\newcommand{\notion}[1]{{\emph{#1}}}
\theoremstyle{plain}
\newtheorem{thm}{Theorem}[section]
\newtheorem{cor}[thm]{Corollary}
\newtheorem{lem}[thm]{Lemma}
\newtheorem{definition}[thm]{Definition}
\newtheorem{prop}[thm]{Proposition}
\def\@rst #1 #2other{#1}
\newcommand\MR[1]{\relax\ifhmode\unskip\spacefactor3000 \space\fi
  \MRhref{\expandafter\@rst #1 other}{#1}}
\newcommand{\MRhref}[2]{\href{http://www.ams.org/mathscinet-getitem?mr=#1}{MR#2}}
\newcommand{\arxiv}[1]{\href{http://arxiv.org/abs/#1}{#1}}
\theoremstyle{definition}
\newtheorem{defn}[thm]{Definition}
\newtheorem{remark}[thm]{Remark}
\numberwithin{equation}{section}
\newcommand{\dsb}{\begin{adjustwidth}{2.5em}{0pt}
\begin{footnotesize}}
\newcommand{\dse}{\end{footnotesize}
\end{adjustwidth}}
\newcommand{\ssb}{\begin{adjustwidth}{2.5em}{0pt}}
\newcommand{\sse}{\end{adjustwidth}}
\newcommand{\aryb}{\begin{eqnarray*}}
\newcommand{\arye}{\end{eqnarray*}}
\def\alb#1\ale{\begin{align*}#1\end{align*}}
\def\allb#1\alle{\begin{align}#1\end{align}}
\newcommand{\eqb}{\begin{equation}}
\newcommand{\eqe}{\end{equation}}
\newcommand{\eqbn}{\begin{equation*}}
\newcommand{\eqen}{\end{equation*}}
\newcommand{\defeq}{:=}
\newcommand{\initial}{\cC_{\op{initial}}}
\newcommand{\tri}{{\op {tri}}}
\newcommand{\diam}{{\op {diam}}}
\newcommand{\area}{{\op {area}}}
\newcommand{\reg}{{\op {reg}}}
\newcommand{\GHPUL}{{\op {GHPUL}}}
\newcommand{\BB}{\mathbbm}
\newcommand{\ol}{\overline}
\newcommand{\op}{\operatorname}
\newcommand{\frk}{\mathfrak}
\newcommand{\eqD}{\overset{d}{=}}
\newcommand{\ep}{\epsilon}
\newcommand{\rd}{\mathrm{d}}
\newcommand{\rta}{\rightarrow}
\newcommand{\wt}{\widetilde}
\newcommand{\wh}{\widehat} 
\newcommand{\mcl}{\mathcal}
\newcommand{\bdy}{\partial} 
\newcommand{\rng}{\mathring}
\newcommand{\el}{\ell}
\newcommand{\elll}{\ell_{\op L}}
\newcommand{\ellr}{\ell_{\op R}}
\newcommand{\LL}{{\mathfrak l}}
\newcommand{\mrb}{\mathrm{b}}
\newcommand{\mry}{\mathrm{r}} 
\newcommand{\mrm}{\mathrm{m}}   
\newcommand{\ans}{\mathrm{AnFr}}   
\newcommand{\ansd}{\mathrm{anfr}}   
\newcommand{\type}{\mathrm{type}}
\def\bpv{\mathbf{piv}} 
\newcommand{\Cardy}{ \op {Cardy} }
\newcommand{\dcon}{{\mathbbm a}}
\newcommand{\mcon}{{\mathbbm b}}
\newcommand{\bcon}{{\mathbbm c}}
\newcommand{\tcon}{{\mathbbm s}}
\newcommand{\mas}{{\mathrm{a}}}
\newcommand{\Map}{{\mathsf{M}}}
\newcommand{\PBT}{{\mathrm{PBT}}}
\let\originalleft\left
\let\originalright\right
\renewcommand{\left}{\mathopen{}\mathclose\bgroup\originalleft}
\renewcommand{\right}{\aftergroup\egroup\originalright}
\newcommand{\BD}{\mathrm{BD}}
\def\cZ{\mathcal{Z}}
\def\cV{\mathcal{V}}
\def\cR{\mathcal{R}}
\def\cN{\mathcal{N}}
\def\cL{\mathcal{L}}
\def\cK{\mathcal{K}}
\def\cI{\mathcal{I}}
\def\cF{\mathcal{F}}
\def\cE{\mathcal{E}}
\def\cC{\mathcal{C}}
\def\cB{\mathcal{B}}
\def\BM{\BB M}
\newcommand{\C}{\mathbb{C}}
\newcommand{\R}{\mathbb{R}}
\newcommand{\N}{\mathbb{N}}
\newcommand{\Z}{\mathbb{Z}}
\newcommand{\D}{\mathbb{D}}
\renewcommand{\P}{\mathbb{P}}
\DeclareMathOperator{\SLE}{SLE}
\DeclareMathOperator{\CLE}{CLE}
\DeclareMathOperator{\Peel}{Peel}  
\newcommand{\DP}{\mathfrak{P}}
\newcommand{\DPm}{\mathfrak{P}^{\mathrm{m}}}
\newcommand{\GHPU}{{\mathrm{GHPU}}}
\newcommand{\bk}{\BB k}
\newcommand{\Be}{\BB e}
\newcommand{\fh}{\mathfrak h}
\newcommand{\Cut}{\mathrm{Cut}}
\title{Joint scaling limit of site percolation on random triangulations in the metric and peanosphere sense}
\date{  }
\author{Ewain Gwynne, Nina Holden, and Xin Sun}
\begin{document}

\maketitle

\begin{abstract}
Recent works have shown that random triangulations decorated by critical ($p=1/2$) Bernoulli site percolation converge in the scaling limit to a $\sqrt{8/3}$-Liouville quantum gravity (LQG) surface (equivalently, a Brownian surface) decorated by SLE$_6$ in two different ways:
\begin{itemize}
\item The triangulation, viewed as a curve-decorated metric measure space equipped with its graph distance, the counting measure on vertices, and a single percolation interface converges with respect to a version of the Gromov--Hausdorff topology.  
\item There is a bijective encoding of the site-percolated triangulation by means of a two-dimensional random walk, and this walk converges to the correlated two-dimensional Brownian motion which encodes SLE$_6$-decorated $\sqrt{8/3}$-LQG via the mating-of-trees theorem of Duplantier-Miller-Sheffield (2014); this is sometimes called \emph{peanosphere convergence}. 
\end{itemize} 
We prove that one in fact has \emph{joint} convergence in both of these two senses simultaneously. 
We also improve the metric convergence result by showing that the map decorated by the full collection of percolation interfaces (rather than just a single interface) converges to $\sqrt{8/3}$-LQG decorated by CLE$_6$ in the metric space sense. 

This is the first work to prove simultaneous convergence of any random planar map model in the metric and peanosphere senses.
Moreover, this work is an important step in an ongoing program to prove that random triangulations embedded into $\mathbb C$ via the so-called \emph{Cardy embedding} converge to $\sqrt{8/3}$-LQG.
\end{abstract}

\section{Introduction}
\label{sec-intro}

\subsection{Overview}
\label{sec-overview}

A planar map is a planar graph (multiple edges and self-loops allowed)
embedded into the two-dimensional sphere, viewed modulo orientation-preserving homeomorphisms of the sphere.   
Starting in the 1980's, physicists have interpreted random planar maps as the discrete analogs of random fractal surfaces called \emph{Liouville quantum gravity (LQG)} surfaces with parameter $\gamma \in (0,2)$ (see \cite{shef-kpz,nakayama-lqg} and the references therein). 
Heuristically speaking, if $D\subset \BB C$ and $h$ is some variant of the Gaussian free field (GFF) on $D$, then for $\gamma \in (0,2)$ the  $\gamma$-LQG surface corresponding to $(D,h)$ 
is the random two-dimensional Riemannian manifold with Riemannian metric tensor $e^{\gamma h} \, (dx^2 +dy^2)$. 
The parameter $\gamma$ depends on the particular type of random planar map model under consideration. 
Uniform random planar maps --- including uniform maps with local constraints like triangulations and quadrangulations ---  correspond to $\gamma=\sqrt{8/3}$, which is sometimes called ``pure gravity". This case will be our main interest in this paper.
Other values of $\gamma$ correspond to random planar maps weighted by the partition function of a statistical mechanics model, such as the Ising model ($\gamma=\sqrt 3$) or the uniform spanning tree ($\gamma=\sqrt 2$). 

The above definition of an LQG surface does not make literal sense since $h$ is only a distribution, not a function, so cannot be exponentiated.
However, one can rigorously define various aspects of LQG surfaces using regularization procedures.
For example, one can define the \emph{$\gamma$-LQG area measure} $\mu_h $ on $D$ as a limit of regularized versions of ``$e^{\gamma h(z)} \,d^2z$", where $d^2z$ denotes Lebesgue measure~\cite{shef-kpz,kahane,rhodes-vargas-review,dkrv-lqg-sphere}.
In the special case when $\gamma=\sqrt{8/3}$, one can also define $\sqrt{8/3}$-LQG as a random metric space~\cite{lqg-tbm1,lqg-tbm2,lqg-tbm3,miermont-brownian-map,legall-uniqueness} (it is a major open problem to construct a metric on $\gamma$-LQG for $\gamma\not=\sqrt{8/3}$).

Mathematically, the statement that ``random planar maps are the discrete analog of LQG" means that the former should converge in the scaling limit to the latter as, say, the total number of edges of the map tends to $\infty$. 
For a number of natural random planar maps decorated by statistical mechanics models, various curves associated with the statistical mechanics model also converge in some sense to Schramm-Loewner evolution (SLE$_\kappa$) curves~\cite{schramm0} for $\kappa \in \{\gamma^2,16/\gamma^2\}$ which are independent from the limiting LQG surface.

There are three main ways to formulate the convergence of random planar maps (decorated by statistical mechanics models) to LQG surfaces (decorated by SLE curves). 
The main goal of the present paper is to show that random triangulations decorated by critical ($p=1/2$) Bernoulli site percolation converge \emph{jointly} to $\sqrt{8/3}$-LQG decorated by SLE$_6$ in two of these senses: metric convergence and peanosphere convergence.
This is a major step toward proving convergence in the third sense (embedding convergence), which will be accomplished in~\cite{hs-cardy-embedding}.  
Let us now briefly review the main types of convergence for random planar maps. 
\medskip

\noindent \textbf{Metric convergence.} One can view a planar map as a random metric measure space --- equipped with the counting measure on vertices and the graph distance --- and show convergence to an LQG surface --- equipped with its LQG area measure and LQG metric --- with respect to the Gromov--Hausdorff--Prokhorov topology, the natural analog of the Gromov--Hausdorff topology for metric measure spaces (see, e.g.,~\cite{adh-ghp}).
Presently, this type of convergence is well understood for uniform random planar maps, but not for random planar maps in the $\gamma$-LQG universality class for $\gamma\not=\sqrt{8/3}$. 

The main reason why distances in uniform random planar maps are tractable is the \emph{Schaeffer bijection}~\cite{schaeffer-bijection} and its generalization due to Bouttier-Di Francesco-Guitter~\cite{bdg-bijection}, which encode various types of uniform random planar maps by means of labeled trees, where the labels correspond to graph distances in the map. 
Using the Schaeffer bijection, it was shown independently by Le Gall~\cite{legall-uniqueness} and Miermont~\cite{miermont-brownian-map} that uniform quadrangulations converge in the scaling limit to a random metric measure space called the \emph{Brownian map}, a continuum metric measure space constructed via a continuum analog of the Schaeffer bijection. 
Subsequent works have extended this result to planar maps with different local constraints~\cite{miermont-brownian-map,legall-uniqueness,ab-simple,bjm-uniform,abraham-bipartite,beltran-legall-pendant} and planar maps with different topologies (such as the whole plane or disk, instead of the sphere)~\cite{bet-mier-disk,curien-legall-plane,gwynne-miller-uihpq,bmr-uihpq,gwynne-miller-simple-quad,aasw-type2}.
Particularly relevant to the present work is the paper~\cite{aasw-type2}, which shows that uniform triangulations of the disk of type II (i.e., multiple edges, but not self-loops, allowed) converge in the scaling limit to the \emph{Brownian disk}, the disk analog of the Brownian map which was constructed in~\cite{bet-mier-disk}.

In a series of works~\cite{lqg-tbm1,lqg-tbm2,lqg-tbm3}, Miller and Sheffield showed that one can define a metric on a $\sqrt{8/3}$-LQG surface (i.e., a metric on $D\subset\BB C$ induced by a GFF-type distribution on $D$). Moreover, it is shown in~\cite[Corollary 1.4]{lqg-tbm2} that certain special $\sqrt{8/3}$-LQG surfaces (corresponding to particular choices of $(D,h)$) are equivalent, as metric measure spaces, to Brownian surfaces such as the Brownian map and Brownian disk. 
We will review the background on Brownian and $\sqrt{8/3}$-LQG surfaces necessary to understand the present paper in Section~\ref{sec-sle-markov}.

For certain planar maps decorated by a curve, one can prove convergence to SLE-decorated LQG with respect to the \emph{Gromov--Hausdorff--Prokhorov--uniform (GHPU)} topology, the natural generalization of the Gromov--Hausdorff topology for curve-decorated metric measure spaces which was introduced in~\cite{gwynne-miller-uihpq}. 
For example, it was shown in~\cite{gwynne-miller-saw,gwynne-miller-perc} that a uniform random planar map decorated by a self-avoiding walk or a percolation interface converges in the scaling limit to a $\sqrt{8/3}$-LQG surface decorated by SLE$_{8/3}$ or SLE$_6$, respectively.

{We remark that the $\gamma$-LQG metric for general $\gamma \in (0,2)$ was very recently constructed in~\cite{gm-uniqueness}, building on~\cite{dddf-lfpp,local-metrics,lqg-metric-estimates,gm-confluence}; see also~\cite{gm-coord-change}. For $\gamma \not=\sqrt{8/3}$, it is conjectured, but not yet proven, that appropriate weighted random planar map models converge to $\gamma$-LQG surfaces equipped with this metric in the Gromov--Hausdorff sense.}
\medskip

\noindent\textbf{Peanosphere convergence.} Certain random planar maps decorated by statistical mechanics models can be encoded by random walks on $\BB Z^2$ (with increment distributions depending on the model). Some encodings of this type are called \emph{mating-of-trees bijections}   since  the bijection can be interpreted as gluing together, or ``mating", the discrete random trees associated with the two coordinates of the walk to construct the map. 
The simplest such bijection is the Mullin bijection~\cite{mullin-maps} (see~\cite{shef-burger,bernardi-maps} for more explicit expositions), which encodes a planar map decorated by a spanning tree by a  nearest-neighbor random walk on $\BB Z^2$.
Other mating-of-trees bijections are obtained in~\cite{shef-burger,kmsw-bipolar,gkmw-burger,bernardi-dfs-bijection,bhs-site-perc,lsw-schnyder-wood}. 
We will review the mating-of-trees bijection for site percolation on a uniform triangulation from~\cite{bernardi-dfs-bijection,bhs-site-perc} (which is the only such bijection used in the present paper) in Section~\ref{sec-walk}. 

In the continuum setting, Duplantier, Miller, and Sheffield~\cite{wedges} showed that a $\gamma$-LQG surface decorated by a space-filling variant of SLE$_\kappa$ for $\kappa = 16/\gamma^2$ can be encoded by a correlated two-dimensional Brownian motion, with the correlation of the two coordinates given by $-\cos(\pi\gamma^2/4)$, via an exact continuum analog of a mating-of-trees bijection. 
This result is sometimes called the \emph{peanosphere construction} since it implies that SLE-decorated LQG is homeomorphic to a random curve-decorated topological measure space called the \emph{peanosphere} which is constructed from two correlated Brownian motions.
For each of the mating-of-trees bijections discussed above, it can be shown that the walk on $\BB Z^2$ which encodes the decorated random planar map converges in the scaling limit to the correlated two-dimensional Brownian motion which encodes the SLE-decorated LQG. 
We interpret this as a scaling limit result for random planar maps in a certain topology --- namely, the one where two decorated ``surfaces" are close if their encoding functions are close.
Convergence with respect to this topology is called \emph{peanosphere convergence}.

Several extensions of peanosphere convergence are possible, giving convergence of a wide range of different functionals of the decorated random planar map to their continuum analogs. See~\cite{ghs-bipolar,gms-burger-cone,gms-burger-local,gms-burger-finite,lsw-schnyder-wood,bhs-site-perc}.
\medskip

\noindent \textbf{Embedding convergence.} 
There are several natural ways of embedding a planar map into $\BB C$, such as circle packing, Riemann uniformization, and Tutte embedding.
It is expected that for any reasonable choice of embedding with conformal properties, 
the embedded planar maps should converge to LQG, e.g., in the sense that the counting measure on vertices (appropriately rescaled) should converge to the LQG measure. Moreover, certain random curves on the embedded planar map should converge to SLE curves. 
So far, this type of convergence has only been proven for a special one-parameter family of random planar maps called \emph{mated-CRT maps} which are defined for all $\gamma \in (0,2)$~\cite{gms-tutte}.
\medskip

A priori, there is no direct relationship between the above modes of convergence. 
Each encodes different information about the planar map and none implies any of the others. 
One expects that random planar maps in the $\gamma$-LQG universality class should converge to $\gamma$-LQG in each of the above three senses. In fact, this convergence should occur \emph{jointly}, in the sense that the joint law of the triple consisting of three copies of the random planar map should converge to the joint law of the triple consisting of three copies of the $\gamma$-LQG surface with respect to the product of the above three topologies.  

In this paper, we will prove the joint convergence of critical site percolation on a random triangulation to SLE$_6$ on $\sqrt{8/3}$-LQG in the metric and peanosphere sense (Theorem~\ref{thm-metric-peano}). This is the first such joint scaling limit result for any random planar map model. 
We will also extend the result of~\cite{gwynne-miller-perc}, which gives the GHPU convergence of the map decorated by a single percolation interface toward $\sqrt{8/3}$-LQG decorated by chordal SLE$_6$, to a convergence result for the full collection of interfaces toward $\sqrt{8/3}$-LQG decorated by a conformal loop ensemble with $\kappa=6$~\cite{shef-cle} (Theorem~\ref{thm:loop-vague}). 

This paper is an important step in an ongoing program of the second and third authors to prove that site percolation on a random triangulation converges to SLE$_6$-decorated $\sqrt{8/3}$-LQG under a certain embedding --- the so-called \emph{Cardy embedding} --- which is named after Cardy's formula for percolation~\cite{cardy-formula,smirnov-cardy}.
In fact, combined with the results of the present paper the argument will give joint convergence in all three of the above senses. 
Other papers involved in the proof of the Cardy embedding convergence include~\cite{gwynne-miller-char,gwynne-miller-perc,bhs-site-perc,hlls-cut-pts,hls-sle6,ghss-ldp,hs-cardy-embedding,aasw-type2}.
See Section~\ref{sec:app} and Remark~\ref{rmk:Cardy} for further discussion of the Cardy embedding. 

\subsection{Main result}
\label{sec:main-result}

For a planar map $\Map$, we write $\mcl V(\Map)$, $\mcl E(\Map)$, and $\mcl F(\Map)$ for the set of vertices, edges, and faces, respectively, of $\Map$. 
A map is \emph{rooted} if one of its edges, called the \emph{root edge}, is 
distinguished and oriented.  The face to the right of the root edge is called the \emph{root face}. Given an integer $\el\ge 2$, a planar map $\Map$ is called \emph{a triangulation with boundary} if every   face in $\cF(\Map)$  has degree 3 except that the root face has degree $\el$. We call $\el$ the \emph{boundary length} of $\Map$. We write $\partial \Map$ for the graph consisting of  edges and vertices on the root face of $\Map$. A vertex on  $\Map$ is called a \emph{boundary vertex} if it is on $\bdy\Map$. Otherwise, it is called an \emph{inner vertex}. We similarly define \emph{boundary edges} and \emph{inner edges}. 

A graph is called \emph{2-connected} if removing any vertex does not disconnect the  graph. If a  triangulation with boundary $\Map$ is 2-connected, we call it a \emph{(loopless) triangulation with simple boundary} since there are no self-loops  in $\Map$ and  $\bdy \Map$ is a simple cycle. 
For an integer $\el\ge 2$, let $\frk T(\el)$ be the  set of such maps with boundary length $\el$.  
By convention, we view a map with a single edge as an element in $\frk T(2)$ which we  call the \emph{degenerate triangulation}. To highlight the role of the root, we will write  each element in $\cup_{\ell\ge 2}\frk T(\ell)$ in the form of $(\Map,\Be)$, where $\Be$ is the directed rooted edge of $\Map$. 

Given $\Map\in \cup_{\el\ge 2} \frk T(\el)$, a \emph{site percolation} on $\Map$ is a coloring of $\cV(\Map)$ in two colors, say, red and blue. The Bernoulli-$\frac12$ site percolation on $M$ is the random site percolation $\omega$ on $M$ such that each inner vertex is independently colored in red or blue with equal probability. The coloring of the boundary vertices is called the \emph{boundary condition} of $\omega$, which can follow any distribution independent of $\omega|_{\cV(\Map)\setminus \bdy\cV(\Map)}$.
We say $\omega$ has \emph{monochromatic} red (resp.\ blue) boundary condition if all boundary vertices are red (resp.\ blue).

\begin{definition}\label{def:bol}
	For an integer $\el\ge 2$, the \emph{(critical) Boltzmann triangulation with simple boundary of length} $\el$ is a probability measure on $\frk T (\el)$ where each element is assigned probability
	\begin{equation}\label{eq:Bol-def}
	\left(\frac{2}{27}\right)^n \frac{(\el-2)!\el!}{(2\el-4)!}\left(\frac{4}{9}\right)^{\el-1}, \qquad\textrm{where $n$ is the number of inner vertices.}
	\end{equation}
	Suppose a random triple $(\Map,\Be,\omega)$ is such that the marginal law of $(\Map,\Be)$ is the critical Boltzmann triangulation given its boundary length and conditioning on $(\Map,\Be)$ and $\omega|_{\cV(\bdy \Map)}$, the conditional  law of $\omega$   is the Bernoulli-$\frac12$ $\cV(\Map)$ site percolation. Then we call (the law) of $(\Map,\Be,\omega)$ a \emph{critical site-percolated Boltzmann triangulation}.
\end{definition}

The primary object of interest in our paper is the scaling limit of the site-percolated Boltzmann triangulation. To be more precise, fix $\LL >0$ and a sequence  $\{\el^n\}_{n\in\BB N}$  in $[2,\infty)\cap \N$ such that $\left(\frac2{3n}\right)^{1/2}\el^n \rta \LL$ as $n\rta\infty$. For $n\in\BB N$, we consider the triple $(\Map^n, \BB e^n, \omega^n)$ where $\Map^n$ is the Boltzmann triangulation of boundary length $\el^n$ rooted at $\BB e^n$, and conditional on $\Map^n$, {we have that} $\omega^n$ is a Bernoulli-$\frac12$ site percolation on $\Map^n$ with monochromatic red boundary condition.

{The site-percolated Boltzmann triangulation was first studied in~\cite{angel-peeling}, where it was proved that the critical threshold is $\frac12$. 
In that paper a key tool called the \emph{peeling process} was introduced. The peeling process}  also plays a fundamental role in, e.g.,~\cite{angel-uihpq-perc,angel-curien-uihpq-perc,angel-ray-classification,richier-perc,gwynne-miller-saw,caraceni-curien-saw,gwynne-miller-simple-quad,gwynne-miller-perc}. 
We will review the peeling process associated with the percolation interface in Section~\ref{sec-perc-interface}. Roughly speaking, this process explores the edges along the percolation interface in order, keeping red vertices to the left and blue vertices to the right. 

In~\cite{bhs-site-perc}, an iterative peeling process was used to define a space-filling exploration\footnote{Throughout this paper, we denote space-filling curves with a prime and non-space-filling curves (such as percolation interfaces or ordinary chordal SLE$_6$) without a prime. Note that this differs  from the convention of~\cite{ig1,ig2,ig3,ig4}, where a prime is used for any non-simple curve.}
$\acute{\lambda}^n$ of $\cE(\Map^n)$, i.e., a total ordering of $\cE(\Map^n)$. Moreover $\acute\lambda^n$ defines a random walk $\acute{\mcl Z}^n = (\acute{\mcl L}^n , \acute{\mcl R}^n)$ of duration
$\#\mcl E(\Map^n)$, with steps in $\{(1,0), (0,1), (-1,-1)\}$, describing the evolution of the lengths of the two arcs between the starting point and the target point on  the boundary of the unexplored region.  
The construction of $\acute{\lambda}^n$ and $\acute{\cZ}^n$ will be reviewed in Section~\ref{sec-walk}.\footnote{In fact the peeling process perspective is alluded to but not highlighted in \cite{bhs-site-perc}. We give a self-contained treatment in Section~\ref{sec-walk} with this perspective.}

We equip $\Map^n$ with the graph distance and the counting measure on vertices, rescaled appropriately (the precise scaling is specified at the end of Section~\ref{sec-ghpu}). Then $\acute{\frk M}^n:=(\Map^n,\acute{\lambda^n})$ can be thought of as a compact metric measure space decorated with two curves,  $\bdy \Map^n$ and $\acute{\lambda}^n$. The natural topology on  the space of compact curve-decorated metric measure spaces is the \emph{Gromov--Hausdorff--Prokhorov--uniform (GHPU) topology}, whereby two such spaces are close if they can be isometrically embedded into a common space in such a way that the spaces are close in the Hausdorff distance, the measures are close in the Prokhorov distance, and the curves are close in the uniform distance. This topology was introduced in~\cite{gwynne-miller-uihpq}, and will be reviewed in Section~\ref{sec-ghpu}.

The continuum analog of site percolation on a Boltzmann triangulation with boundary is a space-filling SLE$_6$ curve on a Brownian disk (equivalently, by~\cite[Corollary 1.4]{lqg-tbm2}, a $\sqrt{8/3}$-LQG disk) with boundary length $\LL$ (and random area). This object can be viewed as a metric measure space decorated by two curves (the SLE$_6$ and the boundary of the disk). 
We denote this curve-decorated metric measure space by $\frk H'$. 
We will review the definitions of the above objects in more detail in Section~\ref{sec-mating}. In particular, we will explain how the mating-of-trees theorem of~\cite{wedges} allows us to associate with $\frk H'$ a pair of correlated Brownian excursions $Z' = (L',R')$, with correlation $ 1/2$, 
in a manner directly analogous to the definition of the left/right boundary length process $\acute{\cZ}^n$ above.
The following is an informal statement of our first main result. 
\begin{thm} \label{thm-metric-peano}
	Under appropriate scaling, the joint law of $(\acute{\frk M}^n, \acute Z^n)$ converges to the joint law of $(\frk H',Z')$ where the first coordinate is given the GHPU topology and the second coordinate is given the uniform topology.
\end{thm}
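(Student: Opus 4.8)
The plan is to prove tightness of the sequence $(\acute{\frk M}^n, \acute Z^n)_{n\in\N}$ in the product topology and then to identify every subsequential limit, using the mating-of-trees theorem of~\cite{wedges} to pin down the coupling between the two coordinates. Tightness is immediate from the two marginal statements: peanosphere convergence $\acute Z^n \to Z'$, which is~\cite{bhs-site-perc}, and GHPU convergence $\acute{\frk M}^n \to \frk H'$. The second marginal is not directly available, however: \cite{gwynne-miller-perc} gives GHPU convergence of the triangulation decorated by a \emph{single} percolation interface, whereas $\acute{\frk M}^n$ carries the entire space-filling exploration $\acute\lambda^n$. So a first task is to upgrade this, namely to prove Theorem~\ref{thm:loop-vague}: that $\Map^n$ decorated by its whole collection of percolation loops converges in the GHPU sense to $\sqrt{8/3}$-LQG decorated by CLE$_6$. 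I would do this by applying~\cite{gwynne-miller-perc} to one interface at a time, using the spatial Markov/resampling property of Bernoulli-$\frac12$ percolation on a Boltzmann triangulation to iterate over finitely many loops, and then controlling the tail (small loops have small diameter, uniformly in $n$) to pass to the full collection. Since $\acute\lambda^n$ is constructed from the percolation configuration by the explicit iterated interface-peeling of Section~\ref{sec-walk}, convergence of the loops together with this construction yields GHPU convergence of $\acute{\frk M}^n$ to the Brownian (equivalently $\sqrt{8/3}$-LQG) disk $\frk H'$ decorated by space-filling SLE$_6$ and its boundary.

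With tightness in hand, fix a subsequence along which $(\acute{\frk M}^n, \acute Z^n) \to (\mathcal H, \mathcal Z)$ jointly, so that automatically $\mathcal H \eqD \frk H'$ and $\mathcal Z \eqD Z'$. The goal is to show $\mathcal H = \Phi(\mathcal Z)$ almost surely, where $\Phi$ is the Borel map sending a correlated Brownian excursion pair to the curve-decorated metric measure space obtained by the peanosphere gluing construction followed by the Miller-Sheffield construction of the $\sqrt{8/3}$-LQG metric; that $\Phi$ is well-defined and that $\frk H' = \Phi(Z')$ is precisely the combination of the mating-of-trees theorem~\cite{wedges} with~\cite{lqg-tbm1,lqg-tbm2,lqg-tbm3} and~\cite[Corollary 1.4]{lqg-tbm2}, which I would record in Section~\ref{sec-mating}. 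Once $\mathcal H = \Phi(\mathcal Z)$ a.s.\ is established, the subsequential limit is forced: $(\mathcal H, \mathcal Z) \eqD (\Phi(Z'), Z') = (\frk H', Z')$; since every subsequence has this limit, the whole sequence converges, which is Theorem~\ref{thm-metric-peano}.

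To obtain $\mathcal H = \Phi(\mathcal Z)$, observe that $\mathcal H$ is a.s.\ a $\sqrt{8/3}$-LQG disk decorated by a space-filling SLE$_6$ curve $\eta$ (because $\mathcal H \eqD \frk H'$), and the mating-of-trees theorem says such an object is a deterministic function of the left/right \emph{quantum boundary length process} of $\eta$ parametrized by quantum area. Hence it suffices to identify this continuum boundary length process with $\mathcal Z$. Discretely, $\acute Z^n$ \emph{is} the left/right boundary length process of $\acute\lambda^n$ --- it records the number of edges on the two arcs of the boundary of the unexplored region as the peeling proceeds --- so $\mathcal Z$ is the scaling limit of these discrete boundary length processes. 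What remains, and what I expect to be the main obstacle, is to show that these discrete boundary lengths converge, jointly with everything else along the subsequence, to the quantum lengths of the corresponding boundary arcs in the limiting surface $\mathcal H$. This is delicate because boundary length is not a continuous functional of a curve-decorated metric measure space in general: the boundary of $\eta([0,t])$ is a fractal, self-touching curve, so reading off its quantum length from a GHPU limit requires genuine quantitative input. I would supply this input using estimates on the interface peeling process --- bounding, on both sides, the number of edges of $\Map^n$ within a given graph distance of the exploration front --- together with the cut-point estimates of~\cite{hlls-cut-pts} and the SLE$_6$/CLE$_6$ Markov property on $\sqrt{8/3}$-LQG reviewed in Section~\ref{sec-sle-markov}, which allow graph-distance neighborhoods of the exploration front to be transported across the GHPU embeddings to quantum-length neighborhoods in $\mathcal H$. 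Combining these comparisons with the joint subsequential convergence identifies $\mathcal Z$ with the quantum boundary length process of $\eta$ in $\mathcal H$, giving $\mathcal H = \Phi(\mathcal Z)$ and completing the proof.
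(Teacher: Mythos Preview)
Your strategy differs from the paper's in its organization, and that reorganization opens two gaps.

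First, you propose to establish the GHPU marginal $\acute{\frk M}^n \to \frk H'$ by proving Theorem~\ref{thm:loop-vague} (CLE$_6$ convergence) and then passing from the loop ensemble to the parametrized space-filling curve. In the paper the logical dependence runs the other way: Theorem~\ref{thm:loop-vague} is proved in Section~\ref{subsec:CLE-conv} as a \emph{consequence} of Theorem~\ref{thm-metric-peano}, and its proof uses the uniform convergence $\acute\eta^n \to \eta'$ (Proposition~\ref{prop-peano-conv}) as input. The step you elide --- ``convergence of the loops together with [the iterated peeling] construction yields GHPU convergence of $\acute{\frk M}^n$'' --- is not automatic: the space-filling curve carries a specific bubble-ordering and a $\mu$-mass parametrization, and obtaining its \emph{uniform} convergence from convergence of loops requires exactly the finite-approximation and diameter-control arguments of Section~\ref{sec-sle6-conv} (Lemmas~\ref{lem-component-diam} and~\ref{lem-bead-diam}), which in turn rest on the joint convergence of the nested chordal explorations together with their boundary length processes (Proposition~\ref{prop-component-conv}). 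So your ``first task'' is not logically upstream of the main theorem.

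Second, for the identification step you correctly isolate the real difficulty --- showing that along the subsequential coupling the rescaled edge-counts $\acute Z^n$ converge to the \emph{quantum} boundary length process of the limiting curve in $\mathcal H$ --- but the tools you propose do not address it. The paper~\cite{hlls-cut-pts} concerns Minkowski content of Brownian cut points and says nothing about relating graph-distance neighborhoods of a percolation front to $\sqrt{8/3}$-LQG lengths; and GHPU convergence of a curve by itself gives no control on the length of its fractal frontier. The Remark following Proposition~\ref{prop-peano-conv} discusses precisely this ``tightness then identify via mating-of-trees'' approach and explains why the non-explicit measurability $\frk H' = \Phi(Z')$ from~\cite{wedges} is insufficient on its own: knowing that a curve-decorated Brownian disk has boundary-length process equal in law to $Z'$ does not force it to be $\frk H'$. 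One would instead need the characterization theorem~\cite[Theorem~7.1]{gwynne-miller-char}, and the authors note this route is harder than the one they take.

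The paper's actual argument shares your core idea --- iterate the single-interface convergence of~\cite{gwynne-miller-perc} --- but arranges it so that no separate boundary-length identification is ever needed. Theorem~\ref{thm-chordal-conv} already packages curve and boundary-length-process convergence together; iterating it gives the full nested family $\{(\frk M_\bk^n, Z_\bk^n)\} \to \{(\frk H_\bk, Z_\bk)\}$ jointly (Proposition~\ref{prop-component-conv}). A combinatorial and excursion-theoretic argument (Section~\ref{sec-disk-walk}) then shows that $\acute Z^n$ is assembled from the $Z_\bk^n$ in exactly the way $Z'$ is assembled from the $Z_\bk$, yielding the joint convergence with $\acute Z^n$ (Proposition~\ref{prop:joint}). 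Only after this is the uniform convergence $\acute\eta^n \to \eta'$ established (Proposition~\ref{prop-peano-conv}), by approximating both curves with concatenations of finitely many chordal segments. The boundary-length convergence is thus built in at every step of the iteration rather than recovered at the end.
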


A precise statement of Theorem~\ref{thm-metric-peano}, including the proper scaling, the convergence topology and the description of the limiting object, will be given in Section~\ref{sec-continuum} as Theorem~\ref{thm:main-precise}.
An important input to the proof of Theorem~\ref{thm-metric-peano} is~\cite{gwynne-miller-perc},
which give the joint convergence of a random triangulation decorated by a single percolation interface together with its associated left/right boundary length process to a Brownian disk decorated by a chordal SLE$_6$ together with its associated left/right boundary length process.\footnote{Actually,~\cite{gwynne-miller-perc} proves the analogous statement for face percolation on a quadrangulation instead of site percolation on a triangulation. But, as explained in~\cite[Section 8]{gwynne-miller-perc}, the proof carries over verbatim to site percolation on a triangulation once one has the convergence of triangulations with simple boundary to the Brownian disk, which is proven in~\cite{aasw-type2}. }
Roughly speaking, the idea of the proof is to build the space-filling exploration from nested percolation interfaces, build the space-filling SLE$_6$ analogously from nested chordal SLE$_6$'s, and then apply the result of~\cite{gwynne-miller-perc} countably many times. 
The relationship between chordal and space-filling curves is explained in the discrete (resp.\ continuum) setting in Section~\ref{sec-sle6-def-discrete} (resp.\ Section~\ref{sec-mating}). A similar iteration strategy is used in \cite{camia-newman-sle6} to extract the convergence to $\CLE_6$ from the convergence to $\SLE_6$ for Bernoulli-$\frac12$ site percolation on the regular triangular lattice. However, the argument in that paper heavily relies on the fact that the regular triangular lattice is nicely embedded in the plane, where very strong percolation estimates are known.

\subsection{Applications of the main result}
\label{sec:app}

Suppose $\omega$ is a site percolation on  $\Map\in \bigcup_{\el\ge 2} \frk T(\el)$ with monochromatic boundary condition.
Removing all edges on $\Map$ whose endpoints have different colors, we call each connected component in the remaining graph a \emph{percolation cluster}, or simply a cluster, of $\omega$. 
By definition, vertices in each cluster share the same color. 
Moreover, each pair of neighboring vertices that are on different clusters must have different colors. 
We call the cluster containing $\bdy \Map$ the \emph{boundary cluster}. If $\cC$ is a non-boundary cluster of $\omega$, one can canonically define a loop on $\Map^n$ surrounding $\cC$ as a path of vertices in the dual map. See Footnote \ref{fn:edge-path} and  Figure~\ref{fig-discrete-loops}.
The collections of such loops is called the \emph{loop ensemble} of $\omega$, denoted by $\Gamma(\Map,\omega)$.
In \cite{camia-newman-sle6}, it is proved that given a Jordan domain $D$, the loop ensemble for Bernoulli-$\frac12$ site percolation on the regular triangular lattice on $D$ converges to the so-called $\CLE_6$
on $D$, as defined in~\cite{shef-cle}, as the mesh size tends to zero. In Section~\ref{sec-cle-def} we review the definition of the $\CLE_6$ on the Brownian disk, which we denote by $(H,d,\mu,\xi,\Gamma)$. We also define a natural topology  called the \emph{Gromov--Hausdorff--Prokhorov--uniform--loop (GHPUL)} topology on metric measure space decorated with  a boundary curve and a collection of loops.   
As a byproduct of our proof of Theorem~\ref{thm-metric-peano}, we prove the following.

\begin{figure}[ht!]
	\begin{center}
		\includegraphics[scale=.8]{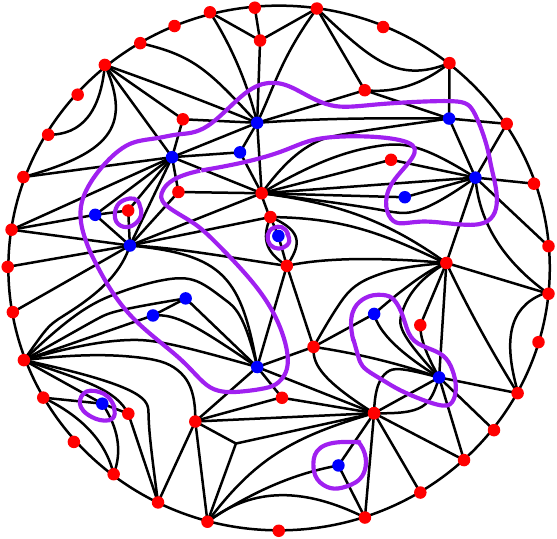} 
		\caption{\label{fig-discrete-loops} 
			A site-percolated triangulation with simple boundary with monochromatic red boundary data along with the associated collection of loops. Theorem~\ref{thm:loop-vague} asserts that this object converges in the metric space sense to CLE$_6$ on the Brownian disk.
		}
	\end{center}
\end{figure}

\begin{thm}\label{thm:loop-vague}
	In the setting of Theorem~\ref{thm-metric-peano}, let $\Gamma^n=\Gamma(\Map^n,\omega^n)$.
	Then under the appropriate scaling $(\Map^n,\Gamma^n)$ converges in law to $(H,d,\mu,\xi,\Gamma)$ as a loop ensemble decorated metric measure space with a boundary curve.
	Moreover, this convergence occurs jointly with the convergence of Theorem~\ref{thm-metric-peano}.
\end{thm}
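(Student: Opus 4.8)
\textbf{Proof proposal for Theorem~\ref{thm:loop-vague}.}
The plan is to bootstrap Theorem~\ref{thm-metric-peano} by carrying the loop ensemble $\Gamma^n$ along through its proof. Recall that the proof of Theorem~\ref{thm-metric-peano} builds the space-filling exploration $\acute\lambda^n$ out of a countable family of nested chordal percolation interfaces and shows, via repeated application of~\cite{gwynne-miller-perc}, that these interfaces converge jointly --- in the GHPU sense, together with the surrounding metric-measure structure and the boundary length process --- to the nested chordal $\SLE_6$'s out of which the space-filling $\SLE_6$ associated with $\frk H'$ is assembled. The key structural observation is that $\Gamma^n$ is a deterministic function of precisely this nested-interface data: with monochromatic red boundary condition, the loop of $\Gamma^n$ surrounding a non-boundary cluster $\cC$ of $\omega^n$ coincides with (the completion into a loop of) one of the chordal percolation interfaces explored in the course of constructing $\acute\lambda^n$ --- namely the outermost interface, with the appropriate two-color boundary condition, of the triangulation-with-simple-boundary that gets pinched off around $\cC$. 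In the continuum, by Sheffield's branching-$\SLE_6$ construction of $\CLE_6$~\cite{shef-cle} and its compatibility with the space-filling $\SLE_6$ in $\frk H'$, the $\CLE_6$ loops $\Gamma$ on $\frk H'$ are obtained from the nested chordal $\SLE_6$'s in exactly the analogous way. So it suffices to show that (i) the laws of $(\acute{\frk M}^n,\acute Z^n,\Gamma^n)$ are tight in $\GHPUL \times(\text{uniform})$, and (ii) any subsequential limit of $\Gamma^n$ is the image of the limiting nested-interface data under the continuum loop-extraction map, i.e.\ the $\CLE_6$ on $\frk H'$.

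For tightness it remains, given Theorem~\ref{thm-metric-peano}, only to establish tightness of $\Gamma^n$ in the $\GHPUL$ topology. Since loops of small rescaled diameter are negligible there, it is enough to show that for every $\epsilon>0$ the number of loops of $\Gamma^n$ of rescaled diameter at least $\epsilon$ is tight, and that each such loop admits, when parametrized by its rescaled length over $[0,1]$, a modulus of continuity that is tight uniformly in $n$. Both statements follow from a priori percolation estimates for Boltzmann triangulations with simple boundary --- RSW-type crossing bounds and interface regularity --- which are available from~\cite{gwynne-miller-perc,angel-curien-uihpq-perc} and are in any case consequences of the convergence to the Brownian disk decorated by $\SLE_6$. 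These estimates also give that the number of loops separating two fixed macroscopic regions is a.s.\ finite in the limit, which is used below.

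To identify the subsequential limit, pass to an almost sure realization along a subsequence (Skorokhod), so that $(\acute{\frk M}^n,\acute Z^n)\to(\frk H',Z')$ and $\Gamma^n\to\Gamma^\infty$ simultaneously, where $\Gamma^\infty$ is some loop ensemble on the Brownian disk $\frk H'$. Each loop $\ell\in\Gamma^\infty$ of positive diameter arises as the limit of discrete loops $\ell^n\in\Gamma^n$; conditioning on the part of the iterative peeling carried out before $\ell^n$ is discovered, $\ell^n$ is the chordal percolation interface of a triangulation-with-simple-boundary with the appropriate two-color boundary condition, so by~\cite{gwynne-miller-perc} it converges, jointly with the surrounding structure, to a chordal $\SLE_6$ --- hence $\ell$ is a $\CLE_6$-type loop, and moreover the nesting and branching relations among the $\ell^n$ (being encoded in the already-converging interface data) match those among the nested chordal $\SLE_6$'s defining $\CLE_6$ on $\frk H'$. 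Together with the characterization of $\CLE_6$, this forces $\Gamma^\infty$ to be the $\CLE_6$ on $\frk H'$. Since the limit does not depend on the subsequence, the full sequence converges; and because $\Gamma^n$ and the $\CLE_6$ loops are built from the same nested-interface data as $\acute\lambda^n$ and the space-filling $\SLE_6$ (so also as $\acute Z^n$ and $Z'$), the convergence is automatically joint with that of Theorem~\ref{thm-metric-peano}. This is in the spirit of the $\SLE_6$-to-$\CLE_6$ iteration of~\cite{camia-newman-sle6}, but carried out in the GHPU framework so as to avoid reference to any embedding of the triangulation.

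The main obstacle is the continuity of the loop-extraction procedure in the $\GHPUL$ topology: one must rule out that loops of diameter bounded below collapse or disappear in the limit, that spurious loops appear, or that the infinitely many small $\CLE_6$ loops nested near a given point fail to be approximated. Quantitatively this requires percolation estimates, uniform in $n$, showing that distinct macroscopic loops are separated by a definite metric distance with high probability and that the number of loops at intermediate scales is controlled --- so that truncating to loops of diameter at least $\epsilon$ and then letting $\epsilon\to 0$ is a legitimate approximation on both the discrete and the continuum sides. Once these estimates are in place, the remaining inputs --- tightness of the metric-measure structure and convergence of the individual interfaces --- are exactly those already supplied by~\cite{gwynne-miller-perc} and by the proof of Theorem~\ref{thm-metric-peano}.
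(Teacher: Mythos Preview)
Your high-level picture --- loops are deterministic functions of the nested chordal interface data, so their convergence should ride along with Theorem~\ref{thm-metric-peano} --- is correct and is what the paper does. But two of the steps you lean on are genuine gaps, and the paper's proof (Section~\ref{subsec:CLE-conv}) routes around both of them in a more concrete way.

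\textbf{Tightness.} You invoke ``RSW-type crossing bounds and interface regularity'' for Boltzmann triangulations, citing~\cite{gwynne-miller-perc,angel-curien-uihpq-perc}. Such a~priori estimates are not available in the form you need: \cite{angel-curien-uihpq-perc} treats half-plane models, and the arguments in~\cite{gwynne-miller-perc} do not supply a uniform bound on the number of macroscopic percolation loops or their moduli. The paper never appeals to percolation estimates of this sort. Instead it controls the small loops via Lemma~\ref{lem-pinch} (the ``pinch'' lemma), a purely topological consequence of the fact that the Brownian disk has no cut points: any subgraph whose boundary has small diameter must itself have small diameter. Combined with the equicontinuity of the finitely many chordal curves $\eta_\bk^n$ for $\bk$ in a bounded range (from Proposition~\ref{prop-component-conv}), this shows that any discrete loop not among the first $K$ in the bubble decomposition is trapped in a region of small diameter.

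\textbf{Identification.} You propose to identify the subsequential limit by showing each limiting loop is ``$\CLE_6$-type'' and then invoking ``the characterization of $\CLE_6$''. No such characterization is available on the Brownian disk; the results of~\cite{shef-cle,camia-newman-sle6} are Euclidean and, as the paper's introduction notes, the Camia--Newman iteration relies heavily on the lattice embedding. The paper avoids characterizations entirely by working loop-by-loop: Definition~\ref{def:CLE} writes each $\CLE_6$ loop $\gamma_\bk$ explicitly as the concatenation of a segment $\eta_{\bk^-}|_{[\bar\sigma_\bk,\bar\tau_\bk]}$ of the parent chordal $\SLE_6$ with the child chordal $\SLE_6$ $\eta_\bk$. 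The discrete loop $\gamma_k^n$ has the exact same two-piece structure (this is spelled out in the proof of Theorem~\ref{thm:loop}, see Figure~\ref{fig-tri}), and each piece converges separately by the nested-interface convergence already established (Proposition~\ref{prop-component-conv} and Lemma~\ref{lem-path-conv}). So $\gamma_k^n\to\gamma_k$ in $\BB d^{\op u}_D$ for each fixed $k$, and the pinch lemma handles the tail.

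A small correction to your structural observation: the discrete loop surrounding a cluster is not ``one of the chordal percolation interfaces'' but the concatenation of a segment of the parent interface with the child interface --- exactly mirroring the continuum $\gamma_\bk$ in Section~\ref{sec-cle-def}. Getting this decomposition right is what makes the direct matching work.
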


We state and prove the precise version of Theorem~\ref{thm:loop-vague} in Section~\ref{subsec:CLE-conv}.  
In \cite{bhs-site-perc}, many convergence results related to $(\Map^n,\Be^n,\omega^n)$ above are  proved under a certain embedding from $\Map^n$ to $\D$ depending on the randomness of the percolation configuration and a coupling of percolated maps for different $n$. Theorems~\ref{thm-metric-peano} and~\ref{thm:loop-vague} allow us to transfer all the convergence result in \cite{bhs-site-perc} to a more intrinsic setting. 
The reason for this is that Theorem~\ref{thm-metric-peano} reduces  convergence results for curves on $\Map^n$  in the uniform topology  and measures on $\Map^n$ in the Prokhorov  topology  to the convergence result of certain time sets of the random walk $\acute Z^n$.
As examples, we prove convergence results related to pivotal points and crossing events in Section~\ref{subsec:pivotal} and~\ref{subsec:crossing},   respectively.  These results are important inputs for the work \cite{hs-cardy-embedding}, which shows the convergence of $\Map^n$ to $\sqrt{8/3}$-LQG under the Cardy embedding. See Remarks~\ref{rmk:dynamical} and~\ref{rmk:Cardy} for more detail.

Another important application of Theorem~\ref{thm:loop-vague} is related to the following question. In the setting of Theorem~\ref{thm-metric-peano}, condition on $(\Map^n , \Be^n)$ and let $\wt\omega^n$ be another critical site percolation configuration on $\Map^n$ which is conditionally independent from $\omega^n$. 
Let $\wt \Gamma^n$ be the loop ensemble associated with $\wt\omega^n$. 
In light of Theorem~\ref{thm:loop-vague}, it is natural to conjecture that $(\Map^n,\Gamma^n,\wt \Gamma^n)$   converges in law to $(H,\Gamma,\wt\Gamma )$  where $\Gamma $ and $\wt \Gamma$ are conditionally independent CLE$_6$'s on $H$. In a similar vein, one expects that the boundary length processes $\acute Z^n$ and {$\wt{Z}^n$} associated with $(\Map^n,\Be^n ,\omega^n)$ and $(\Map^n,\Be^n,\wt\omega^n)$ converge jointly to the boundary length processes $Z'$ and $\wt Z'$ associated with two independent space-filling SLE$_6$'s on the same Brownian disk. 

These questions appear to be very challenging since it is difficult to tease apart the randomness arising from $H$ and from $\eta'$ in the definition of $Z$. 
Any subsequential scaling limit of $(\acute{Z}^n, \wt Z^n)$ is a pair of coupled Brownian motions. These coupled Brownian motions give rise to a pair of coupled SLE$_6$-decorated Brownian disks due to the mating-of-trees theorem~\cite[Theorem 2.1]{sphere-constructions}.
Theorem~\ref{thm-metric-peano} implies the corresponding Brownian disks must be equal almost surely. 
In \cite{hs-cardy-embedding}, it will be shown that in fact the corresponding space-filling SLE$_6$'s are conditionally independent given the Brownian disk, which proves the joint convergence $(\acute{Z}^n, \wt{Z}^n) \rta (Z',\wt Z')$. As demonstrated in \cite{hs-cardy-embedding}, this joint convergence is essentially equivalent to the convergence of the Cardy embedding of $(\Map^n,\Be^n,\omega^n)$ to SLE$_6$-decorated $\sqrt{8/3}$-LQG.

\subsection*{Outline}

The rest of this paper is structured as follows. 

Section~\ref{sec-discrete} is the combinatorial foundation of the paper, where we review the space-filling exploration of site-percolated loopless triangulations with simple boundary and the associated random walk encoding from~\cite{bernardi-dfs-bijection,bhs-site-perc}. In fact, we will reformulate this encoding in terms of an iterative peeling process, which makes its connection to peeling clearer.

In Section~\ref{sec-continuum}, we review the GHPU topology used in Theorem~\ref{thm-metric-peano} and provide the necessary background on $\SLE_6$, $\sqrt{8/3}$-LQG, and the Brownian disk. In particular, we explain how space-filling SLE$_6$ can be constructed by iterating ordinary chordal SLE$_6$ curves inside the ``bubbles" which the curve cuts out, in a manner analogous to the definition of the space-filling exploration from the previous section. We will also give a precise statement of Theorem~\ref{thm-metric-peano}. 

In Section~\ref{sec-scaling}, we prove that the joint law of $\Map^n$ (viewed as a metric measure space with a distinguished boundary curve) and the random walk $\acute{\cZ}^n$ converges in the scaling limit to a Brownian disk and a correlated Brownian excursion. In other words, we prove all of Theorem~\ref{thm-metric-peano} except for the uniform convergence of the space-filling exploration toward space-filling SLE$_6$.
In Section~\ref{sec-sle6-conv}, we conclude the proof of Theorem~\ref{thm-metric-peano}. 
The proofs in Sections~\ref{sec-scaling} and~\ref{sec-sle6-conv} are both based on the main result in~\cite{gwynne-miller-perc}, which gives the GHPU convergence of a single percolation interface and its left/right boundary length process, together with the description of the space-filling curves in terms of nested chordal curves.
In Section~\ref{sec-CLE}, we prove Theorem~\ref{thm:loop-vague} as well as some consequences thereof which will be used in~\cite{hs-cardy-embedding}.

\subsubsection*{Basic notation}
\label{sec-basic-notation}

We write $\BB N$ for the set of positive integers and $\BB N_0 = \BB N\cup \{0\}$. For a finite set $A$ we let $\#A$ denote its cardinality.
For $a,b \in \BB R$ with $a<b$, we define the discrete interval $[a,b]_{\BB Z} := [a, b]\cap \BB Z$.  
For each $A\subset \Z$, we define the \emph{connected components} of $A$ to be the connected components with respect to the graph structure on $\Z$ where $i,j\in\Z$ are adjacent if and only if $|i-j|=1$.
We write $X\overset{d}{=}Y$ if two random variables $X,Y$ have the same law.

\subsubsection*{Acknowledgements}
{We thank an anonymous referee for helpful comments on the draft.} E.G.\ was partially supported by a Herchel Smith fellowship and a Trinity College junior research fellowship.
N.H.\ was partly supported by a doctoral research fellowship from the Norwegian Research Council and partly supported by Dr.\ Max R\"ossler, the Walter Haefner Foundation, and the ETH Z\"urich Foundation.  
X.S.\ was supported by Simons Foundation as a Junior Fellow at Simons Society of Fellows  and by NSF grants DMS-1811092 and by Minerva fund at Department of Mathematics at Columbia University.

\section{Discrete preliminaries}
\label{sec-discrete}
This elementary section reviews the classical peeling process  as well as the bijection between  percolated triangulations and certain random walks discovered in \cite{bernardi-dfs-bijection,bhs-site-perc}. Our presentation is self-contained and reveals the close relation between the peeling process and the bijection. 

\subsection{Peeling process along a percolation interface}
\label{sec-perc-interface}

We let $\DP$ be the set of triples $(\Map,\BB e,\omega)$ satisfying the following. First, $\Map\in \bigcup_{\el\ge 2}\frk T(\el)$ is a loopless triangulation with simple boundary equipped with an oriented rooted edge $\Be \in\bdy\Map$. Moreover,  $\omega$ is a site percolation  on $\Map$ with  \emph{dichromatic boundary condition}, namely, 
if  $\Be $ is oriented from a red vertex to a blue vertex, and if $\partial \Map$ is counterclockwise oriented, then there exists a unique edge $\wh{\Be }$ oriented
from a blue vertex to a red vertex.  Given $(\Map,\BB e,\omega)\in \DP$,  {the boundary} $\partial \Map$ consists of a red (resp.\ blue)  clockwise (resp.\ counterclockwise) arc between $\Be $ and $\wh{\Be }$, which we call the left (resp.\ right) boundary of $\Map$. The left (resp.\ right) boundary length  is the number of edges on $\partial \Map$ whose two endpoints are both red (resp.\ blue). Given $\elll,\ellr \in \N_0$, let $\DP(\elll,\ellr)$ be the subset of $\DP$ where the left and right boundary length of each element is given by $\elll$ and $\ellr$, respectively. {See Figure \ref{fig-tri-perc} for an illustration.} For $n\in\N_0$ let $\DP(n;\elll,\ellr)$ be the subset of $\DP(\elll,\ellr)$ where each map has $n$ inner vertices.  If $\Map$ is degenerate, (i.e., $\Map$ consists of a single edge), let $\Be=\wh\Be$ be the unique edge in $\cE(\Map)$, and let $\omega$ be the coloring on $\cV(\Map)$  such that $\Be $ is oriented from a red vertex to a blue vertex. By convention we consider $(\Map,\Be, \omega)$ to be the unique element in $\DP(0;0,0)$.  
We further let
\begin{align}
&\DP^{\mathrm{r}}(n;\el)=\DP(n;\el,0), &&\DP^{\mathrm{b}}(n;\el)=\DP(n;0,\el),\qquad \DPm(n;\el)=\DP^{\mry}(n;\el)\cup \DP^{\mathrm b}(n;\el),\nonumber\\
&\DP^{\bullet}(\el)=\bigcup_{n\in \N_0} \DP^{\bullet}(n;\el), &&\DP^{\bullet}=\bigcup_{\el\in \N_0} \DP^{\bullet}(\el) \qquad\qquad \textrm{ for } \bullet=\mathrm{r},\mathrm b,\mathrm m. \label{eq:DP}
\end{align}
\begin{remark}\label{rmk:flip}
	Given  a site percolation $\omega$  on $\Map\in \frk T(\el)$  and  an edge $e\in \cE(\partial \Map)$, suppose the boundary condition for $\omega$ is monochromatic red. Then there is a unique way to orient $e$ such that after flipping  the color of the head of $e$ to blue, the percolation on $(\Map,e)$ belongs to $\DP^{\mry}(\el)$. This identifies $(\Map,e,\omega)$ as an element in  $\DPm(\el)$. The same holds if we swap the roles of red and blue.  
\end{remark}

\begin{definition}\label{def:interface}
	For $(\Map,\Be , \omega)\in \DP$, there is a unique interface from $\BB e$ to $\wh{\BB e}$ with red to its left and blue to its right.
	We can represent this interface as a function $\lambda: [0,m]_{\BB Z} \rta \mcl E(\Map)$ for some $m \in \BB N$ with the property that $\lambda(0) = \Be $, $\lambda(m) = \wh{\Be }$, $\lambda(i-1)$, and $\lambda(i)$ share an endpoint for each $i\in [1,m]_{\BB Z}$, and each edge $\lambda(i)$ has one red and one blue endpoint (see Figure~\ref{fig-tri-perc}, left).
	We extend the definition of $\lambda$ from $[0,m]_{\BB Z}$ to $\BB N_0$ by declaring that $\lambda(i) = \wh{\Be }$ for $i > m$.
	We call $\lambda$  the  \emph{percolation interface} of $(\Map,\Be,\omega)$ and $\wh{\Be }$ the \emph{target} of $\lambda$.
\end{definition}
\begin{figure}[ht!]
	\begin{center}
		\includegraphics[scale=.8]{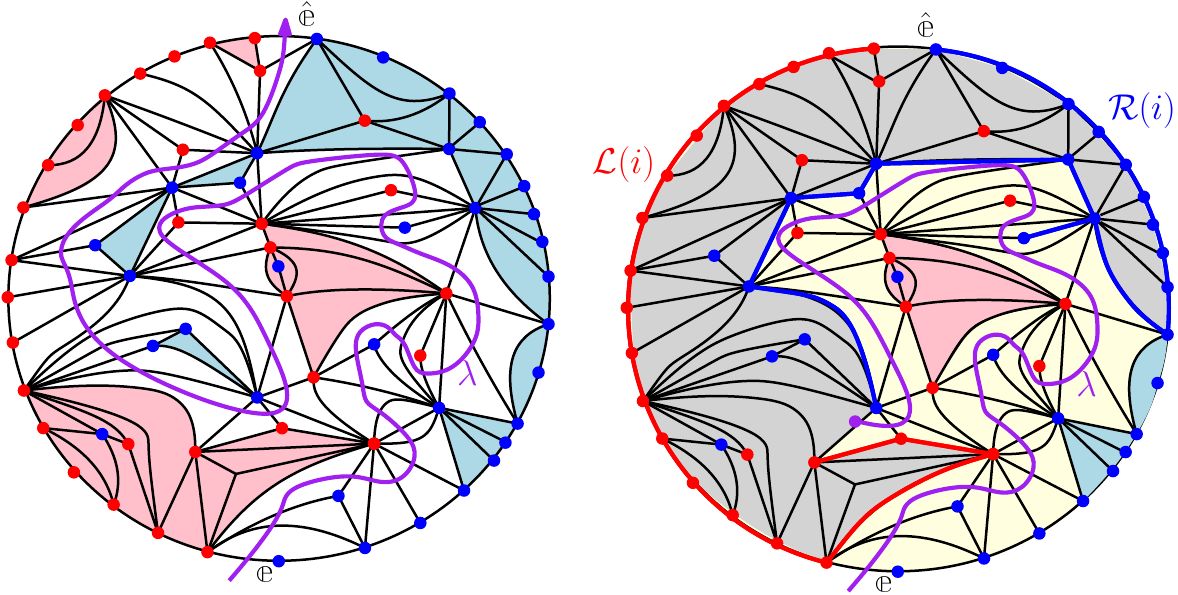} 
		\caption{\label{fig-tri-perc} A triangulation with simple boundary decorated with a percolation with dichromatic boundary condition  together with the percolation interface. {The percolated map is an element of $\DP(\elll,\ellr)$ with $\elll=16$ and $\ellr=17$.}
		\textbf{Left:} The 2-connected components of $\Map - \lambda$ lying to the left (resp.\ right) of $\lambda$ consist of the pink (resp.\ light blue) regions and the vertices and edges along their boundaries. By convention, an edge with two red (resp.\ blue) endpoints which is not on the boundary of any pink (resp.\ light blue) triangle also counts as a connected component. {See also Figure \ref{fig-2-conn} for an illustration of the 2-connected components of $\Map-\lambda$.} \textbf{Right:} The process $\mcl Z = (\mcl L , \mcl R)$ gives the left and right boundary lengths of the 2-connected component of $\Map - \lambda([0,i]_{\BB Z})$ with $\wh{\BB e}$ on its boundary (shown in gray).
		}
	\end{center}
\end{figure}

Given a planar map $\Map$ and an edge subset $E\subset \cE(\Map)$, let $\Map-E$ be the collection of maps obtained by removing edges in $E$. The graph corresponding to $\Map-E$ has vertex set $\cV(\Map)$ and edge set $\cE(\Map)\setminus E$. 
Throughout this paper we identify $\Map$ and $\Map-E$ with their corresponding graphs when a graph-theoretic notion is applied to them. For simplicity $\Map-E$ is written as $\Map-e$ if $E=\{e\}$.

Given a graph $G$, a nonempty subgraph $G'$ is called a \emph{2-connected component of $G$} if  $G'$ is the unique 2-connected subgraph of $G$ containing $G'$ itself. In Definition~\ref{def:interface} where $\lambda$ is the percolation interface, we write $\Map-\lambda(\N_0)$ as $\Map-\lambda$ for simplicity.
The graph $\Map-\lambda$ has two connected components, both of which are triangulations with (not necessarily simple) boundary, where each component contains one endpoint of $\lambda(i)$ on its boundary for each $i\in\N$. Moreover, the boundary of the two components are both monochromatic. We say that the component with red (resp.\ blue) boundary is to the left (resp.\ right) of $\lambda$. The interface
$\lambda$ traces the left  component  in the  counterclockwise direction in the sense that the left endpoint of $\lambda(i)$ as $i$ increases from $0$ to $m$ traces the boundary of the left component in the counterclockwise direction. Similarly, {the interface} $\lambda$ traces the right  component  in the clockwise direction. For each 2-connected component $U$ of the left (resp.\ right) component of $\Map-\lambda$,
let 
\begin{equation}\label{eq:IU}
I_U=\{i\in[0,m]_\Z: \cV(\lambda(i))\cap \cV(U) \neq \emptyset\}.
\end{equation}  
The set of left (resp.\ right) endpoints of edges in $\lambda(I_U)$ equals $\cV(\bdy U)\setminus\cV(\bdy \Map )$ if $U$ is on the left (resp.\ right) side of $\lambda$. Moreover,  $\lambda$ traces $\bdy U$ in the counterclockwise (resp.\ clockwise) direction.

One can also construct $\lambda$ by exploring $\Map$ one edge at a time, based on the information of $\omega$.
This is sometimes called \emph{peeling}; {see e.g.~\cite{angel-peeling}}. 
We start by defining some related quantities. {See Figure \ref{fig-peeling-Mpp} for an illustration.}
\begin{definition}\label{def:peel-one}
	Given $(\Map,\Be ,\omega)\in \DP$ which is not degenerate, let $t\defeq t(\Map,\Be)$ be the unique triangle (inner face) of $\Map$ which is incident to $\Be $ and let $v\defeq v(\Map,\Be)$ be the vertex on $t$ but not on $\Be $ (such a vertex exists since $\Map$ has no loops).
	Let $\Be'\defeq \Be'(\Map,\Be,\omega)$ be the edge on $t$ other than $\Be$ with its two vertices having the opposite color.
	Let $\Map'\defeq \Map'(\Map,\Be,\omega)$ be the $2$-connected component of $\Map-\Be$ containing the target edge $\wh\Be$ and let $\omega'$ be the restriction of $\omega$ to $\cV(\Map')$. If $\Map-\Be$ has two 2-connected components, let $\Map''\defeq \Map''(\Map,\Be,\omega)$ be the one other than $\Map'$ and let $\Be''\defeq  \Be''(\Map,\Be,\omega)$
	be the edge shared by $t$ and $\Map''$.  Moreover,  define a coloring  $\omega''\defeq \omega''(\Map,\Be,\omega)$ on $\cV(\Map'')$ by  letting $\omega''=\omega$ on $\cV(\Map'')\setminus\{v\}$ and $\omega''(v)$ be opposite to $\omega(v)$. We orient $\Be'$ and $\Be''$ so that $(\Map',\Be,\omega')\in \DP$ and $(\Map'',\Be'',\omega'')\in \DP^{\mrm}$.
\end{definition}

\begin{figure}[ht!]
	\begin{center}
		\includegraphics[scale=1]{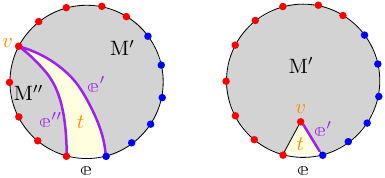}
		\caption{Illustration of Definition \ref{def:peel-one}. 
		{Note that $\Map-\Be$ has  two connected components if and only if $v\in\cV(\bdy \Map)$. Otherwise $\Map-\Be$ itself is still 2-connected.}
		}
		\label{fig-peeling-Mpp}
	\end{center}
\end{figure}

Decompositions similar to the one in Definition~\ref{def:peel-one} were used by Tutte \cite{Tutte} to enumerate planar maps of various types. 
It is also the building block of the peeling process.
\begin{definition}\label{def:peel}
	For $(M,\Be ,\omega)\in \DP$,
	if $\Map $ is degenerate, let $\Peel (\Map,\Be ,\omega)=(\Map,\Be ,\omega)$.
	Otherwise, let $\Peel (\Map,\Be ,\omega)=(\Map',\Be' ,\omega')$ with notation as in Definition~\ref{def:peel-one}. We call $\Peel : \DP \rta \DP$ the \emph{peeling operator}. 
\end{definition}

Now let $(\Map_0,\Be _0,\omega_0)=(\Map,\Be , \omega)$ and for $i\in \N$, inductively let $ (\Map_i,\Be _i,\omega_i)=\Peel (\Map_{i-1},\Be _{i-1}, \omega_{i-1})$. Then the percolation interface $\lambda$  for $(\Map,\Be,\omega)$ satisfies  $\lambda(i)=\Be _i$ for all $i\in \N_0$. For $i\in [0,m]_\Z$, the map  $\Map_i$ is the 2-connected component of $\Map-\lambda([0,i-1]_\Z)$ containing $\wh \Be$. Here we make the convention that $[0,-1]_\Z=\emptyset$. We note that this is an example of a peeling process on $\Map$; see, e.g., the lecture notes~\cite{curien-peeling-notes} for more information about such processes. Similar peeling processes to the one considered here are also used, e.g., in~\cite{angel-peeling,angel-curien-uihpq-perc,gwynne-miller-perc,richier-perc}. 

For all $i\in \N_0$, let  $\cL(i)$ (resp.\ $\cR(i)$) be the left (resp.\ right) boundary length of $(\Map_i,\Be_i, \omega_i)$, as illustrated in Figure~\ref{fig-tri-perc}, right. 
We call  \( \mcl Z:= (\mcl L, \mcl R) \) the \emph{boundary length process} of $\lambda$.  For $0\le i<   m$, if $\Map_i-\Be_i$ is 2-connected, then 
$\cZ(i+1)-\cZ(i)=(1,0)$ (resp.\ $(0,1)$) when the added boundary vertex is red (resp.\ blue).
Otherwise, let $U=\Map''(\Map_i,\Be_i,\omega_i)$ be the 2-connected component of $\Map_i-\Be_i$ other than $\Map_{i+1}$. 
Then $i$ and $i+1$ belong to $ I_U$ (recall~\eqref{eq:IU}). If $U$ is on the left of $\lambda$, then $\cL(i)-\cL(i+1)$ equals the boundary length of $U$ minus 1, and $\cR(i)=\cR(i+1)$. Moreover,  $\cL$ is nondecreasing on $I_U\cap [0,i]$ with the increments being 0  or 1, and $\cL$ is constant on $I_U\cap [i+1,\infty)$. We further observe that%
\eqb \label{eqn-I_U-property}
\cL(\max I_U)=\min_{j\in [\min I_U,\max I_U]_{\Z}}  \cL(j) .
\eqe 
Moreover, 
$\cL(\min I_U)=\cL(\max I_U)$  if and only if $\cE(U)\cap \cE(\bdy\Map)=\emptyset$.
If $U$ is on the right of $\lambda$, all statements hold with $\cL$ and $\cR$ swapped.

For $i\in \N_0$, let $\bdy_{i}$ be the boundary of $\Map- \lambda([0,i-1]_{\BB Z})$ (by convention $[0,-1]_\Z=\emptyset$) and $X_i:=(\bdy_{i},\Be_{i}, \omega|_{\bdy_{i}})$. 
Then $\{X_j\}_{j\in [0,i]_{\Z}}$ and $\mcl Z|_{[0,i]_{\BB Z}}$ determine each other. 

The main property of $\Peel$ that we will rely on is the following domain Markov property. 
\begin{lem} \label{lem-interface-markov}
	Given $\elll,\ellr\in\N_0$, suppose $(\Map,\Be,\omega)$ is a  site-percolated Boltzmann triangulation with $(\elll,\ellr)$-boundary condition.
	Given $i\in \N_0$, for each 2-connected component of $\Map- \lambda([0,i]_{\BB Z})$, suppose a root edge is chosen  in a manner depending only on $\{ X_j\}_{j\in [0,i+1]_\Z}$. 
	Then conditional on  $\{ X_j\}_{j\in [0,i+1]_\Z} $, 
	the 2-connected components of $\Map- \lambda([0,i]_{\BB Z})$ together with the percolation $\omega$ restricted to them are distributed as  independent  site-percolated  Boltzmann triangulations with given boundary condition.
\end{lem}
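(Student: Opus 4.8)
The plan is to reduce the statement to the single peeling step (the case $i=0$, with the root of the detached piece taken to be $\Be''$) and then bootstrap to general $i$ by induction on $i$. \emph{Single step.} The heart of the matter is: if $(\Map,\Be,\omega)$ is a \emph{non-degenerate} site-percolated Boltzmann triangulation with $(\elll,\ellr)$-boundary condition, then conditionally on $X_1$, the triple $\Peel(\Map,\Be,\omega)=(\Map',\Be',\omega')$ and, in the case $v\in\cV(\bdy\Map)$ where $\Map-\Be$ disconnects, the detached triple $(\Map'',\Be'',\omega'')$ of Definition~\ref{def:peel-one} are \emph{independent} site-percolated Boltzmann triangulations with boundary conditions read off from $X_1$. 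To prove this, note that since the boundary coloring is deterministic given $(\elll,\ellr)$ and the Bernoulli-$\tfrac12$ percolation contributes the factor $2^{-n}$ over the $n$ inner vertices, every admissible realization $(M,e,o)$ of $(\Map,\Be,\omega)$ has probability $(\tfrac{2}{27})^{n}2^{-n}c(\el)=(\tfrac1{27})^{n}c(\el)$, where $n$ and $\el$ are the number of inner vertices and the boundary length of $M$ and $c(\el):=\tfrac{(\el-2)!\,\el!}{(2\el-4)!}(\tfrac49)^{\el-1}$. Unpacking Definition~\ref{def:peel-one} (see Figure~\ref{fig-peeling-Mpp}) one records: if $v$ is an inner vertex, then $\Map'=\Map-\Be$, $n'=n-1$, $\el'=\el+1$, and there is no $\Map''$; if $v\in\cV(\bdy\Map)$, then $\Map-\Be$ consists of $\Map'$ and $\Map''$ glued at the single vertex $v$, one has $n=n'+n''$, and a short boundary-edge count gives the relation $\el'+\el''=\el+1$, with $\el',\el''$ (indeed all the boundary data of $\Map'$ and $\Map''$) determined by $X_1$. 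Substituting into the probability formula, $\P[(M,e,o)]$ factors as $(\text{a quantity depending only on }(\el,\el',\el''))\times(\tfrac1{27})^{n'}c(\el')$ in the first case and $\times(\tfrac1{27})^{n'}c(\el')\cdot(\tfrac1{27})^{n''}c(\el'')$ in the second, the first factor depending only on $X_1$. Since for fixed $X_1$ the assignment $(M,e,o)\mapsto((M',e',o'),(M'',e'',o''))$ is a bijection onto the admissible pairs — its inverse reglues $\Map',\Map''$ through the triangle $t$ and the vertex $v$ and undoes the color flip at $v$ — this exhibits the conditional law as the asserted product of Boltzmann measures. The degenerate case of the lemma is immediate from the conventions in Definitions~\ref{def:peel-one} and~\ref{def:peel}.

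\emph{Iteration.} The general case follows by induction on $i$ using the tower property. Conditioning on $X_1$ and applying the single-step claim, $(\Map_1,\Be_1,\omega_1)$ is again a site-percolated Boltzmann triangulation with a boundary condition of the type in the hypothesis, so the inductive hypothesis applies to it, and it is independent of the detached piece $\Map''_0$, if any. Since $\Map_1,\dots$ are obtained by peeling only inside $\Map_1$, the family $\{X_j\}_{j\in[1,i+1]_{\BB Z}}$ is nothing but the first-$(i+1)$-step data of the peeling of $(\Map_1,\Be_1,\omega_1)$ together with the boundary data of $\Map''_0$; hence, applying the inductive hypothesis to $(\Map_1,\Be_1,\omega_1)$ and using the independence of $\Map''_0$, one gets that conditionally on $\{X_j\}_{j\in[0,i+1]_{\BB Z}}$ the maps $\Map_{i+1}$ and $\Map''_0,\dots,\Map''_i$ (those detached pieces that actually occurred) are mutually independent site-percolated Boltzmann triangulations with boundary conditions dictated by $\{X_j\}$. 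These are exactly the $2$-connected components of $\Map-\lambda([0,i]_{\BB Z})$, with degenerate single-edge components appearing among the $\Map''_j$. Finally, to allow an arbitrary root-edge choice depending only on $\{X_j\}_{j\in[0,i+1]_{\BB Z}}$: the boundaries of all these components are traced out by $\lambda$ and so are determined by the conditioning, hence after conditioning the root rule picks a deterministic edge on each known boundary cycle; since the weight in~\eqref{eq:Bol-def} is root-independent, re-rooting preserves the Boltzmann law with the boundary condition relabeled accordingly.

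\emph{Main obstacle.} All the substance is in the single-step claim, and within it the only real work is the bookkeeping in the disconnecting case: tracking how flipping the color of $v$ redistributes the monochromatic boundary edges between $\Map'$ and $\Map''$ (which pins down all of $\elll',\ellr',\elll'',\ellr''$), and verifying that the regluing map is a bijection onto admissible pairs. Everything else — the probability factorization, the induction, the re-rooting — is formal once one exploits the crucial product form $(\tfrac{2}{27})^{n}\times(\text{function of }\el)$ of the Boltzmann weight in~\eqref{eq:Bol-def}: additivity of the inner-vertex count $n$ under the gluing is exactly what makes the bulk factors $(\tfrac1{27})^{n}$ cancel, which is the structural reason the lemma holds (and the reason $2/27$ is the ``critical'' weight).
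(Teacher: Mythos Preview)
Your proof is correct and follows the same architecture as the paper's: establish the single-step Markov property of peeling and then iterate. The paper's own proof is a two-line citation to the literature (the Markov property of peeling from Angel--Curien) for the single step, whereas you actually carry out that step by the explicit probability factorization $(\tfrac{1}{27})^{n}c(\ell)$ and the gluing bijection; your re-rooting paragraph is also more explicit than anything the paper writes. So the approaches coincide, with yours being a self-contained expansion of what the paper treats as standard background.
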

\begin{proof}
	Since $\lambda$ can be constructed by a peeling process on $\Map$, Lemma~\ref{lem-interface-markov} follows by iteratively applying the so-called \emph{Markov property of peeling} (see, e.g.,~\cite[Section 2.3.1]{angel-curien-uihpq-perc}). In the notion of Definition~\ref{def:peel-one}, it gives that  $(\Map',\Be',\omega')$ and $(\Map'',\Be'',\omega'')$ (when it is defined)   are independent site percolated Boltzmann triangulations given their respective boundary condition.
\end{proof}

\subsection{Space-filling peeling and random walk}
\label{sec-walk}

In this section, we  use the peeling process in Section~\ref{sec-perc-interface} to review  the bijection between random walks and site-percolated triangulations from~\cite{bernardi-dfs-bijection,bhs-site-perc}. We also prove some basic lemmas about the bijections.
\begin{definition}\label{def:spacefilling}
	Given $(\Map,\Be,\omega)\in\DP$, we  define a total ordering of $\cE(\Map)$,  i.e., a bijection $\acute{\lambda}: [0,\#\cE(\Map)-1]_\Z \to \cE(\Map)$. 
	Let $\acute{\lambda}(0)=\Be $. If $(\Map,\Be )$ is not degenerate,  we define  $\acute{\lambda}|_{[1,\#\cE(\Map)-1]_\Z}$ inductively as follows. Suppose that $\acute{\lambda}$ is defined for all elements in $\DP$ whose number of edges is smaller than $\#\cE(\Map)$.
	Recall the notation in Definition~\ref{def:peel-one}, including $t$, $v$,  $(\Map',\Be ',\omega')$, and $(\Map'',\Be'',\omega'')$.
	\begin{enumerate}[label=(\arabic{enumi})]
		\item\label{item:lambda1}
		If $v\notin\cV(\bdy\Map)$, then the total ordering on $\cE(\Map)$ induced by $\acute\lambda$, restricted to $\cE(\Map)\setminus \{ \Be  \}$, is given by the total ordering for  $(\Map',\Be ',\omega')$, which is already defined by our inductive hypothesis;
		\item  \label{item:lambda2}
		If $v\in\cV(\bdy \Map)$, then $\acute{\lambda}(e'') < \acute{\lambda}(e')$ for each $e'\in \cE(\Map')$ and $e''\in \cE(\Map'')$.
		Moreover, the total ordering of $\cE(\Map)$ induced by $\acute{\lambda}$ restricted to $\cE(\Map')$ (resp.\ $\cE(\Map'')$) is given by the total ordering for $(\Map',\Be ',\omega')$ (resp.\ $(\Map'',\Be '',\omega'')$).
	\end{enumerate} 
	We call $\acute{\lambda}$ the \notion{space-filling exploration} of $(\Map,\Be,\omega)$. 
\end{definition} 

In Definition~\ref{def:spacefilling}, let $N=\#\cE(\Map)$. For $i\in [0,N-1]_\Z$, let $\ol \Map_i=\Map-\acute{\lambda}([0,i-1]_\Z)$ and $\acute{e}_i=\acute\lambda(i)$, where we set $[0,-1]_\Z=\emptyset$ by convention. As $\acute\lambda$ is inductively defined, {the edge} $\acute{e}_j$ is assigned an orientation along the way (see Definition~\ref{def:peel-one}).
It can be checked inductively  that $(\ol \Map_i,\acute e_i)$ is a triangulation with (possibly non-simple) boundary such that  $ \wh\Be \in\cE(\partial \ol\Map_i)$. 

Let ${\acute\cL}(i)$ (resp.\ ${\acute\cR}(i)$) be the  number of edges on $\bdy \ol\Map_i$ that are traversed when tracing $\partial \ol \Map_{i}$ clockwise (resp.\ counterclockwise) from
the left (resp.\ right) endpoint of $\acute{e}_{i}$ to the  left (resp.\ right) endpoint $\wh\Be $. Let  $\acute{\cZ}(i)=({\acute\cL}(i),{\acute\cR}(i))$ for $0\le i< N$ and
$\acute{\cZ}(N)=(-1,-1)$.
We call $\acute{\cZ}=\{\acute\cZ_i\}_{i\in[0,N]_\Z}$ the \emph{boundary length process} of $\acute{\lambda}$. See Figure \ref{fig-Z} for illustration.

\begin{figure}[ht!]
	\begin{center}
		\includegraphics[scale=1]{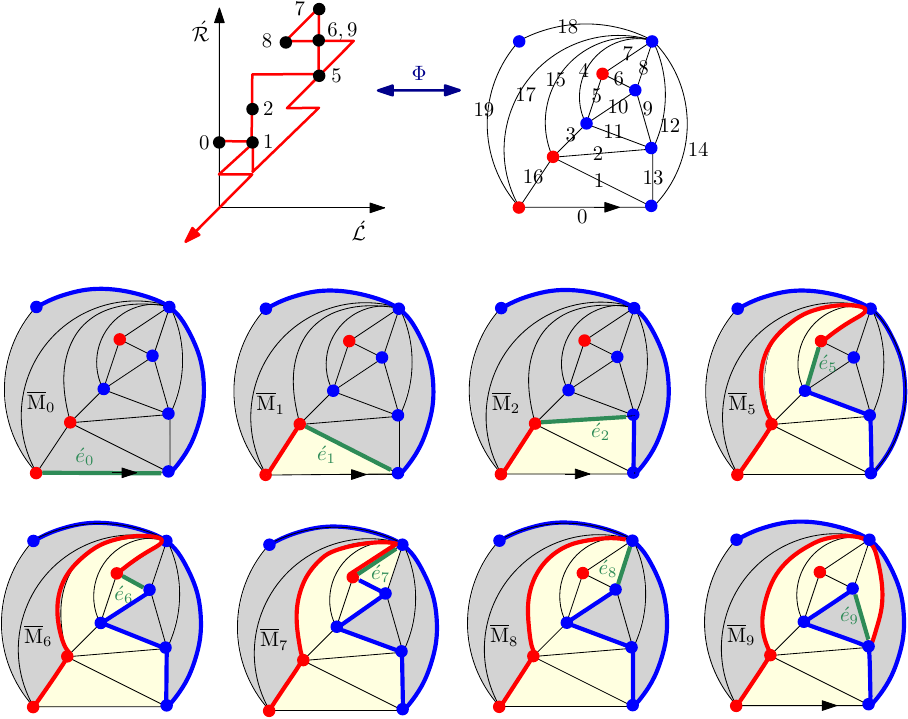} 
		\caption{ 
		{{\bf Top row}}: Illustration of the bijection $\Phi$ in Theorem \ref{thm:bijection} between $\cK$ and $\DP$. {In the left part of the figure we have added a black dot at $\acute\cZ_i=(\acute\cL_i,\acute\cR_i)$ for $i=0,1,2,5,6,7,8,9$ (corresponding to the figures in the middle and lower rows) and written $i$ next to the dot.} {In the right part of the figure} we have labeled the edge $\acute{e}_i=\acute\lambda(i)$ by $i$ for $i\in[0,N-1]_{\Z}$. 
		{{\bf Middle and bottom rows}: The map $\ol\Map_i$ for $i=0,1,2,5,6,7,8,9$ is shown in gray. The length of the red (resp.\ blue) curve is equal to $\acute{\cL}(i)$ (resp.\ $\acute{\cR}(i)$).}
		}
		\label{fig-Z} 
	\end{center}
\end{figure}

For each $i\in [1,N]_\Z$, we call $\Delta\acute{\cZ}_i\defeq\acute{\cZ}({i})-\acute{\cZ}(i-1) $ the \emph{step} of $\acute{\cZ}$ associated with the edge $\acute{e}_{i-1}$. 
By the definition of $\acute{\lambda}$, for each $i\in [1,N]_\Z$, if $\acute e_{i-1}$ itself is a 2-connected component of $\ol\Map_{i-1}$ as the  degenerate element in $\DP$, then $\Delta\acute{\cZ}_i=(-1,-1)$.
Otherwise, the edges  $\acute{e}_{i}$ and $\acute{e}_{i-1}$ share a unique vertex. 
If the vertex is the tail (resp.\ head) of $\acute{e}_{i-1}$, then $\Delta\cZ_i$ equals $(1,0)$ (resp.\ $(0,1)$).  

For any function $Z=(L,R)$ from $[a,b]_\Z$ to $\R^2$ for some $a<b\in \N_0$, 
given $i,j\in [a,b]_\Z$ with $i<j$, we write $j\prec_{Z} i$ if  $L(k) >L(j)$ and $R(k) >R(j)$  for all $k\in [i,j-1]_\Z$. We say that $j$ is an \emph{ancestor}%
\footnote{
	Our definition of ancestors and ancestor-free times, correspond to the definitions in~\cite[Section 10.2]{wedges} for the \emph{time reversal} of the process.}
of $i$.
Geometrically $j \prec_Z i$ means that $Z$ stays in the cone $Z(j) + (0,\infty)^2$ between times $i$ and $j-1$.
In other words $j$ is a simultaneous strict running minima of the two coordinates of $Z$ relative to time $i$.
Given $i\le m\in [a,b]_\Z$, {we call $i$}  an \emph{ancestor-free time relative to $m$} if each $j\in[i,m]_\Z$ is not an ancestor of $i$. 

\begin{thm}\label{thm:bijection}
	Suppose $(\Map,\Be, \omega)\in \DP(n;\elll,\ellr)$ for some $n,\elll,\ellr\in\N_0$ and let $\acute{\cZ}$ be its boundary length processes.
	Let $N=\#\cE(\Map)=3n+2\elll+2\ellr+1$. Then $\acute{\cZ}$ satisfies the following three properties.
	\begin{enumerate}[label=(\arabic{enumi})]
		\item $\acute{\cZ}$ is defined on $[0,N]_{\BB Z}$ starting at $(\elll,\ellr)$ and ending at $(-1,-1)$; \label{item:K1}
		\item $\Delta \acute{\cZ}_i:=\acute{\cZ}({i})-\acute{\cZ}(i-1) \in \{(1,0), (0,1) ,(-1,-1)   \}$ for all $i\in [1,N]_\Z$; \label{item:K2}
		\item  $N=\inf\left\{i\in [1,N]_\Z: i \prec_{\acute{\cZ}} 0 \right\}$, equivalently,  $N$ is an ancestor of $0$ while $0$ is ancestor-free relative to $N-1$. \label{item:K3}
	\end{enumerate} 
	Let $\cK(n;\elll,\ellr)$ be the set of walks satisfying the three properties above and define $\Phi(\Map,\Be,\omega) := \acute{\cZ}$. Then $\Phi$ is a bijection from $\DP(n;\elll,\ellr)$ to $\cK(n;\elll,\ellr)$.

	Given $n,\elll,\ellr,\el \in \N_0$ and $\bullet\in \{\mry,\mrb,\mrm\}$, let $\cK(\elll,\ellr)$, $\cK$, $\cK^{\bullet}(n;\el)$, $\cK^\bullet(\el)$, and $\cK^{\bullet}$ be defined as in \eqref{eq:DP} with $\DP$ replaced by $\cK$. Then $\Phi$ defines a bijection from each set defined in \eqref{eq:DP} to its counterpart for walks. 
\end{thm}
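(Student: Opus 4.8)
The plan is to prove Theorem~\ref{thm:bijection} by strong induction on $N = \#\cE(\Map)$, exploiting the recursive structure built into Definition~\ref{def:spacefilling}. First I would verify the base case: when $(\Map,\Be,\omega)$ is the degenerate element of $\DP(0;0,0)$, we have $N=1$, the ordering $\acute\lambda$ consists of the single edge $\Be$, and $\acute\cZ$ is the walk $(0,0),(-1,-1)$ on $[0,1]_\Z$, which plainly lies in $\cK(0;0,0)$ and is the only such walk. For the inductive step, I would split according to the two cases of Definition~\ref{def:spacefilling}. In case~\ref{item:lambda1} ($v\notin \cV(\bdy\Map)$), the map $\Map' = \Map-\Be$ is still $2$-connected and $\acute\lambda$ restricted to $\cE(\Map')$ is the space-filling exploration of $(\Map',\Be',\omega')\in\DP$, which has one fewer edge; the boundary length process $\acute\cZ$ of $(\Map,\Be,\omega)$ is obtained from that of $(\Map',\Be',\omega')$ by prepending a single $(1,0)$ or $(0,1)$ step (depending on the color of $v$, via the tail/head rule recorded after Figure~\ref{fig-Z}). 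In case~\ref{item:lambda2} ($v\in\cV(\bdy\Map)$), $\Map-\Be$ splits into $\Map''$ and $\Map'$, all of $\cE(\Map'')$ is explored before $\cE(\Map')$, and $\acute\cZ$ is the concatenation of a prepended $(1,0)$ or $(0,1)$ step, the boundary length process of $(\Map'',\Be'',\omega'')$, and the boundary length process of $(\Map',\Be',\omega')$. The key combinatorial bookkeeping is that the lengths match: if $\Map''$ has boundary length $k$, then traversing $\bdy\ol\Map_i$ picks up exactly the right contribution so that properties \ref{item:K1}--\ref{item:K2} are inherited; here the edge-count identity $N = 3n + 2\elll + 2\ellr + 1$ follows by adding up the contributions (each inner vertex contributes $3$ edges, each unit of boundary length contributes $2$, plus the root edge), which I would check by a short separate induction or by Euler's formula for triangulations with boundary.

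The subtle point is property~\ref{item:K3}, the statement that $N$ is the first ancestor of $0$, equivalently that $0$ is ancestor-free relative to $N-1$ and $N \prec_{\acute\cZ} 0$. I would prove this by translating the cone condition into the geometry of the exploration: $j \prec_{\acute\cZ} i$ means precisely that both coordinates of $\acute\cZ$ stay strictly above $\acute\cZ(j)$ on $[i,j-1]_\Z$, and by the definition of $\acute\cL,\acute\cR$ as boundary-arc lengths relative to the target $\wh\Be$, this happens exactly when, at time $j$, the exploration ``closes off'' a region without ever having the left or right arc drop to zero in between. In case~\ref{item:lambda1}, the inductive hypothesis for $(\Map',\Be',\omega')$ gives that $N$ is the first ancestor of $1$ in the shifted process, and one checks that prepending a $(+1,0)$ or $(0,+1)$ step strictly raises both coordinates at time $0$ relative to time $1$, so no intermediate $j$ can be an ancestor of $0$ either, and $N$ remains an ancestor of $0$. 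In case~\ref{item:lambda2}, one uses that the $\Map''$-block is itself a valid $\cK$ walk so it returns to $(-1,-1)$ relative to its own start — and the relation \eqref{eqn-I_U-property}, together with the fact that $\cE(\Map'')\cap\cE(\bdy\Map)\neq\emptyset$ exactly when the block's endpoint is strictly lower — ensures the concatenated walk does not acquire a premature ancestor of $0$. I expect this verification of property~\ref{item:K3} to be the main obstacle, since it requires carefully matching the recursive cone-excursion structure of $\acute\cZ$ with the recursive peeling decomposition, and in particular handling the boundary-touching versus non-boundary-touching subcases cleanly.

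Once the forward map $\Phi \colon \DP(n;\elll,\ellr) \to \cK(n;\elll,\ellr)$ is shown to be well-defined, injectivity and surjectivity I would establish simultaneously by constructing an inverse, again recursively. Given a walk $Z \in \cK(n;\elll,\ellr)$ on $[0,N]_\Z$, the first step $\Delta Z_1$ is either $(-1,-1)$ — forcing $N=1$ and the degenerate map — or $(1,0)$ or $(0,1)$, which tells us the color of the peeled vertex $v$; then one locates the first ancestor of $1$ in $Z$, say at time $m$. If $m = N$ (no splitting, case~\ref{item:lambda1}) we recurse on $Z|_{[1,N]_\Z}$ to get $(\Map',\Be',\omega')$ and glue back the triangle $t$ with the interior vertex $v$. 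If $m < N$ (splitting, case~\ref{item:lambda2}), then $Z|_{[1,m]_\Z}$ is a valid shorter $\cK$ walk that reconstructs $(\Map'',\Be'',\omega'')$ and $Z|_{[m,N]_\Z}$ reconstructs $(\Map',\Be',\omega')$, and we glue the two along the shared triangle, recoloring $v$ back. That this reconstruction is well-defined and two-sided inverse to $\Phi$ follows because each gluing operation is the unique inverse of the peeling step of Definition~\ref{def:peel-one} — this is essentially Tutte's decomposition — and the case analysis is driven entirely by data ($\Delta Z_1$ and the location of the first ancestor) that $\Phi$ is built to record. Finally, the assertion that $\Phi$ restricts to bijections between the various decorated subsets defined in \eqref{eq:DP} — the $\bullet = \mry,\mrb,\mrm$ versions and the unions over $n$ and $\el$ — is immediate once one observes that the starting point $(\elll,\ellr)$ of the walk is exactly $\Phi$'s record of the boundary condition and $n$ is recovered from $N$ via $N = 3n + 2\elll + 2\ellr + 1$, so each of the defining constraints on $\DP$-subsets corresponds under $\Phi$ to the matching constraint on $\cK$-subsets.
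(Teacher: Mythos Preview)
Your proposal is correct and follows essentially the same approach as the paper: both argue by strong induction on $N$, split according to whether $v\in\cV(\bdy\Map)$, identify $\acute\cZ$ in Case~\ref{item:lambda2} as the concatenation of the walks for $\Map''$ and $\Map'$ after one initial step, and build the inverse by locating the first ancestor $\Upsilon$ of $1$ (your $m$) to decide whether to recurse on one piece or two. The paper packages the inverse slightly more cleanly via the commuting relation $\Peel_{\op Z}\circ\Phi=\Phi\circ\ol\Peel$, and note that your phrase ``strictly raises both coordinates'' is a slip (a $(1,0)$ or $(0,1)$ step raises only one coordinate, which already prevents $1$ from being an ancestor of $0$), and your appeal to~\eqref{eqn-I_U-property} is misplaced since that concerns $\cZ$ rather than $\acute\cZ$---but neither affects the argument.
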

\begin{remark}\label{rmk:last-step}
	The bijection in Theorem \ref{thm:bijection} is slightly different from the one in \cite{bhs-site-perc}, since the walks considered in \cite{bhs-site-perc} do not have the final step $(-1,-1)$ but rather end at $(0,0)$. We include this step to make the connection to the peeling process cleaner.
\end{remark}

\begin{proof} 
	We first prove that $\acute{\cZ}\in \cK(n;\elll,\ellr)$, which is trivially true when $N=1$, where $\Delta\acute{\cZ}_1=(-1,-1)$ is the single step of $\acute{\cZ}$. 
	When $N>1$ we do an induction on $N$ and assume that 
	our assertion holds for every nonnegative integer smaller than $N$.
	Recall the triangle $t=t(\Map,\Be)$ and the vertex $v=v(\Map,\Be)$ in Definition~\ref{def:peel-one}. 
	If $v\notin \bdy\Map$, then we are in Case~\ref{item:lambda1} of Definition~\ref{def:spacefilling}. In this case, by the definition of $\acute{\cZ}$ and our induction hypothesis,  if $v$ is red, then $\Delta\acute{\cZ}_1=(1,0)$ and $\{\acute{\cZ}({i+1})-\acute{\cZ}(1) \}_{i\in [0,N-1]_\Z}$ belongs to  $\cK(n-1;\elll+1,\ellr)$. Therefore $\acute{\cZ}\in \cK(n;\elll,\ellr)$. The same arguments hold if $v$ is blue, in which case  $\Delta\acute{\cZ}_1=(0,1)$.

	If $v\in \bdy \Map$, so that we are in Case~\ref{item:lambda2} of Definition~\ref{def:spacefilling}, recall $(\Map',\Be ',\omega')$ and $(\Map'',\Be '',\omega'')$ in Definition~\ref{def:peel-one} and  let ${\acute{\cZ}}'$ and ${\acute{\cZ}}''$ be the corresponding boundary length process. Then by the inductive hypothesis, {we have that} ${\acute{\cZ}}' \in \cK(n';\elll',\ellr')$ and ${\acute{\cZ}}'' \in \cK(n'';\elll'',\ellr'')$ for some $(n';\elll',\ellr')$ and $(n'';\elll'',\ellr'')$.  By the definition of $\acute{\lambda}$ and $\acute{\cZ}$, we have 
	\begin{align}\label{eq:peel-Z}
	&\acute{\cZ}({i})={\acute{\cZ}}''({i-1})  +(\elll'+1,\ellr'+1)
	&&\textrm{ for }i\in [1,N''+1]_\Z \nonumber,\\
	&\acute{\cZ}({N-i})={\acute{\cZ}}'({N'-i}) 
	&&\textrm{ for } i\in [0,N']_\Z,
	\end{align}
	where $N'=\#\cE(\Map')$ and $N''=\#\cE(\Map'')$. In words, {we have that} $\acute\cZ|_{[1,N]_\Z}$
	is the concatenation of ${\acute{\cZ}}''$ and ${\acute{\cZ}}'$. In particular,
	$\acute{\cZ}(1)=(\elll''+1,\ellr''+1)+(\elll',\ellr')$. Since $\acute{\cZ}(0)=(\elll,\ellr)$,
	when $v$ is red, we have 
	\begin{equation}\label{eq:peel-Z2}
	\ellr'=\ellr,\qquad  \ellr''=0, \qquad\elll'+1+ \elll''=\elll.
	\end{equation}
	This shows that $\acute{\cZ}\in \cK(n;\elll,\ellr)$. When $v$ is blue, the same argument  works with the two coordinates of $\acute{\cZ}$ swapped. 
	As a byproduct, we see that 
	$N-N'=N''+1=\inf\left\{i\in [2,N]_\Z: i\prec_{\acute{\cZ}} 1\right\}$.
	
	To prove that $\Phi$ is a bijection, we now define a ``peeling'' operator $\Peel_{\op Z}$ on $\cK$.  Fix $\acute{\cZ}\in\cK(n;\elll,\ellr)$ and let $N=3n+2\elll+2\ellr+1$. If $N=1$, let $\Peel_{\op Z}(\acute{\cZ})=\acute{\cZ}$. Note that in this case we have $\Delta\acute{\cZ}_1=(-1,-1)$.  
	Now we suppose $N>1$ so that $N\prec_{\acute \cZ} 1$. Let $\Upsilon=\inf\left\{i\in [2,N]_\Z: i\prec_{\acute{\cZ}} 1\right\}$. (See Lemma~\ref{lem:induction} below for the geometric meaning of $\Upsilon$.)
	\begin{enumerate}[label=(\arabic{enumi})]
		\item If $\Upsilon=N$, then let $\Peel_{\op Z}(\acute{\cZ}) :=\{\acute{\cZ}_{i+1}-\acute{\cZ}_1  \}_{i\in [0,N-1]_\Z}$.
		\item If $\Upsilon<N$, let $\Peel_{\op Z}(\acute{\cZ}) :=({\acute{\cZ}}',{\acute{\cZ}}'')$ where ${\acute{\cZ}}''=\{\acute{\cZ}_{i+1}-\acute{\cZ}_{\Upsilon-1}\}_{i\in [0,\Upsilon-1]_\Z}$ and ${\acute{\cZ}}'=\{\acute\cZ_{i+\Upsilon}\}_{i\in[0,N-\Upsilon]_\Z}$.
	\end{enumerate}
	Let $\ol\Peel(\Map,\Be,\omega)=((\Map',\Be',\omega'),$ $(\Map'',\Be'',\omega'' ))$ if $\Map$ is not degenerate and $\Map-\Be$ has two 2-connected components (recall Definition~\ref{def:peel-one}). Let  $\ol\Peel=\Peel$ otherwise.
	Then the proof of $\Phi(\DP(n;\elll,\ellr))\subset \cK(n;\elll,\ellr) $ above shows that 
	$\Peel_{\op Z} \circ \Phi = \Phi\circ \ol\Peel$, where if $\ol\Peel(\Map,\Be,\omega)$ has two components we apply $\Phi$ to each of them.  
	This commuting  relation allows us to conclude the bijectivity of $\Phi$ by an induction on $N$.
\end{proof}

{Given $\elll,\ellr\in\N_0$, let $\PBT(\elll,\ellr)$ be the law of the  site-percolated Boltzmann triangulation with $(\elll,\ellr)$-boundary condition.
	For $\el\in \N_0$, let  $\PBT^{\mry}(\el)=\PBT(\el,0)$.  Under the identification in Remark~\ref{rmk:flip},  $\PBT^{\mry}(\el)$ can be throughout of as the law of a Boltzmann triangulation with boundary length $\el+2$ decorated with a  Bernoulli-$\frac12$ percolation with monochromatic red boundary condition. Note that a necessary condition for Property (3) in the definition of $\cK(n;\elll,\ellr)$ to hold is that $\cZ_i\in [0,\infty)^2$ for all $i\in [0,N-1]_\Z$. Although it is weaker than Property (3) in general, the equivalence does hold when $\elll=0$ or $\ellr=0$. This provides the following sampling method for  $\PBT^{\mry}(\el)$.
	\begin{cor}\label{cor:bijection}
		For $\el\in\N_0$, the set $\cK^{\mry}(\el)$ consists of walks  with step choices $(1,0)$, $(0,1)$, and $(-1,-1)$, starting at $(\el ,0)$ and  ending at $(-1,-1)$, such that  both of the two coordinates stay nonnegative except at the end point.
		Let $\acute{\cZ}\in\cK^{\mry}(\el)$ be a random variable sampled from the probability measure  where each element in  $\cK^{\mry}(n;\el)$ is assigned probability (recall \eqref{eq:Bol-def})
		\[
		\left(\frac{2}{27}\right)^n \frac{\el!(\el+2)!}{(2\el)!}\left(\frac{4}{9}\right)^{\el+1} \cdot\left(\frac{1}{2}\right)^n= \frac{1}{{\Sigma}_\el}\left(\frac{1}{3}\right)^{3n}, \qquad\textrm{where}\qquad
		\Sigma_\el=\frac{(2\ell)!}{\el!(\el+2)!}\left(\frac{9}{4}\right)^{\el+1}. 
		\]
		Then $(\Map,\Be,\omega):=\Phi^{-1}(\acute{\cZ})$ has the law of $\PBT^{\mry}(\el)$
	\end{cor}}

We now  list a few useful properties of $\Phi$ which essentially follow from more general results in \cite{bhs-site-perc}. We give self-contained proofs using the language of peeling, which is not elaborated on in \cite{bhs-site-perc}.  Detailed proofs will only be given to relatively involved statements. 
All the proofs are based on the same induction as in the proof of Theorem~\ref{thm:bijection}. We first summarize some useful facts from this proof. 
In the following lemma and the rest of this subsection, given any $j<k\in [0,N]_\Z$, if $\{\acute{\cZ}_{i+j}-\acute{\cZ}_{k-1}\}_{i\in[0,k-j]_\Z}\in \cK$, we identify $\acute{\cZ}|_{[j,k]_\Z}$ with this element in $\cK$. 
\begin{lem}\label{lem:induction}
	In the setting of Theorem~\ref{thm:bijection}, suppose $N>1$ and let  $\Upsilon=\inf\left\{i\in [2,N]_\Z: i\prec_{\acute{\cZ}} 1\right\}$. Recall notations in Definition~\ref{def:peel-one}.  When $v(\Map,\Be)\in \cV(\bdy\Map)$, we have
	$\Upsilon=\# \cE(\Map'')+1$.
	Moreover,  $\acute{\lambda}(\Upsilon-1)$ is the target edge of $(\Map'',\Be'',\omega'')$ and $\acute\lambda(\Upsilon)=\Be'$. 
	Let $\acute\cZ'=\Phi(\Map',\Be',\omega')$ and $\acute\cZ''=\Phi(\Map'',\Be'',\omega'')$. 
	Then 
	\begin{equation}\label{eq:concatenation}
	\textrm{$\acute{\cZ}|_{[1,N]_\Z}$ is the concatenation of $\acute\cZ'=\acute{\cZ}|_{[1,\Upsilon]_\Z}$ and $\acute\cZ''=\acute{\cZ}|_{[\Upsilon,N]_\Z}$.}
	\end{equation}
	When $v(\Map,\Be)\notin \cV(\bdy\Map)$, we have that $\Upsilon=N$ and 
	\begin{equation}\label{eq:restriction}
	\acute{\cZ}|_{[1,N]_\Z}=\Phi(\Map',\Be',\omega').
	\end{equation}
\end{lem}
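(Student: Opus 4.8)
The plan is to harvest the four assertions of the lemma from the proof of Theorem~\ref{thm:bijection}, where each appeared (at least implicitly) along the way; the only genuinely new input is a one-line induction showing that the space-filling exploration of an arbitrary element of $\DP$ begins at its root edge and ends at its target edge. I would split according to whether $v := v(\Map,\Be)$ lies on $\bdy\Map$. Suppose first $v \in \cV(\bdy\Map)$, so that $\Map''$ is defined and we are in Case~\ref{item:lambda2} of Definition~\ref{def:spacefilling}. By that definition, $\acute\lambda$ lists $\Be$, then $\cE(\Map'')$ in the order of the exploration of $(\Map'',\Be'',\omega'')$, then $\cE(\Map')$ in the order of the exploration of $(\Map',\Be',\omega')$. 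The proof of Theorem~\ref{thm:bijection} --- precisely equation~\eqref{eq:peel-Z} and the ``byproduct'' identity recorded just after it --- shows that the two corresponding subwords of $\acute\cZ$ are, after the rebasing built into the identification convention stated just before the lemma, equal to $\Phi(\Map'',\Be'',\omega'')$ and $\Phi(\Map',\Be',\omega')$, and that $\#\cE(\Map'')+1 = \inf\{i \in [2,N]_\Z : i \prec_{\acute\cZ} 1\}$. This is exactly $\Upsilon = \#\cE(\Map'')+1$ together with~\eqref{eq:concatenation}. It remains to identify $\acute\lambda(\Upsilon-1)$ and $\acute\lambda(\Upsilon)$, which are respectively the last edge of the $\Map''$-block and the first edge of the $\Map'$-block. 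Since $\acute\lambda(0)$ of any element of $\DP$ is, by Definition~\ref{def:spacefilling}, its root edge, we get $\acute\lambda(\Upsilon) = \Be'$. For $\acute\lambda(\Upsilon-1)$ I would invoke the fact that the last edge visited by the space-filling exploration of any element of $\DP$ is its target edge; this follows by induction on the number of edges --- trivial for the degenerate map, and in both cases of Definition~\ref{def:spacefilling} the last edge visited in $\Map$ equals the last edge visited in $\Map'$, which by the inductive hypothesis is the target of $(\Map',\Be',\omega')$, and that target is $\wh{\Be}$ since $\Peel$ preserves the target (Section~\ref{sec-perc-interface}). Applying this to $(\Map'',\Be'',\omega'')$ gives $\acute\lambda(\Upsilon-1) = \wh{\Be''}$.

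Now suppose $v \notin \cV(\bdy\Map)$, i.e.\ we are in Case~\ref{item:lambda1} of Definition~\ref{def:spacefilling}. Then Definition~\ref{def:spacefilling}\ref{item:lambda1} says that $\acute\lambda|_{[1,N-1]_\Z}$ is the space-filling exploration of $(\Map',\Be',\omega')$, and the proof of Theorem~\ref{thm:bijection} identifies $\acute\cZ|_{[1,N]_\Z}$ with $\Phi(\Map',\Be',\omega')$; since $\acute\cZ(N-1)=(0,0)$, the rebasing in the identification convention is trivial, and this is exactly~\eqref{eq:restriction}. The only remaining point is $\Upsilon = N$, i.e.\ that no $i \in [2,N-1]_\Z$ is an ancestor of $1$ relative to $\acute\cZ$. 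The relation $j \prec_Z i$ depends only on the increments of $Z$, so it is unchanged under the index shift relating $\acute\cZ|_{[1,N]_\Z}$ to $\Phi(\Map',\Be',\omega')$; hence this is precisely property~\ref{item:K3} of Theorem~\ref{thm:bijection} applied to the walk $\Phi(\Map',\Be',\omega') \in \cK$, namely the statement that $0$ is ancestor-free relative to $N-2$. (That $\Upsilon$ is well defined, i.e.\ $\Upsilon \le N$, holds because $N \prec_{\acute\cZ} 0$ by property~\ref{item:K3} while $\acute\cZ$ has nonnegative coordinates on $[0,N-1]_\Z$, so also $N \prec_{\acute\cZ} 1$.)

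I do not expect a genuine obstacle: the lemma is bookkeeping layered on the proof of Theorem~\ref{thm:bijection}. The steps that need the most care are keeping the subword-to-block correspondence straight under the several index shifts and rebasings --- note in particular that the $\Map''$-block is explored before the $\Map'$-block, by Definition~\ref{def:spacefilling}\ref{item:lambda2} --- and running the induction ``last visited edge $=$ target'' carefully enough to pin down $\acute\lambda(\Upsilon-1) = \wh{\Be''}$ (and $\acute\lambda(\Upsilon) = \Be'$), the part of the lemma that goes slightly beyond what is written in the proof of Theorem~\ref{thm:bijection}.
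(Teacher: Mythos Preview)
Your approach is correct and matches the paper's, which simply records that the lemma was proven in the proof of Theorem~\ref{thm:bijection}; you have usefully unpacked that reference and supplied the small induction (``last edge visited $=$ target'') needed for $\acute\lambda(\Upsilon-1)=\wh{\Be''}$. One wrinkle worth flagging in your write-up: the displayed~\eqref{eq:concatenation} as printed has the primes swapped --- it should read $\acute\cZ''=\acute\cZ|_{[1,\Upsilon]_\Z}$ and $\acute\cZ'=\acute\cZ|_{[\Upsilon,N]_\Z}$, which is what you actually prove and what~\eqref{eq:peel-Z} (and the sentence ``In words, $\acute\cZ|_{[1,N]_\Z}$ is the concatenation of $\acute\cZ''$ and $\acute\cZ'$'' following it) say.
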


Lemma~\ref{lem:induction} was proven in the proof of Theorem~\ref{thm:bijection}.  As an immediate corollary, we have the following.
\begin{lem}\label{lem:walk}
	In the setting of Theorem~\ref{thm:bijection}, let $\lambda:[0,m]_\Z\to \cE(\Map)$  be the interface of $(\Map,\Be, \omega)$, and let $\cZ$ be the boundary length process of $\lambda$ as defined in 
	Section~\ref{sec-perc-interface}.   For $i\in [0,m]_\Z$, let $T(i)$ be such that $\lambda(i) = \acute{\lambda}(T(i))$. 
	Then  $\{T(0), T(1),\cdots , T(m)\}$ is exactly the collection of ancestor-free times for $\acute{\cZ}$ relative to $N-1$, in increasing order.
	Moreover,
	$\cZ(i)=\acute{\cZ}(T(i))$ for all $i\in[0,m]_\Z$.
\end{lem}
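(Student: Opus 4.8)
The plan is to prove both assertions simultaneously by induction on $N = \#\cE(\Map)$, running exactly the same recursion that underlies Theorem~\ref{thm:bijection} and Lemma~\ref{lem:induction}. The base case $N=1$ is the degenerate triangulation: here $m=0$, $\lambda(0)=\Be=\wh\Be=\acute\lambda(0)$, so $T(0)=0$; and $0$ is the unique (vacuously) ancestor-free time relative to $N-1=0$, while $\cZ(0)=(0,0)=\acute{\cZ}(0)$. So both claims hold. For the inductive step, recall the triangle $t=t(\Map,\Be)$, the vertex $v=v(\Map,\Be)$, and the decomposition of Definition~\ref{def:peel-one}, and split into the two cases of Definition~\ref{def:spacefilling} according to whether $v\in\cV(\bdy\Map)$.

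First I would handle the case $v\notin\cV(\bdy\Map)$, which is Case~\ref{item:lambda1}. Here $\Peel(\Map,\Be,\omega)=(\Map',\Be',\omega')$ with the same vertex $v$ now an interior vertex of $\Map'$, so the percolation interface $\lambda$ of $(\Map,\Be,\omega)$ is precisely $\Be$ followed by the interface $\lambda'$ of $(\Map',\Be',\omega')$; i.e.\ $\lambda(i)=\lambda'(i-1)$ for $i\ge 1$. Likewise by~\eqref{eq:restriction} we have $\acute\cZ|_{[1,N]_\Z}=\Phi(\Map',\Be',\omega')=\acute{\cZ}'$ (after the identification of restrictions with elements of $\cK$), and correspondingly the space-filling order on $\cE(\Map)\setminus\{\Be\}$ agrees with that of $(\Map',\Be',\omega')$. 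The first step of $\acute{\cZ}$ is $(1,0)$ or $(0,1)$ according to the color of $v$, which also records the first step of $\cZ$. Since $\Upsilon=N$ in this case and no $j\in[2,N]_\Z$ is an ancestor of $1$, the time $0$ is itself ancestor-free relative to $N-1$ (indeed $N\prec_{\acute\cZ}0$ but nothing in between is), so $T(0)=0$ is the smallest ancestor-free time; and the ancestor-free times of $\acute{\cZ}$ relative to $N-1$ that are $\ge 1$ are, after shifting by $1$, exactly the ancestor-free times of $\acute{\cZ}'$ relative to $N-2$, which by the inductive hypothesis applied to $(\Map',\Be',\omega')$ are $\{T(1),\dots,T(m)\}$ shifted. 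Combining with $\cZ(i)=\acute{\cZ}(T(i))$ from the inductive hypothesis and $\cZ(0)=(\elll,\ellr)=\acute{\cZ}(0)$ finishes this case.

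The case $v\in\cV(\bdy\Map)$ (Case~\ref{item:lambda2}) is the substantive one and I expect it to be the main obstacle, because here the interface $\lambda$ of $(\Map,\Be,\omega)$ restricted to the maps $\Map'$ and $\Map''$ must be matched against the concatenation structure~\eqref{eq:concatenation}, and the ``swallowed'' component $\Map''$ contributes a whole block of non-ancestor-free times of $\acute{\cZ}$ which nonetheless sits in the middle of the explored region. Concretely, by Lemma~\ref{lem-interface-markov}/Lemma~\ref{lem:induction} the interface of $(\Map,\Be,\omega)$ is the interface of $(\Map',\Be',\omega')$ (there is no nontrivial interface through $\Map''$, since $\Map''$ lies entirely on one side of $\lambda$ and gets absorbed when $\Be$ is peeled); so $m$ equals $m'$, and $\lambda(i)=\lambda'(i)$ for $i\in[1,m]_\Z$ with $\lambda(0)=\Be$. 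On the walk side, $\acute{\cZ}|_{[1,N]_\Z}$ is the concatenation of $\acute{\cZ}''=\acute{\cZ}|_{[1,\Upsilon]_\Z}$ and $\acute{\cZ}'=\acute{\cZ}|_{[\Upsilon,N]_\Z}$, with $\Upsilon=\#\cE(\Map'')+1$. The key geometric fact, which I would extract carefully from~\eqref{eqn-I_U-property} and the discussion around it, is that \emph{every} time in $[1,\Upsilon-1]_\Z$ has an ancestor in $[1,\Upsilon]_\Z$ — precisely because $\Upsilon$ is an ancestor of $1$ and $\Upsilon=\inf\{i\ge 2: i\prec_{\acute\cZ}1\}$, so running the two coordinates from any intermediate time one hits $\acute{\cZ}(\Upsilon)$ before escaping — hence \emph{none} of $1,\dots,\Upsilon-1$ is ancestor-free relative to $N-1$, while $0$ and $\Upsilon$ are (the latter being the image $T(0)$-analogue for the sub-map $\Map'$). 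Then the ancestor-free times of $\acute{\cZ}$ relative to $N-1$ are exactly $\{0\}\cup(\text{ancestor-free times of }\acute{\cZ}'\text{ relative to }N'-1,\text{ shifted by }\Upsilon)$; by the inductive hypothesis applied to $(\Map',\Be',\omega')$ these are $\{0,T(1),\dots,T(m)\}$ with $T(i)=\Upsilon+T'(i)$, and $T(0)=0$. For the boundary-length identity, $\cZ(0)=(\elll,\ellr)=\acute{\cZ}(0)$, and for $i\ge 1$, $\cZ(i)=\cZ'(i)=\acute{\cZ}'(T'(i))=\acute{\cZ}(\Upsilon+T'(i))=\acute{\cZ}(T(i))$ using~\eqref{eq:concatenation} and the inductive hypothesis for $(\Map',\Be',\omega')$; here one must check the additive shift in~\eqref{eq:peel-Z}–\eqref{eq:peel-Z2} cancels correctly against the definition of $\cZ$ on $(\Map,\Be,\omega)$ versus $(\Map',\Be',\omega')$, which is a short bookkeeping check using that $\elll=\elll'$ and $\ellr=\ellr'$ after the $\Map''$-block is removed (when $v$ is, say, red, $\ellr'=\ellr$ and the left boundary lengths of $\Map$ and $\Map'$ agree once $\Be$ and the $\Map''$-part are accounted for). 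This completes the induction.

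Throughout, the only real care is in the $v\in\cV(\bdy\Map)$ case: identifying $\Upsilon-1$ with the target edge of $\Map''$ and $\Upsilon$ with $\Be'$ (given by Lemma~\ref{lem:induction}), checking that the block $[1,\Upsilon-1]_\Z$ contains no ancestor-free times, and verifying the shift in the boundary-length identity. Everything else is a direct unwinding of the definitions and the inductive hypothesis.
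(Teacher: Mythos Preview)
Your approach is exactly the paper's: induction on $N$ with the two-case split according to whether $v\in\cV(\bdy\Map)$, invoking Lemma~\ref{lem:induction} to handle the concatenation. The structure and key observations (in particular that $\Upsilon$ is an ancestor of every $j\in[1,\Upsilon-1]_\Z$, so none of those times is ancestor-free) are correct.

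There is, however, a consistent off-by-one slip in the second case. Since $\lambda(1)=\Be'=\lambda'(0)$, one has $\lambda(i)=\lambda'(i-1)$ for $i\ge 1$ and hence $m=m'+1$, not $m=m'$ and $\lambda(i)=\lambda'(i)$. Correspondingly $T(i)=\Upsilon+T'(i-1)$ for $i\in[1,m]_\Z$, not $\Upsilon+T'(i)$; with your indexing the set $\{0\}\cup\{\Upsilon+T'(0),\dots,\Upsilon+T'(m')\}$ has $m'+2$ elements while your claimed $\{0,T(1),\dots,T(m)\}$ would have only $m'+1$. The same slip propagates into the boundary-length check: it is not true that $\elll=\elll'$ (when $v$ is red, \eqref{eq:peel-Z2} gives $\elll=\elll'+\elll''+1$). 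What you actually need is the exact identity $\acute{\cZ}(\Upsilon+k)=\acute{\cZ}'(k)$ from \eqref{eq:peel-Z} (no additive shift on this piece), together with $\cZ(i)=\cZ'(i-1)$ for $i\ge 1$; then $\acute{\cZ}(T(i))=\acute{\cZ}'(T'(i-1))=\cZ'(i-1)=\cZ(i)$ drops out directly. Once the indices are fixed, your argument goes through. (A minor aside: in Case~\ref{item:lambda1}, $v$ becomes a \emph{boundary} vertex of $\Map'$, not an interior one, though this does not affect your argument.)
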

\begin{proof}
	The result is immediate for $N=1$. By induction, we assume the first statement is true for maps with less than $N$ edges. For $N>1$, we see that $T(1)=1$ when $v\not\in \cV(\bdy\Map)$ and $T(1)=\Upsilon$ as defined in Lemma~\ref{lem:induction} when $v\in \cV(\bdy\Map)$. 
	In both cases, {the time} $T(1)$ is the first  ancestor-free time relative to $N-1$ after $T(0)=0$.
	By Lemma~\ref{lem:induction}, {we have that} $T(i)>\Upsilon$ for $i\ge 2$.  Applying the induction hypothesis to $(\Map',\Be',\omega')$, we see that 
	$\{T(1),\cdots , T(m)\}$ is the collection of ancestor-free times for $\Phi(\Map',\Be',\omega')$ relative to $\#\cE(\Map'')-1$, in increasing order.
	Now the first statement follows from \eqref{eq:concatenation}.
	
	The second  statement follows from the definition of   $\cZ$ and $\acute{\cZ}$. 
\end{proof}
Here is another immediate corollary of Lemma~\ref{lem:induction}, which we leave to the reader to verify.
\begin{lem}\label{lem:component}
	In the setting of Lemma~\ref{lem:walk}, for $i\in [0,m-1]_\Z$, recall $(\Map_i,\Be_i,\omega_i)$ in Section~\ref{sec-perc-interface} where $\Be_i=\lambda(i)$.
	Then $\Map_i-\Be_i$ has two 2-connected components  if and only if $T(i+1)>T(i)+1$. 
	Define $(\Map''_i,\Be''_i,\omega''_i)$ as $(\Map'',\Be'',\omega'')$ in Definition~\ref{def:peel-one} with $(\Map_i,\Be_i,\omega_i)$ in place of $(\Map, \Be, \omega)$.
	Then $\acute{\cZ}|_{[T(i)+1, T(i+1)]_\Z}=\Phi(\Map''_i,\Be''_i, \omega''_i)$. 
\end{lem}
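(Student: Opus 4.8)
The plan is to prove Lemma~\ref{lem:component} by the same induction on $N = \#\cE(\Map)$ used in the proof of Theorem~\ref{thm:bijection}, leaning almost entirely on Lemma~\ref{lem:induction} and the recursive structure of $\acute\lambda$. First I would observe that the two assertions are local to the first peeling step applied at $\lambda(i) = \Be_i$: by the domain-Markov structure of the space-filling exploration (Case~\ref{item:lambda1} versus Case~\ref{item:lambda2} of Definition~\ref{def:spacefilling}), one peeling step of the interface process starting from $(\Map_i,\Be_i,\omega_i)$ is exactly one application of $\Peel$ to that triple, and $T(i+1) - T(i)$ counts how many edges of $\acute\lambda$ fall into the piece $\Map''_i$ that gets cut off (plus the one step consuming $\Be_i$ itself).

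The first claim. Recall from Section~\ref{sec-perc-interface} that $\Map_i - \Be_i$ has two $2$-connected components if and only if $v(\Map_i,\Be_i) \in \cV(\bdy\Map_i)$, i.e.\ exactly when we are in Case~\ref{item:lambda2}. I would apply Lemma~\ref{lem:induction} to the triple $(\Map_i,\Be_i,\omega_i)$ in place of $(\Map,\Be,\omega)$: when $v \in \cV(\bdy\Map_i)$ it gives $\Upsilon_i = \#\cE(\Map''_i) + 1 \ge 2$ and, via the concatenation~\eqref{eq:concatenation}, $\acute\lambda(\Upsilon_i - 1)$ is the target edge of $(\Map''_i,\Be''_i,\omega''_i)$ while $\acute\lambda(\Upsilon_i) = \Be'_i = \lambda(i+1)$ up to the relabelling $\acute\lambda \mapsto \acute\lambda(T(i)+\cdot)$ coming from the induction hypothesis of Lemma~\ref{lem:walk}. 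Tracking the shift, this says precisely $T(i+1) = T(i) + \Upsilon_i = T(i) + \#\cE(\Map''_i) + 1 > T(i) + 1$, since $\#\cE(\Map''_i) \ge 1$. Conversely, when $v \notin \cV(\bdy\Map_i)$, Lemma~\ref{lem:induction} gives $\Upsilon_i = N_i$ but more to the point the restriction identity shows $\acute\lambda(T(i)+1) = \Be'_i = \lambda(i+1)$, so $T(i+1) = T(i)+1$. This establishes the equivalence.

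The second claim. In the case $T(i+1) > T(i) + 1$, by the concatenation statement~\eqref{eq:concatenation} applied to $(\Map_i,\Be_i,\omega_i)$, the block $\acute\cZ|_{[T(i)+1,\,T(i)+\Upsilon_i]_\Z} = \acute\cZ|_{[T(i)+1,\,T(i+1)]_\Z}$ is (after the canonical identification of a sub-walk with an element of $\cK$, as set up just before Lemma~\ref{lem:induction}) exactly the walk $\acute\cZ'' = \Phi(\Map''_i,\Be''_i,\omega''_i)$; here one uses that $\acute\lambda$ orders all of $\cE(\Map''_i)$ consecutively right after $\Be_i$ by Case~\ref{item:lambda2}, and that the boundary-length bookkeeping defining $\acute\cZ$ restricted to those steps coincides with the one defining $\Phi$ on $(\Map''_i,\Be''_i,\omega''_i)$ — this last point is exactly the content of~\eqref{eq:peel-Z}. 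I expect the main (though still modest) obstacle to be precisely this bookkeeping: making sure the additive shift $(\elll' + 1, \ellr' + 1)$ appearing in~\eqref{eq:peel-Z} is correctly absorbed by the ``identify $\acute\cZ|_{[j,k]_\Z}$ with an element of $\cK$'' convention, so that the identity $\acute\cZ|_{[T(i)+1,T(i+1)]_\Z} = \Phi(\Map''_i,\Be''_i,\omega''_i)$ holds on the nose rather than up to translation. Once the conventions are pinned down, the statement is an immediate transcription of Lemma~\ref{lem:induction}, which is why it is reasonable to leave it to the reader.
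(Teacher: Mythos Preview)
Your proposal is correct and matches the paper's approach exactly: the paper states only that this is ``another immediate corollary of Lemma~\ref{lem:induction}, which we leave to the reader to verify,'' and your argument is precisely the induction on $N$ that unwinds Lemma~\ref{lem:induction} at each peeling step, using that $\acute\cZ|_{[T(i),N]_\Z}=\Phi(\Map_i,\Be_i,\omega_i)$ (which itself follows by iterating Lemma~\ref{lem:induction}) to transfer the $i=0$ case to general $i$. Your identification of the bookkeeping around the additive shift in~\eqref{eq:peel-Z} as the only point requiring care is accurate and is handled by the subwalk-identification convention set up just before Lemma~\ref{lem:induction}.
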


Following \cite{bhs-site-perc}, given $N\in \N$,  suppose  $\acute{\cZ}=({\acute\cL},{\acute\cR})=\{\acute{\cZ}(i) \}_{i\in[0,N]_\Z}$ is a walk  
ending at $(-1,-1)$  satisfying Property~\ref{item:K2} in Theorem~\ref{thm:bijection}.
For $i\in[1,N]_\Z$, we call $\Delta \acute{\cZ}_i:=\acute{\cZ}({i})-\acute{\cZ}(i-1)$  
an $a$-step if $\Delta\acute{\cZ}_i=(1,0)$, a $b$-step if $\Delta\acute{\cZ}_i=(0,1)$, and a $c$-step if $\Delta\acute{\cZ}_i=(-1,-1)$.  
This identifies $\acute{\cZ}$ with a word  (i.e., sequence of letters)  of length $N$ on the alphabet $\{a,b,c\}$. 
Given $i,j\in [1,N]_\Z$ such that $i<j$, we say that an $a$-step $\Delta\acute{\cZ}_i$ and a $c$-step $\Delta\acute{\cZ}_j$ are \emph{matching} if
\[j=\inf\{m\in [i,N]_\Z: {\acute\cL}(m)={\acute\cL}(i-1) \}. \]  
The matching of $b$-steps and $c$-steps
is defined analogously in the same way with ${\acute\cL}$ replaced by ${\acute\cR}$. Using this terminology, and examining Property~\ref{item:K3} in Theorem~\ref{thm:bijection}, we see that $\acute{\cZ}\in \cK$ if and only if
\begin{enumerate}[label=(\arabic{enumi})]
	\item each $a$-step and $b$-step in $\acute{\cZ}$ has a matching $c$-step, and
	\item $\Delta\acute{\cZ}_{N}$ is a $c$-step and is the unique $c$-step of $\acute{\cZ}$ with neither a matching  $a$- nor $b$-step.
\end{enumerate}
This identification gives that Theorem~\ref{thm:bijection} is equivalent to the bijection in \cite[Corollary 2.12]{bhs-site-perc}.
Since a triangulation of a $2$-gon is equivalent to a rooted triangulation of the sphere by gluing the two boundary edges, 
the special case $\elll=\ellr=0$ of Theorem~\ref{thm:bijection} gives a bijection between site-percolated loopless triangulations and walks in $\cK(0;0)$, which is closely related to the one in \cite{bernardi-dfs-bijection}.

Recall that $\ol\Map_0=\Map$ and $\ol \Map_i=\Map-\acute{\lambda}([0,i-1]_\Z)$ for $i\in[1,N-1]_\Z$, which is a triangulation with boundary but not necessarily 2-connected.
The 2-connected components of $\ol\Map_i$
 also have an easy description in terms of $\acute{\cZ}$. See the left part of Figure \ref{fig-xi} for an illustration.

\begin{figure}[ht!]
	\begin{center}
		\includegraphics[scale=1]{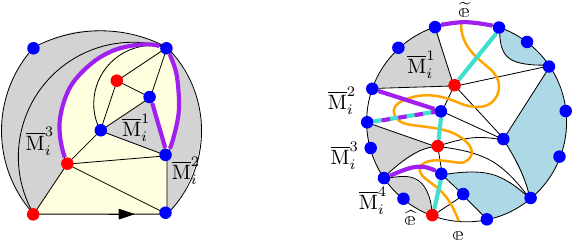} 
		\caption{ {\bf Left}: Illustration of the maps $\{\ol\Map_i^j\}_{j\in [1,k]_\Z}$  in Lemma \ref{lem:future} for $i=9$ and $k=3$. 
		The purple edge on the boundary of $\ol{\Map}_i^j$ is $e_i^j=\acute{\lambda}(\xi_{j-1})$. 
		{\bf Right}: Illustration of Lemmas~\ref{lem:interface} and~\ref{lem:future2}. The percolation interface $\wt\lambda$ from $\Be$ to $\wt\Be$ is shown in orange. Lemma \ref{lem:interface} describes the 2-connected components of $\Map-\wt\lambda$ that are visited by $\acute\lambda$ before $\wt\Be$ (blue).   Lemma~\ref{lem:future2} describes the components $\ol\Map_i^j$ visited after $\wt\Be$ (gray), where $i$ is such that $\wt\Be=\acute\lambda(i-1)$. In the notation of Lemma~\ref{lem:future2}, the edges $\acute\lambda(\sigma_j-1)$ are shown in turquoise, while the edges $\acute\lambda(\sigma_j)$ are shown in purple.
		}
	\label{fig-xi}
	\end{center}
\end{figure}

\begin{lem} \label{lem:future}
	Fix $i\in [0,N-1]_\Z$.  Set $\xi_0=i$. Let $\xi_1,\cdots, \xi_k$  be the  set \(\{\xi\in[i+1,N]_\Z: \xi\prec_{\cZ'} i \} \) listed in increasing order.  
	Then $\ol \Map_i$ has $k$ 2-connected components, which can be written  as $\{\ol\Map^j_i\}_{j\in[1,k]_\Z}$ such that $\cE(\ol\Map^j_i)=\acute{\lambda}([\xi_{j-1},\xi_j-1]_\Z)$ for $j\in[1,k]_\Z$.
	Moreover, let $e^j_i=\acute{\lambda}(\xi_{j-1})$  and  $\wh e^j_i=\acute{\lambda}(\xi_{j}-1)$. Let $\omega^j_i$ be such that 
	$(\ol\Map^j_i, e^j_i  ,\omega^j_i)\in \DP$ with target $\wh e^j_i$ and $\omega^j_i$ agrees with $\omega$
	on inner vertices of $\ol M^j_i$. Then
	\begin{equation}\label{eq:future}
	\cZ|_{[\xi_{j-1},\xi_j]_\Z}=\Phi(\ol\Map^j_i, e^j_i,\omega^j_i).
	\end{equation}
\end{lem}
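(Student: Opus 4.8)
\textbf{Proof proposal for Lemma~\ref{lem:future}.}

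The plan is to run the same induction on $N = \#\cE(\Map)$ that powers the proof of Theorem~\ref{thm:bijection} and Lemma~\ref{lem:induction}, but applied to the restricted walk $\acute\cZ' = \acute\cZ|_{[1,N]_\Z}$ (equivalently, to the peeled map after removing $\acute\lambda(0) = \Be$) rather than to $\acute\cZ$ itself. First I would dispose of the base case: when $N=1$ the map is degenerate, $i$ can only be $0$, there are no $\xi$'s, $k=0$, $\ol\Map_0$ has no edges, and \eqref{eq:future} is vacuous. For $N>1$, it is convenient to first reduce to the case $i=0$. Indeed, by Lemma~\ref{lem:induction}, removing the first few edges $\acute\lambda([0,i-1]_\Z)$ leaves $\ol\Map_i$, whose 2-connected components are exactly the maps produced by the peeling process up to step $i$; each such component $\ol\Map^j_i$ is, by the inductive construction of $\acute\lambda$ in Definition~\ref{def:spacefilling}, an element of $\DP$ with a strictly smaller number of edges than $N$, on which $\acute\lambda$ restricts (up to the order-shift) to the space-filling exploration. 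So it suffices to show that the 2-connected components of $\ol\Map_i$, together with the edges $\acute\lambda([0,N-1]_\Z)$ grouped according to the ancestor cone structure of $\acute\cZ'$ relative to $i$, match up as claimed — and this is really a statement about how the peeling operator $\ol\Peel$ interacts with the word decomposition of $\acute\cZ$, which we can unwind one step at a time.

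The key steps, in order: (1) Use the dichotomy of Definition~\ref{def:peel-one}: either $v(\Map,\Be)\notin\cV(\bdy\Map)$, in which case $\Map-\Be$ is still 2-connected, $\ol\Map_1 = \Map'$, and by \eqref{eq:restriction} the walk $\acute\cZ|_{[1,N]_\Z}$ is exactly $\Phi(\Map',\Be',\omega')$; or $v(\Map,\Be)\in\cV(\bdy\Map)$, in which case $\Map-\Be$ splits into $\Map''$ and $\Map'$, and by \eqref{eq:concatenation} the walk $\acute\cZ|_{[1,N]_\Z}$ is the concatenation of $\Phi(\Map'',\Be'',\omega'')$ (on $[1,\Upsilon]_\Z$) and $\Phi(\Map',\Be',\omega')$ (on $[\Upsilon,N]_\Z$), with $\Upsilon$ the first ancestor of $1$. (2) Translate the combinatorial definition of the $\xi_j$'s: a time $\xi\in[i+1,N]_\Z$ satisfies $\xi\prec_{\acute\cZ'} i$ exactly when $\acute\cZ'$ stays strictly inside the cone $\acute\cZ'(\xi)+(0,\infty)^2$ on $[i,\xi-1]_\Z$; one checks that the first such $\xi$ cuts off precisely the first 2-connected component to be fully explored — i.e., the chunk of $\acute\lambda$-edges forming $\Map''$ in the split case (of size $\Upsilon - i$), and then recursing. (3) Observe that removing that first chunk from $\ol\Map_i$ and from $\acute\cZ'$ simultaneously returns us to the same situation with strictly fewer edges, so the inductive hypothesis applies to the remaining components; the claim that $e^j_i = \acute\lambda(\xi_{j-1})$ is the root and $\wh e^j_i = \acute\lambda(\xi_j - 1)$ the target follows from the last sentence of Lemma~\ref{lem:induction} ($\acute\lambda(\Upsilon-1)$ is the target of $\Map''$, $\acute\lambda(\Upsilon)=\Be'$) applied at each stage. (4) Finally, \eqref{eq:future} for each $j$ is then literally an instance of \eqref{eq:concatenation}/\eqref{eq:restriction} at the appropriate level of the recursion, combined with the observation that $\acute\cZ|_{[\xi_{j-1},\xi_j]_\Z}$, re-based, lies in $\cK$ and hence is $\Phi$ of a well-defined element of $\DP$.

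The main obstacle I anticipate is purely bookkeeping: correctly identifying the indices $\xi_j$ with the successive peeling splits and keeping straight which side (left or right of $\lambda$, i.e. red vs. blue, $\Map'$ vs.\ $\Map''$) each component lies on, together with the off-by-one shifts between $[0,N-1]_\Z$-indexing of $\acute\lambda$ and $[1,N]_\Z$-indexing of the steps $\Delta\acute\cZ$. There is also a small subtlety in that the cone relation $\prec_{\acute\cZ'}$ is defined relative to the \emph{restricted} walk $\acute\cZ' = \acute\cZ|_{[1,N]_\Z}$ and not $\acute\cZ$ — one should double-check that, because the first step $\Delta\acute\cZ_1$ is an $a$- or $b$-step (never a $c$-step) when $\ol\Map_i$ has positive boundary, the ancestor structure relevant to the components of $\ol\Map_i$ is unaffected by this shift. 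Once these indexing matters are pinned down, no genuinely new idea beyond the induction of Theorem~\ref{thm:bijection} is needed; accordingly I would present the base case and the case split in full, then say ``the remaining verification is a routine induction identical to that in the proof of Theorem~\ref{thm:bijection}'' and leave the details to the reader, as the lemma statement itself invites.
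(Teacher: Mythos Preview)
Your inductive approach via the dichotomy of Lemma~\ref{lem:induction} is exactly what the paper does, and your steps (1)--(4) capture the right structure. A few corrections to the bookkeeping, though:

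Your base case is mis-stated. When $N=1$ and $i=0$, we have $\ol\Map_0=\Map$ (not the empty map), and the time $\xi=1=N$ \emph{is} an ancestor of $0$ since $\acute\cZ(0)=(0,0)>(-1,-1)=\acute\cZ(1)$; thus $k=1$, not $k=0$. The paper handles this cleanly by first observing that for \emph{any} $N$ the case $i=0$ is trivial, with $k=1$ and $\xi_1=N$, and this in particular disposes of $N=1$.

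The symbol $\cZ'$ in the lemma statement is just the paper's shorthand for $\acute\cZ$, not the restricted walk $\acute\cZ|_{[1,N]_\Z}$. Since the cone relation $\xi\prec_{\acute\cZ} i$ only involves the values of $\acute\cZ$ on $[i,\xi]_\Z$, there is no index-shift issue to worry about, and your caveat about the first step being an $a$- or $b$-step is unnecessary.

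Finally, drop the ``reduce to $i=0$'' idea---it does not lead anywhere. The paper's induction runs over $N$ for all $i$ simultaneously: in the split case ($v\in\cV(\bdy\Map)$, with $\Upsilon=T(1)$), if $i\ge\Upsilon$ one applies the inductive hypothesis directly inside $(\Map',\Be',\omega')$; if $1\le i<\Upsilon$, the last 2-connected component of $\ol\Map_i$ is $\Map'$ itself, with $\xi_{k-1}=\Upsilon$ and $\xi_k=N$, and the earlier components (and the corresponding $\xi_1,\dots,\xi_{k-1}$) come from the inductive hypothesis applied to $(\Map'',\Be'',\omega'')$.
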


\begin{proof}
	When $i=\xi_0=0$, we have $k=1$ and $\xi_1=N$. Lemma~\ref{lem:future} is trivial in this case.
	Therefore Lemma~\ref{lem:future} holds when $N=1$. 
	By induction, we assume  that Lemma~\ref{lem:future} holds for maps with fewer edges than $\Map$.
	If $\ol\Map_1$ has a single 2-connected component, then Lemma~\ref{lem:future} holds for $\Map$ by the induction hypothesis. Now suppose that $\ol\Map_1$ has two components $\Map',\Map''$, as in Case~\ref{item:lambda2} of the definition of $\acute{\lambda}|_{[1,\#\cE(\Map)-1]_\Z}$. For $i\in [T(1),N-1]_\Z$ so that $\acute{\lambda}(i)\in \cE(\Map')$,  we can apply the inductive hypothesis to $(\Map',\Be',\omega')$  to get the result.
	If $i\in [0,T(1)-1]_\Z$, so that $\acute{\lambda}(i)\in \cE(\Map'')$, then $\ol\Map^k_i=\Map'$. 
	Moreover, {we have that} $\xi_{k-1}=T(1)-1$ and $\xi_k=N$. Therefore $\cE(\ol\Map^k_i)=\acute{\lambda}([\xi_{k-1},\xi_k-1]_\Z)$  and \eqref{eq:future} holds for $j=k$. 
	 Since $\xi_1,\cdots, \xi_{k-1}$  is the  set \(\{\xi\in[i+1,T(1)-1]_\Z: \xi\prec_{\cZ'} i \} \), Lemma~\ref{lem:future} follows by applying the induction hypothesis to $(\Map'',\Be'',\omega'')$.
\end{proof}

The next lemma describes boundary edges in terms of $\acute{\cZ}$.

\begin{lem}\label{lem:boundary}
	In the setting of Theorem~\ref{thm:bijection}, except for the $c$-step $\Delta\cZ_N$, there are exactly $\elll$ (resp.\ $\ellr$) $c$-steps with no matching $a$-step (resp.\ matching $b$-step), which correspond to the $\elll$ left (resp.\ $\ellr$ right) boundary edges of $(\Map,\Be,\omega)$.
\end{lem}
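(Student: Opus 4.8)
\textbf{Proof plan for Lemma~\ref{lem:boundary}.}

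The plan is to argue by induction on $N = \#\cE(\Map)$, mirroring exactly the induction used to prove Theorem~\ref{thm:bijection} and Lemma~\ref{lem:induction}, since the concatenation structure of $\acute{\cZ}$ recorded there is precisely what lets us track boundary edges. Recall from Remark~\ref{rmk:flip} and the discussion preceding it that the $\elll$ left boundary edges of $(\Map,\Be,\omega)$ are the $\elll$ edges of $\bdy\Map$ with two red endpoints, and similarly for the $\ellr$ right boundary edges. So I must show that these $\elll + \ellr$ boundary edges are exactly the edges $\acute{e}_{i-1}$ whose associated step $\Delta\acute{\cZ}_i$ is a $c$-step with no matching $a$-step (for the left ones) or no matching $b$-step (for the right ones), other than the terminal step $\Delta\acute{\cZ}_N$.

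The base case $N=1$ is the degenerate triangulation, where $\elll=\ellr=0$ and the only step is the terminal $c$-step, so there is nothing to check. For the inductive step, I would split into the two cases of Definition~\ref{def:spacefilling}. If $v = v(\Map,\Be)\notin\cV(\bdy\Map)$, then by \eqref{eq:restriction} we have $\acute{\cZ}|_{[1,N]_\Z} = \Phi(\Map',\Be',\omega')$ up to an initial $a$- or $b$-step corresponding to $\acute{e}_0 = \Be$, and $(\Map',\Be',\omega')$ has the same boundary length data $(\elll,\ellr)$ as $(\Map,\Be,\omega)$ except that one of the boundary counts drops by $1$ — wait, more carefully: peeling at an interior vertex $v$ adds $v$ to the boundary, so the new boundary gains a monochromatic edge pair; the first step is an $a$- or $b$-step (not a $c$-step), so it contributes no $c$-step to count, and the boundary edges of $\Map$ persist as boundary edges of $\Map'$ with the same colors. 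One then needs to check that a $c$-step with no matching $a$-step in $\acute{\cZ}'$ remains one in $\acute{\cZ}$ after prepending the single $a$- or $b$-step and shifting — this is where the definition of matching via $j = \inf\{m : {\acute\cL}(m) = {\acute\cL}(i-1)\}$ must be tracked, using that prepending an $a$-step shifts ${\acute\cL}$ by $+1$ uniformly but does not change which relative minima are attained. If $v\in\cV(\bdy\Map)$, then by \eqref{eq:concatenation} and \eqref{eq:peel-Z}, \eqref{eq:peel-Z2}, $\acute{\cZ}|_{[1,N]_\Z}$ is the concatenation of $\acute{\cZ}''$ (shifted up by $(\elll'+1,\ellr'+1)$) and $\acute{\cZ}'$, where the left/right boundary lengths split as $\elll = \elll' + 1 + \elll''$, $\ellr = \ellr' + \ellr''$ when $v$ is red (and symmetrically when $v$ is blue). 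Here the key combinatorial point is that $\bdy\Map$ decomposes into the part inherited by $\Map''$, the part inherited by $\Map'$, and the single edge $\Be$ (which, since $v\in\cV(\bdy\Map)$, the first step $\acute{e}_0 = \Be$ becomes a genuine boundary edge shared by $t$ and $\Map''$); applying the inductive hypothesis to $\Map'$ and to $\Map''$ separately accounts for their respective boundary edges, and the terminal $c$-step of $\acute{\cZ}''$ (which is unmatched within $\acute{\cZ}''$) becomes, within the concatenated $\acute{\cZ}$, the $c$-step corresponding to the edge $\Be''$ — and I should verify it is unmatched on the appropriate side, accounting for the $+1$ in the shift which is exactly the contribution of the edge $\Be$.

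The main obstacle I anticipate is bookkeeping the matching relation across concatenation: a $c$-step that is unmatched \emph{within} a sub-walk $\acute{\cZ}'$ or $\acute{\cZ}''$ might acquire a match once embedded in the longer walk, or vice versa, and one must show the counts come out exactly right. The clean way to handle this is to use the reformulation given just after Remark~\ref{rmk:last-step} (the word-on-$\{a,b,c\}$ description with matching $c$-steps), combined with the observation from \eqref{eq:concatenation} that the terminal unmatched $c$-step of $\acute{\cZ}''$ is precisely the one that, inside $\acute{\cZ}$, gets its match supplied by the initial $a$- or $b$-step $\Delta\acute{\cZ}_1$ associated to $\Be$ (when $v$ is on the boundary and $\Be$ is itself a boundary edge of $\Map$ on the appropriate side) — so the "$+1$" in the shift is not an accident but the algebraic trace of the edge $\Be$ being a boundary edge. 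Once this single correspondence is pinned down, the rest of the induction is a routine matter of adding up $\elll' + 1 + \elll'' = \elll$ and $\ellr' + \ellr'' = \ellr$, noting that unmatched $c$-steps on the left (resp.\ right) side of $\acute{\cZ}'$ and $\acute{\cZ}''$ stay unmatched on that side within $\acute{\cZ}$ because the two sub-walks live in disjoint coordinate ranges after the shift.
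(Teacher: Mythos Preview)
Your overall plan --- induction on $N$ via the concatenation structure of Lemma~\ref{lem:induction} --- is exactly the paper's approach. But the bookkeeping in both cases is off, and the errors are not cosmetic.

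\textbf{Case $v\notin\cV(\bdy\Map)$.} Say $v$ is red. Then $(\Map',\Be',\omega')\in\DP(\elll+1,\ellr)$, not $(\elll,\ellr)$: the left boundary of $\Map'$ has the $\elll$ old edges \emph{plus} the new edge $\Be''$ (both endpoints red). By induction, $\acute\cZ' = \acute\cZ|_{[1,N]_\Z}$ has $\elll+1$ unmatched-$a$ $c$-steps. Your claim that ``prepending an $a$-step~\dots~does not change which relative minima are attained'' is false: the $c$-step at which $\acute\cL'$ first drops from $\elll+1$ to $\elll$ is a strict running minimum of $\acute\cL'$, but in $\acute\cZ$ we have $\acute\cL(0)=\elll$, so that step now matches the prepended $a$-step $\Delta\acute\cZ_1$. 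Exactly one step changes status, and it is the one corresponding to $\Be''$, which is \emph{not} a boundary edge of $\Map$. That is where the count $\elll+1\to\elll$ comes from; without this you would over-count.

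\textbf{Case $v\in\cV(\bdy\Map)$.} Again with $v$ red: several identifications are scrambled. The root $\Be$ has one red and one blue endpoint, so it is \emph{never} a left or right boundary edge; your parenthetical about ``$\Be$ itself a boundary edge of $\Map$ on the appropriate side'' is wrong. From~\eqref{eq:peel-Z}--\eqref{eq:peel-Z2} the first step is a $b$-step (not an $a$-step), and its matching $c$-step is the terminal $c$-step of $\acute\cZ''$ at time $\Upsilon$. That terminal step corresponds to the edge $\wh\Be''$ (the target of $(\Map'',\Be'',\omega'')$), not to $\Be''$. Within $\acute\cZ$ this step acquires a $b$-match but remains \emph{$a$-unmatched}, because $\acute\cL(\Upsilon)=\elll'$ is a strict running minimum of $\acute\cL$ on $[0,\Upsilon]$. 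And $\wh\Be''$ is precisely the edge of $\bdy\Map$ adjacent to $v$ on the $r$-side --- a genuine left boundary edge of $\Map$ under $\omega$, not captured as a left boundary edge of either $(\Map',\omega')$ or $(\Map'',\omega'')$. That is the ``$+1$'' in $\elll = \elll'' + 1 + \elll'$.

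The paper sidesteps all of this by proving a sharper inductive statement: if $e$ is the $j$-th left boundary edge clockwise from $\Be$ and $\tau_j=\acute\lambda^{-1}(e)$, then $\acute\cL(\tau_j)=\elll-j$, $\Delta\acute\cZ_{\tau_j+1}=(-1,-1)$, and $\acute\cL(i)\ge\elll-j$ for $i\le\tau_j$. This immediately gives $\tau_j+1=\inf\{i:\acute\cL(i)=\elll-j-1\}$, so the associated $c$-step is a strict running minimum of $\acute\cL$, hence $a$-unmatched. Since $\acute\cL$ starts at $\elll$ and first hits $-1$ only at time $N$, there are exactly $\elll$ such minima before the terminal step, so the correspondence is a bijection. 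This formulation is much easier to push through the induction because it tracks a single edge at a time rather than the whole matching relation.
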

\begin{proof} 
	We first use the induction on $N$ to show the following.
	For $j\in [1,\elll]_\Z$, let $e$ be the $j$-th left boundary edge when tracing $\bdy \Map$ clockwise from $\Be$ to $\wh\Be$ and $\tau_j=\acute{\lambda}^{-1}(e)$. 
	Then it must be the case that ${\acute\cL}(\tau_j)=\elll-j$ and  $\Delta\acute{\cZ}_{\tau_j+1}=(-1,-1)$. Moreover,  ${\acute\cL}(i)\ge \elll-j$ for $i\in [0, \tau_j]_\Z$. When $e
	\in \cE(\Map')$, we apply the  induction hypothesis and Lemma~\ref{lem:induction} to $(\Map',\Be',\omega')$. Otherwise, {we have} $e\in \cE(\Map'')$, in which case we apply the induction hypothesis and Lemma~\ref{lem:induction} to $(\Map'',\Be'',\omega'')$. We leave the details to the reader. Also see the proof of Lemma~\ref{lem:interface} for a detailed implementation of the same induction scheme. 
	
	The induction above shows that 
	$\tau_j+1=\inf\{i\in [0,N]_\Z: {\acute\cL}(i)=\elll -j-1 \}$. In particular, the step $\Delta\acute{\cZ}_{\tau_j+1}$ corresponding to $e$ is a $c$-step with no matching $a$-step. Since $e\neq \Be$ and $\acute\lambda(N-1)=\Be$,  we see that $\tau_j\le N-2$. The number of such $c$ steps is $\elll$ since $\acute{\cL}|_{[0,N-1]_\Z}$ reaches its record infima exactly at these $c$-steps. 
	Therefore every $c$-step without a matching $a$-step corresponds to a left boundary edge.
	The same argument works for right boundary edges. 
\end{proof}

We conclude this subsection with the target invariance properties of percolation interfaces. 
\begin{definition}\label{def:interface-target}
	Given $\elll,\ellr,n\in \N_0$ and $\el=\elll+\ellr$, let $(\Map,\Be,\omega)\in \cK^{\mrm}(\el)$ and  $\wt \omega$ be such that  $\wt\omega=\omega$ on $\cV(\Map)\setminus \cV(\partial \Map)$ and $(\Map,\Be,\wt\omega)\in \DP(n;\elll,\ellr)$. Let $\wt\lambda$ be the percolation interface  of  $(\Map,\Be,\wt\omega)$.  We call $\wt\lambda$ the percolation interface of $(\Map,\Be,\omega)$ with $(\elll,\ellr)$-boundary condition.  
	
	For an edge $e\in\wt\lambda(\N_0)$ other than the target $\wt \Be$ of $\wt \lambda$, let $i=\wt\lambda^{-1}(e)$ and let $\wt \Map_i$ be the 2-connected component of $\Map-\wt\lambda([0,i-1]_\Z)$ containing $\wt\Be$.  
	Then the 2-connected component of $\wt\Map_i-e$ not containing $\wt \Be$, if it exists, is called the 
	\notion{2-connected component disconnected from $\wt \Be$ when the edge $e$  is peeled by $\wt \lambda$}.
\end{definition} 

\begin{lem}\label{lem:interface}
	In the setting of Definition~\ref{def:interface-target}, let $\wt \tau=\acute{\lambda}^{-1}(\wt \Be)$ so that $\acute{e}_{\wt\tau}=\wt \Be$.	 Let $\tau(0)=0$, $\tau(1)$,$\cdots$, $\tau(\wt m)=\wt\tau$ be the increasing sequence  of ancestor-free times for $\acute{\cZ}$ relative to $\wt \tau$. Then  $\wt\lambda(i)= \acute{\lambda}(\tau(i))$ for all $i\in [0,\wt m]_\Z$.
	Let $(\wt \Map_0,\wt\Be,\wt \omega_0)=(\Map,\Be,\wt \omega)$ and  $(\wt \Map_i,\wt \Be_i,\wt \omega_i)=\Peel(\wt\Map_{i-1},\wt \Be_{i-1},\wt\omega_{i-1})$ for $i\in [1,m]_\Z$.
	For $i\in [0,\wt m-1]_\Z$,  $\tau(i+1)>\tau(i)+1$ if and only if $\wt\Map_i-\wt\Be_i$ has two 2-connected components and $\wh\Be \in  \cE(\wt\Map_{i+1})$.  
	In this case, define $(\wt \Map''_i,\wt \Be''_i,\wt\omega''_i)$ as $(\Map'',\Be'',\omega'')$  in Definition~\ref{def:peel-one} with $(\wt \Map_i,\wt \Be_i,\wt \omega_i)$ in place of $(\Map, \Be, \omega)$.
	Then $\acute{\cZ}|_{[\tau(i)+1, \tau(i+1)]_\Z}=\Phi(\wt \Map''_i,\wt \Be''_i, \wt \omega''_i)$. 
\end{lem}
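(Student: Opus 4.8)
The plan is to argue by induction on $N=\#\cE(\Map)$, in parallel with the proofs of Theorem~\ref{thm:bijection} and Lemmas~\ref{lem:walk}, \ref{lem:component}, and~\ref{lem:future}. I would first observe that the statement extends verbatim to an arbitrary dichromatically coloured $(\Map,\Be,\omega)\in\DP$ — with $\wt\omega$ ranging over the recolourings of its inner-vertex configuration having prescribed boundary split of the same total length, and $\wh\Be$ being the target of the $\omega$-interface — and prove this slightly stronger statement, since the peeling recursion does not preserve $\DPm$ (the component $\Map'$ below is only in $\DP$). The base case $N=1$ is immediate, since then $\Map$ is degenerate and both $\wt\lambda$ and $\acute\lambda$ are trivial with $\wt\tau=0$. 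For the inductive step I would peel the root edge once: recall from Definition~\ref{def:peel-one} the triangle $t=t(\Map,\Be)$ and its apex $v=v(\Map,\Be)$, which depend only on $(\Map,\Be)$, and distinguish the cases $v\notin\cV(\bdy\Map)$ and $v\in\cV(\bdy\Map)$.

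When $v\notin\cV(\bdy\Map)$, we have $\wt\omega(v)=\omega(v)$ because $\wt\omega$ and $\omega$ differ only on $\cV(\bdy\Map)$; hence the first interface step is the same for both colourings, $\acute\lambda(1)=\Be'(\Map,\Be,\omega)=\Be'(\Map,\Be,\wt\omega)=\wt\lambda(1)$, and $\ol\Map_1=\Map-\Be=\Map'$ is $2$-connected. A routine check of left and right boundary lengths (as in the proof of Theorem~\ref{thm:bijection}) shows that $\wt\omega':=\wt\omega|_{\cV(\Map')}$ is exactly the recolouring of $\omega'=\omega|_{\cV(\Map')}$ corresponding to the one-step peel of $(\Map,\Be,\wt\omega)$, so that $\wt\lambda$, reindexed by one, is the percolation interface of $(\Map',\Be',\wt\omega')$ with the appropriate boundary condition. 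By~\eqref{eq:restriction}, $\acute\cZ|_{[1,N]_\Z}=\Phi(\Map',\Be',\omega')$, and $\Delta\acute\cZ_1$ is an $a$- or $b$-step; since that forces one coordinate of $\acute\cZ(1)$ to strictly exceed the corresponding coordinate of $\acute\cZ(0)$, the time $1$ is not an ancestor of $0$, and any ancestor of $0$ in $(0,\wt\tau]_\Z$ is a fortiori an ancestor of $1$, so $0$ is ancestor-free relative to $\wt\tau$ as soon as $1$ is. Thus the ancestor-free times of $\acute\cZ$ relative to $\wt\tau$ are $0$ together with $1$ plus the reindexed ancestor-free times of $\acute\cZ|_{[1,N]_\Z}$ relative to $\wt\tau-1$, and applying the inductive hypothesis to $(\Map',\Be',\omega')$ yields all the assertions, with $\wt\Map_0=\Map$ and $\wt\Map_1=\Map'$.

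When $v\in\cV(\bdy\Map)$, the map $\Map-\Be$ splits into $2$-connected components $\Map'\ni\wh\Be$ and $\Map''$, and by Definition~\ref{def:spacefilling} together with~\eqref{eq:concatenation} the walk $\acute\cZ|_{[1,N]_\Z}$ is the concatenation of $\Phi(\Map'',\Be'',\omega'')$ and $\Phi(\Map',\Be',\omega')$, with splice at $\Upsilon=\#\cE(\Map'')+1=\inf\{i\ge 2:i\prec_{\acute\cZ}1\}$. Here I would split further according to whether the target $\wt\Be$ of $\wt\lambda$ lies in $\cE(\Map')$ (equivalently $\wt\tau\ge\Upsilon$) or in $\cE(\Map'')$ (equivalently $\wt\tau<\Upsilon$). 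In the first case the one-step peel of $(\Map,\Be,\wt\omega)$ also disconnects $\Map''$, the $\wt\Be$-component is $\Map'$, so $\wt\Map_1=\Map'\ni\wh\Be$; one checks (boundary-length bookkeeping again) that $\wt\omega|_{\cV(\Map')}$ is the correct recolouring of $\omega'$, applies the inductive hypothesis to $(\Map',\Be',\omega')$, and reads off $\tau(1)=\Upsilon$ and the remaining $\tau(i)$ from the concatenation. In the second case the roles of $\Map'$ and $\Map''$ in the $\wt\omega$-peel are interchanged: $\wt\Map_1=\Map''$ and $\wh\Be\notin\cE(\wt\Map_1)$, the inductive hypothesis is applied to $(\Map'',\Be'',\omega'')$, and the component $\Map'$, which contains $\wh\Be$, is not visited by $\acute\lambda$ before time $\wt\tau$, so that $\tau(1)=1$ and no extra $c$-step is inserted there. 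In both sub-cases the equivalence ``$\tau(i+1)>\tau(i)+1$ if and only if $\wt\Map_i-\wt\Be_i$ splits and $\wh\Be\in\cE(\wt\Map_{i+1})$'', together with the identity $\acute\cZ|_{[\tau(i)+1,\tau(i+1)]_\Z}=\Phi(\wt\Map''_i,\wt\Be''_i,\wt\omega''_i)$, reduces via Lemmas~\ref{lem:component} and~\ref{lem:future} and~\eqref{eq:future} to the corresponding statements for $(\Map',\Be',\omega')$ or $(\Map'',\Be'',\omega'')$.

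The step I expect to be the main obstacle is controlling, through the recursion, the position of the original target $\wh\Be=\acute\lambda(N-1)$ relative to the tower of $2$-connected sub-maps produced by the $\wt\omega$-peeling of $\wt\lambda$: one must show that a bubble $\wt\Map''_i$ disconnected from $\wt\Be$ at step $i$ is explored by $\acute\lambda$ during the block $[\tau(i)+1,\tau(i+1)-1]_\Z$ precisely when $\wh\Be$ lies on the $\wt\Be$-side of the peeled edge (i.e.\ $\wh\Be\in\cE(\wt\Map_{i+1})$), and is otherwise explored only after time $\wt\tau$. This is exactly what the clause ``$\wh\Be\in\cE(\wt\Map_{i+1})$'' in the statement encodes, and keeping it consistent under the two-way split in the second case is the crux; once it is in place, all the walk increments are forced by~\eqref{eq:restriction},~\eqref{eq:concatenation}, and the already-established parts of Theorem~\ref{thm:bijection}.
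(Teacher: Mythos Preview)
Your proposal is correct and follows essentially the same route as the paper's proof: induction on $N=\#\cE(\Map)$, case split on whether $v\in\cV(\bdy\Map)$, and in the boundary case a further split according to whether $\wt\Be\in\cE(\Map')$ or $\wt\Be\in\cE(\Map'')$, applying the induction hypothesis via~\eqref{eq:restriction} or~\eqref{eq:concatenation} respectively. Your explicit remark that the induction must be carried out over all of $\DP$ (since $(\Map',\Be',\omega')$ need not lie in $\DPm$) is a point the paper uses implicitly but does not state.
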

\begin{proof}
	We will use induction on $N$. The result is immediate for $N=1$ since $\wt\Be=\wh\Be$.
	Assume  Lemma~\ref{lem:interface} is true for maps with less than $N$ edges. 
	When $N>1$ and $v(\Map,\Be)\notin \cV(\bdy\Map)$, then $\tau(1)=\tau(0)+1$ is the first ancestor-free time for $\acute{\cZ}$ relative to $\wt \tau$ after $0$.
	Moreover $\wt\lambda(1)=\lambda(1)$.
	Now Lemma~\ref{lem:interface} follows by applying the induction hypothesis and \eqref{eq:restriction} to $(\Map',\Be',\omega')$.
	
	Now suppose  $N>1$ and $v(\Map,\Be)\in \cV(\bdy\Map)$.
	If $\wt \Be\in \cE(\Map')$ and $\Upsilon$ is as define in Definition~\ref{lem:induction}, we have $\Upsilon\le \wt \tau$. By Lemma~\ref{lem:induction}, we see that $\tau(1)=\Upsilon>1$ and $\wt\lambda(1)=\lambda(1)=\Be'$.
	Moreover, {we have} $\{\tau(2),\cdots, \tau(\wt m) \}\subset [\Upsilon+1,N]_\Z$. Since $(\wt \Map''_0,\wt \Be''_0,\wt\omega''_0)=(\Map'',\Be'',\omega'')$,  we have $\acute{\cZ}|_{[1, \tau(1)]_\Z}=\Phi(\wt \Map''_0,\wt \Be''_0, \wt \omega''_0)$. Now Lemma~\ref{lem:interface} follows by applying the induction hypothesis and \eqref{eq:concatenation} to $(\Map',\Be',\omega')$.
	
	We are left with the case   $N>1$, $v(\Map,\Be)\in \cV(\bdy\Map)$, and $\wt \Be\notin\cE(\Map')$.  In this case,  $\wt \Be\in\cE(\Map'')$ and $\Upsilon>\wt\tau$.
	By the definition of $\Upsilon$, we see that $1$ is ancestor-free for $\acute\cZ$ relative to $\Upsilon-1$, hence also relative to $\wt\tau$. Therefore $\tau(1)=1$. Since $\wh\Be\notin \cE(\wt\Map_1)$, in this case $\wt\Map''_0$ is not defined. Now Lemma~\ref{lem:interface} follows by applying the induction hypothesis and \eqref{eq:concatenation} to $(\Map'',\Be'',\omega'')$.
\end{proof}
\begin{remark}\label{rmk:order}
	By  Lemma~\ref{lem:walk}  the order in which edges are visited for $\acute\lambda$ and $\lambda$ are consistent. The same statement holds for $\acute\lambda$ and $\wt\lambda$ in the setting of Lemma~\ref{lem:interface}. This geometric observation is fundamental to the construction in Section~\ref{sec-sle6-def-discrete}.
\end{remark}

Lemma~\ref{lem:interface} gives the random walk encoding of the 2-connected components of $\Map-\wt\lambda$ that are visited by $\acute{\lambda}$ before $\wt \Be$. 
Suppose $\wt \Be\neq \wh\Be$. Set $i=\wt\tau+1$ in  Lemma~\ref{lem:future} and recall the notations there, including  $\{\xi_j\}_{j\in [0,k]_\Z}$ and $\{\ol\Map^j_{i}\}_{j\in[1,k]_\Z}$. Then the 2-connected components visited after $\wt \Be$ are exactly $\{\ol\Map^j_{i}\}_{j\in[1,k]_\Z}$. Moreover, we can identify the time relative to $\acute{\lambda}$ when these 2-connected components are disconnected from $\wt \Be$ when exploring along $\wt\lambda$ using the following lemma.

\begin{lem}\label{lem:future2}
	In the setting of the above paragraph, for $j\in [1,k]_\Z$, let $\sigma_j$ be such that $\Delta\acute{\cZ}_{\sigma_j}$  is the unique matching step  of the $c$-step $\Delta\acute{\cZ}_{\xi_{j-1}}$, 
	whose existence is ensured by Lemma~\ref{lem:boundary}.  Then both $\acute\lambda(\sigma_j-1)$ and $\acute{\lambda}(\sigma_j)$ are  on the interface $\wt\lambda$.
	Moreover, the map $\ol\Map^{j}_i$ is the 2-connected component disconnected from $\wt \Be$ when the edge $\acute\lambda(\sigma_j-1)$  is peeled by $\wt \lambda$ (recall Definition~\ref{def:interface-target}).
\end{lem}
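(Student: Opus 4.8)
I would prove Lemma~\ref{lem:future2} by the same induction on $N=\#\cE(\Map)$ that underlies Lemmas~\ref{lem:future} and~\ref{lem:interface}; only the additional bookkeeping is new. Write $v=v(\Map,\Be)$; since $\wt\Be\ne\wh\Be$ we have $N>1$, so $\Peel(\Map,\Be,\omega)=(\Map',\Be',\omega')$ (and, when $v\in\cV(\bdy\Map)$, also $(\Map'',\Be'',\omega'')$) are defined. Two preliminary observations drive the argument. First, each $\Delta\acute\cZ_{\xi_{j-1}}$ for $1\le j\le k$ is a $c$-step: for $j\ge 2$ because $\xi_{j-1}\prec_{\acute\cZ}(\wt\tau+1)$, and for $j=1$ because $\acute e_{\wt\tau}=\wt\Be$ is a degenerate $2$-connected component of $\ol\Map_{\wt\tau}$; its associated edge $\acute\lambda(\xi_{j-1}-1)$ equals $\wh e_i^{j-1}$ for $j\ge 2$ and $\wt\Be$ for $j=1$, hence is not a left- or right-boundary edge of $(\Map,\Be,\omega)$, and $\xi_{j-1}\ne N$, so Lemma~\ref{lem:boundary} supplies a unique matching $a$- or $b$-step $\Delta\acute\cZ_{\sigma_j}$, so $\sigma_j$ is well defined. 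Second (the key point), matching respects the block decomposition of the recursion: when $v\in\cV(\bdy\Map)$, $\acute\cZ|_{[1,N]}$ splits at $\Upsilon$ as the concatenation of $\acute\cZ|_{[1,\Upsilon]}$ (encoding $(\Map'',\Be'',\omega'')$, along which $\acute\cL\ge\elll'+1$ and $\acute\cR\ge\ellr'+1$ on $[1,\Upsilon-1]$) and $\acute\cZ|_{[\Upsilon,N]}$ (encoding $(\Map',\Be',\omega')$, with $\acute\cZ(\Upsilon)=(\elll',\ellr')$); a $c$-step lying in one block cannot be matched to an $a$- or $b$-step in the other, since the level such a matching step would have to attain lies strictly below the walk on $[1,\Upsilon-1]$, the one exception being $\Delta\acute\cZ_1$, which is matched to the $c$-step at time $\Upsilon$ (itself a $c$-step) and to no other.

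With these in hand the induction proceeds case by case, exactly as for Lemma~\ref{lem:interface}. If $v\notin\cV(\bdy\Map)$, then $\acute\cZ|_{[1,N]}=\Phi(\Map',\Be',\omega')$ by~\eqref{eq:restriction}, $\wt\lambda$ is the interface of $(\Map',\Be',\omega')$ with $\Be$ prepended, and $\wt\tau$, the $\xi_j$, the $\ol\Map_i^j$ and (by the localization point) the $\sigma_j$ are all obtained by a time shift of $1$ from the corresponding data for $(\Map',\Be',\omega')$; the claim follows from the inductive hypothesis. If $v\in\cV(\bdy\Map)$ and $\wt\Be\in\cE(\Map')$, then $\wt\lambda$ enters and remains in $\Map'$: here $\Map''$ is cut off at the first peeling step and is the component of Lemma~\ref{lem:interface}, not one of the $\ol\Map_i^j$; all $\xi_{j-1}\ge\Upsilon+1$, so all $\ol\Map_i^j$ and (by localization, together with the fact that $\Delta\acute\cZ_\Upsilon$ is itself a $c$-step) all $\sigma_j$ lie strictly inside $[\Upsilon,N]$, and we invoke the inductive hypothesis for $(\Map',\Be',\omega')$. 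If $v\in\cV(\bdy\Map)$ and $\wt\Be\in\cE(\Map'')$, then the target-containing component is exchanged between $\omega$ and $\wt\omega$: $\wt\lambda$ enters $\Map''$ and terminates there, the components $\ol\Map_i^j$ with $j<k$ lie in $[1,\Upsilon]$ and are governed by the inductive hypothesis for $(\Map'',\Be'',\omega'')$, while the last component $\ol\Map_i^k=\Map'$ is disconnected from $\wt\Be$ precisely when $\wt\lambda$ peels $\Be$ itself; one checks directly that its $c$-step $\Delta\acute\cZ_{\xi_{k-1}}=\Delta\acute\cZ_\Upsilon=(-1,-1)$ is matched to $\Delta\acute\cZ_1$, so $\sigma_k=1$, both $\acute\lambda(\sigma_k-1)=\Be=\wt\lambda(0)$ and $\acute\lambda(\sigma_k)=\wt\lambda(1)$ lie on $\wt\lambda$, and by Definition~\ref{def:peel-one} $\Map'$ is exactly the $2$-connected component of $\Map-\Be$ not containing $\wt\Be$, as required by Definition~\ref{def:interface-target}.

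The main obstacle is the localization point in the first paragraph: one must track, through the recursion, that the matching step $\sigma_j$ of $\Delta\acute\cZ_{\xi_{j-1}}$ lands in the same sub-block as $\xi_{j-1}$ (or, in the $\wt\Be\in\cE(\Map'')$ case, at time $1$), because both claims of the lemma — that $\acute\lambda(\sigma_j-1)$ and $\acute\lambda(\sigma_j)$ lie on $\wt\lambda$, and the identification of $\ol\Map_i^j$ as the component cut off when $\acute\lambda(\sigma_j-1)$ is peeled — reduce to the inductive hypothesis only once $\sigma_j$ is known to sit in the correct block. Making this precise rests on the exact comparison of the walk heights of $\acute\cL,\acute\cR$ across the concatenation point $\Upsilon$, which is where Lemma~\ref{lem:boundary} and the monotonicity of $\acute\cL,\acute\cR$ on the relevant index sets (cf.~\eqref{eqn-I_U-property}) are used. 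The $\wt\Be\in\cE(\Map'')$ case is the most delicate for bookkeeping, since there the target-side component is swapped and $\Map'$ itself reappears as one of the $\ol\Map_i^j$, cut off at the very first step of $\wt\lambda$ even though it is visited by $\acute\lambda$ only after $\wt\Be$.
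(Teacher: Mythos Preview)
Your approach is sound but takes a different route from the paper. You induct on $N$ via the one-step peeling recursion (Lemma~\ref{lem:induction}), doing a three-way case split on the first step and tracking how matching steps localize across the block decomposition at $\Upsilon$. The paper instead exploits target invariance directly: it introduces $m'$, the last step at which the interfaces $\lambda$ (targeting $\wh\Be$) and $\wt\lambda$ (targeting $\wt\Be$) still agree, so that $\ol\Map_i^k=\Map_{m'+1}$ is the component containing $\wh\Be$, cut off from $\wt\Be$ precisely when $\wt\lambda$ peels $\lambda(m')=\wt\lambda(m')$. A short walk computation then gives $\xi_{k-1}=T(m'+1)$ and $\sigma_k=T(m')+1$, settling $j=k$ in one stroke; the cases $j<k$ follow by reapplying the same argument inside $(\Map''_{m'},\Be''_{m'},\omega''_{m'})$ via Lemma~\ref{lem:component}. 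The paper's induction thus strips off one \emph{component} per step rather than one edge, which is shorter and more geometric; your scheme is more uniform with the proofs of Lemmas~\ref{lem:future} and~\ref{lem:interface} but, as you acknowledge, carries heavier bookkeeping in the $\wt\Be\in\cE(\Map'')$ case.

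One correction to your preliminary observations: the claim that $\acute\lambda(\xi_0-1)=\wt\Be$ ``is not a left- or right-boundary edge of $(\Map,\Be,\omega)$'' is false. In the setting of Definition~\ref{def:interface-target} with $(\Map,\Be,\omega)\in\DP^{\mry}(\ell)$, the edge $\wt\Be$ \emph{is} one of the $\ell$ left boundary edges. The correct reason $\sigma_1$ exists and is unique is that $(\Map,\Be,\omega)$ is monochromatic (say $\ellr=0$): every non-final $c$-step has a matching $b$-step by Lemma~\ref{lem:boundary}, while the $c$-step at $\xi_0=\wt\tau+1$ has no matching $a$-step \emph{precisely because} $\wt\Be$ is a left boundary edge. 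The analogous uniqueness check for $j\ge2$ likewise hinges on monochromaticity and on $\wh e_i^{j-1}$ lying on $\bdy\Map$; this needs a separate short argument rather than the blanket ``not a boundary edge'' claim. None of this affects your inductive scheme, but the existence/uniqueness paragraph should be rewritten.
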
 
\begin{proof}
	Let $m'$ be the largest nonnegative integer such that $\wh\Be$ and $\wt \Be$ are in the same 2-connected component of $\Map-\lambda([0,m']_\Z)$.
	By definition, {we have} $\lambda|_{[0,m']_\Z}=\wt\lambda|_{[0,m']_\Z}$ and $T(m')<\wt\tau< T(m'+1)$, where $T(\cdot)$ is as defined in Lemma~\ref{lem:walk}.
	For any $i' \in[T(m'), T(m'+1)]_\Z$, 
	we see that $\ol\Map_{T(m'+1)}$ 
	is the last 2-connected component of $\ol\Map_{i'}$ visited by $\acute\lambda$. 
	Therefore $\ol\Map_i^{k}=\ol\Map_{T(m'+1)}$ and $\xi_{k-1}=T(m'+1)$. By the definition of $\sigma_k$, we have $\sigma_k=T(m')+1$. By the definition of $m'$, we see that $\acute{\lambda}(\sigma_k-1)=\lambda(m')=\wt \lambda(m')$ is the last common edge of $\lambda$ and $\wt \lambda$. Moreover, the map $\ol\Map^{k}_i$ is the 2-connected component disconnected from $\wt \Be$ when the edge $\wt \lambda(m')$  is peeled by $\wt \lambda$.
	This gives  the desired result for $j=k$.
	The $j<k$ case follows by considering  $(\Map''_{m'}, \Be''_{m'},\omega''_{m'})$ (see Lemma~\ref{lem:component} for definition)  via induction.
\end{proof}

\subsection{A nested percolation interface exploration}
\label{sec-sle6-def-discrete}
Let $(\Map,\Be)$ be a triangulation with simple boundary and let $\omega$ be a site percolation on $(\Map,\Be)$ with monochromatic boundary condition. We identify $(\Map,\Be ,\omega)$ with an element of $\DPm$ as in Remark~\ref{rmk:flip} and let $\acute{\lambda}$ be its space-filling exploration. In this section, we represent $\acute\lambda$ as a nested percolation exploration which will be convenient when we take the scaling limit in Section~\ref{sec-scaling}. {We refer to Figure \ref{fig-2-conn} for an illustration.}

\begin{figure}
	\includegraphics[scale=0.75]{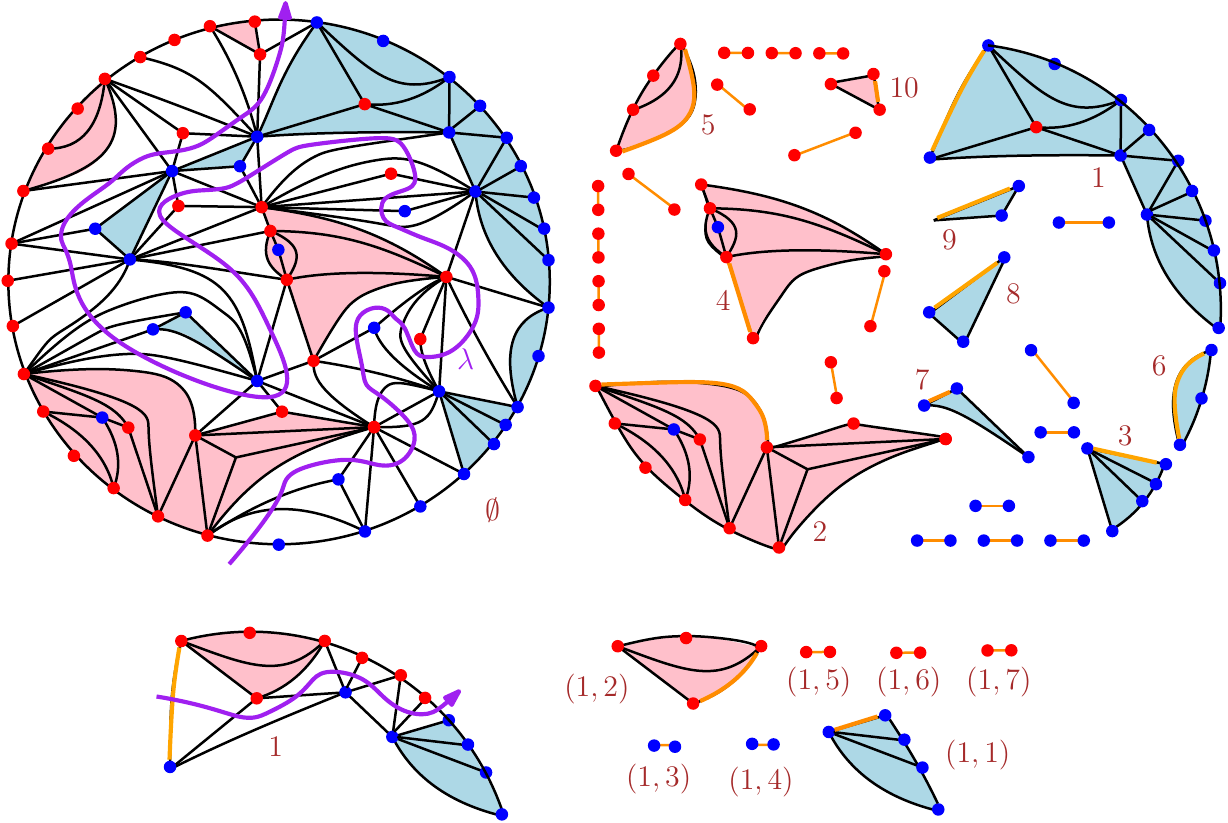}
	\caption{
		{{\bf Top left}: A map $(\Map_{\emptyset},\Be _{\emptyset},\omega_\emptyset)\in \DP(16,17)$ with the percolation interface shown in purple.
		{\bf Top right}: The 2-connected components of $\Map_\emptyset-\lambda_\emptyset$. For $k_1=1,\dots,11$ we have written $k_1$ next to the map $(\Map_{k_1},\Be_{k_1},\omega_{k_1})$. To simplify the drawing, we did not add numbers next to the maps $(\Map_{k_1},\Be_{k_1},\omega_{k_1})$ for $k_1=11,\dots,29$, which consist of only one edge. The edges $\Be_{k_1}$ are marked in orange and we have $\type_{k_1}=\op{mono}$ for all $k_1$. 
	{\bf Bottom}: The maps $(\Map_{(1,k_2)},\Be_{(1,k_2)},\omega_{(1,k_2)})$ for $k_2=1,\dots,7$. We have $\type_{\bk}=\op{mono}$ for $\bk=(1,1),(1,3),(1,4)$ and $\type_{\bk}=\op{di}$ for $\bk=(1,2),(1,5),(1,6),(1,7)$.}}
	\label{fig-2-conn}
\end{figure}

\begin{defn} \label{def-multi-index}
	A \emph{multi-index} is an element of $\bk \in \bigcup_{m=0}^\infty \BB N^m$ (where here $\BB N^0 = \{\emptyset\}$). For a multi-index $\bk = (k_1,\dots,k_m) \in \BB N^m$, we define its \emph{parent} $\bk^- := (k_1,\dots,k_{m-1}) \in \BB N^{m-1}$. By convention, we write $\BB N^0 = \{\emptyset\}$ so that $\bk^- =\emptyset$ 
	for $\bk\in \BB N$. 
	We write $\bk^{-r}$ for the $r$th parent of $\bk$, i.e., the $r$th iterate of the parent function applied to $\bk$.  We define the set of \emph{children} of $\bk$ by
	\eqbn
	\mcl C_{\bk} := \left\{ \bk' \in \BB N^{m+1} : \bk^- = \bk\right\} .
	\eqen 
\end{defn}

For each $n\in\BB N$, we iteratively define for each multi-index $\bk \in \bigcup_{m=0}^\infty \BB N^m$  an element $(\Map_{\bk} , \Be_{\bk},\omega_{\bk})$ in $\DP \cup\{\emptyset\}$. 
We also assign a \emph{type} in $\{\op {mono},\op {di} \}$ to  this element, denoted by $\type_\bk$. 
As we will explain in Remark~\ref{rmk:type}, the strings $\op{mono}$ and $\op{di}$ typically indicate whether  the associated element has monochromatic or dichromatic boundary condition, but there are some exceptions, and the precise definition of $\type_\bk$ is as given below.

For each multi-index $\bk=(\bk^-,k)$, the map $(\Map_{\bk} , \Be_{\bk},\omega_{\bk})$ will be defined as a particular percolated submap of $(\Map_{\bk^-}, \Be_{\bk^-},\omega_{\bk^-})$ (modulo minor modifications to the boundary condition of $\omega_{\bk}$). The idea of the iteration is to consider a percolation interface for $(\Map_{\bk^-}, \Be_{\bk^-},\omega_{\bk^-})$, and let $(\Map_{\bk} , \Be_{\bk},\omega_{\bk})$ be the $k$-th largest complementary component of the percolation interface. If $\type_{\bk^-}=\op{di}$, then the percolation interface is the natural percolation interface between blue and red as defined right after Remark~\ref{rmk:flip}, while if $\type_{\bk^-}=\op{mono}$ then the percolation interface is obtained by considering a percolation exploration towards the boundary edge which is directly opposite $\Be_{\bk^-}$.

We will now give the precise definition of the maps $(\Map_{\bk}, \Be_{\bk},\omega_{\bk})$ and $\type_\bk$. First consider the case $m=0$ and the empty multi-index $\emptyset$. 
Let $(\Map_{\emptyset},\Be _{\emptyset})= (\Map,\Be )$ and let $\el_\emptyset$ be the boundary length of $\Map$. 
Let $\omega_\emptyset$ be the site percolation on $\Map_\emptyset$ such that $\omega_\emptyset$ and $\omega$ agree on $\cV(\Map_\emptyset)\setminus\cV(\bdy \Map_\emptyset)$ and $(\Map_{\emptyset},\Be _{\emptyset},\omega_\emptyset)\in \DP(\lfloor\frac\ell2\rfloor-1, \lceil\frac\ell2\rceil-1)$.  
Let $\lambda_\emptyset$ be the percolation interface of $(\Map_\emptyset , \Be _\emptyset,\omega_\emptyset)$ 
and  let $\wh\Be _\emptyset$ be the target edge of $\omega_\emptyset$. 
We let $\type_\emptyset=\op {mono}$.

Now suppose $m=1$ and $\bk\in \cC_\emptyset$. Then $\bk=k_1$ for some $k_1\in \N$. Let 
$\Map_{\bk}$ be the 2-connected component of $\Map_{\emptyset}- \lambda_{\emptyset}$  with the $k_1$-th largest boundary length,
with ties broken in some arbitrary manner; or let $\Map_{\bk}=\emptyset$ if there are fewer than $k_1$ such components.  Let $\type_\bk=\op{mono}$ (resp.\ $\type_\bk=\op{di}$) if $\Map_\bk$ is a 2-connected component of  $\Map-\lambda_\emptyset$ visited by $\acute \lambda$ before (resp.\ after) $\wh\Be_\emptyset$.
If $\Map_\bk\neq \emptyset$, let $\Be _{\bk}$ be the first edge in  $\cE(\Map_{\bk})$ visited by  $\acute{\lambda}$ and let $\el_{\bk}$ be the boundary length of $\Map_\bk$.

We will now define $\omega_\bk$. If $\type_\bk=\op{mono}$, let $\omega_\bk$ be such that 
\begin{equation}\label{eq:swap}
(\Map_\bk,\Be_\bk, \omega_\bk)\in  \DP\Big(\Big\lfloor\frac{\el_\bk}2\Big\rfloor-1, \Big\lceil\frac{\el_\bk}2\Big\rceil-1\Big)\qquad\textrm{and}\qquad \omega_\bk=\omega\;\textrm{on}\;\cV(\Map_\bk) \setminus \cV(\bdy\Map_\bk).
\end{equation}
For the case $\type_\bk=\op{di}$, let  $i=\acute{\lambda}^{-1}(\wh\Be_\emptyset)+1$ and recall  the notations of Lemma~\ref{lem:future}, including $\ol\Map_i$,
$k$ and $(\ol\Map^j_i, e^j_i  ,\omega^j_i)_{j\in [1,k]_\Z}$.
By Lemma~\ref{lem:future2}, there must be $j\in[1,k]_\Z$ such that $(\Map_\bk,\Be_\bk)=(\ol\Map_i^{j},e^j_i)$. 
We let $\omega_\bk=\omega_i^j$ for this choice of  $j$. 
By Lemma~\ref{lem:future}, the  space-filling exploration of $(\Map_\bk,\Be_\bk,\omega_\bk)$ visits edges of $\cE(\Map_\bk)$ in the same order as $\acute\lambda$. 

For $\Map_\bk\neq \emptyset$, let $\lambda_\bk$ be the percolation interface of $(\Map_\bk , \Be _\bk,\omega_\bk)$ and let $\wh\Be _\bk$ be the target edge.
This concludes our construction for $\bk\in \cC_\emptyset$.

Now we assume $\bk\in\N^2$. If $\Map_{\bk^-}=\emptyset$, set $\Map_{\bk}=\emptyset$.
Otherwise, if $\type_{\bk^-}=\op{di}$, we still let $\Map_\bk$ be defined from $(\Map_{\bk^-},\Be_{\bk^-},\omega_{\bk^-})$ in the same way as $\Map_{\bk}$ is defined from $(\Map_\emptyset,\Be_\emptyset,\omega_\emptyset)$. However, in this case, if $\Map_\bk\neq \emptyset$, then $\Map_{\bk}$ is visited by $\acute{\lambda}$ before $\wh\Be_{\bk^-}$ and $\omega_{\bk^-}|_{\cV(\Map_\bk)}$ is monochromatic.
In light of this we assign $\type_\bk=\op{mono}$ for all $\bk$ such that $\Map_\bk\neq \emptyset$.  Still let  $\Be_\bk$ be the first edge in $\Map_\bk$ visited by $\acute{\lambda}$.  Let $\ell_\bk$, $\omega_\bk$, $\lambda_\bk$, $\wh\Be_\bk$ be defined from $(\Map_\bk,\Be_\bk,\omega|_{\cV(\Map_\bk) \setminus\cV(\bdy\Map_\bk)})$ in the same way as  $\ell_\emptyset$, $\omega_\emptyset$, $\lambda_\emptyset$, $ \wh\Be_\emptyset$ are defined from $(\Map_\emptyset,\Be_\emptyset,
\omega|_{\cV(\Map_\emptyset)\setminus \cV(\partial\Map_\emptyset)})$ described above.

Now let $m>1$ and consider a multi-index $\bk=(k_1,\cdots,k_m)$.
Inductively, suppose our construction has been done for all multi-indices in $\cup_{n=0}^{m-1} \N^n$.
If $\type_{k_1}=\op{mono}$, let $\ol\bk=k_1$ and $\bk'=(k_2,\cdots,k_m)$. If $m>2$, $\type_{k_1}=\op{di}$, and $\type_{(k_1,k_2)}=\op{mono}$, let $\ol\bk=(k_1,k_2)$ and $\bk' =(k_3,\cdots,k_m)$.
In both cases, let $(M_\bk,\Be_\bk,\omega_\bk)$, $\type_\bk$, $\ell_\bk$, $\lambda_\bk$, $\wh \Be_\bk$ be defined from  $(\Map_{\ol\bk}, \Be_{\ol\bk},\omega|_{\cV(\Map_{\ol \bk})\setminus \cV(\bdy \Map_{\ol \bk}) })$ in the same way as $(\Map_{\bk'},\Be_{\bk'}, \omega_{\bk'})$, $\type_{\bk'}$, $\ell_{\bk'}$, $\lambda_{\bk'}$, $\wh \Be_{\bk'}$  are defined from $(\Map_\emptyset,\Be_\emptyset,\omega|_{\cV(\Map_\emptyset)\setminus \cV(\bdy\Map_\emptyset)})$.  
Besides these two cases, we must have $\Map_{\ol\bk}=\emptyset$ and we set $\Map_\bk=\emptyset$.

For each multi-index $\bk$, the iterative construction  allows us define $\Map_\bk$ to be $\emptyset$ or $(\Map_\bk,\Be,\omega_\bk)\in \DP$ with $\type_\bk\in \{\op{mono},\op{di}\}$ and $\ell_\bk,\lambda_\bk,\Be_\bk$ being  the boundary length, percolation interface, and the target edge of $(\Map_\bk,\Be,\omega_\bk)$, respectively. 
If $\Map_\bk\neq \emptyset$,  we call $\Map_{\bk}$ the  \emph{bubble} of index $\bk$ associated with $(\Map,\Be,\omega)$.  We make some basic observations about these bubbles:
\begin{enumerate}[label=(\arabic{enumi})]
	\item $\acute{\lambda}$ visits $\lambda_\bk(\N_0)$ in the same order as $\lambda_\bk$;
	\item   $\type_\bk=\op{mono}$ if and only if  $\bk=\emptyset$ or $\Map_{\bk}$ is visited by $\acute{\lambda}$ before $\wh\Be_{\bk^-}$, in which case 
	$(\Map_{\bk},\Be _{\bk},\omega_\bk)\in \DP(\lfloor\frac{\el_\bk}2\rfloor-1, \lceil\frac{\el_\bk}2\rceil-1)$ and $\omega_\bk=\omega$ on $\cV(\Map_\bk)\setminus \cV(\bdy\Map_\bk)$;  
	\label{item:discrete-bdy1}
	\item $\type_\bk=\op{di}$ if and only if  $\Map_{\bk}$ is visited by $\acute{\lambda}$ after $\wh\Be_{\bk^-}$, in which case  the space-filling exploration of $(\Map_{\bk},\Be _{\bk},\omega_\bk)$  visits edges in $\cE(\Map_\bk)$ in the same order as  $\acute\lambda$. 
	\label{item:discrete-bdy2}
\end{enumerate}
\begin{remark}\label{rmk:type}
	When $(\Map,\Be,\omega)$ is sampled from $\PBT^{\mry}(\el)$, for large $\ell$, the types mono and di typically indicate whether the maps $\Map_{\bk}$ (with coloring as induced from $\Map$) are monochromatic or dichromatic. When $\type_\bk=\op{mono}$, we always have that $\omega_{\bk^-}$ is monochromatic on $\cV(\bdy\Map_\bk)$.
	However, when $\bk^-=\emptyset$ and $\type_\bk={\op {di}}$, {the percolation configuration} $\omega_{\bk^-}$ could still be monochromatic on $\cV(\bdy\Map_\bk)$. For example, it is possible that  $\cV(\bdy\Map_\bk)\subset \cV(\bdy \Map_{\bk^-})$, in which case $\omega|_{\cV(\bdy\Map_\bk)}$ is monochromatic. 
	As we will see later, as $\ell\to \infty$, with probability $1-o_\ell(1)$  $\type_\bk$  honestly indicates whether $\omega$ has monochromatic or dichromatic boundary condition  on $\Map_\bk$.
\end{remark}%

For each $\Map_\bk\neq \emptyset$, let $S_{\bk},T_{\bk}\in \N_0$ be such that  $\acute{\lambda}([S_{\bk},T_{\bk}]_\Z)=\cE(\Map_\bk )$ and let $\wh T_\bk\defeq\acute{\lambda}^{-1}(\wh\Be_\bk) \in [S_\bk , T_\bk]_{\BB Z}$.
Lemmas~\ref{lem:boundary},~\ref{lem:future}, and~\ref{lem:interface}  give the description  of $S_\bk,\wh T_\bk,T_\bk$ in terms of $\acute \cZ=(\acute{\cL},\acute{\cR})$.
\begin{enumerate}[label=(\arabic{enumi})]
	\item If $\type_\bk=\op{mono}$ and $\omega|_{\cV(\bdy\Map_\bk)}$ is red,  then $\wh T_\bk=\inf\left\{t\ge S_\bk: \acute{\cL}_t-\acute{\cL}_{S_\bk}=\lceil\frac{\ell_\bk}{2}\rceil-1 \right\}-1$;
	
	If $\type_\bk=\op{mono}$ and $\omega|_{\cV(\bdy\Map_\bk)}$ is blue, then $\wh T_\bk=\inf\left\{t\ge S_\bk: \acute{\cR}_t-\acute{\cR}_{S_\bk}=\lfloor\frac{\ell_\bk}{2}\rfloor-1 \right\}-1$.  
	
	If $\type_\bk=\op{di}$, then $\wh T_\bk= T_\bk$.  (See Lemma~\ref{lem:boundary}.) \label{item:hat}

	\item
	For $t\in [0,\#\cE(\Map)]_\Z$, let $\ansd(t)$ be the set of ancestor-free times for $\cZ'$ relative to $t$. For each $\Map_\bk\neq \emptyset$,
	the set  of  intervals  $\{ [S_{\bk'}, T_{\bk'}]_\Z : \bk' \in\cC_\bk\textrm{ and }\type_{\bk'}=\op{mono}\}$ equals the set of connected components of  $[S_\bk,\wh T_\bk]_\Z \setminus \ansd(\wh T_\bk)$.
	(See Lemma~\ref{lem:interface}.) \label{item:past}
	
	\item 	If $\type_\bk=\op{mono}$, the set  of intervals  $\{[S_{\bk'}, T_{\bk'}]_\Z: \bk' \in\cC_\bk \textrm{ and } \type_{\bk'}=\op{di}\}$ forms a disjoint union of $[\wh T_\bk+1,T_\bk]_\Z$. Moreover, 
	$t\in \{T_{\bk'}: \bk' \in\cC_\bk \textrm{ and } \type_{\bk'}=\op{di}\}$ if and only if  $t+1\in \{\xi\in[S_\bk,T_\bk]_\Z:  \xi \prec_{\cZ'} 1+\wh T_\bk
	 \}$.
	(See Lemma~\ref{lem:future}.) \label{item:future}

\end{enumerate}

Let $\mcl Z_{\bk} = (\mcl L_{\bk} , \mcl R_{\bk})$ be the boundary length process for $\lambda_{\bk}$. 
By iteratively applying Lemma~\ref{lem-interface-markov}, we have the following Markov property.
\begin{lem} \label{lem-perc-decomp-markov}
	Suppose $(\Map,\Be ,\omega)$ has law of $\PBT^{\mry}(\el)$ for some $\el\in\N_0$.
	For $m\in\BB N_0$, conditioned on $\{\mcl Z_{\bk}: \bk \in \bigcup_{i=1}^m \N^i \}$, the conditional law of $\{(\Map_{\bk'} , \Be _{\bk'} , \omega_{\bk} ): \bk'\in \BB N^{m+1}, \Map_{\bk'}\neq \emptyset \}  $ is that of a collection of independent critical site-percolated Boltzmann triangulations with given boundary condition.
\end{lem}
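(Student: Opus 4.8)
The plan is to prove the statement by induction on $m$, applying the domain Markov property of peeling (Lemma~\ref{lem-interface-markov}) once for each nonempty bubble. Two ingredients are used throughout. First, for any nonempty bubble $(\Map_\bk,\Be_\bk,\omega_\bk)$ its nonempty children $\{(\Map_{\bk'},\Be_{\bk'},\omega_{\bk'}):\bk'\in\cC_\bk\}$ are exactly the $2$-connected components of $\Map_\bk-\lambda_\bk$, and the root edge $\Be_{\bk'}$, the boundary length $\el_{\bk'}$, and the (mono or di) boundary condition of each child are measurable functions of the peeling of $\lambda_\bk$, while the coloring $\omega_{\bk'}$ is obtained from the restriction of $\omega_\bk$ by a deterministic recoloring of its boundary vertices (the balanced recoloring of~\eqref{eq:swap} when $\type_{\bk'}=\op{mono}$, the recoloring $\omega^j_i$ of Lemma~\ref{lem:future} when $\type_{\bk'}=\op{di}$); this is the content of Lemmas~\ref{lem:interface},~\ref{lem:future},~\ref{lem:future2} together with items~\ref{item:hat}--\ref{item:future}. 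Second, any deterministic recoloring of the boundary vertices sends a critical site-percolated Boltzmann triangulation to a critical site-percolated Boltzmann triangulation with the new boundary condition, since the weight~\eqref{eq:Bol-def} depends only on the number of inner vertices and the boundary length, and the Bernoulli-$\tfrac12$ law of the inner coloring is unaffected.

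For the base case $m=0$: a deterministic recoloring of its boundary vertices turns $(\Map,\Be,\omega)\sim\PBT^{\mry}(\el)$ into $(\Map_\emptyset,\Be_\emptyset,\omega_\emptyset)$, which is therefore a critical site-percolated Boltzmann triangulation, and the level-one bubbles are the $2$-connected components of $\Map_\emptyset-\lambda_\emptyset$. Applying Lemma~\ref{lem-interface-markov} to $(\Map_\emptyset,\Be_\emptyset,\omega_\emptyset)$ with $i$ equal to the length of $\lambda_\emptyset$, with the root edges of the complementary components chosen and the colorings modified as in the first paragraph, shows that conditionally on $\mcl Z_\emptyset$ the level-one bubbles are independent critical site-percolated Boltzmann triangulations with the indicated boundary conditions. (For $m=0$ the conditioning should be read as including $\mcl Z_\emptyset$, which is all that ``with given boundary condition'' requires.)

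For the inductive step, assume the statement for $m$: conditionally on $\{\mcl Z_\bk:1\le|\bk|\le m\}$, the nonempty bubbles $(\Map_{\bk'},\Be_{\bk'},\omega_{\bk'})$ with $|\bk'|=m+1$ are independent critical site-percolated Boltzmann triangulations. Since $\mcl Z_{\bk'}$ is a measurable function of $(\Map_{\bk'},\Be_{\bk'},\omega_{\bk'})$, conditioning in addition on $\{\mcl Z_{\bk'}:|\bk'|=m+1\}$ --- that is, conditioning on $\{\mcl Z_\bk:1\le|\bk|\le m+1\}$ --- keeps these bubbles independent and merely conditions each on its own interface boundary length process. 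Applying Lemma~\ref{lem-interface-markov} inside each such bubble $\bk'$ (with $i$ the length of $\lambda_{\bk'}$, and with the choices of the first paragraph) gives that, conditionally on $\mcl Z_{\bk'}$, the children of $\bk'$ are independent critical site-percolated Boltzmann triangulations with given boundary conditions; assembling this over all $\bk'$ with $|\bk'|=m+1$ using the independence of the level-$(m+1)$ bubbles yields the statement for $m+1$.

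The probabilistic content is thus only the repeated application of Lemma~\ref{lem-interface-markov} together with routine conditional-independence (tower-property) manipulations. The step I expect to require the most care --- and the main obstacle --- is combinatorial rather than probabilistic: one must verify that the data $(\Be_{\bk'},\el_{\bk'},\type_{\bk'})$ attached to the child bubbles, which Section~\ref{sec-sle6-def-discrete} defines via the \emph{global} space-filling exploration $\acute\lambda$ and walk $\acute\cZ$, is in fact determined by the peeling of the \emph{parent} interface $\lambda_\bk$ alone, so that Lemma~\ref{lem-interface-markov} can be invoked verbatim. Establishing this is precisely the purpose of the target-invariance and component-decomposition results (Lemmas~\ref{lem:interface},~\ref{lem:future},~\ref{lem:future2}) of Section~\ref{sec-walk}.
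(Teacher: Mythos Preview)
Your proposal is correct and follows exactly the approach the paper takes: the paper's proof is the single sentence ``By iteratively applying Lemma~\ref{lem-interface-markov}, we have the following Markov property,'' and you have simply spelled out the induction and the routine combinatorial and conditional-independence bookkeeping that this sentence encapsulates. Your careful observation that the child data $(\Be_{\bk'},\el_{\bk'},\type_{\bk'})$ is measurable with respect to the parent peeling (via Lemmas~\ref{lem:interface},~\ref{lem:future},~\ref{lem:future2}) and that boundary recolorings preserve the Boltzmann law is exactly what the paper leaves implicit.
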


\section{Precise scaling limit statement and continuum background}
\label{sec-continuum}
In this section we describe the scaling, topology and continuum limit in  Theorem~\ref{thm-metric-peano} precisely. To do this, we will need to review a number of existing results from the literature. Our exposition will be far from self-contained, but we aim to provide all of the background needed to understand the present paper. 

\subsection{The Gromov--Hausdorff--Prokhorov--uniform metric}
\label{sec-ghpu}

We first review the definition of the \emph{Gromov--Hausdorff--Prokhorov--uniform} (GHPU) metric from~\cite{gwynne-miller-uihpq}, the natural generalization of the Gromov--Hausdorff topology to curve-decorated metric measure spaces. We will need the case of spaces decorated by multiple curves, rather than just a single curve. In this setting, which is also explained in~\cite[Section 2.2]{gwynne-miller-perc}, all of the statements and proofs are the same as in the one-curve case treated in~\cite{gwynne-miller-uihpq}. We follow closely the exposition of~\cite[Section 2.2]{gwynne-miller-perc}. 

\subsubsection*{Hausdorff, Prokhorov, and uniform distances}

We first need some basic definitions for metric spaces. 
For a metric space $(X,d)$, $A\subset X$, and $x\in X$, we write $d(x,A)=\sup\{d(x,y): y\in A\}$. For $r>0$, we
let $B_r(A;d)=\{x\in X: d(x,A)\le r \}$. If $A = \{y\}$ is a singleton, we write $B_r(y;d)$ instead of $B_r(\{y\};d)$.  

For two closed sets $E_1,E_2 \subset X$, their \emph{Hausdorff distance}   is given by 
\[
\BB d_d^{\op{H}}(E_1,E_2):=\inf\{r>0: E_1\subset B_r(E_2;{d})\;\textrm{and}\; E_2\subset B_r(E_1;{d}) \} .
\]
Let $\cB(X)$ be the Borel sigma algebra of $(X,d)$.
For two finite Borel measures $\mu_1,\mu_2$ on $X$, their \emph{Prokhorov distance}   is given by   
\[
\BB d^{\op{P}}_d (\mu_1,\mu_2) =\inf \{ \ep>0: \mu_1(A)\le \mu_2(B_\ep(A;d))+\ep \;\textrm{and}\; \mu_2(A)<\mu_1(B_\ep(A;d))+\ep \; \textrm{for all } A\in \cB(X) \}.
\]

Let $f_1: I_1\to X$ and $f_2: I_2 \to X$ be two  functions where $I_1,I_2 \subset \R$ are intervals. 
Their \emph{$d$-Skorokhod distance} is given by  
\[
\BB d^{\op{SK}}_d (f_1,f_2)=\inf_{\phi}\left\{ \sup_{t\in I_1}\left(d(f_1 (t) , (f_2\circ\phi)(t)  )+d(\phi(t),t)\right)\right\},
\]
where $\phi$ ranges over  all strictly increasing, continuous bijections from $[a_1,b_1]$ to $[a_2,b_2]$. 
If $I_1=I_2=\R$, then  the \emph{uniform distance} between $f_1,f_2$ 
is given by 
$$\BB d^{\op{U}}_d(f_1,f_2)=\sup_{t\in \R} d(f_1(t) ,f_2(t)).$$ 
Let $C_0(\BB R , X)$ be the space of continuous curves $\eta : \BB R\rta X$ which extend continuously to the extended real line $[-\infty,\infty]$, i.e., the limits of $\eta(t)$ as $t\rta\infty$ or $t\rta-\infty$ exist.  
For a finite interval $[a,b]$, we can view a curve $\eta : [a,b] \rta X$ as an element of $C_0(\BB R ,X)$ by defining $\eta(t) = \eta(a)$ for $t < a$ and $\eta(t) = \eta(b)$ for $t> b$.  
It is easy to see that 
$\BB d^{\op{SK}}_d $ and $\BB d^{\op{U}}_d$ induce the same topology on $C_0(\R;X)$.
{On the other hand, we will also use Skorokhod  topology for certain discontinuous functions later in the paper; see e.g.\ Theorem~\ref{thm-chordal-conv}.}

\subsubsection*{Definition of the GHPU metric}

For $k\in \BB N$, let $\BM_k^\GHPU$ be the set of $3+k$-tuples $\frk X  = (X , d , \mu , \eta_1,\dots,\eta_k)$ where $(X,d)$ is a compact metric space, $\mu$ is a finite Borel measure on $X$, and $\eta_1,\dots,\eta_k \in C_0(\BB R,X)$.  If we are given elements $\frk X^1 = (X^1 , d^1, \mu^1 , \eta^1_1,\dots , \eta_k^1) $ and $\frk X^2 =  (X^2, d^2,\mu^2,\eta^2_1 , \dots , \eta^2_k) $ of $ \BM_k^\GHPU$ and isometric embeddings $\iota^1 : (X^1 , d^1) \rta (W,D)$ and $\iota^2 : (X^2 , D^2) \rta  (W,D)$ for some metric space $(W,D)$, we define the \emph{GHPU distortion} of $(\iota^1,\iota^2)$ by
\begin{align}
\label{eqn-ghpu-var}
\op{Dis}_{\frk X^1,\frk X^2}^\GHPU\left(W,D , \iota^1, \iota^2 \right)   
:=  \BB d^{\op{H}}_D \left(\iota^1(X^1) , \iota^2(X^2) \right) +   
\BB d^{\op{P}}_D \left(( (\iota^1)_*\mu^1 ,(\iota^2)_*\mu^2) \right) + 
\sum_{j=1}^k \BB d_D^{\op{U}}\left( \iota^1 \circ \eta_j^1 , \iota^2 \circ\eta_j^2 \right) ,
\end{align}
The \emph{Gromov--Hausdorff--Prokhorov--Uniform distance} between $\frk X^1$ and $\frk X^2$ is given by
\begin{align} \label{eqn-ghpu-def}
\BB d^\GHPU\left( \frk X^1 , \frk X^2 \right) 
= \inf_{(W, D) , \iota^1,\iota^2}  \op{Dis}_{\frk X^1,\frk X^2}^\GHPU\left(W,D , \iota^1, \iota^2 \right)      ,
\end{align}
where the infimum is over all compact metric spaces $(W,D)$ and isometric embeddings $\iota^1 : X^1 \rta W$ and $\iota^2 : X^2\rta W$.
Using the same argument in~\cite{gwynne-miller-uihpq}, {we have that} $\BB d^\GHPU$ is a complete separable metric on~$\BM_k^\GHPU$ provided we identify any two elements of~$\BM_k^\GHPU$ which differ by a measure- and curve-preserving isometry. 

Convergence with respect to $\BB d^\GHPU$ can be rephrased in terms of convergence of subsets, measures, and curves embedded in a common metric measure space, as we now explain.  
\begin{defn}[HPU convergence] \label{def-hpu}
	Let $(W ,D)$ be a metric space. Fix $k\in\BB N$.
	Let $\frk X^n = (X^n , d^n , \mu^n , \eta_1^n, \dots$, $\eta_k^n)$ for $n\in\BB N$ and $\frk X = (X,d,\mu,\eta_1,\dots, \eta_k)$ be elements of $\BM^\GHPU_k$ such that $X$ and each $X^n$ is a subset of $W$ satisfying $D|_X = d$ and $D |_{X^n} = d^n$. We say that $\frk X^n\rta \frk X$ in the \emph{$D$-Hausdorff-Prokhorov-uniform (HPU) sense} if $X^n \rta X$ in the $D$-Hausdorff metric, $\mu^n \rta \mu$ in the $D$-Prokhorov metric, and for each $j \in [1,k]_{\BB Z}$, $\eta_j^n \rta \eta_j $ in the $D$-uniform metric.  
\end{defn}
The following result is the $k$-curve variant of~\cite[Proposition~1.9]{gwynne-miller-uihpq} and is identical to~\cite[Proposition 2.2]{gwynne-miller-uihpq}.
\begin{prop} \label{prop-ghpu-embed}
	Let $ \frk X^n = (X^n , d^n , \mu^n , \eta_1^n,\dots , \eta_k^n)$ for $ n\in\BB N $ and $\frk X = (X,d,\mu,\eta_1,\dots , \eta_k)$ be elements of $\BM_k^\GHPU$. Then $\frk X^n\rta \frk X$ in the GHPU topology if and only if there exists a compact metric space $(W,D)$ and isometric embeddings $X^n \rta W$ for $n\in\BB N$ and $X\rta W$ such that the following is true. If we identify $X^n$ and $X$ with their embeddings into $W$, then $\frk X^n \rta \frk X$ in the $D$-HPU sense.
\end{prop}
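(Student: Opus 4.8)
The statement to prove is Proposition~\ref{prop-ghpu-embed}, the $k$-curve variant of~\cite[Proposition~1.9]{gwynne-miller-uihpq}. Since the paper explicitly says ``all of the statements and proofs are the same as in the one-curve case'', the plan is essentially to reproduce the argument of~\cite{gwynne-miller-uihpq} with bookkeeping for $k$ curves rather than one.

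\textbf{The plan.} The ``if'' direction is the easy one: if $(W,D)$ and isometric embeddings are as in the statement, then we may take the common metric space in the definition~\eqref{eqn-ghpu-def} of $\BB d^\GHPU$ to be $(W,D)$ itself, with $\iota^n$ the given embedding of $X^n$ and $\iota$ the given embedding of $X$. Then $\op{Dis}^\GHPU_{\frk X^n,\frk X}(W,D,\iota^n,\iota)$ equals $\BB d_D^{\op H}(X^n,X) + \BB d_D^{\op P}(\mu^n,\mu) + \sum_{j=1}^k \BB d_D^{\op U}(\eta_j^n,\eta_j)$, each term of which tends to $0$ by the HPU convergence hypothesis; hence $\BB d^\GHPU(\frk X^n,\frk X)\to 0$. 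The ``only if'' direction is where the work lies. Assume $\BB d^\GHPU(\frk X^n,\frk X)\to 0$. For each $n$, choose a compact metric space $(W_n,D_n)$ and isometric embeddings $\iota^n\colon X^n\to W_n$, $\hat\iota^n\colon X\to W_n$ with $\op{Dis}^\GHPU_{\frk X^n,\frk X}(W_n,D_n,\iota^n,\hat\iota^n)\le \BB d^\GHPU(\frk X^n,\frk X)+ 1/n =: \ep_n\to 0$. The standard trick is now to build a single space $W$ by gluing: take the disjoint union $W := X \sqcup \bigsqcup_{n} X^n$ (identifying each $X^n$ with its copy), and define a metric $D$ on $W$ that restricts to $d$ on $X$ and to $d^n$ on each $X^n$, and such that the ``cross distances'' between $X$ and $X^n$ are controlled using $D_n$: for $x\in X$ and $y\in X^n$ set
\[
D(x,y) := \inf\big\{ \BB d_{D_n}(\hat\iota^n(x'),z) + \BB d_{D_n}(\iota^n(y'),z) \colon \ldots \big\}
\]
more precisely one sets $D(x,y) = \inf_{z\in X}\big( d(x,z) + D_n(\hat\iota^n(z),\iota^n(y))\big)$ modulo the usual care, and analogously for distances between $X^m$ and $X^n$ (routing through $X$). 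One then checks $D$ is a genuine metric (symmetry is clear, the triangle inequality follows from the infimum construction), that $D|_X=d$, $D|_{X^n}=d^n$, and that $(W,D)$ — or rather a suitable completion of it — is separable; then one replaces $W$ by a compact space by taking the closure of $X\cup\bigcup_n X^n$ inside the completion (here one uses that each $X^n\to X$ in Hausdorff distance, so the union is totally bounded).

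\textbf{Verifying HPU convergence inside $W$.} Having built $(W,D)$, one must check $X^n\to X$ in $D$-Hausdorff, $\mu^n\to\mu$ in $D$-Prokhorov, and $\eta_j^n\to\eta_j$ in $D$-uniform for each $j\in[1,k]_{\BB Z}$. By construction, for $x\in X$ we have $D(x,X^n)\le \BB d_{D_n}(\hat\iota^n(x),\iota^n(X^n))\le \BB d_{D_n}^{\op H}(\hat\iota^n(X),\iota^n(X^n))\le\ep_n$, and symmetrically; so $\BB d_D^{\op H}(X,X^n)\le\ep_n\to 0$. For the measures, the pushforwards under $\hat\iota^n$ and $\iota^n$ of $\mu$ and $\mu^n$ have Prokhorov distance $\le\ep_n$ in $D_n$, and since the embeddings into $W$ are ``close'' to the embeddings into $W_n$ in the appropriate sense, one gets $\BB d_D^{\op P}(\mu,\mu^n)\lesssim\ep_n\to 0$. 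For the curves: $\BB d_D^{\op U}(\eta_j,\eta_j^n) = \sup_{t}D(\eta_j(t),\eta_j^n(t))\le \sup_t \BB d_{D_n}(\hat\iota^n(\eta_j(t)),\iota^n(\eta_j^n(t))) = \BB d_{D_n}^{\op U}(\hat\iota^n\circ\eta_j,\iota^n\circ\eta_j^n)\le\ep_n\to 0$ for each of the $k$ curves. This is exactly where the $k$-curve case adds nothing new: the sum $\sum_{j=1}^k$ in~\eqref{eqn-ghpu-var} just means each summand is individually $\le\ep_n$, and finitely many things each going to zero still go to zero.

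\textbf{Main obstacle.} The only genuinely delicate point — and it is delicate already in the one-curve case of~\cite{gwynne-miller-uihpq} — is the construction of the single compact space $(W,D)$ simultaneously accommodating \emph{all} the $\frk X^n$ and $\frk X$, with the metric restricting correctly to each and with the embeddings genuinely isometric (not merely ``almost isometric''). The gluing-through-$X$ construction handles the distances between $X$ and each $X^n$ cleanly, but one must verify the triangle inequality for triples with one point in $X^m$, one in $X^n$ ($m\ne n$), and one in $X$, and ensure the resulting object is compact — this requires combining the uniform Hausdorff-smallness $\BB d_{D_n}^{\op H}(\hat\iota^n(X),\iota^n(X^n))\le\ep_n$ with compactness of $X$ to get total boundedness of the union. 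Once $(W,D)$ is in hand, everything else, including all the $k$-curve bookkeeping, is a routine diagram-chase exactly as in~\cite{gwynne-miller-uihpq}, so I would simply cite that reference for the construction and only spell out the points where the number of curves could conceivably matter (namely, that it doesn't).
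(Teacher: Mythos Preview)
The paper does not supply its own proof of this proposition; it simply cites it as the $k$-curve variant of \cite[Proposition~1.9]{gwynne-miller-uihpq}, identical to \cite[Proposition~2.2]{gwynne-miller-perc}. Your sketch is a correct outline of the standard gluing argument used in those references, and you rightly identify that the passage from one curve to $k$ curves is pure bookkeeping (each of finitely many uniform-distance summands is bounded by the total distortion $\ep_n$).
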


\subsubsection*{Graphs as curve-decorated metric measure spaces}
Given a graph $G$, let $d_G$ be the graph distance on $\cV(G)$ and let $\mu_{G}$ be the measure on $\cV(G)$ where the mass of each vertex equals half of its degree.   In order to study continuous curves on $G$, we identify each edge of $G$ with a copy of the unit interval $[0,1]$  and extend $d_G$ and $\mu_G$ by requiring that this identification is an isometric measure-preserving embedding  from $[0,1]$ to $(G,d_G,\mu_G)$.
For a discrete interval $[a,b]_\Z$, a function $ \rho : [a,b]_\Z \rta \mcl E(G)$ is called an \emph{edge path} if $\rho(i)$ and $\rho(i+1)$ share an endpoint for each $i\in [a,b-1]_\Z$. We can extend an edge path $\rho$ from $[a,b]_{\BB Z}$ to $[a-1,b] $ in such a way that $\rho$ is continuous and $\rho([i-1,i])$ lies on the edge $\rho(i)$. Note that there are multiple ways to extend $\rho$, but any two different extensions result in curves with uniform distance  at most~$1$. 
If $G$ is a finite graph with  edge paths $\rho_1,\dots,\rho_k$, then  $(G , d_G , \mu_G, \rho_1,\dots,\rho_k)$ is an element of $\BM_k^\GHPU$ under the extensions above. As an example, given $(\Map,\BB e, \omega)\in \DPm$ such that $\Map$ has boundary length $\ell$, let $\acute{\lambda}_\Map$ be the space-filling exploration of $\Map$ based on $\omega$, as in Section~\ref{sec-walk}, and let $\beta_\Map : [0,\el]_{\BB Z} \rta \mcl E(\bdy \Map)$ be defined by tracing $\bdy \Map$ counterclockwise with $\beta(0) = \beta(\ell)=  \BB e$. Then both $\acute{\lambda}$ and $\beta$ are edge paths of $\Map$ and
therefore $(\Map, d_{\Map}, \mu_{\Map}, \beta_\Map,\acute{\lambda}_\Map)$ can be viewed as an element in $\BM_2^\GHPU$. 

\subsubsection*{Space associated with site percolation on a triangulation}

We now give the precise definition of the curve-decorated metric measure space $\acute{\frk M}^n$ in Theorem~\ref{thm-metric-peano}, including the precise scaling for distances, areas, and boundary lengths. Throughout the rest of this paper, we define the scaling constants
\begin{equation}\label{eq:const}
\dcon=(2/3)^{1/4},\qquad \mcon= 3,\qquad \bcon=(3/2)^{1/2}.
\end{equation}
Recall the setting in Theorem~\ref{thm-metric-peano} including $\LL >0$ and the sequence $\{\el^n\}_{n\in\BB N}$. In our new terminology,  $(\Map^n,\BB e^n,\omega^n)$ has the law of $ \PBT^{\mrb}(\el^n)$, where $\omega^n|_{\cV(\bdy \Map^n)}$ can be thought of as monochromatic red in the sense of Remark~\ref{rmk:flip}. 
Let  $d^n=\dcon^{-1} n^{-1/4}d_{\Map^n}$ and $\mu^n=\mcon^{-1} n^{-1}\mu_{\Map^n}$.
Let $\xi_{\bk}^n(s) := \beta_{\Map^n}(\bcon n^{1/2} s)$ for $s\in [0,\bcon^{-1} n^{-1/2} \el^n]$. Let  $\acute\eta^n$ be a reparametrization of $\acute{\lambda}^n$ such that in each unit of time  $\acute\eta^n(t)$ traverses one unit of $\mu^n$-mass of $\Map^n$. Then the precise definition of  $\acute{\frk M}^n$ in Theorem~\ref{thm-metric-peano} is given by 
\begin{equation}\label{eq:frkM}
\acute{\frk M}^n:=\left( \Map^n , d^n , \mu^n , \xi^n , \acute\eta^n \right)\in \BM^\GHPU_2.
\end{equation}

\subsection{Background on  Brownian disk, $\sqrt{8/3}$-LQG and chordal SLE$_6$}
\label{sec-LQG}

We retain the notation $\acute{\frk M}^n$ given in~\eqref{eq:frkM}.  
Fix $\frk l_L , \frk l_R > 0$ such that $\frk l_L+\frk l_R= \LL$ and a sequence of pairs of positive integers $\{(\el_L^n ,\el_R^n)\}_{n\in\BB N}$ such that $\bcon^{-1} n^{-1/2} (\el_L^n,\el_R^n)  \rta (\frk l_L,\frk l_R)$ and $\el_L^n+\el_R^n+2=\ell^n$. Let $\lambda^n$ be the percolation interface of  $(\Map^n,\BB e^n,\omega^n)$ with $(\el_L^n,\el_R^n)$-boundary condition (recall Lemma~\ref{lem:interface}).  Then $\lambda^n$ is an edge path
on $\Map^n$ that can be extended to a curve as in the preceding subsection.\footnote{
	Although the edges of $\lambda^n$ may not be oriented in a consistent manner if we view $\lambda^n$ as an edge path, they can be oriented in a consistent manner if we instead think of $\lambda^n$ as a path on the dual map $\Map^*$.\label{fn:edge-path}}
Set $\BB s=6\BB c^{3/2}$.
For $t\geq 0$ let $\eta^n(t ) := \lambda^n(\tcon n^{ 3/4} t)$. Define 
\eqb
\frk M^n := \left( \Map^n , d^n , \mu^n , \xi^n , \eta^n \right) \in \BM^\GHPU_2 .
\eqe
Note that $\frk M^n$ differs from $\acute{\frk M}^n$ in that it is decorated by only a single interface, instead of the whole space-filling exploration.

It is proved in~\cite{aasw-type2} that $(\Map^n,d^n,\mu^n, \xi^n)$ converges in the GHPU topology to a random curve-decorated metric measure space $\BD_{\LL} :=(H,d,\mu, \xi) \in \BM^\GHPU_1$ called the \emph{Brownian disk with boundary length $\LL$}, which was introduced in~\cite{bet-mier-disk}. The space $(H,d)$ is homeomorphic to a closed disk~\cite{bet-disk-tight} and the curve $\xi$ parametrizes its boundary $\bdy H$ according to its natural length measure. 
For concreteness, we will orient $\bdy H$ by requiring that $\xi$ traces $\bdy H$ in the counterclockwise direction.
The construction of $\BD_{\LL}$ given in~\cite{bet-mier-disk} is based on a standard linear Brownian motion stopped upon reaching $-\LL$ and a Brownian snake on top of it. We refer to \cite{bet-mier-disk} for more details, which are not required to understand our paper. We will need the following basic scaling relation for Brownian disks (see~\cite[Section~2.3]{bet-mier-disk}): if $(H,d,\mu,\xi)$ is a Brownian disk of boundary length $\LL$, then
\eqb \label{eqn-disk-scaling}
(H,\LL^{-1/2}d, \LL^{-2}\mu, \xi(\LL\cdot)) \eqD \BD_1  .
\eqe

We will now describe the scaling limit of the rescaled interface $\eta^n$, as obtained in~\cite{gwynne-miller-perc}.  
Given a simply connected domain $D \subset\BB C$ whose boundary is a continuous curve and two distinct points $a,b\in \bdy D$,
the chordal $\SLE_6$ on $(D,a,b)$  is a random non-simple curve from $a$ to $b$ whose law modulo parametrization is singled out by the so-called conformal invariance, domain Markov property, and target invariance~\cite{schramm0}. We refer to \cite{lawler-book} for more on SLE$_6$ and its basic properties. 
Following Smirnov's breakthrough \cite{smirnov-cardy}, it has been proven that under various topologies, percolation interfaces for critical site percolation on the regular triangular lattice converge to chordal $\SLE_6$ \cite{camia-newman-sle6,gps-pivotal,hls-sle6}.

In light of the scaling limit of percolation interface on the triangular lattice, it is reasonable to expect that $\frk M^n$ defined above converge in law to the ``$\SLE_6$  on the Brownian disk''. 
A priori, the Brownian disk is only defined as a curve-decorated metric measure space, without an embedding into the unit disk $\BB D$, so it is not a priori clear how to define an SLE$_6$ on it. 
However, it was shown in~\cite{lqg-tbm1,lqg-tbm2,lqg-tbm3} that there is a canonical way of embedding the Brownian disk into $\BB D$, i.e., there is a natural way to define a random metric and a random measure on $\ol{\BB D}$ and a random parametrization of $\bdy\BB D$ in such a way that the resulting curve-decorated metric measure space agrees in law with $\BD_{\LL}$. This in particular allows us to define SLE$_6$ on the Brownian disk (Definition~\ref{def:disk-curve}). 

The embedding of the Brownian disk is constructed using the theory of Liouville quantum gravity, which we now briefly review. Our presentation is by no means self-contained. But once Definition~\ref{def:disk-curve} is assumed, detailed knowledge of LQG and SLE is not required to understand the rest of the paper.

Consider a pair $(D,h)$, where $D\subset \BB C$ is a domain and $h$ is some variant of the Gaussian free field (GFF) on $D$ (see \cite{shef-gff,ss-contour,shef-zipper,ig1} for background on the GFF). For $\gamma\in (0,2)$, the random measures $\mu_h=e^{\gamma h}d^2z$ (resp.\ $\nu_h=e^{\gamma h}dz$) supported on $D$  (resp.\ $\bdy D$)  are constructed in\cite{shef-kpz} via a regularization procedure. In this paper, we will only need the special case when $\gamma=\sqrt{8/3}$.  In this case, given two distinct points $a,b\in \bdy D$ and a chordal $\SLE_6$ curve $\eta$  on $(D,a,b)$ independent of $h$, the field $h$ determines a parametrization of $\eta$ called the \emph{quantum natural time} \cite[Definition 6.23]{wedges}. 
Roughly speaking, parametrizing by quantum natural time is equivalent to parametrizing by ``quantum Minkowski content", analogously to the Euclidean Minkowski content parametrization studied in~\cite{lawler-shef-nat,lawler-zhou-nat,lawler-rezai-nat}.
Hereafter we always assume that $\SLE_6$ is given this parametrization. In \cite{lqg-tbm1,lqg-tbm2,lqg-tbm3}, a metric $\frk d_h $ on $D$ depending on $h$ was constructed via 
a growth process called the quantum Loewner evolution~\cite{qle}. 
We will not need the precise definition of this metric here. 

It is shown in~\cite[Corollary 1.4]{lqg-tbm1} that there exists a variant $\fh$ of the GFF on $\D$ (corresponding to the so-called \emph{quantum disk with boundary length $\LL$}) with $\nu_\fh(\bdy \D)=\LL$ and two universal constants $c_{\op d}, c_{\op m} > 0$ such that if $\xi_\fh$ denotes the curve which parametrizes $\bdy\BB D$ according to its $\nu_\fh$ length, then
\begin{equation}\label{eq:disk}
(\D, c_{\op d} \frk d_\fh,  c_{\op m} \mu_\fh, \xi_\fh )\overset{d}{=}\BD_{\LL} \qquad \text{as  random variables in $\BM^\GHPU_1$}.
\end{equation} 
See \cite[Section 4.5]{wedges} for a precise definition of the quantum disk (which will not be needed in this paper). 
The values of $c_{\op d} $ and $c_{\op m}$ are currently unknown. 
It is shown in~\cite{lqg-tbm3}, that the Brownian disk $(\D, c_{\op d} \frk d_\fh,  c_{\op m} \mu_\fh, \xi_\fh )$ (viewed as a curve-decorated metric measure space) a.s.\ determines the field $\frk h$, and hence its parametrization by $\BB D$. 

The equality in law~\eqref{eq:disk} allows us to define chordal SLE$_6$ on the Brownian disk $H$ between two marked points in $ \bdy H$ as follows.

\begin{definition}[SLE$_6$ on the Brownian disk] \label{def:disk-curve}
	For $\frk l_L,\frk l_R>0$, let $\BD_{\frk l_L+\frk l_R} = (H,d,\mu,\xi)$ be a Brownian disk with boundary length $\frk l_L + \frk l_R$. Using~\eqref{eq:disk}, we can identify $\BD_{\frk l_L + \frk l_R}$ with a quantum disk and thereby parametrize our Brownian disk by $\BB D$, i.e., we can take $H=\BB D$. 
	Let $\eta_h$ be a random curve such that conditioned on $\BD_{\frk l_L + \frk l_R}$, the conditional law of $\eta_h$ is a chordal $\SLE_6$ on $(\D,\xi_h(0),\xi_h(\frk l_R))$ parametrized by the quantum natural time with respect to $\fh$.  Then $(D, c_{\op d} \frk d_\fh,  c_{\op m} \mu_\fh, \xi_\fh , \eta_\fh)$ can be viewed as a random variable in  $\BM^\GHPU_2$,  which we call  the  $\SLE_6$-decorated Brownian disk with  $(\frk l_L,\frk l_R)$-boundary condition.  
\end{definition}

Assuming the GHPU convergence $(\Map^n,d^n,\mu^n, \xi^n) \rta \BD_\LL$ (which is proven in~\cite{aasw-type2}), it is proved in \cite[Theorem 8.3]{gwynne-miller-perc} that $\frk M^n$ converges in law in the space $\BM^\GHPU_2$ to an $\SLE_6$-decorated Brownian disk with  $(\frk l_L,\frk l_R)$-boundary condition. 
Indeed, the  procedure $\Peel$ in Section~\ref{sec-perc-interface} is precisely  the so-called \emph{percolation peeling process} for site percolation on triangulations which is studied in detail in~\cite{gwynne-miller-perc} in the setting of face percolation on a quadrangulation.
In fact,~\cite{gwynne-miller-perc} obtains a slightly stronger result which also gives convergence of the boundary length process for the percolation interface (as described in Section~\ref{sec-perc-interface}); see Theorem~\ref{thm-chordal-conv} below.

\subsection{Continuum boundary length process, Markov property, and single interface scaling limit}
\label{sec-sle-markov}

We now review the the definition of the boundary length process for SLE$_6$ on the Brownian disk as well as a continuum domain Markov property analogous to Lemma~\ref{lem-interface-markov}, which is a consequence of results in~\cite{wedges,gwynne-miller-sle6} and the equivalence between the Brownian disk and the $\sqrt{8/3}$-LQG disk. Let us first recall the notion of \emph{internal metric}. Let $(X,d_X)$ be a metric space.
For a curve $\gamma : [a,b] \rta X$, the \emph{$d_X$-length} of $\gamma$ is defined by 
\eqbn
\op{len}\left( \gamma ; d_X  \right) := \sup_P \sum_{i=1}^{\# P} d_X (\gamma(t_i) , \gamma(t_{i-1})) 
\eqen
where the supremum is over all partitions $P : a= t_0 < \dots < t_{\# P} = b$ of $[a,b]$. Note that the $d_X$-length of a curve may be infinite.
For $Y\subset X$, the \emph{internal metric $d_Y$ of $d_X$ on $Y$} is defined by
\eqb \label{eqn-internal-def}
d_Y (x,y)  := \inf_{\gamma \subset Y} \op{len}\left(\gamma ; d_X \right) ,\quad \forall x,y\in Y 
\eqe 
where the infimum is over all curves in $Y$ from $x$ to $y$. 
The function $d_Y$ satisfies all of the properties of a metric on $Y$ except that it may take infinite values. 

Now let $\frk H=(H,d,\mu,\xi, \eta)$ be the $\SLE_6$-decorated Brownian disk with $(\frk l_L,\frk l_R)$-boundary condition as  in Definition~\ref{def:disk-curve}. 
Given $t\ge 0$, let $\cB$ be a  connected component of $H\setminus \eta[0,t]$, let $x$ be the first point on $\partial \cB$ visited by $\eta$,  and let $d_\cB$ be the interval metric of $d$ on $\cB$. A \emph{boundary length} measure can be defined on $\bdy \cB$ by first identifying $(H,d,\mu,\xi)$ with a quantum disk $(\D, \frk h,1)$ and then considering the $\sqrt{8/3}$-LQG measure on $\bdy \cB$ (this is  made sense of  in  \cite{shef-zipper}). 
An equivalent but more intrinsic construction of this boundary length measure is given in \cite{legall-disk-snake} by taking limit of the suitably normalized  $\mu$-measure on a small tubular neighborhood of  $\bdy \cB$.

\begin{figure}[ht!]
	\begin{center}
		\includegraphics[scale=.8]{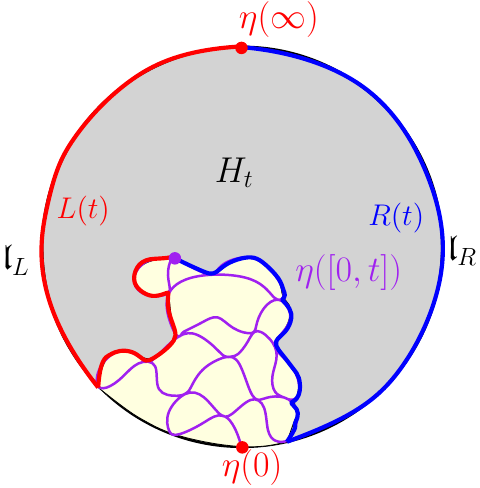} 
	\end{center}
	\caption{\label{fig-continuum-bdy} Illustration of the definition of the left/right boundary length process $Z=(L,R)$ for SLE$_6$ on the Brownian disk.  The red (resp.\ blue) curve has length $L(t)$ (resp.\ $R(t)$). Note that $Z(0) = (\frk l_L , \frk l_R)$ and $Z(\sigma) = (0,0)$. This is the continuum analog of Figure~\ref{fig-tri-perc}, right. }
\end{figure}

Let $\sigma$ be the first time when $\eta$ reaches the target point $\eta(\infty)$ of $\eta$.  (Recall that $\eta$ is viewed as an element of $C_0(\R,H)$.) 
For $t\le\sigma$, let $H_t$ be the connected component of $H\setminus \eta([0,t])$ containing  $\eta(\infty)$ on its boundary.
Let $L(t)$ and $R(t)$ be the boundary lengths of the  clockwise and counterclockwise arcs on $\bdy H_t$ from $\eta(t)$ to $\eta(\infty)$ respectively, we call $Z:=(L,R)$ the \emph{boundary length process} of $\frk H$. 
To describe the law of $Z$, let $L^\infty,R^\infty$ be two independent standard   $3/2$-stable processes starting from 0
with no positive jumps, i.e., the L\'evy process with L\'evy measure $\frac{3}{4\sqrt{\pi}}|x|^{-5/2} 1_{x<0} \rd x$ .  
Let $Z^\infty:=(L^\infty,R^\infty)$. The following lemma is \cite[Theorem 1.2]{gwynne-miller-sle6}.\footnote{In fact, the quantum natural time is defined only up to a multiplicative constant. There is a unique constant such that Lemma~\ref{lem:Levy} holds. This is the way we fix the multiplicative constant.}
\begin{lem}[\cite{gwynne-miller-sle6}]
	Let $\sigma^\infty=\inf
	\{t\ge 0:  L^\infty(t)\le 0 \; \textrm{or} \; R^\infty(t)\le 0 \}$.  
	For each $t>0$, the law of $Z|_{[0,t]} \cdot 1_{t<\sigma}$ is absolutely continuous with respect to $(Z^\infty +(\frk l_L, \frk l_R)) |_{[0,t]}  \cdot 1_{t<\sigma^\infty}$   
	with Radon-Nikodym derivative given by $(L^\infty(t) +R^\infty(t) )^{-5/2} 1_{t<\sigma^\infty}$. Moreover, $\lim\limits_{t\to \sigma}Z(t)=(0,0)$ almost surely.
	\label{lem:Levy} 
\end{lem}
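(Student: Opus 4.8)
The statement to be proved is a reformulation of \cite[Theorem 1.2]{gwynne-miller-sle6}, so the plan is essentially to transport that result through the chain of identifications set up above, rather than to reprove it from scratch. The first step is to recall the precise output of \cite{gwynne-miller-sle6}: there one works directly with the $\sqrt{8/3}$-LQG disk $(\D,\fh)$ with boundary length $\frk l_L + \frk l_R$, decorated by an independent chordal SLE$_6$ from $\xi_\fh(0)$ to $\xi_\fh(\frk l_R)$, parametrized by quantum natural time, and considers the quantum boundary lengths of the two arcs of $\bdy H_t$ separating $\eta(t)$ from the target. The content of \cite[Theorem 1.2]{gwynne-miller-sle6} is exactly that this pair of processes, weighted appropriately, is absolutely continuous with respect to a pair of independent $3/2$-stable processes with the stated L\'evy measure, with Radon–Nikodym derivative $(L^\infty(t)+R^\infty(t))^{-5/2} 1_{t < \sigma^\infty}$. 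The unweighted stable process arises as the boundary length process in the infinite-volume setting (the quantum wedge / quantum cone picture from \cite{wedges}), and the reweighting by $(L+R)^{-5/2}$ is precisely the Radon–Nikodym derivative converting the infinite-volume law into the finite quantum-disk law, which is the origin of the exponent $-5/2 = -(\gamma^2/2 + 1) $-type weight for $\gamma = \sqrt{8/3}$.

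The second step is to check that the object appearing in Lemma~\ref{lem:Levy} — namely the left/right boundary length process $Z = (L,R)$ of the \emph{Brownian-disk} SLE$_6$ of Definition~\ref{def:disk-curve} — really is the same process (in law, jointly with everything else) as the LQG-disk boundary length process just discussed. This is where one invokes~\eqref{eq:disk}: the Brownian disk $\BD_{\frk l_L + \frk l_R}$ is, as a curve-decorated metric measure space, equal in law to $(\D, c_{\op d}\frk d_\fh, c_{\op m}\mu_\fh, \xi_\fh)$, and by~\cite{lqg-tbm3} this metric measure space a.s.\ determines the field $\fh$ and hence its embedding into $\BB D$. Consequently the SLE$_6$ curve $\eta$ of Definition~\ref{def:disk-curve} is a measurable function of the same data as the LQG picture, and the quantum-natural-time parametrization is intrinsic. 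One then needs to verify that the \emph{metric} notion of boundary length used in the definition of $Z$ — the limit of normalized $\mu$-mass of a thin tubular neighborhood of $\bdy \cB$, as in~\cite{legall-disk-snake}, or equivalently the internal-metric description — agrees with the quantum boundary length $\nu_\fh$ restricted to the arcs of $\bdy H_t$. This is precisely the identification of the Brownian-disk boundary length measure with the $\sqrt{8/3}$-LQG boundary length measure, which is part of the package~\cite{lqg-tbm1,lqg-tbm2,lqg-tbm3,legall-disk-snake,shef-zipper}; under this identification the two boundary length processes coincide.

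Once these identifications are in place, the absolute continuity statement and the Radon–Nikodym derivative are immediate from \cite{gwynne-miller-sle6}, and it only remains to address the final assertion, $\lim_{t \to \sigma} Z(t) = (0,0)$ a.s. For the infinite-volume process $Z^\infty + (\frk l_L,\frk l_R)$ run until $\sigma^\infty$, the fact that it hits the boundary of the positive quadrant by a simultaneous arrival of one coordinate at $0$ — indeed that at time $\sigma^\infty$ the process is at a point with one coordinate equal to $0$ — is a standard fact about stable processes with no positive jumps (the coordinate that reaches $0$ does so continuously, by the spectral negativity), but one must rule out the possibility of a jump in the \emph{other} coordinate at that time and, more importantly, upgrade to the statement that \emph{both} coordinates vanish in the limit. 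This is where the argument is genuinely about SLE$_6$ on the disk rather than about L\'evy processes: as $t \uparrow \sigma$ the curve $\eta$ swallows the target point, and the component $H_t$ shrinks to a point, forcing both of its boundary arc lengths to zero. One gets this from the continuum analog of the peeling description — the curve $\eta$ eventually disconnects any neighborhood of the target from $\bdy H$ — combined with the a.s.\ finiteness and continuity properties of the quantum natural time parametrization established in~\cite{wedges,gwynne-miller-sle6}. Since absolute continuity only transfers the claim on $[0,t] \cap \{t < \sigma\}$ for fixed $t$, the limiting statement at $\sigma$ must be extracted separately by this geometric argument (taking $t \uparrow \sigma$ along a sequence and using that $L,R \ge 0$ together with the fact that $\sigma < \infty$ a.s.).

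The main obstacle, then, is not the absolute continuity — that is quoted wholesale from \cite{gwynne-miller-sle6} — but rather assembling the bookkeeping of the three equivalences (Brownian disk $=$ $\sqrt{8/3}$-LQG disk as metric measure spaces; metric boundary length $=$ quantum boundary length; metric SLE$_6$ of Definition~\ref{def:disk-curve} $=$ SLE$_6$ on the LQG disk with the intrinsic quantum-natural-time parametrization) into a single joint identification, and then arguing the boundary behavior $Z(\sigma^-) = (0,0)$, which lies just outside the reach of the fixed-time absolute continuity and requires a direct continuum peeling/swallowing argument. Both of these are, however, by now standard in this literature.
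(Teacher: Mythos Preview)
Your proposal is correct and matches the paper's approach: the paper gives no proof at all for this lemma beyond the citation, stating only ``The following lemma is \cite[Theorem 1.2]{gwynne-miller-sle6}'' (with a footnote fixing the multiplicative constant in the quantum natural time). Your elaboration on the bookkeeping of the Brownian disk/$\sqrt{8/3}$-LQG disk identifications and the separate geometric argument for $Z(\sigma^-)=(0,0)$ is more detail than the paper provides, but it is consistent with how the result is quoted.
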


As in the discrete setting, the downward jumps of $Z|_{[0,t]}$ give the boundary length of the complementary connected components of $\eta([0,t])$.
The following lemma, which is a restatement of~\cite[Theorem 1.1]{gwynne-miller-sle6}, is the continuum analog of Lemma~\ref{lem-interface-markov}.

\begin{lem}[\cite{gwynne-miller-sle6}]
	\label{lem-domain-SLE}
	For $t\geq 0$, the connected components of $H\setminus \eta([0,t])$ lying to the left (resp.\ right) of $\eta$ are in one-to-one correspondence with the downward jumps of $L$ (resp.\ $R$) up to time $t$. 
	If $t\geq 0$ and we condition on $Z|_{[0,t]}$, then the conditional law of the connected components of  $H\setminus \eta([0,t])$, each equipped with the internal metric of $d$, the restriction of $\mu$, and the path which parametrizes its boundary according to the natural length measure, are Brownian disks with given boundary lengths.
	Moreover, $Z$ together with this collection of Brownian disks for $t=\sigma$ determines $\frk H$ a.s. 
\end{lem}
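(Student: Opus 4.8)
The plan is to transport the statement to the $\sqrt{8/3}$-LQG picture, where the mating-of-trees theory of \cite{wedges} is available, prove the three assertions there, and translate back via the equivalence~\eqref{eq:disk}. So, using~\eqref{eq:disk}, identify $(H,d,\mu,\xi)$ with a quantum disk $(\D,\fh)$ of boundary length $\frk l_L+\frk l_R$, with $d=c_{\op d}\frk d_\fh$, $\mu=c_{\op m}\mu_\fh$, and $\xi=\xi_\fh$ parametrizing $\bdy\D$ by $\nu_\fh$-length, and take $\eta$ to be a chordal $\SLE_6$ from $\xi_\fh(0)$ to $\xi_\fh(\frk l_R)$, conditionally independent of $\fh$, parametrized by quantum natural time. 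By definition $L(t),R(t)$ are the $\nu_\fh$-lengths of the two boundary arcs of $H_t$ from $\eta(t)$ to $\eta(\infty)$.

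First I would record the bubble/jump correspondence. The quantum zipper and conformal welding description of SLE drawn on a quantum surface \cite{shef-zipper,wedges} shows that as $\eta$ is traced the $\nu_\fh$-length of the portion of $\bdy H_t$ lying on $\eta([0,t])$ evolves continuously, while each time $\eta$ disconnects a bubble $\cB$ from $\eta(\infty)$ on its left (resp.\ right) the $\nu_\fh$-boundary length of $\cB$ is recorded as a downward jump of $L$ (resp.\ $R$), and conversely every downward jump arises this way. This is the first assertion, and it is the exact continuum analog of the correspondence in Section~\ref{sec-perc-interface} between downward jumps of $\cL,\cR$ and the $2$-connected components cut off to the left, right of the discrete interface.

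The conditional law statement is the heart of the matter and is the disk form of the mating-of-trees theorem. I would obtain it from the quantum-disk/$\SLE_6$ version of the mating-of-trees theorem \cite{wedges} (or the finite-volume variant underlying \cite[Theorem 2.1]{sphere-constructions}), which asserts that, conditionally on $Z|_{[0,t]}$, the quantum surfaces $\cB$ cut out along $\eta([0,t])$ are conditionally independent quantum disks whose boundary lengths are the corresponding downward jumps of $Z|_{[0,t]}$. Lemma~\ref{lem:Levy} is exactly the explicit law of $Z$ in this description, and $\lim_{t\to\sigma}Z(t)=(0,0)$ says the welding exhausts the disk. To pass from ``quantum disk'' to ``Brownian disk'', I would use that (i) the $\sqrt{8/3}$-LQG area and boundary measures of a sub-quantum-disk are the restrictions of $\mu_\fh$ and $\nu_\fh$, with the latter agreeing with the intrinsic Brownian-disk boundary length of \cite{legall-disk-snake}; (ii) the internal metric of $c_{\op d}\frk d_\fh$ on a complementary component $\cB$ coincides, as a length metric, with the LQG metric of the quantum disk obtained by restricting $\fh$ to $\cB$ — a locality property of the QLE metric from \cite{qle,lqg-tbm3}; and then (iii) applying~\eqref{eq:disk} to each such sub-quantum-disk identifies it in law with a Brownian disk of the prescribed boundary length. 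The case $t=\sigma$ then follows once one notes that $\eta([0,\sigma])$ is $\mu_\fh$-null, so no area is lost.

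Finally, for the last assertion, at $t=\sigma$ the complement $H\setminus\eta([0,\sigma])$ is a countable family of Brownian disks indexed by the jumps of $Z$, glued along $\eta$ according to a topological pattern that is an explicit function of the path $Z$ (the peanosphere/looptree diagram of \cite[Section 10]{wedges}). I would show that $\fh$ — hence $\eta$, $\frk d_\fh$, $\mu_\fh$, $\nu_\fh$ — is measurable with respect to $(Z,\{\text{complementary Brownian disks}\})$ by inverting the mating of trees: $Z$ prescribes the welding, the welding reconstructs $(\D,\fh,\eta)$ up to conformal coordinate change (pinned down by the marked boundary points $\xi_\fh(0),\xi_\fh(\frk l_R)$ and the root), and $d,\mu$ are deterministic functionals of $\fh$; using the metric-gluing theory underlying \cite{lqg-tbm3,gm-confluence} one checks that the metric produced by the gluing really is $d$ rather than merely a metric with the right law. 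Since~\eqref{eq:disk} says the Brownian disk a.s.\ determines $\fh$, this gives that $Z$ together with the complementary disks determines $\frk H$. The step I expect to be the main obstacle is this last one: verifying that the gluing map is measurable and recovers exactly $d$ requires the strong form of the $\sqrt{8/3}$-LQG metric construction and its compatibility with conformal welding, which is substantially more involved than the mere existence of the metric and is where most of the work in \cite{gwynne-miller-sle6} is concentrated.
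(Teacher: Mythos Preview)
Your outline for the first two assertions is essentially correct and matches what the paper does: it simply cites \cite[Theorem~1.1]{gwynne-miller-sle6} for the jump/bubble correspondence and the conditional-law statement in the quantum-disk picture, then invokes the equivalence~\eqref{eq:disk} (i.e., \cite[Corollary~1.4]{lqg-tbm2}) to translate ``quantum disk'' into ``Brownian disk''. The metric-locality step you flag is indeed part of what is established in \cite{gwynne-miller-sle6}.

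Where your proposal diverges is in the measurability assertion. You propose to reconstruct $\frk H$ from $(Z,\{\text{bubbles}\})$ by explicitly inverting the conformal welding and then appealing to metric-gluing theory to identify the resulting metric with $d$; you also speculate that this reconstruction is the hard part of \cite{gwynne-miller-sle6}. That is not how the paper (or \cite{gwynne-miller-sle6}) proceeds, and your belief about where the difficulty lies is mistaken. The paper obtains the measurability by a soft transfer: on an infinite-volume surface (a suitable quantum wedge), \cite[Theorem~1.16]{wedges} already gives that the SLE-decorated surface is a.s.\ determined by the boundary-length process together with the cut-out quantum surfaces; one then passes to the quantum disk by local absolute continuity of the field near a boundary segment (the proof of \cite[Lemma~3.8]{gwynne-miller-sle6} carries out exactly this kind of transfer). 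Since measurability is an almost-sure statement, it survives under absolute continuity with no further work. Your metric-gluing route would in principle succeed, but it imports heavy machinery that is not needed here, and the references you cite for it (e.g.\ \cite{gm-confluence}) postdate and are orthogonal to the actual argument.
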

\begin{proof}
	The first two statements from \cite[Theorem 1.1]{gwynne-miller-sle6} and the equivalence of Brownian disk and $\sqrt{8/3}$-LQG disks~\cite[Corollary 1.4]{lqg-tbm2}.
	The second statement follows from \cite[Theorem 1.16]{wedges} and local absolute continuity (see the proof of~\cite[Lemma 3.8]{gwynne-miller-sle6} for a similar argument). 
\end{proof}
\begin{remark}
	It is shown in~\cite[Theorem 7.12]{gwynne-miller-char} that SLE$_6$ on the Brownian disk is uniquely characterized by the Markov property of Lemma~\ref{lem-domain-SLE} together with the topology of the curve. This statement is a key input in the proof of the scaling limit result for a single interface in~\cite{gwynne-miller-perc}.
\end{remark} 

Recall the triangulation decorated by a percolation interface $\frk M^n=\left( \Map^n , d^n , \mu^n , \xi^n , \eta^n \right)$ from the beginning of Section~\ref{sec-LQG}. Let $\cZ^n=(\cL^n,\cR^n)$ be the boundary length process of the percolation interface $\lambda^n$ as defined in Section~\ref{sec-perc-interface}.  Let  
\begin{equation}\label{eq:renormalized-Z}
Z^n(t) := (L^n(t) , R^n(t)):=  \bcon^{-1} n^{-1/2}\mcl Z^n(\lfloor \tcon t n^{3/4} \rfloor)\qquad \textrm{for all}\; t\geq 0 ,
\end{equation}
where $\BB s = 6\BB c^{3/2}$.\footnote{The constant $\BB s$ is not explicitly given in \cite{gwynne-miller-char}. With our choice of normalizations, it follows from \cite[Page 23]{angel-curien-uihpq-perc}  that $\BB s=6\BB c^{3/2}$.}
The starting point of the proof of Theorem~\ref{thm-metric-peano} is the following theorem, which follows from results in~\cite{gwynne-miller-perc,aasw-type2}.

\begin{thm}\label{thm-chordal-conv}
	The pair $(\frk M^n , Z^n) $ converges in law to the SLE$_6$ decorated quantum disk $\frk H$ together with its boundary length process $Z$  with respect to the GHPU topology on the first coordinate and the Skorokhod topology for c\`adl\`ag processes on the second coordinate.
\end{thm}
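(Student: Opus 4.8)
The plan is to combine the GHPU convergence of triangulations with simple boundary to the Brownian disk (proven in~\cite{aasw-type2}) with the machinery of~\cite{gwynne-miller-perc}, noting that the percolation peeling process $\Peel$ from Section~\ref{sec-perc-interface} is the exact analog of the percolation peeling process analyzed in~\cite{gwynne-miller-perc} for face percolation on a quadrangulation. As remarked in the footnote after Theorem~\ref{thm-metric-peano}, the proof of~\cite[Theorem 8.3]{gwynne-miller-perc} applies verbatim to the site-percolation-on-triangulations setting once the convergence of the underlying undecorated maps is available, which is precisely the content of~\cite{aasw-type2}. So the first step is simply to invoke~\cite[Theorem 8.3]{gwynne-miller-perc} to obtain GHPU convergence $\frk M^n \to \frk H$ in $\BM_2^{\GHPU}$, where $\frk H$ is the SLE$_6$-decorated Brownian disk of Definition~\ref{def:disk-curve}.

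The second step is to upgrade this to joint convergence of the pair $(\frk M^n, Z^n)$. Here I would use the fact, also established in the course of the proof in~\cite{gwynne-miller-perc} (and stated explicitly there), that the discrete boundary length process $\cZ^n$, rescaled as in~\eqref{eq:renormalized-Z}, converges in the Skorokhod topology to the continuum boundary length process $Z$ of Lemma~\ref{lem:Levy}, and that this convergence holds jointly with the GHPU convergence of $\frk M^n$. Concretely: by tightness (the one-dimensional marginals $L^n, R^n$ are tight because they are, up to lower-order corrections, random walks with increments in $\{(1,0),(0,1),(-1,-1)\}$ conditioned on a positivity-type event, and one has the a priori estimates from~\cite{gwynne-miller-perc,bhs-site-perc,angel-curien-uihpq-perc}), every subsequence of $(\frk M^n, Z^n)$ has a further subsequence converging in law in $\BM_2^{\GHPU}\times D([0,\infty),\R^2)$ to some limit $(\frk H, \wt Z)$. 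The first coordinate is identified as the SLE$_6$-decorated Brownian disk by the previous paragraph. It then remains to identify $\wt Z$ with the boundary length process $Z$ of $\frk H$ and to check it does not depend on the subsequence.

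The identification of the limiting second coordinate is where the real work lies, and I expect it to be the main obstacle. The point is that the downward jumps of $\cZ^n$ at time $\lfloor \tcon t n^{3/4}\rfloor$ record the (rescaled) boundary lengths of the 2-connected components of $\Map^n - \lambda^n([0,i]_\Z)$ cut off by the interface, and by the discrete domain Markov property (Lemma~\ref{lem-interface-markov}) these are, conditionally, independent site-percolated Boltzmann triangulations. Under GHPU convergence these components converge to the corresponding complementary components of $\eta([0,t])$ in $\frk H$, which by Lemma~\ref{lem-domain-SLE} are Brownian disks whose boundary lengths are encoded by the jumps of $Z$. Matching up the discrete cut-off components with the continuum ones — i.e., showing that the GHPU limit is compatible with the Skorokhod limit at the level of which component is cut off when and with what boundary length — requires a quantitative argument: one must rule out the possibility that boundary length is "created" or "lost" in the limit, e.g.\ by many microscopic components merging or by a macroscopic component failing to be detected, which is controlled using the Lévy-process description of $Z$ in Lemma~\ref{lem:Levy} (in particular that $Z$ has no simultaneous jumps in its two coordinates and that small jumps are summable in the appropriate sense) together with a priori bounds on the number and sizes of the discrete components. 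This is exactly the kind of argument carried out in~\cite{gwynne-miller-perc}, so in the write-up I would structure it as: (a) state that the convergence $Z^n \to Z$ jointly with $\frk M^n \to \frk H$ is part of the conclusion of~\cite[Theorem 8.3]{gwynne-miller-perc} once translated to our setting via~\cite{aasw-type2} and the constant identification $\BB s = 6\BB c^{3/2}$ from~\cite{angel-curien-uihpq-perc}; (b) note that since the limit $(\frk H, Z)$ is characterized (the Brownian disk by~\cite{aasw-type2}, the SLE$_6$ and its boundary length process jointly by the Markov property characterization of~\cite{gwynne-miller-char} recalled in the remark after Lemma~\ref{lem-domain-SLE}), the subsequential limit is unique, giving convergence along the full sequence.
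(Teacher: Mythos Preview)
Your proposal is correct and follows the same approach as the paper: the paper's proof consists of exactly one sentence, namely that Theorem~\ref{thm-chordal-conv} is the content of \cite[Theorem~8.3]{gwynne-miller-perc} once one supplies the GHPU convergence of $(\Map^n,d^n,\mu^n,\xi^n)$ to the Brownian disk, which is provided by~\cite{aasw-type2}. Your step~(a) is precisely this, and your more detailed discussion of tightness, subsequential limits, and identification via the characterization in~\cite{gwynne-miller-char} is a faithful sketch of what happens \emph{inside} the black box of~\cite{gwynne-miller-perc}; the paper itself does not reproduce that argument but simply cites it, so your write-up could be shortened to the single citation without loss.
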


Theorem~\ref{thm-chordal-conv} is proved as \cite[Theorem~8.3]{gwynne-miller-perc} conditioned on the assumption that $(\Map^n,d^n,\mu^m,\xi^n)$ converge to $(H,d,\mu,\xi)$ in the GPHU topology. This convergence is proved in \cite{aasw-type2}.

\subsection{Nested exploration, space-filling SLE$_6$, and  Brownian motion}
\label{sec-mating}  

In this section, we  describe the continuum limit of $\acute{\lambda}^n$ in Theorem~\ref{thm-metric-peano}, which is the space-filling $\SLE_6$ on the Brownian disk.
This will allow us to give a precise statement of our main theorem (Theorem~\ref{thm:main-precise}). 

The space-filling $\SLE_6$ is a conformally invariant random space-filling curve whose precise definition relies on the full strength of imaginary geometry developed in \cite{ig1,ig4}. In Theorem~\ref{thm:space-fillng} we give an alternative construction based on the continuum analog of the nested exploration procedure in Section~\ref{sec-sle6-def-discrete}, and we use the imaginary geometry construction as a black box in the proof of the theorem to argue existence of the curve. The construction in Theorem~\ref{thm:space-fillng} provides all information about space-filling $\SLE_6$ which is needed for Theorem~\ref{thm-metric-peano}.

Start with a sample $(H,d,\mu,\xi)$ of the Brownian disk $\BD_\LL$.
We will iteratively define for each multi-index $\bk \in \bigcup_{m=0}^\infty \BB N^m$ a Brownian disk decorated by an independent chordal SLE$_6$ curve $\frk H_{\BB k} = (H_{\BB k} , d_{\BB k} , \mu_{\BB k} , \xi_{\BB k } , \eta_{\BB k})$ in a manner analogous to the discrete construction in Section~\ref{sec-sle6-def-discrete}. See Figure~\ref{fig-sle6-decomp} for an illustration.

\begin{figure}[ht!]
	\begin{center}
		\includegraphics[scale=.78]{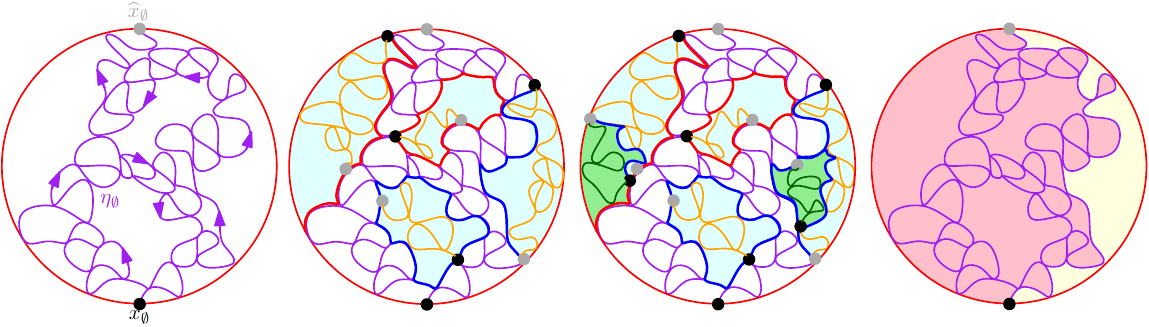} 
	\end{center}
	\caption{\label{fig-sle6-decomp} {\textbf{Left (three figures):} The main Brownian disk $H$ (shown as the unit disk) with several stages of the nested chordal SLE$_6$ process which we use to construct the space-filling SLE$_6$ $\eta'$. The stage-0 curve $\eta_\emptyset$ is shown in purple in the first figure. Four stage-1 domains $ H_{\bk} $ for $\bk \in \BB N$ are shown in light blue in the second figure, and their associated curves $\eta_\bk$ are shown in orange. Two stage-2 domains $H_{\bk}$ for $\bk \in \BB N^2$ are shown in light green in the third figure, and their associated chordal SLE$_6$ curves $\eta_\bk$ are shown in dark green. The coloring of the boundary of each of the domains, as defined in Section~\ref{sec-mating}, is also shown. The initial and target points $x_{\bk}$ and $\wh x_{\bk}$ for the curves $\eta_\bk$ are shown as black and gray dots, respectively; note that these are necessarily the points separating the red and blue arcs for domains with dichromatic boundary.}
		\textbf{Right:} The Brownian disk $H$ and the curve $\eta_\emptyset$ (still in purple) with points colored based on the order in which they are hit by $\eta'$: first $\eta'$ traces $\eta_\emptyset$ in order and fills in each of the bubbles in the pink region (which are monochromatic) immediately after tracing the segment of $\eta_\emptyset$ which cuts it off. Then, $\eta'$ fills in the dichromatic bubbles (yellow region) in order as it travels from $\wh x_\emptyset$ to $x_\emptyset$. }
\end{figure}

Let $H_\emptyset=H$, $d_\emptyset = d$, $\mu_\emptyset=\mu$, $\xi_\emptyset= \xi$, and $x_\emptyset= \xi_\emptyset(0)$. Let $\ell_\emptyset$ be the total length of $\bdy H_\emptyset$ and  $\wh x_\emptyset = \xi_\emptyset(\ell_\emptyset /2)$.
Let $\eta_\emptyset$ be a chordal $\SLE_6$ on $(H,x_\emptyset,\wh x_\emptyset)$. 
Set $\frk H_\emptyset=(H_\emptyset, d_\emptyset, \mu_\emptyset, \xi_\emptyset,\eta_\emptyset)$ so that $\frk H_\emptyset$ satisfies  $(\ell/2,\ell/2)$-boundary condition (Definition~\ref{def:disk-curve}).  
For convenience, we call the counterclockwise arc of $\bdy H$ from $x_\emptyset$ to $\wh x_\emptyset$ --- i.e., the set $\xi_\emptyset([0,\ell/2])$ --- the \emph{right} boundary arc of $H$ and its complementary arc the \emph{left} boundary arc of $H$.

Inductively, suppose $m\in\BB N$ and $\frk H_{\bk'}= (H_{\bk'},d_{\bk'},\mu_{\bk'},\xi_{\bk'},\eta_{\bk'})$,  $\ell_{\bk'}$, $x_{\bk'}$, and $\wh{x}_{\bk'}$  have been defined  for each $\bk'\in \BB N^{m-1}$
in such a way that $x_{\bk'}=\xi_{\bk'}(0)=\eta_{\bk'}(0)$, $\wh x_{\bk'}=\eta(\infty)$, and $\ell_{\bk'}$ is the boundary length of $\bdy H_{\bk'}$. 
Let $\bk = (k_1,\dots,k_m) \in \BB N^m$ and let $\bk^- := (k_1,\dots,k_{m-1}) \in \BB N^{m-1}$ be its ancestor. 
Let $H_\bk $ be  the closure of the (almost surely unique) connected component of $H_{\bk^-}\setminus \eta_{\bk^-}$  with the  $k_m$-th largest boundary length.
We call $H_{\bk}$ the  \emph{bubble} of index $\bk$. 
  
Let $d_{\BB k}$ be the internal metric of $d_{\BB k^-}$ on $H_{\BB k}$, let $\mu_{\BB k} := \mu_{\BB k^-} |_{H_{\BB k}}$, and let $\xi_{\BB k}$ be the path which traverses $\bdy H_{\BB k}$ according to its natural boundary length measure as in Section~\ref{sec-sle-markov}, started from the point $x_{\BB k} = \xi_{\BB k}(0)$ where $\eta_{\BB k^-}$ finishes tracing $\bdy H_{\BB k}$ and oriented in the counterclockwise (resp.\ clockwise) direction if $H_{\BB k}$ is to the left (resp.\ right) of $\eta_{\BB k}$. We know from Lemma~\ref{lem-domain-SLE} that $(H_{\BB k}, d_{\BB k} , \mu_{\BB k} , \xi_{\BB k})$ is a Brownian disk conditional on its boundary length. 
Let $\ell_{\BB k}$ be this boundary length.

In order to choose the target point for the chordal SLE$_6$ curve $\eta_{\BB k}$, we need to specify a notion of ``color" for  the boundaries of the bubbles $H_{\BB k}$ analogous to the coloring of the boundaries of the bubbles in Section~\ref{sec-sle6-def-discrete}.  We say that the $H_\emptyset$ is \emph{monochromatic red}. 
Moreover, by induction we assume that the color of $H_{\bk'}$ is defined for each $\bk'\in \N^{m-1}$.
\begin{enumerate}[label=(\arabic{enumi})]
	\item If either $H_\bk \cap \bdy H_{\bk^-} = \emptyset$ or $\bdy H_{\BB k} \cap \bdy H_{\bk^-}$ is traced in the same order by $\xi_{\BB k}$ and $\xi_{\BB k^-}$, we say that $\bdy H_{\BB k}$ is \emph{monochromatic}. We say that it is \emph{monochromatic red} (resp.\ \emph{monochromatic blue}) if $H_{\BB k}$ lies to the left (resp.\ right) side of $\eta_{\BB k^-}$, equivalently if $\xi_{\BB k}$ travels in the counterclockwise (resp.\ clockwise) direction. In this case, we let $\wh x_\bk = \xi_\bk(\ell_\bk/2)$. \label{item:color1}
	\item If $H_\bk \cap \bdy H_{\bk^-} \not= \emptyset$ and $\bdy H_{\BB k} \cap \bdy H_{\bk^-}$ is traced in the opposite order by $\xi_{\BB k}$ and $\xi_{\BB k^-}$, we say that $\bdy H_{\BB k}$ is \emph{dichromatic}. In this case, we let $\wh x_\bk$ be the common endpoint of $\bdy H_{\bk} \cap \bdy H_{\bk^-}$ and $\bdy H_{\bk} \setminus \bdy H_{\bk^-}$ other than $x_\bk$. \label{item:color2}
\end{enumerate}
With these definitions, {we have that} $H_{\BB k}$ is monochromatic whenever $H_{\BB k^-}$ is dichromatic. If $H_{\BB k^-}$ is monochromatic red (resp.\ blue), then $H_{\BB k}$ is dichromatic if and only if $H_{\BB k}$ intersects the right (resp.\ left) boundary of $H_{\BB k^-}$.
We now let $\eta_{\BB k}$ be a chordal SLE$_6$ on the Brownian disk $H_{\BB k}$ from $x_{\BB k}$ to $\wh x_{\BB k}$ (using Definition~\ref{def:disk-curve}). This completes the inductive construction of our SLE$_6$-decorated Brownian disks $\frk H_{\BB k} = (H_{\BB k},d_{\BB k} , \mu_{\BB k} , \xi_{\BB k} ,\eta_{\BB k})$.

For each multi-index $\bk$, let $Z_{\bk}  = (L_{\bk} , R_{\bk})$ be the  boundary length process associated with $\frk H_\bk$ as in Section~\ref{sec-sle-markov}. 
Then one has the following domain Markov property by iteratively applying Lemma~\ref{lem-domain-SLE}.

\begin{lem} \label{lem-sle6-decomp-markov}
	Let $m\in\BB N_0$. Conditioned on  $\{Z_{\bk}\}_{\bk \in \BB N^m}$,  the conditional law of $\{\frk H_{\bk'}\}_{\bk' \in \BB N^{m+1}}$ is that of a collection of independent $\SLE_6$-decorated  Brownian disks with specified left/right boundary  condition. 
\end{lem}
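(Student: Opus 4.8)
The plan is to prove the lemma by induction on $m$, using the single-disk domain Markov property of Lemma~\ref{lem-domain-SLE} as the only substantive input; the statement is the continuum analog of Lemma~\ref{lem-perc-decomp-markov}. It is convenient to isolate the following sub-claim $(\star)$: \emph{if $\frk H = (H,d,\mu,\xi,\eta)$ is an $\SLE_6$-decorated Brownian disk with some specified left/right boundary condition and $Z$ is its boundary length process, then, conditioned on $Z$, the objects obtained from the complementary connected components of $\eta$ --- ordered by decreasing boundary length, each equipped with the internal metric, the restricted area measure, the boundary-length parametrization started at the point where $\eta$ finishes tracing that component's boundary, and an independent chordal $\SLE_6$ targeted according to the rules \ref{item:color1}--\ref{item:color2} --- form a collection of independent $\SLE_6$-decorated Brownian disks whose boundary lengths and left/right boundary conditions are measurable functions of $Z$.} Granting $(\star)$, the case $m=0$ of the lemma is $(\star)$ applied to $\frk H_\emptyset$, which by construction is an $\SLE_6$-decorated Brownian disk with $(\ell/2,\ell/2)$-boundary condition. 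For the inductive step, assume the lemma for $m-1$, so that $\{\frk H_{\bk'}\}_{\bk'\in\BB N^m}$ are conditionally independent $\SLE_6$-decorated Brownian disks given $\{Z_{\bk}\}_{\bk\in\BB N^{m-1}}$. Since each $Z_{\bk'}$ is a measurable function of $\frk H_{\bk'}$, further conditioning on $\{Z_{\bk'}\}_{\bk'\in\BB N^m}$ keeps the $\frk H_{\bk'}$ independent, with each having the conditional law of an $\SLE_6$-decorated Brownian disk given its own boundary length process; applying $(\star)$ inside each $\frk H_{\bk'}$ and combining with this cross-independence gives that $\{\frk H_{\bk''}\}_{\bk''\in\BB N^{m+1}}$ are conditionally independent $\SLE_6$-decorated Brownian disks given $\{Z_{\bk}\}_{\bk\in\BB N^m}$.

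It remains to prove $(\star)$. Applying Lemma~\ref{lem-domain-SLE} with $t$ equal to the time $\sigma$ at which $\eta$ finishes (so that $\eta([0,\sigma])=\eta(\R)$ and its complement is the union of the bubbles) already yields that, conditioned on $Z$, the complementary components of $\eta$ equipped with internal metric, restricted measure, and boundary-length parametrization are independent Brownian disks whose boundary lengths are the downward jumps of $L$ and $R$. The remaining point is to check that the color of each component's boundary (via \ref{item:color1}--\ref{item:color2}) and the marked points $x_{\bk},\wh x_{\bk}$ --- equivalently, via Definition~\ref{def:disk-curve}, the left/right boundary condition of that component as an $\SLE_6$-decorated Brownian disk --- depend only on $Z$. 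This is the continuum counterpart of the discrete description \ref{item:hat}--\ref{item:future} in Section~\ref{sec-sle6-def-discrete}, transported via Lemma~\ref{lem:Levy}: a component lies to the left (resp.\ right) of $\eta$ according to whether the jump that cuts it off is a jump of $L$ (resp.\ of $R$); it is dichromatic precisely when that jump attains a new running infimum of the corresponding coordinate (and a component of a dichromatic disk is always monochromatic); in the monochromatic case the left/right split is the balanced one $(\ell_{\bk}/2,\ell_{\bk}/2)$; and in the dichromatic case the two boundary arcs have lengths equal to the excess of the jump over the previous running infimum and to the decrement of the running infimum, placed in the order dictated by the side of $\eta$. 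Finally, the decorating curves $\eta_{\bk}$ are by construction chordal $\SLE_6$ curves sampled conditionally independently of one another given the Brownian disks $(H_{\bk},d_{\bk},\mu_{\bk},\xi_{\bk})$ and their marked points (Definition~\ref{def:disk-curve}), so conditional independence of the undecorated Brownian disks upgrades to conditional independence of the $\frk H_{\bk}$.

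I expect the real content to be the measurability check together with the measure-theoretic bookkeeping in the inductive step. The measurability check is geometric but elementary: it is exactly the analog of the discrete bijection facts recorded in \ref{item:hat}--\ref{item:future}, and it uses that a.s.\ no two complementary components share the same boundary length (the jump sizes of two independent $3/2$-stable processes are a.s.\ all distinct), so that ordering the bubbles by boundary length is canonical and no auxiliary randomness enters the construction. The bookkeeping in the inductive step is the routine fact that conditional independence of a family $\{\frk H_{\bk'}\}_{\bk'\in\BB N^m}$ is preserved --- and the conditional laws are only refined coordinatewise --- when one additionally conditions on functions $Z_{\bk'}$ each depending on a single $\frk H_{\bk'}$; the tower property is used here to reduce conditioning on $\{Z_{\bk}\}_{\bk\in\BB N^m}$ to conditioning on $\{\frk H_{\bk}\}_{\bk\in\BB N^m}$ and back.
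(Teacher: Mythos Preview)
Your proposal is correct and follows exactly the approach the paper indicates: the paper's entire justification is the one-line remark ``one has the following domain Markov property by iteratively applying Lemma~\ref{lem-domain-SLE}'', and your write-up simply spells out that iteration (the sub-claim $(\star)$, the induction on $m$, the measurability of the child boundary conditions from $Z$, and the tower-property reduction from conditioning on all levels $\le m$ to conditioning on level $m$ alone). There is no substantive difference in strategy.
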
  

Now we move  to construct the space-filling $\SLE_6$ on the Brownian disk following the discrete intuition in Section~\ref{sec-discrete}. 
For each multi-index $\bk$ and child multi-indices $\bk_1,\bk_2 \in \cC_\bk$, if $H_{\bk_1}$ is monochromatic and $H_{\bk_2}$ is dichromatic, then  we write $H_{\bk_1}\prec_\bk H_{\bk_2}$. If both $H_{\bk_1}$  and $H_{\bk_2}$ are monochromatic (resp.\ dichromatic) and the time at which $\eta_{\BB k}$ finishes tracing $\bdy H_{\bk_1}$ is smaller than that for $\bdy H_{\bk_2}$, then we write $H_{\bk_1}\prec_\bk H_{\bk_2}$  (resp.\  $H_{\bk_2}\prec_\bk H_{\bk_1}$). By the color convention for bubbles, this uniquely defines $\prec_\bk$ as a total ordering $\prec_\bk$  on $\{H_{\bk'} \}_{\bk'\in \cC_\bk}$.

The following theorem is essentially proven in  \cite[Section 4.3]{ig4}. 

\begin{thm}\label{thm:space-fillng}
	In the above setting, with probability 1, there exists a unique continuous curve $\eta' : [0,\mu(H)] \rta H$ with the following properties: 
	\begin{enumerate}[label=(\arabic{enumi})]
		\item $\eta'(0)   =x_\emptyset$, $\eta'([0,\mu(H)])=H$, and  $\mu( \eta'([0,s]) ) = s$  for each $s\in [0,\mu(H)]$; \label{item-sfsle1}
		\item For each multi-index $\bk$, given any $\bk_1, \bk_2 \in \cC_\bk$ and times $s_1,s_2$ such that $\eta'(s_i)$ is in the interior of  $H_{\bk_i}$ for $i=1,2$, we have $H_{\bk_1}\prec H_{\bk_2}$ if and only if $s_1<s_2$.   \label{item-sfsle2}
	\end{enumerate}
	As a random variable in $ \BM^\GHPU_2$, {the tuple} $\frk H':=(H,d,\mu,\xi,\eta')$ is called the the space-filling $\SLE_6$-decorated Brownian disk with boundary length $\LL$.
\end{thm}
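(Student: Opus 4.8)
The plan is to deduce Theorem~\ref{thm:space-fillng} from the imaginary-geometry construction of space-filling $\SLE_6$ in~\cite{ig4} together with the identification of the Brownian disk with the $\sqrt{8/3}$-quantum disk (see~\eqref{eq:disk} and Definition~\ref{def:disk-curve}), rather than building $\eta'$ by hand. Using~\eqref{eq:disk}, identify $(H,d,\mu,\xi)$ with a quantum disk $(\D,\fh)$, so that $\mu$ is the quantum area measure and $\xi$ the quantum-length parametrization of $\bdy\D$. By~\cite[Section 4.3]{ig4} (in the form adapted to the quantum disk, cf.~\cite{wedges}) there is a space-filling $\SLE_6$ curve on $\D$ emanating from $x_\emptyset$ and filling all of $\D$; by the mating-of-trees results of~\cite{wedges}, when parametrized so as to cover one unit of quantum area per unit time it is a continuous curve $\eta' : [0,\mu(H)] \to H$ with $\mu(\eta'([0,s])) = s$. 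Thus property~\ref{item-sfsle1} holds essentially by definition of $\eta'$, and the content of the theorem is (a) that $\eta'$, suitably coupled to the $\eta_{\bk}$, satisfies~\ref{item-sfsle2}, and (b) uniqueness.

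For the existence half, the key structural input about space-filling $\SLE_6$ from~\cite[Section 4.3]{ig4} is that it admits its own recursive decomposition: its ``trunk'' is an ordinary chordal $\SLE_6$, the complementary components of the trunk are visited by the space-filling curve consecutively in a prescribed order, and inside each component the curve is again a space-filling $\SLE_6$ to an appropriate target. I would match this decomposition with the explicit recursion defining the $\frk H_{\bk}$: check that the trunk of $\eta'$ inside the bubble $H_{\bk^-}$ is the chordal $\SLE_6$ $\eta_{\bk^-}$, that the components produced by~\cite{ig4} are exactly the $H_{\bk}$, and --- this is the bookkeeping heart of the argument --- that the coloring rules~\ref{item:color1}--\ref{item:color2}, the target points $\wh x_{\bk}$, and the orderings $\prec_{\bk}$ coincide with those forced by the visiting order of the space-filling curve. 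By the spatial Markov property of space-filling $\SLE_6$ (equivalently, of the flow lines of the GFF), conditionally on their domains the chordal curves extracted from $\eta'$ are independent chordal $\SLE_6$'s; so, together with Lemma~\ref{lem-domain-SLE} and Lemma~\ref{lem-sle6-decomp-markov}, the joint law of $(H,d,\mu,\xi,(\eta_{\bk})_{\bk})$ agrees with the joint law of the Brownian disk decorated by the chordal curves extracted from $\eta'$. Transferring $\eta'$ across this coupling produces, on the same probability space as the $\frk H_{\bk}$, a curve with properties~\ref{item-sfsle1} and~\ref{item-sfsle2}.

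For uniqueness, let $\mathcal D := \bigcup_{\bk}\bigl(\bdy H_{\bk} \cup \eta_{\bk}(\R)\bigr)$, the union over all multi-indices. Each $\eta_{\bk}$ carries zero $\mu$-mass (an $\SLE_6$ curve has zero quantum area), and $\bdy H_{\bk} \subset \eta_{\bk^-}(\R) \cup \bdy H_{\bk^-}$, so by induction and countable additivity $\mu(\mathcal D) = 0$. Hence for $\mu$-a.e.\ $z \in H$ and every $m$ there is a unique multi-index $\bk(z,m)$ of length $m$ with $z$ in the interior of $H_{\bk(z,m)}$, and $\bk(z,m+1) \in \cC_{\bk(z,m)}$. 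Since the bubbles shrink --- $\diam_d(H_{\bk(z,m)}) \to 0$ as $m \to \infty$ for $\mu$-a.e.\ $z$, which follows from the continuity of space-filling $\SLE_6$ in~\cite{ig4,wedges} (so the nested exploration exhausts $H$) --- for $\mu\times\mu$-a.e.\ pair $(z,w)$ there is a smallest $m$ with $\bk(z,m) \neq \bk(w,m)$, whence $\bk(z,m)$ and $\bk(w,m)$ are distinct children $\bk_1,\bk_2$ of a common $\bk$. Applying property~\ref{item-sfsle2} to any curve satisfying~\ref{item-sfsle1}--\ref{item-sfsle2}, such a curve reaches $z$ before $w$ iff $H_{\bk_1} \prec_{\bk} H_{\bk_2}$, a condition independent of the curve. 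Thus any two such curves visit $\mu$-a.e.\ pair of points in the same order; combined with $\mu(\eta'([0,s])) = s$ from~\ref{item-sfsle1} (which makes $z \mapsto \inf\{s : z \in \eta'([0,s])\}$ push $\mu$ forward to Lebesgue measure on $[0,\mu(H)]$), this forces the hitting-time functions of any two such curves to agree $\mu$-a.e., hence the curves to agree everywhere by continuity.

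I expect the main obstacle to be the matching step of the second paragraph: reconciling the recursive description of space-filling $\SLE_6$ in~\cite{ig4} --- phrased in the Euclidean/GFF language and for somewhat different boundary data --- with the explicit coloring, target-point, and ordering conventions of Section~\ref{sec-mating}, and carefully transporting all of the boundary-length and quantum-area data through the Brownian-disk/$\sqrt{8/3}$-LQG dictionary. A secondary point used in both halves is the ``bubbles shrink'' statement; this is precisely the assertion that the iterated chordal $\SLE_6$ exploration converges to a space-filling curve, which is what~\cite{ig4} supplies, so I would invoke it rather than reprove it.
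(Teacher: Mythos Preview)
Your proposal is correct and follows essentially the same route as the paper: existence is reduced to the imaginary-geometry construction of space-filling $\SLE_6$ in~\cite{ig4} after checking that the recursive ordering $\prec_{\bk}$ matches the flow-line ordering, and uniqueness comes from the fact that $\mu$-typical pairs of points eventually land in distinct sibling bubbles, so any curve satisfying~\ref{item-sfsle2} visits them in the order dictated by $\prec$. The one noteworthy difference is in how the ``bubbles shrink'' input is obtained: the paper proves this directly as Lemma~\ref{lem:area} (via target invariance, the nested chordal curves concatenate to a radial $\SLE_6$, whose Euclidean hulls shrink to the target), whereas you derive it from the continuity of the space-filling curve in~\cite{ig4}; the paper's argument is slightly more self-contained, and its uniqueness proof via a countable i.i.d.\ $\mu$-sample (dense, hence determining the curve by continuity) is a touch cleaner than your $\mu$-a.e.\ hitting-time argument, but both are valid.
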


In words, $\eta'$ is the unique curve that visits $\{H_{\bk'}\}_{\bk'\in \cC_\bk}$ in the order of $\prec_\bk$ for each multi-index $\bk$ and is parametrized by $\mu$-mass.
Note that the definition of our ordering implies that $\eta'(0) =\eta'(\mu(H)) = x_\emptyset$.
We first record a lemma which is useful for the uniqueness part of Theorem~\ref{thm:space-fillng}.
\begin{lem}\label{lem:area}
	Let $z$ be a point sampled according to $\mu$.  For $m\in \N$, let $\bk_m$ be the index of the $m$-th largest bubble containing $z$. Then $\lim_{m\to\infty}\mu(H_{\BB k_m})=0$ a.s.
\end{lem}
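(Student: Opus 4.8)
The plan is to show that the sequence of areas $\mu(H_{\bk_m})$, which is non-increasing in $m$ (since each $H_{\bk_{m+1}}$ is a subset of $H_{\bk_m}$, the bubbles containing $z$ being nested), converges to a limit $a \geq 0$ almost surely, and then to rule out $a > 0$. The monotonicity and nonnegativity give the existence of the limit immediately; the content is in showing $a = 0$.

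First I would set up the right filtration. Condition on the Brownian disk $(H,d,\mu,\xi)$ and think of $z$ as sampled from the normalized measure $\mu/\mu(H)$. For each $m$, the index $\bk_m$ is determined by the nested exploration and the location of $z$. The key probabilistic input is the domain Markov property of Lemma~\ref{lem-sle6-decomp-markov} together with the fact that, conditional on $\frk H_{\bk_m}$ (equivalently on its boundary length $\ell_{\bk_m}$), the curve $\eta_{\bk_m}$ is an independent chordal SLE$_6$ on the Brownian disk $H_{\bk_m}$. Given $H_{\bk_m}$ and the point $z$ inside it, $\bk_{m+1}$ is the index of the bubble cut out by $\eta_{\bk_m}$ that contains $z$, and by Lemma~\ref{lem-domain-SLE} this bubble is a Brownian disk whose (conditional) law, given its own boundary length, is that of a Brownian disk. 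The crucial point is that there is a uniform constant $p_0 < 1$ such that, conditionally on everything up to stage $m$, the probability that $\mu(H_{\bk_{m+1}}) \geq \frac{1}{2}\mu(H_{\bk_m})$ is at most $p_0$, where $p_0$ can be taken independent of the boundary length $\ell_{\bk_m}$ by Brownian-disk scaling~\eqref{eqn-disk-scaling}: rescaling $H_{\bk_m}$ to have boundary length $1$ rescales all areas by the same factor, so the event $\{\mu(H_{\bk_{m+1}}) \geq \frac12 \mu(H_{\bk_m})\}$ is scale-invariant, and for a unit-boundary-length Brownian disk decorated by an independent chordal SLE$_6$, a point sampled from $\mu$ lies in a complementary bubble of $\mu$-mass at least half the total with probability strictly less than $1$ (indeed the space-filling curve eventually exits any fixed bubble, so with positive probability $z$ is separated into a much smaller piece at the first stage).

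Granting the uniform bound $p_0$, a Borel--Cantelli argument finishes it: by the (conditional) Markov property, the events $A_m = \{\mu(H_{\bk_{m+1}}) \geq \frac12 \mu(H_{\bk_m})\}$ satisfy $\P(A_m \mid \cF_m) \leq p_0 < 1$ for all $m$, where $\cF_m$ is the natural filtration. Hence by the conditional (extended) Borel--Cantelli lemma, almost surely only finitely many $A_m$ fail to occur — wait, rather: almost surely $A_m^c$ occurs infinitely often, i.e., infinitely often $\mu(H_{\bk_{m+1}}) < \frac12 \mu(H_{\bk_m})$. (This follows since $\sum_m \P(A_m^c \mid \cF_m) = \infty$ forces $\sum_m \mathbbm 1_{A_m^c} = \infty$ a.s.) Consequently the product $\prod_m \frac{\mu(H_{\bk_{m+1}})}{\mu(H_{\bk_m})}$ has infinitely many factors $\leq \frac12$ and all factors $\leq 1$, so it tends to $0$; therefore $\mu(H_{\bk_m}) = \mu(H_\emptyset) \prod_{j<m}\frac{\mu(H_{\bk_{j+1}})}{\mu(H_{\bk_j})} \to 0$ almost surely.

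The main obstacle is establishing the uniform-over-boundary-length bound $p_0 < 1$ on the probability that a $\mu$-typical point stays in a bubble of at least half the parent's area. The scaling relation~\eqref{eqn-disk-scaling} reduces this to a single statement about the unit-boundary-length Brownian disk $\BD_1$ decorated by one chordal SLE$_6$: with positive probability, a point sampled from the area measure is cut off into a complementary component of $\eta_\emptyset$ with area less than half. This should follow from the fact that $\eta_\emptyset$ a.s.\ disconnects $H$ into infinitely many components with diameters (hence, by absolute continuity with the Brownian map / local structure, areas) tending to zero — so with positive $\mu$-probability $z$ lands in one of the small ones — but making this rigorous requires either a direct estimate on the law of $(L,R)$ from Lemma~\ref{lem:Levy} (the jump sizes of the $3/2$-stable process give the boundary lengths of the cut-off bubbles, and one needs that with positive probability $z$ is swallowed by a jump of small size relative to $\ell_\emptyset$, combined with the boundary-length-to-area comparison for Brownian disks) or an appeal to known properties of SLE$_6$ on the Brownian disk such as the a.s.\ vanishing of the maximal bubble area after positive time. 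A clean route is: since $\eta'$ restricted to $H_\emptyset$ is space-filling and continuous, for $z$ fixed the unique bubble at level $1$ containing $z$ has area that is a measurable function of the exploration, and the event that this area exceeds $\frac12$ cannot have probability $1$ because otherwise iterating would force $\mu(H_{\bk_m}) \geq 2^{-m}\mu(H)$ always, contradicting the a.s.\ continuity of $\eta'$ at time $\mu$-a.e.\ point (the bubbles containing $z$ must shrink to $\{z\}$ in the metric, hence in area). I would present the argument in this last form, deriving $p_0 < 1$ from continuity of $\eta'$ rather than from delicate stable-process estimates.
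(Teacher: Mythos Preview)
Your Borel--Cantelli strategy is reasonable in outline, but the justification you settle on for the key bound $p_0 < 1$ does not work. The ``clean route'' is circular: you want to infer from continuity of $\eta'$ that the bubbles containing $z$ shrink to $\{z\}$, but $H_{\bk_m} = \eta'([s_{\bk_m}, t_{\bk_m}])$ with $t_{\bk_m} - s_{\bk_m} = \mu(H_{\bk_m})$, so (via uniform continuity) the diameter shrinks if and only if $\mu(H_{\bk_m}) \to 0$, which is precisely the statement you are proving. Separately, the sentence ``otherwise iterating would force $\mu(H_{\bk_m}) \geq 2^{-m}\mu(H)$'' gives a lower bound tending to $0$ and therefore yields no contradiction even if one grants the circular step. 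There is also a uniformity issue you do not address: the scaling relation~\eqref{eqn-disk-scaling} fixes $p_0$ only when the target of $\eta_{\bk_m}$ sits at the antipode of $x_{\bk_m}$, i.e., for monochromatic bubbles; for dichromatic bubbles the target location depends on the ratio $\ell^{\mathrm l}_\bk/\ell_\bk$, so a single constant $p_0$ is not immediate.

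Both gaps can be repaired. A non-circular argument for $p_0 < 1$: by Lemmas~\ref{lem:Levy} and~\ref{lem-domain-SLE}, $\eta_{\bk_m}$ a.s.\ cuts $H_{\bk_m}$ into infinitely many bubbles of positive boundary length, each a Brownian disk of positive $\mu$-mass; if $B_1, B_2$ are the two largest by area then $\mu(B_2) \in (0, \mu(H_{\bk_m})/2]$, and on the positive-probability event $\{z \notin B_1\}$ one gets $\mu(H_{\bk_{m+1}}) \leq \mu(H_{\bk_m})/2$. For uniformity, recall from the construction in Section~\ref{sec-mating} that a dichromatic bubble has only monochromatic children, so no two consecutive $H_{\bk_m}$ are dichromatic; thus infinitely many stages are monochromatic with a fixed $\P(A_m^c \mid \cF_m) \geq 1 - p_0^{\mathrm{mono}} > 0$, and conditional Borel--Cantelli applies. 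The paper's own proof is entirely different and much shorter: embed $H$ into $\ol{\BB D}$ via~\eqref{eq:disk} with $z$ sent to $0$, observe by target invariance that the concatenation of the segments of the $\eta_{\bk_m}$ toward $z$ is a radial $\SLE_6$ from $1$ to $0$, so the Euclidean diameters of $H_{\bk_m}$ tend to $0$, and conclude since $\mu$ is non-atomic.
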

\begin{proof}
	Let $(H,d,\mu,\xi)$ be embedded in $(\D,1)$ so that $z$ is mapped to $0$. By the target invariance of $\SLE_6$, the concatenation of $\eta_{\bk_m}$ from $x_{\bk_m}$ to $x_{\bk_{m+1}}$ for all $m\in \N$ gives  the so-called radial $\SLE_6$ on $\D$ starting from 1 and  targeting at $0$. 
	In particular, the Euclidean diameter of $H_{\bk_m}$  a.s.\ shrinks to $0$. Since $\mu$ is  a.s.\ non-atomic,  this concludes the proof. 
\end{proof}

\begin{proof}[Proof of Theorem~\ref{thm:space-fillng}]
	Conditional on $(H,d,\mu,\xi)$, suppose $z,z'\in H$ are two points sampled independently from $\mu$ (normalized to be a probability measure). Let $\{H_{\BB k_m}\}_{m\in\N}$ be the sequence of bubbles containing $z$ in decreasing order. Then Lemma~\ref{lem:area} implies that $\lim_{m\to\infty}\P[z'\in H_{\BB k_m}]=0$.
	Therefore  $z$ and $z'$ are almost surely contained in two disjoint bubbles with a common ancestor.
	Now let $\{z_n\}_{n\in\N}$ be a sequence of points in $H$ sampled independently  according to $\mu$.  Then  almost surely   any two points in  $\{z_n\}_{n\in\N}$ are contained in two disjoint bubbles with a common ancestor. Therefore two space-filling curves satisfying Condition~\ref{item-sfsle2} must visit $\{z_n\}_{n\in \N}$ in the same order. Since $\{z_n\}_{n\in\N}$ is almost surely dense in $H$,  these two curves must agree if they both satisfy Condition~\ref{item-sfsle1}. This gives the  uniqueness part of Theorem~\ref{thm:space-fillng}.
	
	For the existence part, it suffices to show that the ordering of $\{z_n\}_{n\in\N}$ above can be extended to a continuous curve.  
	As explained in \cite[Section 4.3]{ig4}, this ordering is the same as the one coming from the definition in the introduction of \cite{ig4} based on the flow lines of the GFF. 
	In light of this, the desired  continuous extension is achieved in \cite[Theorem 1.16]{ig4}.  
\end{proof}

For $\bk\in \bigcup_{m=0}^\infty \BB N^m$,  there exists a unique interval $[s_\bk , t_{\bk}]$ such that $H_\bk=\eta'[s_\bk,t_\bk]$. We also write $\wh t_{\bk} \in [s_\bk , t_{\bk}]$ for the a.s.\ unique time in this interval such that $\eta'(\wh t_\bk)=\wh x_{\bk}$. Note that $\wh t_{\bk} = t_{\bk}$ for dichromatic bubbles. Then $\eta_\bk$ can be obtained from $\eta'|_{[s_\bk , t_{\bk}]}$ by skipping the times during which it is filling in bubbles disconnected from $\wh x_{\bk}$ and then parameterizing by quantum natural time.
By the definition of the ordering of bubbles above, the curve $\eta'|_{[s_\bk , \wh t_{\bk}]}$  fills in each monochromatic bubble cut out by $\eta_\bk$ immediately after it finishes tracing its boundary, but does not fill in the dichromatic bubbles cut out by $\eta_\bk$ until the time interval $[\wh t_{\bk} , t_{\bk}]$.  In particular, $\eta'(\mu(H))=\eta'(0)=\xi(0)$. This means that $\eta'$ is a space-filling loop based at $\xi(0)$.

If we view $(H , d , \mu , \xi)$ as being embedded into $\BB D$ via~\eqref{eq:disk}, then the curve $\eta'$ modulo parametrization is determined by the trace of chordal $\SLE_6$ curves $\{\eta_\bk:\bk\in \bigcup_{m=0}^\infty \N^m \}$, so is  independent from the field $\frk h$ which describes the associated quantum disk and hence also from $(H,d,\mu,\xi)$.  
This puts the space-filling SLE$_6$ decorated Brownian disk $\frk H' = (H,d,\mu,\xi,\eta')$ into the mating-of-trees framework developed  in \cite{wedges} and gives the continuum analog of the random walk encoding in Section~\ref{sec-walk}.

To be more precise, by identifying $(H,d,\mu)$ with a quantum disk as in~\eqref{eq:disk}, we can (using~\cite{shef-zipper}) define for each $t\geq 0$ the quantum lengths of the clockwise and counterclockwise arcs of $\eta'[t,\infty]= \ol{H\setminus\eta'[0,t]}$ from $\eta'(t)$ to $\eta(\mu(H)) = x_\emptyset$. Denote these lengths by $L_t'$ and $R_t'$, respectively.
We call $Z':=(L',R')$ the \emph{boundary length process} of $\frk H'$. Note that $\bdy H$ counts as part of the left boundary.
The following fundamental result is immediate from~\cite[Theorem 2.1]{sphere-constructions} and the equivalence of the $\sqrt{8/3}$-quantum disk and the Brownian disk. 
\begin{thm}[\cite{sphere-constructions}]
	\label{thm:mating}
	There is a deterministic constant $c > 0$ such that the law of $Z'$ can be described as follows. 
	Let $(X_t,Y_t)_{t\ge 0}$ be a pair of correlated linear Brownian motions with
	\eqbn
	\op{Var}(X_t) = \op{Var}(Y_t) =\frac23 t \quad \op{and} \quad \op{Cov}(X_t,Y_t) =   \frac13 t,\quad \forall t \geq 0 , 
	\eqen
	started from $(X_0, Y_0) = (\frk l , 0)$.
	For $\ep>0$, let $\tau_\ep=\inf\{t\ge 0: X_t=-\ep\textrm{ or }Y_t=-\ep\}$. 
	The conditional law of $(X_t,Y_t)_{[0,\tau_\ep]}$ given that $X_{\tau_\ep} \leq \ep$ and $Y_{\tau_\ep} \leq \ep$ converges to the law of $(Z'_t)_{[0,\mu(H)]}$ as $\ep\rta 0$.
	
	Moreover, $Z'$ and $\frk H'$ are measurable with respect to each other.
\end{thm}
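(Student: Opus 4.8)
Looking at Theorem~\ref{thm:mating}, the statement to prove combines two claims: (i) the boundary length process $Z'$ of the space-filling $\SLE_6$-decorated Brownian disk has the conditioned Brownian motion law described, and (ii) $Z'$ and $\frk H'$ are measurable with respect to each other. Here is how I would approach the proof.

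\textbf{Proof proposal.} The plan is to reduce both assertions to the mating-of-trees theorem for the $\sqrt{8/3}$-quantum disk, \cite[Theorem 2.1]{sphere-constructions}, using the equivalence of the Brownian disk and the $\sqrt{8/3}$-LQG disk from \cite[Corollary 1.4]{lqg-tbm2}. First I would recall the setup from Section~\ref{sec-mating}: we have embedded $(H,d,\mu,\xi)$ into $\ol{\BB D}$ via~\eqref{eq:disk}, so that $(\BB D,\frk h, 1)$ is a quantum disk with boundary length $\LL$, and $\eta'$ is a space-filling $\SLE_6$ on $\BB D$ from $x_\emptyset = \xi(0)$ back to itself, parametrized by $\mu$-mass (equivalently, by $\sqrt{8/3}$-LQG area, up to the constant $c_{\op m}$). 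I would then observe that the curve $\eta'$ constructed iteratively in Theorem~\ref{thm:space-fillng} agrees in law, modulo parametrization, with the space-filling $\SLE_6$ from imaginary geometry \cite{ig4} targeted appropriately — this is exactly what the existence argument in the proof of Theorem~\ref{thm:space-fillng} establishes via \cite[Section 4.3]{ig4}. Thus the pair $(\BB D, \frk h, \eta')$ is precisely the object to which \cite[Theorem 2.1]{sphere-constructions} applies: it says that the left/right quantum boundary length process of a space-filling $\SLE_6$ on a quantum disk, parametrized by quantum area, has the law of a pair of correlated Brownian motions with the stated covariance structure, conditioned to stay in the first quadrant until they simultaneously hit the axes (the $\ep\to 0$ limit of the conditioning $X_{\tau_\ep}\le\ep$, $Y_{\tau_\ep}\le\ep$). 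The correlation $1/2 = -\cos(\pi\gamma^2/4)$ for $\gamma=\sqrt{8/3}$ gives $\op{Cov} = \frac13 t$ against $\op{Var} = \frac23 t$, matching the statement; the overall time scaling is absorbed into the constant $c>0$ relating quantum area to the parametrization by $\mu$. One must be careful to match the boundary-length normalization: the quantum length measure $\nu_{\frk h}$ on $\bdy\BB D$ has total mass $\LL = \frk l$, consistent with the starting point $(X_0,Y_0) = (\frk l, 0)$, since all of $\bdy H$ is counted as left boundary.

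For the mutual measurability claim (ii), I would argue in two directions. That $Z'$ is measurable with respect to $\frk H'$ is essentially by construction: $Z'$ is defined from $\eta'$ and the quantum length measure, and the latter is determined by the field $\frk h$, which by \cite{lqg-tbm3} is a.s.\ determined by the Brownian disk $(H,d,\mu,\xi)$; hence $Z'$ is a deterministic functional of $\frk H' = (H,d,\mu,\xi,\eta')$. Conversely, to see that $\frk H'$ is measurable with respect to $Z'$, I would invoke the fact from \cite[Theorem 2.1]{sphere-constructions} (the mating-of-trees theorem) that the correlated Brownian motion a.s.\ determines the decorated quantum disk it encodes — this is the ``mating'' direction of the mating-of-trees correspondence. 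Alternatively, and perhaps more in keeping with the self-contained spirit of the paper, this direction also follows from iterating Lemma~\ref{lem-domain-SLE}: the boundary length process $Z'$ of the space-filling curve records, via its downward jumps and the structure described in Section~\ref{sec-sle6-def-discrete}'s continuum analogue, all the processes $Z_\bk$ for the chordal curves $\eta_\bk$ in each bubble; and for $t=\sigma_\bk$ each $Z_\bk$ together with its collection of bubble Brownian disks determines $\frk H_\bk$, by the last clause of Lemma~\ref{lem-domain-SLE}. Running this recursion — recovering the top-level decomposition of $H$ into $\eta_\emptyset$ and its bubbles from $Z'$, then recursing into each bubble — recovers $\frk H'$ from $Z'$; one needs Lemma~\ref{lem:area} to ensure the recursion exhausts all of $H$ and the reconstructed curve is the space-filling one.

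\textbf{Main obstacle.} The technical heart is the careful bookkeeping needed to align the iterative nested-exploration construction of $\eta'$ in Theorem~\ref{thm:space-fillng} with the space-filling $\SLE_6$ appearing in \cite[Theorem 2.1]{sphere-constructions} and \cite{ig4}, and in particular to verify that the left/right boundary length process $Z'$ as defined here (clockwise/counterclockwise arcs of $\ol{H\setminus\eta'[0,t]}$ from $\eta'(t)$ to $x_\emptyset$, with $\bdy H$ counted as left) coincides with the process for which the mating-of-trees theorem gives the Brownian law — the ordering conventions for bubbles, the role of the target point $\wh x_\emptyset = \xi(\ell/2)$, and the fact that the disk is encoded by a \emph{Brownian excursion}-type process conditioned on simultaneous hitting rather than an unconditioned Brownian motion. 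The identification of constants ($c$, and the interplay of $c_{\op d}, c_{\op m}$ with the quantum natural time normalization fixed in the footnote to Lemma~\ref{lem:Levy}) is routine once the correspondence is set up, but getting the correspondence itself exactly right is where care is required. The measurability statements, by contrast, are essentially formal once one has the mating-of-trees black box in hand.
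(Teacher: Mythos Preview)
Your proposal is correct and matches the paper's approach: the paper does not give a standalone proof but simply states that the result ``is immediate from~\cite[Theorem 2.1]{sphere-constructions} and the equivalence of the $\sqrt{8/3}$-quantum disk and the Brownian disk,'' which is exactly the reduction you describe. Your additional elaboration on the measurability direction via iterated application of Lemma~\ref{lem-domain-SLE} is a reasonable alternative route, but the paper relies on the black-box citation for that as well.
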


The process $Z'$ is illustrated in Figure~\ref{fig-disk-bm}. 

\begin{figure}[ht!]
	\begin{center}
		\includegraphics[scale=.8]{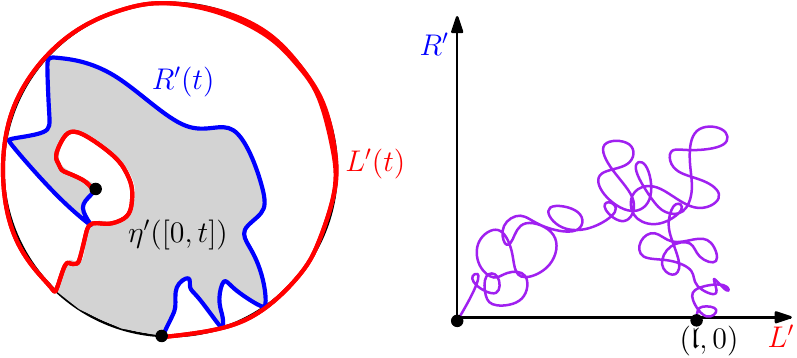} 
	\end{center}
	\caption{\label{fig-disk-bm} \textbf{Left:} The process $Z' = (L',R')$ gives the left (red) and right (blue) boundary lengths of $H\setminus \eta'([0,t])$ for each $t\geq 0$. \textbf{Right:} By Theorem~\ref{thm:mating}, the law of $Z'$ is that of a planar Brownian motion with correlation $1/2$, started from $(\frk l ,0)$, and conditioned to first exit $[0,\infty)^2$ at $(0,0)$. This conditioning is made sense of via a limiting procedure. }
\end{figure}

\begin{remark}[A comment on constants]
	In \cite{wedges,sphere-constructions}, the variance of the Brownian motion encoding a space-filling SLE$_6$-decorated quantum disk is unknown; only the correlation is specified. However, in our setting the variance has the explicit value $\frac23$ because  $\mu(H)$ is known to have density  $\frac{1}{\sqrt{2\pi t^5}}\exp\{-\frac1{2t}\}1_{t>0}dt$ when $\LL=1$ \cite{bet-mier-disk}. 
	{Recently it has been shown in~\cite[Theorem 1.3]{ars-fzz} that the Brownian motion in~\cite{wedges,sphere-constructions} has variance $\frac{4}{\sqrt3}$.
	Therefore the area constant $c_{\op m}$ in \eqref{eq:disk}  satisfies  $\frac{4c_{\op m}}{\sqrt3}=\frac{2}{3}$ hence $c_{\op m}= \frac{\sqrt3}6$.}
\end{remark}

We  conclude this section with a precise statement of Theorem~\ref{thm-metric-peano}.
\begin{thm}\label{thm:main-precise}
	Given $\LL >0$, let $\acute{\frk M}^n \in \BM^\GHPU_2$ for $n\in\BB N$ be the site-percolated triangulations as in \eqref{eq:frkM}. Let $\acute{\cZ}^n=\Phi(M^n,\Be^n,\omega^n)$ be the encoding walk as in Theorem~\ref{thm:bijection} and let
	\eqb \label{eqn-rescaled-walk}
	\acute Z^n(t) := (\acute L^n(t) , \acute R^n(t))=\bcon^{-1} n^{-1/2} \acute{\mcl Z}^n(\lfloor \mcon n t \rfloor)\qquad \textrm{for all}\;  t \in [0,\mu^n(\Map^n)].
	\eqe 
	Let $\frk H',Z'$ be the space-filling SLE$_6$-decorated Brownian disk and its associated boundary length process as above. Then the joint law of  $(\acute{\frk M}^n , \acute Z^n)$ converges the that of $(\frk H',Z')$ with respect to the GHPU topology on the first coordinate and the uniform topology on the second coordinate. 
\end{thm}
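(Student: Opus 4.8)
The plan is to run an induction over the levels of the nested interface decomposition of Section~\ref{sec-sle6-def-discrete}, feeding the single-interface scaling limit of Theorem~\ref{thm-chordal-conv} into the conditional independence structure of Lemma~\ref{lem-perc-decomp-markov} and matching the output, level by level, with the continuum nested construction of Section~\ref{sec-mating} via Lemma~\ref{lem-sle6-decomp-markov}. Concretely the argument splits into two parts. In Section~\ref{sec-scaling} we prove the joint convergence of the metric measure space with boundary curve $(\Map^n,d^n,\mu^n,\xi^n)$ together with the encoding walk $\acute Z^n$ to $((H,d,\mu,\xi),Z')$; in Section~\ref{sec-sle6-conv} we upgrade this to include the uniform convergence of the space-filling exploration $\acute\eta^n$ to $\eta'$, yielding the full statement $(\Map^n,d^n,\mu^n,\xi^n,\acute\eta^n,\acute Z^n)\to(H,d,\mu,\xi,\eta',Z')$.

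\textbf{Joint convergence of the metric space and the walk.} Tightness of $(\Map^n,d^n,\mu^n,\xi^n)$ in the GHPU topology is from~\cite{aasw-type2}, and tightness of $\acute Z^n$ together with the identification of its scaling limit as the correlated Brownian excursion $Z'$ of Theorem~\ref{thm:mating} is from~\cite{bhs-site-perc} (via the conditioned-walk description of Corollary~\ref{cor:bijection}). Hence the pair is tight, and it suffices to identify every subsequential limit; by the Skorokhod representation theorem we may pass to a subsequence along which $((\Map^n,d^n,\mu^n,\xi^n),\acute Z^n)\to((\hat H,\hat d,\hat\mu,\hat\xi),\hat Z)$ a.s., with $(\hat H,\hat d,\hat\mu,\hat\xi)\eqD\BD_\LL$ and $\hat Z\eqD Z'$ marginally. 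Passing to a further subsequence and applying Theorem~\ref{thm-chordal-conv} to the level-$0$ interface $\lambda^n_\emptyset$ (for the boundary splitting $(\lceil\ell^n/2\rceil-1,\lfloor\ell^n/2\rfloor-1)$, which rescales to $(\LL/2,\LL/2)$) and its boundary length process $\cZ^n_\emptyset$, we obtain --- jointly with the above --- a chordal $\SLE_6$ curve $\eta_\emptyset$ on $\hat H$ with boundary length process $Z_\emptyset$. The times $S_\bk,\wh T_\bk,T_\bk$ and boundary lengths $\ell_\bk$ of the level-$1$ bubbles of $\Map^n$ are explicit functions of $\acute{\cZ}^n$ (equivalently of $\cZ^n_\emptyset$, by Lemma~\ref{lem:walk}) through identities~\ref{item:hat}--\ref{item:future}, so the analogous quantities for the bubbles of $\hat H$ cut out by $\eta_\emptyset$ are read off from $(\hat Z,Z_\emptyset)$ exactly as in the continuum construction of Section~\ref{sec-mating}. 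Conditioning on $\cZ^n_\emptyset$ and using Lemma~\ref{lem-perc-decomp-markov} and its continuum counterpart Lemma~\ref{lem-sle6-decomp-markov}, we apply Theorem~\ref{thm-chordal-conv} to each level-$1$ bubble and iterate: by induction on $m$, the level-$\le m$ bubbles of $\Map^n$ --- viewed as metric measure spaces with boundary curves, interfaces, and boundary length processes --- converge jointly to the level-$\le m$ nested $\SLE_6$-decorated Brownian disks built inside $\hat H$ as in Section~\ref{sec-mating}. Passing $m\to\infty$: $\hat H$ is recovered by gluing its level-$m$ bubbles along the arcs specified by the boundary length processes $\{Z_\bk:\bk\in\bigcup_{i=0}^{m-1}\N^i\}$, $\hat Z$ is the corresponding concatenation of the bubble walks dictated by the ancestor structure (Lemmas~\ref{lem:future},~\ref{lem:interface}, and~\ref{lem:future2}), and Lemma~\ref{lem:area} --- together with the discrete estimate on bubble diameters and masses imported from~\cite{gwynne-miller-perc} --- shows that the level-$>m$ bubbles are uniformly negligible as $m\to\infty$. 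Hence $\hat Z$ is the boundary length process of a space-filling $\SLE_6$ on $\hat H$, so $((\hat H,\hat d,\hat\mu,\hat\xi),\hat Z)\eqD((H,d,\mu,\xi),Z')$ by Theorem~\ref{thm:mating}, and uniqueness of the subsequential limit gives the claimed convergence.

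\textbf{Convergence of the space-filling exploration.} It remains to prove $\acute\eta^n\to\eta'$ in the uniform topology, jointly with the convergence just established; we work on a probability space where the latter holds a.s. The key point is equicontinuity of $\{\acute\eta^n\}$: since $\acute\eta^n$ is parametrized by $\mu^n$-mass and visits the level-$m$ bubbles of $\Map^n$ in the discrete order $\prec$ (Definition~\ref{def:spacefilling} and Remark~\ref{rmk:order}), within a time window of length $\delta$ it stays in a controlled number of level-$m$ bubbles, each of which has small $d^n$-diameter once it has small $\mu^n$-mass --- again by the bubble estimates of~\cite{gwynne-miller-perc}. Thus $\{\acute\eta^n\}$ is tight, and any subsequential limit $\hat\eta$ is a curve in $\hat H$ parametrized by $\hat\mu$-mass (inherited from the parametrization of $\acute\eta^n$) which visits the bubbles of $\hat H$ in the order $\prec$ (inherited from the discrete bubble order, whose convergence follows from the first part). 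By the uniqueness statement in Theorem~\ref{thm:space-fillng}, $\hat\eta=\eta'$; hence the subsequential limit is unique, $\acute\eta^n\to\eta'$ jointly with everything else, and Theorem~\ref{thm:main-precise} follows.

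\textbf{Main obstacle.} The crux of the argument is the uniform-in-$n$ control of the tail of the nested decomposition: that the level-$>m$ bubbles of $\Map^n$ have $d^n$-diameter, $\mu^n$-mass, and boundary length tending to $0$ as $m\to\infty$, with high probability uniformly in $n$. This is precisely what makes both the passage $m\to\infty$ in the first part and the equicontinuity of the space-filling curves in the second part go through. In the continuum it is supplied by Lemma~\ref{lem:area} together with standard $\SLE_6$/Brownian-disk estimates, but in the discrete it must be extracted from the quantitative percolation-peeling estimates of~\cite{gwynne-miller-perc}, and keeping this control uniform across the infinitely many applications of Theorem~\ref{thm-chordal-conv} is the most delicate bookkeeping in the proof.
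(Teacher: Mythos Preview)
Your Part~1 strategy matches the paper's approach closely: the paper indeed iterates Theorem~\ref{thm-chordal-conv} through the nested decomposition (Proposition~\ref{prop-component-conv}) using the discrete and continuum Markov properties, then identifies the limit of $\acute Z^n$ with $Z'$ by matching the bubble times $s_\bk,t_\bk,\wh t_\bk$ and the values of the walk at those times (Proposition~\ref{prop:joint-metric-mating}, via Lemmas~\ref{lem:endpoint1}--\ref{lem:Z'}). Your sketch is vaguer about how exactly $\hat Z$ gets pinned down --- the paper argues that $\{s_\bk,t_\bk\}$ is dense and that $\hat Z$ agrees with $Z'$ at every such time --- but the idea is right.

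Part~2, however, has a real gap. Your equicontinuity argument rests on two claims, neither of which is justified: (i) that a bubble with small $\mu^n$-mass has small $d^n$-diameter, and (ii) that in time $\delta$ the curve $\acute\eta^n$ stays in a controlled number of level-$m$ bubbles. For (i), small area does not a priori imply small diameter on a random triangulation, and the paper does \emph{not} import such an estimate from~\cite{gwynne-miller-perc}. It instead shows that small \emph{boundary length} implies small diameter, via a topological pinch argument (Lemma~\ref{lem-pinch}: small $d^n$-diameter of $\partial S$ forces small $d^n$-diameter of $S$, because the limiting Brownian disk has no cut points) combined with a ``no approximate triple points'' estimate for the boundary length process (Lemma~\ref{lem-bubble-2interval}). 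For (ii), in time $\delta$ the curve visits a contiguous run of bubbles of total mass $\delta$, but that run contains infinitely many bubbles; worse, after hitting $\wh x_\bk$ for a monochromatic $H_\bk$, the space-filling curve threads back through a \emph{string} of small dichromatic sub-bubbles, and even if each one has small diameter the string can have large total diameter. The paper makes exactly this point just before Lemma~\ref{lem-bead-diam}, which controls such strings by bounding the boundary arc of $\partial H_\bk$ they touch.

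Because direct equicontinuity is hard for these reasons, the paper does not attempt it. Instead it builds, for each $M,K$, a finite concatenation $\eta'_{M,K}$ (resp.\ $\acute\eta^n_{M,K}$) of the chordal curves $\eta_\bk^{\mas}$ (resp.\ $\eta_\bk^{\mas,n}$) for $\bk\in\bigcup_{m\le M}[1,K]_{\BB Z}^m$, reparametrized by disconnected area (Section~\ref{sec-sle6-area}). Convergence $\acute\eta^n_{M,K}\to\eta'_{M,K}$ then follows from finitely many applications of Lemma~\ref{lem-path-conv}, and Lemmas~\ref{lem-component-diam} and~\ref{lem-bead-diam} show that, uniformly in $n$, both $\acute\eta^n_{M,K}$ and $\eta'_{M,K}$ are $\ep$-close to the true space-filling curves once $M,K$ are large. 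You correctly flag the uniform tail control as the crux, but the mechanism the paper uses is finite approximation plus boundary-based diameter bounds, not a mass-to-diameter estimate.
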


\subsection{The conformal loop ensemble associated with $\eta'$}
\label{sec-cle-def}  

The space-filling SLE$_6$ curve $\eta'$ traces the loops of a conformal loop ensemble (CLE$_6$;~\cite{shef-cle}) on $H$, as we now explain. 
Given $\bk\in \bigcup_{m\in \N_0}\N^m$, if $H_\bk$ is dichromatic, we define a loop $\gamma_\bk$ as follows. 
Let $\bar \sigma_\bk$ and $\bar{\tau}_\bk$ be the almost surely unique times when the chordal SLE$_6$ curve $\eta_{\bk^-}$ visits the points $\eta'(t_\bk)$ and $\eta'(s_\bk)$, respectively, where $\eta'$ starts and finishes filling in $H_{\bk}$. 
Let $\gamma_\bk(t)=\eta_{\bk^-}(t+\bar\sigma_\bk)$ for $t\in [0,\bar{\tau}_\bk-\bar{\sigma}_\bk]$ and $\gamma_\bk(t)=\eta_\bk(t+\bar{\sigma}_\bk-\bar{\tau}_\bk)$ for $t>\bar{\tau}_\bk-\bar{\sigma}_\bk$. 
In words, $\gamma_\bk$ is the parametrized loop obtained by concatenating $\eta_{\bk^-}|_{[\bar{\sigma}_\bk,\bar{\tau}_\bk]}$ and $\eta_\bk$.  
See Figure~\ref{fig-cle-loop} for an illustration.

\begin{figure}[ht!]
	\begin{center}
		\includegraphics[scale=.6]{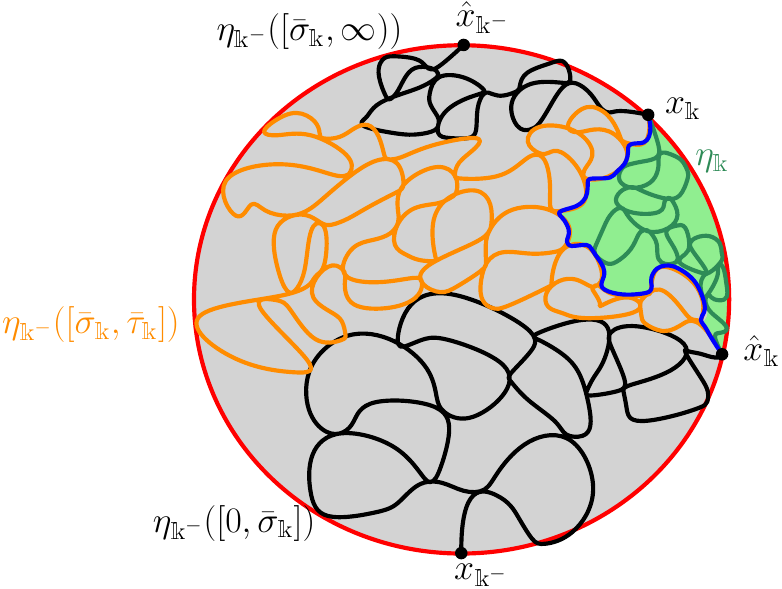} 
	\end{center}
	\caption{\label{fig-cle-loop} Shown are two nested bubbles $H_{\bk^-}$ (the whole disk) and $H_{\bk}$ (light green) with $H_\bk$ dichromatic. The CLE$_6$ loop $\gamma_{\bk}$ is the concatenation of the orange curve and the green curve. The collection of all such loops, as $\bk$ ranges over all dichromatic bubbles, is a CLE$_6$. }
\end{figure}

\begin{definition}\label{def:CLE}
	The collection of loops $\Gamma := \left\{\gamma_\bk: \text{$\bk\in \bigcup_{m\in \N_0}\N^m$ and $H_\bk$ is dichromatic} \right\}$ is called $\CLE_6$ on the Brownian disk $H$ under its natural parametrization.
\end{definition}

Definition~\ref{def:CLE} is equivalent to the construction of $\CLE_6$ on $\D$ given in \cite[Section 3]{camia-newman-sle6} once $(H , d , \mu , \xi)$ is viewed  as being embedded into $\BB D$ via~\eqref{eq:disk} and $\Gamma$ is viewed a collection of unrooted, oriented, unparametrized loops on $\D$, which in turn is equivalent to the one defined in \cite{shef-cle}.

\section{Scaling limit of the nested exploration}
\label{sec-scaling}
We retain the notations in Theorem~\ref{thm-metric-peano}, which was just made precise by Theorem~\ref{thm:main-precise}. In this section we carry out the first main step of the proof of Theorem~\ref{thm:main-precise} by showing that the joint law of the  rescaled map $(\Map^n, d^n , \mu^n,\xi^n)$ and the rescaled boundary length process $\acute Z^n$ associated with the space-filling exploration of $(\Map^n, \Be^n ,\omega^n)$ converges to the joint law of the Brownian disk $(H,d,\mu,\xi)$ and the boundary length process $Z'$ associated with a space-filling SLE$_6$ on $H$. The uniform convergence of the associated space-filling curves will be established in the next section. 

To prove the above joint convergence, we will start in Section~\ref{sec-nested-conv} by showing that the joint law of all of the triangulations $\Map_\bk^n$ for $\bk \in \bigcup_{m=0}^\infty \BB N^m$ decorated by percolation interfaces $\lambda_\bk^n$ along with their boundary length processes $Z_\bk^n$, as constructed in Section~\ref{sec-sle6-def-discrete}, converges to the joint law of the analogous continuum processes from Section~\ref{sec-mating}. We will then show in Section~\ref{sec-disk-walk} that one also has the convergence $\acute Z^n \rta Z'$ by ``concatenating" the boundary length processes associated with the nested SLE$_6$ curves. 

Identify $(\Map^n,\Be^n,\omega^n)$ with an element  of $\DPm$ as in Remark~\ref{rmk:flip}. 
For $\bk\in \bigcup_{m=0}^\infty \BB N^m$, let $\Map^n_\bk$, $\Be^n_\bk$, $\omega^n_\bk$, $\wh \Be^n_\bk$, $\ell^n_\bk$, $\lambda^n_\bk$ be defined in the same way as $\Map_\bk$, $\Be_\bk$, $\omega_\bk$, $\wh \Be_\bk$, $\ell_\bk$, $\lambda_\bk$
with $(\Map^n,\Be^n, \omega^n)$ in place of $(\Map,\Be,\omega)$  in the nested exploration of Section~\ref{sec-sle6-def-discrete}. 
Let  $d_\bk^n=\dcon^{-1} n^{-1/4}d_{\Map_\bk^n}$ and $\mu_\bk^n=\mcon^{-1} n^{-1}\mu_{\Map_\bk^n}$. For $s\in [0,\bcon^{-1} n^{-1/2} \el^n]$ let $\xi_{\bk}^n(s) := \beta_{\Map_\bk^n}(\bcon n^{1/2} s)$.  For $t\geq 0$ let $\eta_\bk^n(t ) := \lambda_\bk^n(\tcon n^{ 3/4} t)$.  Let  $\frk M_{\bk}^n= \left( \Map_{\bk}^n , d_{\bk}^n , \mu_{\bk}^n , \xi_{\bk}^n , \eta_\bk^n \right)$.
Let   $Z^n_\bk=(L^n_\bk,R^n_\bk)$ be  the boundary length process of $\lambda_\bk^n$ renormalized as in \eqref{eq:renormalized-Z}, with  $\eta^n_\bk$ in place of  $\eta^n$ there.

Recall the Brownian disk $(H,d,\mu,\xi)$ and the nested exploration $\{\frk H_{\bk}:\bk\in \bigcup_{m=0}^\infty \N^m \}$ from Section~\ref{sec-mating}. Also recall the boundary length processes $\{Z_{\bk}:\bk\in \bigcup_{m=0}^\infty \N^m \}$ and $Z' = (L',R')$ from the same section.

\subsection{Joint convergence of nested explorations}
\label{sec-nested-conv} 

In this subsection we will prove that $ \left(\frk M_{\bk}^n , Z_{\bk}^n \right) $   converges jointly in law to  $\left(\frk H_{\bk}  , Z_{\bk} \right)$ by iteratively applying Theorem~\ref{thm-chordal-conv}. We will use this to derive the joint convergence of the nested exploration and $\acute{Z}^n$ in Section~\ref{sec-disk-walk}.

\begin{prop} \label{prop-component-conv}
	One has the following {convergence in law} as $n\rta\infty$:
	\eqb \label{eqn-component-conv}
	\left\{ \left(\frk M_{\bk}^n , Z_{\bk}^n \right) : \bk \in \bigcup_{m=0}^\infty \BB N^m \right\} \rta 
	\left\{ \left(\frk H_{\bk}  , Z_{\bk} \right) : \bk \in \bigcup_{m=0}^\infty \BB N^m  \right\} ,
	\eqe 
	where for each $\bk$, the first coordinate is given the GHPU topology and the second coordinate is given the Skorokhod topology.
\end{prop}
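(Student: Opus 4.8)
The plan is to prove \eqref{eqn-component-conv} by induction on the ``depth'' of the multi-indices, using Theorem~\ref{thm-chordal-conv} (single-interface convergence together with the boundary length process) as the base input and the domain Markov properties of Lemmas~\ref{lem-perc-decomp-markov} and~\ref{lem-sle6-decomp-markov} to pass from one generation of bubbles to the next. Since the index set $\bigcup_{m\ge 0}\BB N^m$ is countable and each coordinate lives in a Polish space, it suffices to prove convergence of the two sides of \eqref{eqn-component-conv} restricted to an arbitrary finite, downward-closed set of multi-indices, which we may take to be $I_{m,J}:=\{(k_1,\dots,k_j):j\le m,\ k_i\le J\}$ for $m,J\in\BB N$. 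The hypothesis $H(m)$ to be propagated is: for every $J$, the collection $\{(\frk M_\bk^n,Z_\bk^n):\bk\in I_{m,J}\}$ converges jointly in law to $\{(\frk H_\bk,Z_\bk):\bk\in I_{m,J}\}$. The base case $m=0$ is exactly Theorem~\ref{thm-chordal-conv} applied to $(\frk M_\emptyset^n,Z_\emptyset^n)$: here $\Map_\emptyset^n$ is the triangulation decorated by the percolation interface toward the opposite boundary edge, whose $(\lfloor\el^n/2\rfloor-1,\lceil\el^n/2\rceil-1)$-boundary condition rescales to $(\LL/2,\LL/2)$.

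For the inductive step, assuming $H(m)$, I would condition on the parent boundary length processes $\{\mcl Z_\bk^n:|\bk|\le m\}$. By Lemma~\ref{lem-perc-decomp-markov} the depth-$(m+1)$ bubbles $(\Map_{\bk'}^n,\Be_{\bk'}^n,\omega_{\bk'}^n)$ are then conditionally independent site-percolated Boltzmann triangulations, whose boundary lengths $\el_{\bk'}^n$ and types $\type_{\bk'}^n$ are explicit functionals of the jumps of the parent's boundary length process and of the target time $\wh T_{\bk'^-}$ (items~\ref{item:hat}--\ref{item:future} of Section~\ref{sec-sle6-def-discrete}, via Lemmas~\ref{lem:boundary},~\ref{lem:future},~\ref{lem:interface}). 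Applying Theorem~\ref{thm-chordal-conv} to each factor, and assembling via the elementary kernel-convergence principle (if $X_n\to X$ in law and probability kernels satisfy $\kappa_n(x_n)\to\kappa(x)$ weakly whenever $x_n\to x$, then $(X_n,Y_n)\to(X,Y)$ when $Y_n\mid X_n\sim\kappa_n(X_n)$ and $Y\mid X\sim\kappa(X)$) with $X_n=\{Z_\bk^n:|\bk|\le m\}$ (which converges by $H(m)$) and $Y_n=\{(\frk M_{\bk'}^n,Z_{\bk'}^n):|\bk'|=m+1,\ k_i\le J\}$, I obtain joint convergence of $Y_n$ with $X_n$. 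By Lemma~\ref{lem-sle6-decomp-markov} the limiting conditional law of $Y_n$ given $X$ is that of independent $\SLE_6$-decorated Brownian disks with the limiting boundary data, i.e.\ exactly $\{\frk H_{\bk'}:|\bk'|=m+1\}$ given $\{Z_\bk:|\bk|\le m\}$.

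Two points require care here. First, the children data $\el_{\bk'}^n,\type_{\bk'}^n$ depend continuously on the parent process only at processes with no two jumps of equal size and no jump at the target time; both are a.s.\ excluded in the limit because each $Z_\bk$ is absolutely continuous with respect to a pair of independent $3/2$-stable processes (Lemma~\ref{lem:Levy}), and for the same reason each limiting $\el_{\bk'}$ is a.s.\ positive, so Theorem~\ref{thm-chordal-conv} genuinely applies (the probability that $\Map_{\bk'}^n=\emptyset$ tends to $0$). Second, each bubble has infinitely many children, so to recover the full decorated maps $\frk M_\bk^n$ for $|\bk|\le m$ — and not merely their boundary length processes — from the depth-$(m+1)$ data one must glue the children back along the interface skeleton encoded by $\mcl Z_\bk^n$, discarding all but the first $J$ of them; the resulting error is controlled in the GHPU topology by a tail bound on bubble boundary lengths, again available from the stable domination in Lemma~\ref{lem:Levy}, after which one lets $n\to\infty$ for fixed $J$ and then $J\to\infty$.

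The main obstacle is precisely transferring the nested domain Markov structure to the limit, equivalently showing that this reconstruction converges to its continuum counterpart. I would handle it by the standard route of tightness plus characterization rather than by proving continuity of the gluing map directly: the collection $\{(\frk M_\bk^n,Z_\bk^n):\bk\in I_{m+1,J}\}$ is tight (tightness being stable under countable products, each factor tight by Theorem~\ref{thm-chordal-conv}), and any subsequential limit is, by the kernel-convergence argument above, a Brownian disk decorated by a family of curves that (a) individually have the topology of chordal $\SLE_6$ on their bubbles and (b) satisfy the single-interface domain Markov property of Lemma~\ref{lem-domain-SLE} within each bubble. By \cite[Theorem 7.12]{gwynne-miller-char}, applied generation by generation, properties (a) and (b) characterize the law, so every subsequential limit must equal $\{(\frk H_\bk,Z_\bk):\bk\in I_{m+1,J}\}$; this closes the induction, and letting $m,J\to\infty$ yields \eqref{eqn-component-conv}.
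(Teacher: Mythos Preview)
Your inductive scheme via the domain Markov properties (Lemmas~\ref{lem-perc-decomp-markov} and~\ref{lem-sle6-decomp-markov}) is the same skeleton the paper uses, and your kernel-convergence step correctly yields
\[
\bigl(Z_\emptyset^n,\ \{(\frk M_k^n,Z_k^n)\}_{k\in\BB N}\bigr)\ \rta\ \bigl(Z_\emptyset,\ \{(\frk H_k,Z_k)\}_{k\in\BB N}\bigr)
\]
in law. The gap is exactly where you say the ``main obstacle'' lies: upgrading this to joint convergence with the \emph{parent decorated map} $\frk M_\emptyset^n$. In any subsequential limit one obtains $(\frk H_\emptyset,Z_\emptyset,\{\rng{\frk H}_k\})$ with each $\rng{\frk H}_k$ an SLE$_6$-decorated Brownian disk of the correct boundary length, but there is no a priori reason the underlying metric measure spaces $\rng H_k$ coincide with the bubbles $H_k$ cut out by $\eta_\emptyset$ inside $H$. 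GHPU convergence of $\frk M_k^n$ is convergence of \emph{abstract} metric measure spaces equipped with the internal metric; it does not record how $\Map_k^n$ sits inside $\Map^n$, so the inclusion $\Map_k^n\subset\Map^n$ does not survive the limit without additional input.

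Neither of your proposed fixes closes this. The ``gluing'' route would require that the map $(Z_\emptyset,\{H_k\})\mapsto\frk H_\emptyset$ be continuous in GHPU, but Lemma~\ref{lem-domain-SLE} provides only abstract measurability (it is a conformal welding statement), so one cannot simply truncate to $J$ bubbles and let $J\to\infty$. The invocation of \cite[Theorem~7.12]{gwynne-miller-char} is mis-aimed: that result characterizes the law of a \emph{single} curve on a Brownian disk via topology plus domain Markov property, and here the marginal law of each $\rng\eta_k$ is already known --- what is missing is the identification $\rng H_k=H_k$ as subsets of $H$, which is a coupling statement the characterization theorem does not address.

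The paper supplies the missing ingredient as Lemma~\ref{lem-Z-law-conv1}: a strengthening of Theorem~\ref{thm-chordal-conv}, extracted from the \emph{proof} in \cite[Section~7]{gwynne-miller-perc} rather than its statement, asserting that $\{(\Map_k^n,d_k^n,\mu_k^n,\xi_k^n(0))\}_{k\in\BB N}$ converges \emph{jointly with} $\frk M_\emptyset^n$ to $\{(H_k,d_k,\mu_k,\xi_k(0))\}_{k\in\BB N}$ in the pointed Gromov--Hausdorff--Prokhorov sense. This forces $(\rng H_k,\rng d_k,\rng\mu_k,\rng\xi_k)=(H_k,d_k,\mu_k,\xi_k)$ in any subsequential limit, after which the conditional-law argument identifies $\rng\eta_k$ with $\eta_k$ and the measurability in Lemma~\ref{lem-domain-SLE} (that $Z_\emptyset$ together with the bubbles determines $\frk H_\emptyset$) pins down the joint law. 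You should either cite this lemma or reprove it; the rest of your induction then goes through as written.
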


Proposition~\ref{prop-component-conv} will be a consequence of the convergence results for a single interface proven in~\cite{gwynne-miller-perc} (Theorem~\ref{thm-chordal-conv}) together with the iterative constructions of the pairs $\left(\frk M_{\bk}^n , Z_{\bk}^n \right)$ and $\left(\frk H_{\bk}  , Z_{\bk}  \right)$ from Sections~\ref{sec-sle6-def-discrete} and~\ref{sec-mating}, respectively. Before we give the proof, let us discuss what we need from~\cite{gwynne-miller-perc}. 
Theorem~\ref{thm-chordal-conv} immediately implies that $(\frk M_{\emptyset}^n , Z_{\emptyset}^n) \rta (\frk H_\emptyset, Z_\emptyset)$ in law as $n\rta\infty$. 
In fact, the proof of Theorem~\ref{thm-chordal-conv} from~\cite{gwynne-miller-perc} (in the case of site percolation on a loopless triangulation) yields the following slightly stronger statement.

\begin{lem} \label{lem-Z-law-conv1}
	One has the convergence of laws
	\eqb \label{eqn-Z-law-conv1}
	\left(    \frk M_\emptyset^n , Z_\emptyset^n, \left\{ (\Map_k^n , d_k^n , \mu_k^n , \xi_k^n(0) ) \right\}_{k\in\BB N} \right) \rta \left(  \frk H_\emptyset ,Z_\emptyset , \left\{ (H_k ,d_k , \mu_k  , \xi^n(0)  )\right\}_{k\in\BB N} \right) ,
	\eqe 
	where the first coordinate is given the GHPU topology, the second coordinate is given the Skorokhod topology, and the third coordinate is given the countable product of the pointed Gromov--Hausdorff--Prokhorov topology (i.e., the GHPU topology but restricted to spaces where the curve is constant). 
\end{lem}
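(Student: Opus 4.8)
The plan is to obtain the lemma from the \emph{proof} of Theorem~\ref{thm-chordal-conv} in~\cite{gwynne-miller-perc}, rather than from its statement. That proof identifies the scaling limit of the interface-decorated map by first establishing tightness and then invoking the characterization of $\SLE_6$ on the Brownian disk through its domain Markov property~\cite{gwynne-miller-char}. Since the percolation peeling process of Section~\ref{sec-perc-interface} is run until $\lambda_\emptyset^n$ reaches its target, the ``unexplored regions'' appearing throughout that argument are exactly the $2$-connected complementary components $\Map_k^n$ of $\lambda_\emptyset^n$, equipped with their internal graph metrics; thus the argument of~\cite{gwynne-miller-perc} already controls the joint law of $\frk M_\emptyset^n$, $Z_\emptyset^n$, and these components. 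Below I list the steps and flag the one that is the genuine crux.

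For tightness of the tuple in~\eqref{eqn-Z-law-conv1}: the first two coordinates are tight by Theorem~\ref{thm-chordal-conv}, and since the last coordinate carries the countable-product topology it suffices to check tightness of $(\Map_k^n, d_k^n, \mu_k^n, \xi_k^n(0))$ for each fixed $k$. By the domain Markov property of Lemma~\ref{lem-interface-markov}, applied at the final step of the interface and with the root of each component chosen as in Section~\ref{sec-sle6-def-discrete} --- which, by Lemmas~\ref{lem:future},~\ref{lem:interface} and~\ref{lem:future2}, is a function of the peeling data $\{X_j\}$ of $\lambda_\emptyset^n$ from Section~\ref{sec-perc-interface} --- the component $\Map_k^n$ is, conditionally on $Z_\emptyset^n$ (which is measurably equivalent to $\{X_j\}$), a critical site-percolated Boltzmann triangulation whose rescaled boundary length $\bcon^{-1}n^{-1/2}\ell_k^n$ converges, jointly with $Z_\emptyset^n\to Z_\emptyset$, to the $k$-th largest downward jump of $Z_\emptyset$. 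As this jump is a.s.\ finite and positive (so in particular $\Map_k^n\neq\emptyset$ with probability tending to $1$), \cite{aasw-type2} together with the Brownian disk scaling relation~\eqref{eqn-disk-scaling} gives that $(\Map_k^n, d_k^n, \mu_k^n, \xi_k^n(0))$ is tight in the pointed Gromov-Hausdorff-Prokhorov topology; hence the whole tuple in~\eqref{eqn-Z-law-conv1} is tight.

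To identify a subsequential limit $(\frk H_\emptyset, Z_\emptyset, \{(\wt H_k, \wt d_k, \wt\mu_k, \wt x_k)\}_{k})$ of~\eqref{eqn-Z-law-conv1} (the first two coordinates being as claimed by Theorem~\ref{thm-chordal-conv}): combining the conditional description of $\Map_k^n$ above with~\cite{aasw-type2},~\eqref{eqn-disk-scaling}, and the convergence of the downward jumps of $Z_\emptyset^n$ to those of $Z_\emptyset$, one finds that conditionally on $Z_\emptyset$ the spaces $(\wt H_k, \wt d_k, \wt\mu_k)$ are independent Brownian disks with boundary lengths the successive largest downward jumps of $Z_\emptyset$. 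But Lemma~\ref{lem-domain-SLE}, applied with $t=\sigma$, says the complementary components $(H_k, d_k, \mu_k)$ of $\eta_\emptyset$ in $\frk H_\emptyset$ have exactly this conditional law and, moreover, that $Z_\emptyset$ together with this collection a.s.\ determines $\frk H_\emptyset$. The remaining --- and crucial --- point is that the limiting spaces are not merely distributed like the continuum complementary components but are equal to them, $(\wt H_k, \wt d_k, \wt\mu_k, \wt x_k) = (H_k, d_k, \mu_k, \xi_k(0))$; granting this, the subsequential limit is the right-hand side of~\eqref{eqn-Z-law-conv1}, and since it is independent of the subsequence the convergence follows.

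The main obstacle is precisely this last equality. The internal metric of a subdomain is only lower semicontinuous, not continuous, under Gromov-Hausdorff-Prokhorov convergence, so GHPU convergence of $\frk M_\emptyset^n$ does not by itself give convergence of the complementary regions with their internal metrics; controlling this requires a priori estimates on the percolation peeling process together with the characterization theorem of~\cite{gwynne-miller-char}, and is the technical heart of~\cite{gwynne-miller-perc}. As explained in~\cite[Section~8]{gwynne-miller-perc}, this carries over verbatim from face percolation on quadrangulations to site percolation on loopless triangulations once the convergence of triangulations with simple boundary to the Brownian disk~\cite{aasw-type2} is available, which is the only model-dependent input; I would invoke it rather than reproduce it. The accompanying convergence $\xi_k^n(0)\to\xi_k(0)$ of the marked points is part of the same package, since $\xi_k^n(0)$ is read off from the peeling data of $\lambda_\emptyset^n$.
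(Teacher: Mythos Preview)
Your proposal is correct and takes essentially the same approach as the paper: both treat the lemma as a byproduct of the \emph{proof} (not merely the statement) of the single-interface convergence in~\cite{gwynne-miller-perc}, with the crux being the identification of the subsequential limits of the complementary components with the actual $(H_k,d_k,\mu_k,\xi_k(0))$'s, which is carried out in~\cite[Section~7.2]{gwynne-miller-perc}. The paper's explanation is terser than your outline but points to exactly the same place; the only expository difference is that the paper notes this identification in~\cite{gwynne-miller-perc} actually occurs \emph{before} the full limit $(\frk H_\emptyset,Z_\emptyset)$ is pinned down, whereas you present the steps in a more linear order.
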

Recall that $ \left\{ (H_k ,d_k , \mu_k , \xi_k(0) )\right\}_{k\in\BB N} $ is the set of connected components of $H_\emptyset \setminus \eta_\emptyset$, each equipped with the internal metric of $d_\emptyset$, the restriction of $\mu_\emptyset$, and the last point on its boundary hit by $\eta_\emptyset$.
In particular, these pointed metric measure spaces are a.s.\ determined by $\frk H_\emptyset$. 
The pointed metric measure spaces $(\Map_k^n , d_k^n , \mu_k^n , \xi_k^n(0) )$ are determined by $\Map_\emptyset^n$ and $\eta_\emptyset^n$ in an analogous manner.  

We briefly explain why the proof in~\cite{gwynne-miller-perc} yields Lemma~\ref{lem-Z-law-conv1}.
In our notation, the proof of Theorem~\ref{thm-chordal-conv} in~\cite[Section 7]{gwynne-miller-perc} starts out with a subsequence along which the left side of~\eqref{eqn-Z-law-conv1} converge in law, then shows that the subsequential limit coincides with the right side of~\eqref{eqn-Z-law-conv1}.
In particular, it was shown in~\cite[Section 7.2]{gwynne-miller-perc} that the subsequential limits of the $(\Map_k^n , d_k^n , \mu_k^n , \xi_k^n(0) )$'s coincide with the $(H_k ,d_k , \mu_k  , \xi^n(0)  )$'s (although at that point in the proof in~\cite{gwynne-miller-perc}, the subsequential limit of $(\frk M_\emptyset^n , Z_\emptyset^n)$ had not yet been shown to agree in law with $(\frk H_\emptyset ,Z_\emptyset)$ --- this was not established until the very end of~\cite[Section 7]{gwynne-miller-perc}).

\begin{proof}[Proof of Proposition~\ref{prop-component-conv}]
	If we couple so that $Z_\emptyset^n \rta Z_\emptyset$ a.s.\ in the Skorokhod sense (which we can do by Theorem~\ref{thm-chordal-conv}), then for $k\in\BB N$, the magnitude of the $k$th largest jump of $Z_\emptyset^n$ converges a.s.\ to the magnitude of the $k$th largest jump of $Z_\emptyset$. In other words,
	\eqbn
	\bcon^{-1} n^{-1/2} \# \bdy \Map_{ k}^n \rta \nu(\bdy H_{k}) .
	\eqen
	By the discrete and continuum Markov properties (Lemmas~\ref{lem-perc-decomp-markov} and~\ref{lem-sle6-decomp-markov}) and Theorem~\ref{thm-chordal-conv} (the latter applied once for each $k\in\BB N$), a.s.\ the joint conditional law of $\left\{ (\frk M_k^n , Z_k^n )\right\}_{k\in\BB N}$ given $Z_\emptyset^n$ converge a.s.\ to the joint conditional law of $ \left\{ (\frk H_k  , Z_k  )\right\}_{k\in\BB N} $ given $Z_\emptyset$.  
	This yields the convergence of joint laws 
	\eqb \label{eqn-Z-law-conv2}
	\left( Z_\emptyset^n, \left\{ (\frk M_k^n , Z_k^n )\right\}_{k\in\BB N} \right) \rta \left( Z_\emptyset , \left\{ (\frk H_k  , Z_k  )\right\}_{k\in\BB N} \right) .
	\eqe 
	
	We will now deduce from Lemma~\ref{lem-Z-law-conv1} and~\eqref{eqn-Z-law-conv2} the convergence in law  
	\eqb \label{eqn-Z-law-conv-base}
	\left(  \frk M_\emptyset^n , Z_\emptyset^n,  \left\{ (\frk M_k^n , \frk Z_k^n) \right\}_{k\in\BB N} \right) \rta \left( \frk H_\emptyset, Z_\emptyset , \left\{ (\frk H_k  , Z_k  )\right\}_{k\in\BB N}  \right) ,
	\eqe 
	which is the special case of~\eqref{eqn-component-conv} when we restrict to $m\in \{0,1\}$. 
	Indeed, by Lemma~\ref{lem-Z-law-conv1}, \eqref{eqn-Z-law-conv2}, and the Prokhorov theorem for any sequence of $n$'s tending to $\infty$ there is a coupling of $(Z_\emptyset, \frk H_\emptyset)$ with a collection of pairs $ \left\{ (\rng{\frk H}_k  , \rng Z_k  )\right\}_{k\in\BB N} \eqD  \left\{ ( \frk H_k  , Z_k  )\right\}_{k\in\BB N}$ and a subsequence along which
	\eqb \label{eqn-Z-law-conv-ssl}
	\left( Z_\emptyset^n,   \frk M_\emptyset^n , \left\{ (\frk M_k^n , \frk Z_k^n) \right\}_{k\in\BB N} \right)
	\rta \left( Z_\emptyset , \frk H_\emptyset ,  \left\{ (\rng{\frk H}_k  , \rng Z_k  )\right\}_{k\in\BB N} \right)\qquad \textrm{in law} .
	\eqe 
	As explained after the statement of Lemma~\ref{lem-Z-law-conv1}, {we have that} $ \left\{ (H_k ,d_k , \mu_k , \xi_k(0) )\right\}_{k\in\BB N} $ is a.s.\ determined by $\frk H_\emptyset$. By Lemma~\ref{lem-Z-law-conv1}, if we write $\rng{\frk H}_k = (\rng H_k , \rng d_k , \rng \mu_k ,\rng \xi_k , \rng \eta_k)$ then $(\rng H_k , \rng d_k , \rng \mu_k , \rng \xi(0) ) = (H_k,d_k,\mu_k , \xi(0) )$ as metric measure spaces for each $k\in\BB N$. 
	The metric and area measure on the Brownian disk a.s.\ determine its boundary length measure (see, e.g.,~\cite[Proposition 2]{legall-disk-snake}). 
	This boundary length measure together with the marked boundary point $\rng\xi(0)$ determines $\rng \xi$, so we get that a.s.
	\eqb \label{eqn-disk-agree}
	(\rng H_k , \rng d_k , \rng \mu_k , \rng \xi_k) = (H_k,d_k,\mu_k , \xi_k) ,\quad\forall k\in\BB N  , 
	\eqe 
	where here we mean equality as curve-decorated metric measure spaces. 
	We henceforth identify the curve decorated metric measure spaces appearing in~\eqref{eqn-disk-agree}. 
	
	By~\eqref{eqn-Z-law-conv2}, if we condition on $Z_\emptyset$ then the curve-decorated metric measure spaces $\rng{\frk H}_k$ are conditionally independent SLE$_6$-decorated Brownian disks with boundary lengths specified by the downward jumps of $Z_\emptyset$.
	In particular, under the conditional law given $Z_\emptyset$ and $\{( H_k ,  d_k ,  \mu_k ,  \xi_k)\}_{k\in\BB N}$,
	each of the curves $\rng\eta_k$ is an SLE$_6$ on $ H_k$ going from $x_k = \xi_k(0)$ to the marked boundary point $\wh x_k$ which is determined by $Z_\emptyset$ and $\{(H_k ,   d_k ,  \mu_k ,  \xi_k)\}_{k\in\BB N}$. 
	That is, the conditional laws of $\{\rng{\frk H}_k\}_{k\in\BB N}$ and $\{\frk H_k\}_{k\in\BB N}$ given $Z_\emptyset$ and $\{( H_k ,  d_k ,  \mu_k ,  \xi_k)\}_{k\in\BB N}$ agree.
	The measurability statement in Lemma~\ref{lem-domain-SLE} implies $Z_\emptyset$ and $\{( H_k ,  d_k ,  \mu_k ,  \xi_k)\}_{k\in\BB N}$ a.s.\ determine $\frk H_\emptyset$, so we get that the conditional laws of $\{\rng{\frk H}_k\}_{k\in\BB N}$ and $\{\frk H_k\}_{k\in\BB N}$ given $(Z_\emptyset, \frk H_\emptyset)$ agree.
	Since $\frk H_k$ a.s.\ determines $Z_k$ and $(\frk H_k , Z_k) \eqD (\rng{\frk H}_k ,\rng Z_k)$, we infer that the right sides of~\eqref{eqn-Z-law-conv-base} and~\eqref{eqn-Z-law-conv-ssl} have the same law. Since our initial choice of subsequence was arbitrary, this implies~\eqref{eqn-Z-law-conv-base}. 
	
	Due to the iterative constructions of the $\frk H_\bk$'s and the $\frk M_\bk^n$'s, we may now iterate the above argument to obtain~\eqref{eqn-component-conv}. 
	In particular, we prove by induction on $m$ that
	\eqb \label{eqn-component-conv-m}
	\left\{ \left(\frk M_{\bk}^n , Z_{\bk}^n \right) : \bk \in \bigcup_{r=0}^m\BB N^r \right\} \rta 
	\left\{ \left(\frk H_{\bk}  , Z_{\bk} \right) : \bk \in \bigcup_{r=0}^m \BB N^r  \right\} ,
	\eqe 
	in law. The base case $m=1$ is~\eqref{eqn-Z-law-conv-base}. 
	Now assume that~\eqref{eqn-component-conv-m} holds for some $m\in\BB N$. 
	Then the same reasoning leading to~\eqref{eqn-Z-law-conv2} together with~\eqref{eqn-component-conv-m} applied once inside each of the bubbles of $\frk M_\emptyset^n$ yields
	\eqb \label{eqn-Z-law-conv2-m}
	\left( Z_\emptyset^n, \left\{ (\frk M_\bk^n , Z_\bk^n )\right\}_{\bk \in\bigcup_{r=1}^{m+1} \BB N^r} \right) \rta \left( Z_\emptyset , \left\{ (\frk H_\bk  , Z_\bk  )\right\}_{\bk \in\bigcup_{r=1}^{m+1} \BB N^r} \right) .
	\eqe 
	One can then combine Lemma~\ref{lem-Z-law-conv1} and~\eqref{eqn-Z-law-conv2-m} via exactly the same argument used to prove~\eqref{eqn-Z-law-conv-base} to get~\eqref{eqn-component-conv-m} with $m+1$ in place of $m$. 
	This completes the induction, hence~\eqref{eqn-component-conv} holds.
\end{proof}

\subsection{Joint convergence with   the random walk encoding}
\label{sec-disk-walk}

By standard facts on lattice walks in the first quadrant (see e.g.\ \cite[Theorem~4]{dw-limit}), {we have that} $\acute{Z}^n \rta Z'$  in law with respect to the uniform topology.   
The main result in this section is the following. 

\begin{prop}\label{prop:joint}
	The convergence of $\acute Z^n$ to $Z'$ holds jointly with the one in  Proposition~\ref{prop-component-conv}.
\end{prop}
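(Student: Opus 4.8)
The plan is to establish the joint convergence by a tightness-and-identification argument. Since $\acute Z^n \to Z'$ in law (noted just above) and $\{(\frk M_\bk^n, \mcl Z_\bk^n)\}_\bk$ converges in law by Proposition~\ref{prop-component-conv}, the pair $\big(\acute Z^n, \{(\frk M_\bk^n, Z_\bk^n)\}_\bk\big)$ is tight, so along any subsequence we may pass to a further subsequence and, by the Skorokhod representation theorem, to a coupling in which $\acute Z^n \to \wt Z'$ a.s.\ in the uniform topology and $\{(\frk M_\bk^n, Z_\bk^n)\}_\bk \to \{(\frk H_\bk, Z_\bk)\}_\bk$ a.s., where $\wt Z' \eqD Z'$ and $\{(\frk H_\bk, Z_\bk)\}_\bk$ is the nested exploration of the Brownian disk together with its boundary length processes from Section~\ref{sec-mating}. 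It then suffices to show that $\wt Z' = Z'$ a.s., where $Z'$ is the boundary length process of the space-filling curve $\eta'$ built from $\{\frk H_\bk\}_\bk$; this forces all subsequential joint limits to coincide, which is the assertion of the proposition. (Equivalently, one is checking that a single deterministic ``concatenation functional'' recovers $\acute Z^n$ from $\{\mcl Z_\bk^n\}_\bk$ and $Z'$ from $\{Z_\bk\}_\bk$, and that it is continuous at the limit point off a null set.)

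To identify $\wt Z'$ we pass to the limit the explicit combinatorial identities relating $\acute{\mcl Z}^n$ to the nested processes $\mcl Z_\bk^n$. Recall from the three displayed items following Lemma~\ref{lem-perc-decomp-markov} (which repackage Lemmas~\ref{lem:boundary}, \ref{lem:future}, and~\ref{lem:interface}) that, for each multi-index $\bk$, the endpoints $S_\bk, T_\bk$ of the time interval occupied by the bubble $\Map_\bk^n$ and the splitting time $\wh T_\bk$ are explicit functionals of $\acute{\mcl Z}^n$ --- first-passage times of the two coordinates, simultaneous strict-running-minimum (ancestor) times, and locations of the largest downward jumps --- and that $\mcl Z_\bk^n$ is obtained from $\acute{\mcl Z}^n|_{[S_\bk, T_\bk]_\Z}$ by keeping the ancestor-free times relative to $\wh T_\bk$ and recentering, while the complementary subintervals of $[S_\bk,T_\bk]_\Z$ are exactly tiled by the intervals $[S_{\bk'},T_{\bk'}]_\Z$ of the children $\bk'\in\cC_\bk$. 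After rescaling as in~\eqref{eqn-rescaled-walk} and~\eqref{eq:renormalized-Z}, one checks that each identity is stable under the a.s.\ convergence in the coupling above: the jump-based parts use that $Z_\bk$ and $Z'$ a.s.\ have no fixed-time jumps and a.s.\ distinct jump sizes (so that ``the $k$-th largest boundary length child'' is well defined in the limit), while the ancestor/first-passage parts use that, by local absolute continuity with respect to a pair of independent spectrally-negative $3/2$-stable processes (Lemma~\ref{lem:Levy}), $Z_\bk$ a.s.\ attains its simultaneous running minima in the two-sided strict manner that makes these functionals continuous at it. The outcome is that, in the limit coupling, there are random intervals $[\wt s_\bk,\wt t_\bk]$ and splitting times $\wt{\wh t}_\bk$ such that $\wt Z'|_{[\wt s_\bk,\wt t_\bk]}$, restricted to the ancestor-free times relative to $\wt{\wh t}_\bk$ and recentered, equals $Z_\bk$, and such that the complementary subintervals are tiled by the children's intervals.

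These continuum identities determine $\wt Z'$ uniquely from $\{Z_\bk\}_\bk$ by a recursion on multi-index depth that is precisely the one defining $Z'$ from $\{\frk H_\bk\}_\bk$ in Section~\ref{sec-mating}: on the ancestor-free times of $\wt Z'|_{[0,\wt{\wh t}_\emptyset]}$ one has $\wt Z' = Z_\emptyset$; on each complementary subinterval $\wt Z'$ is a fixed shift of the corresponding child's process; on $[\wt{\wh t}_\emptyset, \mu(H)]$ the (shifted) processes of the dichromatic children of $\emptyset$ are concatenated in the order $\prec_\emptyset$; then iterate inside each child. This recursion pins down $\wt Z'(t)$ for a.e.\ $t$: by Lemma~\ref{lem:area} (and the accompanying Euclidean-diameter estimate) a $\mu$-typical point lies in only finitely many bubbles of boundary length exceeding any given threshold, and the fluctuation of $\wt Z'$ across the time interval of a bubble is at most twice that bubble's boundary length (only the part of the boundary of the unexplored region lying inside the bubble changes during that interval), so truncating the recursion at depth $m$ changes $\wt Z'(t)$ by an amount that tends to $0$ as $m\to\infty$ for a.e.\ $t$. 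Since the same recursion applied to $\{\frk H_\bk\}_\bk$ yields $Z'$, we obtain $\wt Z'(t)=Z'(t)$ for a.e.\ $t$, and as both $\wt Z'$ and $Z'$ are a.s.\ continuous (they are Brownian motions by Theorem~\ref{thm:mating}), $\wt Z' = Z'$ a.s.

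The main obstacle is the stability of the discrete identities under passage to the limit in the second paragraph: the functionals involved --- first-passage times, simultaneous strict-running-minimum times, and locations of the largest jumps --- are genuinely discontinuous on path space, and one must verify that the limiting processes a.s.\ avoid the degenerate configurations. The jump-related degeneracies are handled directly; the subtler point is the ancestor structure, where the key input is that $Z'$, being locally absolutely continuous with respect to a pair of independent spectrally-negative stable processes, a.s.\ has no ``one-sided'' simultaneous running minimum --- the kind of statement established for such processes in the mating-of-trees literature --- which is what makes the rescaled endpoints $S_\bk^n, T_\bk^n$ converge. A secondary technical point, implicit in the third paragraph, is that the tail of the recursion must be controlled uniformly in $n$, so that the truncated functional of the finitely many $\mcl Z_\bk^n$ of depth $\le m$ and rescaled boundary length $\ge\delta$ actually approximates $\acute Z^n$ uniformly in $n$; this follows from the tightness of $\{\acute Z^n\}$ --- hence its equicontinuity in probability --- together with the bound that the displacement of $\acute{\mcl Z}^n$ over the time interval of a bubble is at most twice its perimeter plus the internal fluctuations of that bubble's own interface process.
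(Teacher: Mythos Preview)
Your overall strategy --- pass to a subsequential coupling where $\acute Z^n \to \wt Z'$ and the nested data converge, then show $\wt Z' = Z'$ --- is exactly the paper's, but two steps do not go through as written. First, the claim that ``the fluctuation of $\wt Z'$ across the time interval of a bubble is at most twice that bubble's boundary length'' is false: you are right that only the portion of $\partial\big(\eta'([t,\mu(H)])\big)$ lying inside $H_\bk$ changes over $[s_\bk, t_\bk]$, but that portion is $\partial\big(\eta'([t, t_\bk])\big)$, whose length can be much larger than $\ell_\bk$ once $\eta'$ has wound around inside the bubble. (The fix is easy --- use $t_{\bk_m} - s_{\bk_m} = \mu(H_{\bk_m}) \to 0$ from Lemma~\ref{lem:area} together with continuity of $\wt Z'$ --- but the stated reason does not work.) Relatedly, your appeal to Lemma~\ref{lem:Levy} is to the wrong process: that lemma is about the stable-type $Z_\bk$, whereas the ancestor/cone-time regularity you need is for the Brownian $\wt Z'$; the paper handles this via Proposition~\ref{prop:mating}, citing~\cite[Lemma~9.25]{bhs-site-perc}.

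More substantially, you never establish that $\wt s_\bk = s_\bk$ and $\wt t_\bk = t_\bk$. Your intervals $[\wt s_\bk, \wt t_\bk]$ arise as limits of $[s_\bk^n, t_\bk^n]$ (equivalently, as functionals of $\wt Z'$), whereas the recursion producing $Z'$ uses the intervals $[s_\bk, t_\bk]$ determined by the curve $\eta'$ built from $\{\frk H_\bk\}$; concluding $\wt Z' = Z'$ from ``both satisfy the same recursion'' requires first matching the two time partitions. This is precisely the paper's Lemma~\ref{lem:endpoint1}, proved by combining $t_\bk^n - s_\bk^n = \mu^n(\Map_\bk^n) \to \mu(H_\bk) = t_\bk - s_\bk$ with the ordering $\prec_\emptyset$ to get $s_\bk = \sum_{\bk' \prec_\emptyset \bk} \mu(H_{\bk'}) \le \lim s_\bk^n$, and then forcing equality since $s_\bk$ and $\lim s_\bk^n$ have the same law. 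Even after that, the paper still needs Lemmas~\ref{lem:endpoint2}--\ref{lem:Z'} to pin down $\wt Z'(s_\bk) = Z'(s_\bk)$ via explicit boundary-length formulas and the jump ordering of $Z_\emptyset$, before density of $\{s_\bk, t_\bk\}$ and continuity finish the job. Your sketch elides exactly this identification, which is where the substance of the argument lies.
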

By the Skorokhod representation theorem, for each subsequence of $\BB N$ there exists a further subsequence $\cN$ and a coupling of $\left\{ \left(\frk H_{\bk}  , Z_{\bk} \right) : \bk \in \bigcup_{m=0}^\infty \BB N^m  \right\}$ 
and $\{\acute{\frk M}^n\}_{n\in\mcl N}$  such that almost surely
\begin{enumerate}
	\item $\lim_{\mcl N \ni n \to \infty}(\frk M_{\BB k}^n,Z_{\BB k}^n) =(\frk H_{\BB k},Z_{\BB k})$ for each $\BB k\in \bigcup_{m=0}^\infty \BB N^m$    
	with respect to the topology in Proposition~\ref{prop-component-conv}, 
	\item $\acute Z^n$ converges to a stochastic process  $\rng Z'$ in $C_0(\R,\R^2)$ with the same law as $Z'$.
\end{enumerate}
Throughout this section we work under such a coupling and assume $n\in \cN$. Then Proposition~\ref{prop:joint} is an immediate consequence of the following result concerning the coupling.
\begin{prop}\label{prop:joint-metric-mating}
	$Z'=\rng Z'$ almost surely.
\end{prop}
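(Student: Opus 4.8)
The plan is to show that the limiting walk $\rng Z'$ obtained along the subsequence $\cN$ and the boundary length process $Z'$ of the space-filling $\SLE_6$ agree by establishing that each determines, and is determined by, the \emph{same} combinatorial data attached to the nested exploration. The key observation is that the discrete walk $\acute{\cZ}^n$ is built from the individual boundary length processes $\cZ_\bk^n$ by the deterministic concatenation/time-change rules recorded in Section~\ref{sec-sle6-def-discrete}, items \ref{item:hat}--\ref{item:future}: the times $S_\bk^n, \wh T_\bk^n, T_\bk^n$ marking where the bubble $\Map_\bk^n$ is traced are read off from $\acute{\cZ}^n$ via ancestor-free times and matching $c$-steps, and conversely $\acute{\cZ}^n$ on $[S_\bk^n, T_\bk^n]_\Z$ is reconstructed (up to translation) from $\acute{\cZ}_{\bk'}^n$ for the children $\bk' \in \cC_\bk$, ordered by $\prec_\bk$. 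The continuum statements in Section~\ref{sec-mating} (the ordering $\prec_\bk$ of bubbles, Theorem~\ref{thm:space-fillng}, and the relation of $\eta_\bk$ to $\eta'|_{[s_\bk,t_\bk]}$) are the exact analogs. So the strategy is: (i) express $\rng Z'$ in terms of $\{Z_\bk : \bk\}$ by passing the discrete reconstruction to the limit, (ii) observe $Z'$ satisfies the same reconstruction in the continuum, and (iii) conclude $\rng Z' = Z'$.

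First I would set up the continuum side. Under the coupling, $\acute Z^n \to \rng Z'$ uniformly and $Z_\bk^n \to Z_\bk$ in Skorokhod for every $\bk$. I would prove by induction on the generation $m$ that for each multi-index $\bk$ with $H_\bk \neq \emptyset$ (equivalently, with positive limiting boundary length, which happens a.s.), the rescaled hitting times $\mcon n^{-1}\cdot$ (discrete $S_\bk^n$, $\wh T_\bk^n$, $T_\bk^n$) converge a.s.\ to deterministic functionals $s_\bk, \wh t_\bk, t_\bk$ of $\rng Z'$ and the $Z_\bk$'s, and that $\rng Z'$ restricted to $[s_\bk, t_\bk]$, recentered, is built from the $\rng Z'$-segments over the children intervals exactly as in items \ref{item:hat}--\ref{item:future}. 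The base case $\bk=\emptyset$ uses item \ref{item:future}: the dichromatic children of $\emptyset$ correspond to the times $\xi \prec_{\acute\cZ^n} (1+\wh T^n_\emptyset)$, and these pass to the limit because the relevant infima/record times of $\acute Z^n$ (which avoid the discontinuous-functional issues since $\rng Z'$ has no positive jumps, being a continuous limit — indeed $\rng Z' \eqD Z'$ is a continuous process, so "ancestor-free times" become "running infima" which are stable under uniform convergence) converge a.s.; the monochromatic children correspond to the components of $[S^n_\emptyset, \wh T^n_\emptyset]_\Z \setminus \ansd(\wh T^n_\emptyset)$ via item \ref{item:past}. For the inductive step one applies the same analysis inside each bubble $\Map_\bk^n$, using that the space-filling exploration restricted to $\cE(\Map_\bk^n)$ agrees (in order) with the space-filling exploration of $(\Map_\bk^n, \Be_\bk^n, \omega_\bk^n)$ by Remark~\ref{rmk:order} and the observations \ref{item:discrete-bdy1}--\ref{item:discrete-bdy2}.

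Next I would show $Z'$ obeys the identical reconstruction. This is essentially the content of the discussion surrounding Theorem~\ref{thm:space-fillng}: $\eta'|_{[s_\bk, \wh t_\bk]}$ fills each monochromatic child bubble immediately after tracing its boundary (so the monochromatic children of $\bk$ correspond to the complementary components of the set of ancestor times of $\wh t_\bk$ for $L_\bk, R_\bk$, matching item \ref{item:past}), while the dichromatic children are filled in during $[\wh t_\bk, t_\bk]$ in the order $\prec_\bk$ dictated by when $\eta_\bk$ cuts them off (matching item \ref{item:future} via the matching-step description). The quantum-length additivity of $Z'$ across these concatenations is exactly the statement that $L'_t, R'_t$ are the quantum lengths of the arcs of $\eta'[t,\infty]$, together with Lemma~\ref{lem-domain-SLE}. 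Since both $\rng Z'$ and $Z'$ are then the unique process satisfying: (a) the same top-level concatenation/time-change relations in terms of the same family $\{Z_\bk\}_\bk$, and (b) $\lim Z_\bk(t) = (0,0)$ at the end of each bubble so the recursion terminates (Lemma~\ref{lem:Levy}), it follows that $\rng Z' = Z'$ a.s.\ — here one invokes Lemma~\ref{lem:area} (the bubbles containing $\mu$-a.e.\ point shrink to zero mass) to guarantee the recursion determines the walk at every time, not just on a dense set.

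The main obstacle I anticipate is the passage to the limit of the hitting-time functionals: the maps $\acute{\cZ}^n \mapsto (S_\bk^n, \wh T_\bk^n, T_\bk^n)$ involve record times, first-passage times, and the combinatorial notion of ancestor/ancestor-free times, and these are in general \emph{not} continuous on all of Skorokhod space. The resolution relies on the fact that the limit process is a planar Brownian motion conditioned to stay in a quadrant (Theorem~\ref{thm:mating}), hence a.s.\ continuous and a.s.\ ``nice'' at the relevant times — in particular it a.s.\ does not have a local minimum of one coordinate exactly at a time where the other coordinate also has a running minimum except at the structurally forced points, and its running infima are a.s.\ strictly monotone across the bubble endpoints by Lemma~\ref{lem:Levy} (absolute continuity with respect to independent $3/2$-stable processes, whose running infima are strictly decreasing). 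These genericity facts let one upgrade uniform convergence of $\acute Z^n$ to convergence of the required functionals; carrying this out carefully for the ancestor-free times (which correspond to times on the non-space-filling interfaces $\wt\lambda^n$, cf.\ Lemma~\ref{lem:walk} and Lemma~\ref{lem:interface}) at every level of the hierarchy, uniformly enough to control the whole concatenation, is the technical heart of the argument, but it is structurally parallel to the single-interface analysis already carried out in \cite{gwynne-miller-perc} and to the mating-of-trees bijection analysis of \cite{bhs-site-perc}.
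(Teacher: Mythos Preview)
Your high-level strategy matches the paper's: show that both $\rng Z'$ and $Z'$ can be reconstructed from the \emph{same} nested family $\{Z_\bk\}$ via the same recursive rule, then conclude equality on the dense set $\{s_\bk,t_\bk\}$. Your identification of the discrete/continuum dictionaries (items \ref{item:hat}--\ref{item:future} in Section~\ref{sec-sle6-def-discrete} versus the statements gathered in Proposition~\ref{prop:mating}) is also on target. But there is a genuine gap in the execution.

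You argue that the rescaled times $s_\bk^n,\wh t_\bk^n,t_\bk^n$ converge to functionals of the limit walk $\rng Z'$ --- call them $\rng s_\bk,\rng{\wh t}_\bk,\rng t_\bk$ --- and that the continuum times $s_\bk,\wh t_\bk,t_\bk$ (defined via $\eta'$, i.e.\ via areas) are the \emph{same} functionals of $Z'$. Since $\rng Z'\overset{d}{=}Z'$, this yields only $(\rng Z',\rng s_\bk,\rng t_\bk)\overset{d}{=}(Z',s_\bk,t_\bk)$, not the almost-sure equality $\rng s_\bk=s_\bk$ in the coupling. Your uniqueness claim (``both are the unique process satisfying the concatenation relations in terms of $\{Z_\bk\}$'') would require that $\{Z_\bk\}$ alone pins down the endpoints $\{s_\bk,t_\bk\}$; it does not, because $t_\bk-s_\bk=\mu(H_\bk)$ and the area of a Brownian disk is \emph{random} given its boundary length, hence not a functional of the Lévy processes $Z_\bk$. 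Without knowing the interval lengths you cannot assemble the concatenation, so the walk-functional recursion is underdetermined.

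The paper supplies the missing anchor via the metric/area side of the coupling. Lemma~\ref{lem:endpoint1} uses the identity $s_\bk-s_{\bk^-}=\sum_{\bk'\prec_{\bk^-}\bk}\mu(H_{\bk'})$ together with GHPU convergence of the \emph{areas} $\mu^n(\Map_{\bk'}^n)\to\mu(H_{\bk'})$ (Proposition~\ref{prop-component-conv}) to get the one-sided bound $s_\bk\le\rng s_\bk$ a.s.; combined with $s_\bk\overset{d}{=}\rng s_\bk$ this forces $\rng s_\bk=s_\bk$ a.s. Only after this identification does the paper run the explicit boundary-length bookkeeping (Lemmas~\ref{lem:endpoint2}--\ref{lem:Z'}) to obtain $\rng Z'(s_\bk)=Z'(s_\bk)$ on a dense set. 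Your sketch never invokes area convergence; Lemma~\ref{lem:area} is cited only for density of the times, not for identifying them. That area input is precisely the bridge between the metric data $\{\frk H_\bk\}$ and the walk $\rng Z'$ that makes the argument go through.
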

{We devote the rest of this subsection to the proof of Proposition~\ref{prop:joint-metric-mating}.}

For each multi-index $\bk\in \cC_\emptyset$, recall the times $s_\bk,t_\bk,\wh t_\bk$ in  Section~\ref{sec-mating}  for the  bubble $H_\bk$, which are defined so that $\eta'([s_\bk , t_\bk]) = H_\bk$ and $\eta'(\wh t_\bk) = \wh x_\bk$. If $\Map^n_\bk\neq \emptyset$, we will also need the discrete analog of these times in the setting of Section~\ref{sec-sle6-def-discrete}. 
Let $S_{\bk}^n,T^n_{\bk}\in \N_0$ be such that  $\acute{\lambda}^n([S_{\bk}^n,T^n_{\bk}]_\Z)=\cE(\Map_\bk^n )$ and let $\wh T_\bk^n \in [S_\bk^n , T_\bk^n]_{\BB Z}$ be such that  $\acute\lambda^n(T_\bk^n) =\wh{\BB e}_{\bk}^n$.
The rescaled version of these times are given by 
\eqb \label{eqn-rescaled-abc-n}
s_\bk^n := \mcon^{-1} n^{-1} S_\bk^n ,\quad 
\wh t_\bk^n := \mcon^{-1} n^{-1} \wh T_\bk^n ,\quad\op{and}\quad
t_\bk^n := \mcon^{-1} n^{-1} T_\bk^n  .
\eqe 

Recall the description of $S^n_\bk,\wh T^n_\bk,T^n_\bk$ in terms of $\acute \cZ^n$ at the end of Section~\ref{sec-sle6-def-discrete}, namely, Facts~\ref{item:hat}-\ref{item:future} above Lemma~\ref{lem-perc-decomp-markov}.
We now give the continuum analog of these facts and the convergence  of these discrete quantities to their continuum counterparts.
Fix $s,t \in [0,\mu(H)]$ such that  $s<t$. Given a sample of $Z'$, we say that  $t$ is an \emph{ancestor} of $s$ if $L'_u>L'_t$ and $R'_u>R'_t$ for all $s<u\le t$.  We call $s$ an \emph{ancestor-free time relative to $t$} if there are no ancestors of $s$ in $(s,t)$.  
Let $\ans(t)$ be the set of ancestor-free times relative to $t$, which is  known to be  a closed set with Hausdorff dimension $3/4$~\cite[Section 10.2]{wedges} (see also~\cite[Example 2.3]{ghm-kpz}). Let $\Cut(s) =\{ t'\in (s, \mu(H)):  \textrm{$t'$ is an ancestor of $s$}\}$.
The set $\Cut(s)$ is the same as the set of so-called \emph{$\pi/2$-cone times}\footnote{A time $t$ is a $\pi/2$-cone time for $Z'=(L',R')$ is there exists a $u<t$ such that $L'_{u'}>L'_{t}$ and $R'_{u'}>R'_{t}$ for all $u'\in(u,t)$. Note that this definition of a cone time corresponds to a cone time for the \emph{time reversal} of $Z'$ in some other literature.} for $Z'$ whose corresponding $\pi/2$-cone intervals contain $s$. Therefore $\Cut(s)$ is a closed set of Hausdorff dimension $1/4$ by the main result of~\cite{evans-cone} (see also the proof of~\cite[Lemma 8.5]{wedges}).

\begin{prop}\label{prop:mating}
	For each multi-index $\bk\in \bigcup_{m\in\N_0} \N^m$, the following hold.
	\begin{enumerate}[label=(\arabic{enumi})]
		\item 
		If $H_\bk$ is monochromatic red, then $\wh t_\bk=\inf\{t\ge s_\bk: L'_{s_\bk} - L'_t =\ell_\bk/2 \}$. \label{item-mating1}
		If $H_\bk$ is monochromatic blue, then $\wh t_\bk=\inf\{t\ge s_\bk: R'_{s_\bk} - R'_t=\ell_\bk/2 \}$. 
		If $H_\bk$ is dichromatic, then $\wh t_\bk=t_\bk$.
		\item 	If $H_\bk$ is monochromatic, then the set  of open intervals  $\{ (s_{\bk'}, t_{\bk'}) : \bk' \in\cC_\bk \label{item-mating2}
		\; \textrm{and $H_{\bk'}$ is dichromatic}\}$ equals the set of connected components of 
		$(\wh t_\bk, t_\bk) \setminus \Cut(\wh t_\bk)$.
		\item	The set  of open intervals  $\{ (s_{\bk'}, t_{\bk'}) : \bk' \in\cC_\bk\; \textrm{and $H_{\bk'}$ is monochromatic}\}$ \label{item-mating3}
		equals the set of connected components of  $(s_\bk,\wh t_\bk) \setminus \ans(\wh t_\bk)$.
		\item $(\acute{Z}^n, s^n_\bk, t^n_\bk, \wh t^n_\bk)$ converges in law to $(Z', s_\bk,t_\bk,\wh t_\bk)$.\label{item-mating4}
	\end{enumerate} 
\end{prop}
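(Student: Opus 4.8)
The proof of Proposition~\ref{prop:mating} naturally splits into the three ``structural'' facts \ref{item-mating1}--\ref{item-mating3}, which identify the cut/ancestor times of $Z'$ with the bubble endpoints of the nested SLE$_6$ exploration, and the ``convergence'' fact \ref{item-mating4}. The plan is to prove the structural facts first, since they hold deterministically given the nested exploration, and then deduce \ref{item-mating4} from them together with Proposition~\ref{prop-component-conv} and the discrete analogs (Facts~\ref{item:hat}--\ref{item:future} before Lemma~\ref{lem-perc-decomp-markov}).

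\textbf{Structural facts \ref{item-mating1}--\ref{item-mating3}.} These are the exact continuum analogs of the discrete facts listed at the end of Section~\ref{sec-sle6-def-discrete}. I would prove them directly from the definition of $\eta'$ in Theorem~\ref{thm:space-fillng} and the definition of $Z'$. For \ref{item-mating1}: by construction $\eta'|_{[s_\bk , \wh t_\bk]}$ traces the chordal SLE$_6$ curve $\eta_\bk$ from $x_\bk$ to $\wh x_\bk$, filling in monochromatic bubbles along the way. When $H_\bk$ is monochromatic red, $\wh x_\bk = \xi_\bk(\ell_\bk/2)$ is the midpoint of $\bdy H_\bk$; tracing $\bdy H_\bk$ in the boundary-length parametrization from $x_\bk$, the left boundary of the unexplored component of $H\setminus\eta'[0,t]$ decreases (because all bubbles cut off on the left, being monochromatic red, are immediately filled in, so $L'$ decreases by exactly the length of the segment of $\bdy H_\bk$ traced) until it reaches $L'_{s_\bk}-\ell_\bk/2$ exactly at time $\wh t_\bk$. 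The dichromatic case $\wh t_\bk = t_\bk$ is immediate from the construction of $\eta'$, which fills in a dichromatic bubble monotonically from $x_\bk$ to $\wh x_\bk = $ its final point. For \ref{item-mating2} and \ref{item-mating3}: a time $u \in (s_\bk, \wh t_\bk)$ is \emph{not} in $\ans(\wh t_\bk)$ iff some $u' \in (u, \wh t_\bk]$ is an ancestor of $u$, which (by the geometric meaning: $Z'$ stays in the cone $Z'(u') + (0,\infty)^2$ on $(u,u'-1)$, or rather the continuum version with the open cone) corresponds precisely to $u$ lying strictly inside a monochromatic child bubble of $H_\bk$; and $u \in \Cut(\wh t_\bk)\cap(\wh t_\bk , t_\bk)$ iff $u$ lies inside a dichromatic child bubble. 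I would make this precise by matching the downward jumps of $Z'$ to the bubbles via Lemma~\ref{lem-domain-SLE}, noting that the segment of $Z'$ over a bubble interval $[s_{\bk'}, t_{\bk'}]$ is (a translate of) $Z_{\bk'}$, which by Lemma~\ref{lem:Levy} a.s.\ stays strictly in the open positive quadrant relative to its terminal value on the relevant sub-interval and touches $(0,0)$ only at the end. These are continuum versions of the purely combinatorial Lemmas~\ref{lem:interface}, \ref{lem:future}, \ref{lem:boundary}; the arguments transfer because the structure of $\prec_\bk$ and the boundary-length bookkeeping is identical.

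\textbf{Convergence \ref{item-mating4}.} Fix $\bk$. By Proposition~\ref{prop-component-conv} and the Skorokhod coupling set up just before Proposition~\ref{prop:joint-metric-mating}, a.s.\ $Z_{\bk'}^n \to Z_{\bk'}$ in the Skorokhod topology for every multi-index $\bk'$, and (anticipating Proposition~\ref{prop:joint-metric-mating}, which is what this section is building toward --- so I would instead argue without circularity, see below) $\acute Z^n \to \rng Z'$. The quantities $s^n_\bk, \wh t^n_\bk, t^n_\bk$ are, by Facts~\ref{item:hat}--\ref{item:future}, explicit functionals of $\acute Z^n$: $\wh T^n_\bk$ is a hitting time of $\acute{\cL}^n$ or $\acute{\cR}^n$ (monochromatic case) or equals $T^n_\bk$ (dichromatic case), and $S^n_\bk, T^n_\bk$ are endpoints of intervals determined by the ancestor-free/cone-time structure of $\acute Z^n$ up to the corresponding times for $\bk^-$. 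To pass to the limit I would (i) induct on the length $m$ of $\bk$; (ii) at each step, use continuity of the hitting-time and cone-interval functionals at the limiting process, which holds because by Lemma~\ref{lem:Levy} the limiting coordinate processes a.s.\ hit new running minima in a ``crossing'' (non-sticky) way and the endpoints $s_\bk, \wh t_\bk, t_\bk$ are a.s.\ not local extrema or cone boundary points that the approximating walks could straddle ambiguously --- i.e.\ the relevant functionals are a.s.\ continuous at $Z'$ because of the stable/Brownian regularity; (iii) combine with the structural identifications \ref{item-mating1}--\ref{item-mating3} to see that the limit of $(s^n_\bk, \wh t^n_\bk, t^n_\bk)$ is exactly $(s_\bk, \wh t_\bk, t_\bk)$ rather than some other cut time. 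The joint convergence with $\acute Z^n$ itself is then automatic since everything is a continuous functional of $\acute Z^n$ in the limit.

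\textbf{Main obstacle.} The delicate point is step (ii)--(iii) of the convergence argument: showing that the discrete hitting times and cut-interval endpoints, which a priori could converge to the ``wrong'' cut time (e.g.\ an earlier or later simultaneous running minimum), actually converge to the prescribed $s_\bk,\wh t_\bk, t_\bk$. This requires knowing that for the correlated Brownian motion $Z'$ (and the $3/2$-stable processes appearing in Lemma~\ref{lem:Levy}), the ancestor-free set $\ans(\cdot)$ and the cone-time set $\Cut(\cdot)$ have no isolated points of the wrong type near $s_\bk, \wh t_\bk$, and that the magnitudes of the downward jumps (bubble boundary lengths) of $Z^n_{\bk^-}$ converge to those of $Z_{\bk^-}$ \emph{with the correct ordering} --- which is where the $k_m$-th largest bubble enters and where ties (a.s.\ absent in the limit but a genuine subtlety in relating $n$-th-largest to the limit) must be handled. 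I expect one handles this by the standard argument that a.s.\ all bubble boundary lengths of $Z_{\bk^-}$ are distinct (jumps of the limiting process are a.s.\ all distinct), so for large $n$ the $k_m$-th largest discrete bubble corresponds to the $k_m$-th largest limiting bubble, and then a short continuity/sandwiching argument pins down the time endpoints. This is essentially a routine-but-careful ``convergence of functionals of the process at a.s.\ continuity points'' argument, and I would present it as such, deferring the genuinely combinatorial input to the already-established Facts~\ref{item:hat}--\ref{item:future} and structural parts \ref{item-mating1}--\ref{item-mating3}.
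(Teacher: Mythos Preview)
Your approach is structurally the same as the paper's: establish \ref{item-mating1}--\ref{item-mating3} first, then deduce \ref{item-mating4} by iteration using the discrete Facts~\ref{item:hat}--\ref{item:future}. The main difference is that the paper does not prove \ref{item-mating1}--\ref{item-mating3} or the continuity-of-functionals step in \ref{item-mating4} from scratch; it cites \cite[Section~6.9]{bhs-site-perc} for the structural identifications and \cite[Lemma~9.25]{bhs-site-perc} for the convergence of the ancestor-free/cone-time endpoints. The obstacle you correctly single out---showing that the discrete interval endpoints converge to the \emph{prescribed} limiting endpoints rather than some other cut time---is exactly the content of \cite[Lemma~9.25]{bhs-site-perc}, so your sketch would amount to reproving that lemma. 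Your direct geometric argument for \ref{item-mating1}--\ref{item-mating3} is plausible but would need more care than you indicate (e.g.\ $L'$ is not monotone on $[s_\bk,\wh t_\bk]$, so the ``decreases until'' phrasing is not literally correct; one really needs the running-infimum description).

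Two smaller points. First, there is no circularity: the marginal convergence $\acute Z^n\to Z'$ in law is a standard fact about conditioned lattice walks (stated just before Proposition~\ref{prop:joint}) and does not rely on Proposition~\ref{prop:joint-metric-mating}. Second, the paper's iteration is slightly more careful than a plain induction on $|\bk|$: as explained in Remark~\ref{rmk:initial}, the base case must cover $\initial=\{\emptyset\}\cup\BB N\cup\{\bk\in\BB N^2: H_{\bk^-}\text{ dichromatic}\}$ rather than just $\{\emptyset\}\cup\BB N$, because the description of the children intervals (and hence the functional one must show is continuous) differs depending on whether the parent bubble is monochromatic or dichromatic. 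Your inductive step would need to branch on this.
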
 
\begin{proof}
	We will first argue that \ref{item-mating1}-\ref{item-mating3} hold for $\bk=\emptyset$ and $\bk\in\cC_{\emptyset}$, then we argue that \ref{item-mating4} holds for $\bk=\emptyset$ and $\bk\in\cC_{\emptyset}$, and then we argue that \ref{item-mating1}-\ref{item-mating4} hold for $\bk\in\N^2$ with $H_{\bk^-}$ dichromatic. This will allow us to conclude by iteration.
	
	The statements concerning $\wh t_\emptyset$ and $\{s_\bk,t_\bk\}_{\bk\in  \cC_{\emptyset} }$ in Assertions \ref{item-mating1}-\ref{item-mating3} are consequences of Theorem~\ref{thm:mating}, which are explained in detail in \cite[Section~6.9]{bhs-site-perc}, based on Sections~6.1-6.8 there.  
	
	Assertion~\ref{item-mating4} for $\wh t^n_\emptyset$  and $\{s^n_\bk,t^n_\bk\}_{\bk\in \cC_\emptyset}$ follows from  \cite[Lemma~9.25]{bhs-site-perc}:
	indeed, the convergence of $\wh t^n_\emptyset$   follows from \cite[Lemma~9.25, (i)]{bhs-site-perc} and Fact~\ref{item:hat}  above Lemma~\ref{lem-perc-decomp-markov}.
	The convergence of  $\{s^n_\bk,t^n_\bk: \bk\in \cC_\emptyset, H_\bk\textrm{ is monochromatic} \}$ follows from \cite[Lemma~9.25, (i)]{bhs-site-perc} and Fact~\ref{item:past} above Lemma~\ref{lem-perc-decomp-markov}. 
	The convergence of  $\{s^n_\bk,t^n_\bk: \bk\in \cC_\emptyset, H_\bk \textrm{ is dichromatic} \}$ follows from \cite[Lemma~9.25, (ii)]{bhs-site-perc} and Fact~\ref{item:future} above Lemma~\ref{lem-perc-decomp-markov}.
	Here Facts~\ref{item:hat}-\ref{item:future} give the  $\acute Z^n$-description of  special points corresponding to the percolation interface $\lambda^n_\emptyset$. Section~6.9 in \cite{bhs-site-perc} describes the same quantities in the continuum in terms of $Z'$ and \cite[Lemma~9.25]{bhs-site-perc} show that the random walk quantities converge to their Brownian motion counterparts. 
	
	For $\bk\in\cC_\emptyset$  such that $H_\bk$ is monochromatic, since $(\acute{Z}^n, s^n_\bk, t^n_\bk)$ converges in law to $(Z', s_\bk,t_\bk)$, we may repeat the argument for $\wh t^n_\emptyset$ to conclude  that $(\acute{Z}^n, \wh t^n_\bk)$ converges in law to $(Z',\wh t_\bk)$.
	
	Now for $\bk\in\N^2$ with $H_{\bk^{-}}$ being dichromatic, since $\wh t^n_{\bk^-}=t^n_{\bk^-}$ converges to $\wh t_{\bk^-}=t_{\bk^-}$,
	Assertions \ref{item-mating1}-\ref{item-mating3} for $s_\bk,t_\bk$ follows from \cite[Section~6.9]{bhs-site-perc}.
	Moreover, Assertion~\ref{item-mating4} for $s^n_\bk,t^n_\bk$ follows from \cite[Lemma~9.25 (i)]{bhs-site-perc} and Fact~\ref{item:past} above Lemma~\ref{lem-perc-decomp-markov}.

	Since we have proved Proposition~\ref{prop:mating} for multi-indices in 
	\begin{equation}\label{eq:intial}
	\initial \defeq \{\emptyset\} \cup \N  \cup\{\bk\in \N^2: H_{\bk^-}\textrm{ is dichromatic}  \}
	\end{equation}
	the remaining cases follow from iteration.	
\end{proof}
\begin{remark}\label{rmk:initial}
	In the proof of Proposition~\ref{prop:mating} and other arguments in this section where the iteration is used, it is important to consider  the cases indexed by $\initial$ instead of just $\cC_\emptyset$. Otherwise it would only  cover the  cases where the parent bubble is monochromatic. As seen in the proof of Proposition~\ref{prop:mating}, when the parent bubble is dichromatic, it requires a separate argument.
\end{remark}
\begin{lem}\label{lem:endpoint1}
	For all $\bk\in \bigcup_{m\in\N_0}\N_m$, we have 
	\((\rng Z', s_\bk, t_\bk,\wh t_\bk )\overset{d}{=}(Z',s_\bk, t_\bk,\wh t_\bk)\). Moreover, $(s^n_\bk,t^n_\bk,\wh t^n_\bk) \rta (s_\bk, t_\bk,\wh t_\bk)$ in probability   as $n$ tends to $\infty$.
\end{lem}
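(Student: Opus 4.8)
The plan is to deduce the statement from Proposition~\ref{prop:mating} by combining the distributional convergence of the discrete key times in Proposition~\ref{prop:mating}\ref{item-mating4} with the a.s.\ convergence $\acute Z^n\to\rng Z'$ that holds in the coupling, and then identifying the limit using that the continuum key times are deterministic functionals of $Z'$. To begin, note that in our coupling $\acute Z^n\to\rng Z'$ a.s.\ with $\rng Z'\overset{d}{=}Z'$, and that $\mu^n(\Map^n)\to\mu(H)$ a.s.\ (the latter being contained in the convergence $\frk M_\emptyset^n\to\frk H_\emptyset$, which holds in the coupling). Since $0\le s^n_\bk\le\wh t^n_\bk\le t^n_\bk\le\mu^n(\Map^n)$, the triples $(s^n_\bk,t^n_\bk,\wh t^n_\bk)$ are tight.

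Next, by Theorem~\ref{thm:mating} the process $Z'$ a.s.\ determines the space-filling $\SLE_6$-decorated Brownian disk, hence also each key time triple $(s_\bk,t_\bk,\wh t_\bk)$; write $\Psi_\bk$ for the resulting measurable map. Proposition~\ref{prop:mating}\ref{item-mating1}--\ref{item-mating3} describes $\Psi_\bk$ explicitly by induction along the multi-index tree: out of hitting times of $L'$ and $R'$, the connected components of the complements of the cone/ancestor-free time sets $\Cut(\cdot)$ and $\ans(\cdot)$, and the bubble boundary lengths $\ell_\bk$. The crucial — and most delicate — point is that each $\Psi_\bk$ is a.s.\ continuous at $Z'$, in the sense that whenever $\acute Z^n\to\rng Z'$ with $\rng Z'\overset{d}{=}Z'$, the discrete points $S^n_\bk,\wh T^n_\bk,T^n_\bk$ described via $\acute Z^n$ in Facts~\ref{item:hat}--\ref{item:future} converge to $\Psi_\bk(\rng Z')$; this is essentially \cite[Lemma~9.25]{bhs-site-perc} combined with those facts, and is precisely the input used in the proof of Proposition~\ref{prop:mating}. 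Granting it, since $\acute Z^n\to\rng Z'$ a.s.\ in the coupling we obtain $(s^n_\bk,t^n_\bk,\wh t^n_\bk)\to\Psi_\bk(\rng Z')$ in probability (``in probability'' rather than a.s.\ only because $\Map_\bk^n=\emptyset$ is possible for finitely many $n$ on an event of probability tending to $0$). Denoting $(s_\bk,t_\bk,\wh t_\bk):=\Psi_\bk(\rng Z')$ — which is consistent with the earlier $\frk H'$-definition once one knows $\rng Z'=Z'$, established in Proposition~\ref{prop:joint-metric-mating} — the identity $(\rng Z',s_\bk,t_\bk,\wh t_\bk)\overset{d}{=}(Z',s_\bk,t_\bk,\wh t_\bk)$ is then immediate from $\rng Z'\overset{d}{=}Z'$ and $(s_\bullet,t_\bullet,\wh t_\bullet)=\Psi_\bullet(\cdot)$.

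I expect the main obstacle to be the a.s.\ continuity of $\Psi_\bk$. Since $\Psi_\bk$ is assembled from hitting times of $L'$ and $R'$ and from the sets $\Cut(\cdot)$ and $\ans(\cdot)$, one must rule out the degenerate scenarios — a hitting level attained exactly at a downward jump, or an unstable boundary point of $\Cut$ or $\ans$ — in which the identification of $S^n_\bk,\wh T^n_\bk,T^n_\bk$ with their continuum counterparts would jump in the limit. These are a.s.\ excluded by standard path properties of the correlated Brownian motion $Z'$, as in \cite[Lemma~9.25]{bhs-site-perc} and the proof of Proposition~\ref{prop:mating}. Finally, as emphasized in Remark~\ref{rmk:initial}, the induction along the multi-index tree must be run over the index set $\initial$ of~\eqref{eq:intial} rather than over $\cC_\emptyset$ alone, so that the case of a dichromatic parent bubble is covered.
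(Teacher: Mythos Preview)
Your argument contains a genuine circularity. In the paper, the times $s_\bk,t_\bk,\wh t_\bk$ are defined geometrically via the space-filling curve $\eta'$ on $H$ (Section~\ref{sec-mating}); by Proposition~\ref{prop:mating} they equal $\Psi_\bk(Z')$. The lemma therefore asserts that $(\rng Z',\Psi_\bk(Z'))\overset{d}{=}(Z',\Psi_\bk(Z'))$ and that $(s^n_\bk,t^n_\bk,\wh t^n_\bk)\to\Psi_\bk(Z')$ in probability. Your argument, via the footnote trick, only yields $(\rng Z',\Psi_\bk(\rng Z'))\overset{d}{=}(Z',\Psi_\bk(Z'))$ and $(s^n_\bk,t^n_\bk,\wh t^n_\bk)\to\Psi_\bk(\rng Z')$. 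To bridge the gap you invoke Proposition~\ref{prop:joint-metric-mating} ($\rng Z'=Z'$), but Lemma~\ref{lem:endpoint1} is precisely one of the ingredients used to prove Proposition~\ref{prop:joint-metric-mating} (via Lemmas~\ref{lem:endpoint2}--\ref{lem:Z'}). So you cannot use it here; the identification $\Psi_\bk(\rng Z')=\Psi_\bk(Z')$ is the actual content of the lemma, not a formality.

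The paper supplies exactly the missing idea. It sets $\rng s_\bk:=\Psi_\bk^{(s)}(\rng Z')$, etc., obtains (as you do) $(s^n_\bk,t^n_\bk)\to(\rng s_\bk,\rng t_\bk)$ in probability, and then shows $\rng s_\bk=s_\bk$, $\rng t_\bk=t_\bk$ a.s.\ \emph{without} assuming $\rng Z'=Z'$. The key is the mass identity
\[
s_\bk-s_{\bk^-}=\sum_{\bk'\preceq_{\bk^-}\bk}\mu(H_{\bk'}),
\]
together with the GHPU convergence of Proposition~\ref{prop-component-conv}: for each fixed $\bk'\prec_\emptyset\bk$ and large $n$, the bubble $\Map^n_{\bk'}$ is contained in $\acute\eta^n([0,s^n_\bk])$, so $\sum_{\bk'\preceq_\emptyset\bk}\mu(H_{\bk'})\le\rng s_\bk$ in the limit. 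Combined with $s_\bk\overset{d}{=}\rng s_\bk$, this forces $s_\bk=\rng s_\bk$ a.s.; a symmetric argument (using $\mu(H_\emptyset)-t_\bk$ in the dichromatic case, and $t_\bk-s_\bk=\mu(H_\bk)$) handles $t_\bk$, and then $\wh t_\bk$. In other words, the proof genuinely uses the metric-measure convergence of the bubbles, not only the walk convergence; your purely ``functional of $Z'$'' approach cannot see this information and is therefore forced into the circular appeal to Proposition~\ref{prop:joint-metric-mating}.
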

\begin{proof}
	For each multi-index $\bk$, let $\rng s_\bk,\rng t_\bk$ be such that  $(\rng Z', \rng s_\bk, \rng t_\bk)\overset{d}{=}(Z',s_\bk,t_\bk)$.  
	By Assertion~\ref{item-mating4} of Proposition~\ref{prop:mating}, we see that 
	$(\acute{Z}^n, s^n_\bk, t^n_\bk)$ converges in probability to $(\rng Z', \rng s_\bk, \rng t_\bk)$.\footnote{Here and in several places below, we use the general fact that if $f$ is a measurable function and $\{X_n\}_{n\in\BB N}$ and $X$ are random variables such that $(X_n, f(X_n))\rta(X,f(X))$ in law and $X_n\rta X$ a.s, then $f(X_n)\rta f(X)$ in probability.\label{fn:in-prob}}
	To show \((\rng Z', s_\bk, t_\bk)\overset{d}{=}(Z',s_\bk, t_\bk)\) and $(s^n_\bk, t^n_\bk) \rta (s_\bk,t_\bk)$ in probability, it suffices to show that $\rng s_\bk=s_\bk$ and $\rng t_\bk=t_\bk$ a.s. 
	
	We first consider the case when $\bk \in \initial$.
	Since $\eta'$ is parametrized by $\mu$-mass and $\mu(\eta_\bk) = 0$ for all multi-indices $\bk$, we have  
	\begin{equation}\label{eq:sk}
	s_\bk-{s_{\bk^-}} =\sum_{\bk' \preceq_{\bk^-} \bk}\mu(H_{\bk'}) \qquad \textrm{for each multi-index }\bk,
	\end{equation}
	where here the sum ranges over all $\bk'\in \cC_{\bk^-}$ such that $\bk' \prec_{\bk^-} \bk $ (recall $\prec_{\bk^-}$ from Section~\ref{sec-mating}). 
	
	Suppose $\bk\in\cC_\emptyset$ and $H_\bk$ is monochromatic. For any $\bk'\prec_\emptyset \bk$,  by Proposition~\ref{prop-component-conv}, for large enough $n$, the bubble $\Map^n_{\bk'}$ is contained in $\acute{\eta}^n([0,s^n_\bk])$. By letting $n\rta \infty$, we have $\sum_{\bk' \preceq_\emptyset \bk}\mu(H_{\bk'})\le \rng s_\bk$  a.s.  Since $s_\bk\overset{d}{=} \rng s_\bk$, we must have $s_\bk=\rng s_\bk$ a.s.
	
	Suppose $\bk\in \cC_\emptyset$ and $H_\bk$ is dichromatic. Then $\mu(H_\emptyset)- t_\bk=\sum_{\bk \preceq_\emptyset \bk'} \mu(H_{\bk'})$, where the sum ranges over all $\bk'\in \cC_\emptyset$ such that $\bk\prec_\emptyset \bk'$.
	Similarly as in the monochromatic case we have $\mu(H_\emptyset)- t_\bk\le \mu(H_\emptyset)- \rng t_\bk$ a.s.\ and hence $\rng t_\bk=t_\bk$ a.s. 
	
	For each multi-index $\bk$, we have $ t_\bk -s_\bk=\mu(H_\bk)$ and $t^n_\bk-s^n_\bk=\mu^n(\Map^n)$.
	By letting $n\rta\infty$ and using Proposition~\ref{prop-component-conv} and the two preceding paragraphs, we get that $\rng s_\bk=s_\bk$ and $\rng t_\bk=t_\bk$ a.s.\ for all $\bk \in \cC_\emptyset$.
	
	We have $\mu(H)-\wh t_\emptyset=\sum_{\bk}\mu(H_{\bk})$ where $\bk$ ranges over all $\bk\in \cC_\emptyset$ such that $H_\bk$ is dichromatic.
	A similar argument as above then shows that \((\rng Z', \wh t_\emptyset )\overset{d}{=}(Z',\wh t_\emptyset)\) and $\wh t^n_\emptyset\rta \wh t_\emptyset$ in probability. 
	For $\bk\in\cC_\emptyset$  such that $H_\bk$ is monochromatic, since \((\rng Z', s_\bk, t_\bk)\overset{d}{=}(Z',s_\bk, t_\bk)\) and $(s^n_\bk,t^n_\bk) \rta (s_\bk, t_\bk)$ in probability, we may repeat the argument for $\wh t_\emptyset$ to conclude  that \((\rng Z', \wh t_\bk )\overset{d}{=}(Z',\wh t_\bk)\) and $\wh t^n_\bk\rta \wh t_\bk$ in probability. 
	
	Now suppose $\bk\in\N^2$ and $H_{\bk^-}$ is dichromatic. Since $s^n_{\bk^-}$  converges to $s_{\bk^-}$  in probability, we can use \eqref{eq:sk} and 
	the same argument as in the case where $\bk\in\cC_\emptyset$ and $H_\bk$ is monochromatic to conclude.
	This proves Lemma~\ref{lem:endpoint1} for $\bk\in\initial$.   
		
	Lemma~\ref{lem:endpoint1} for general multi-indices  now follows by iterating the argument above. 
\end{proof}

For each $\bk\in \bigcup_{m=0}^\infty \N^m$, let $\ell_{\bk}^{\mathrm l}$ and $\ell_{\bk}^{\mathrm r}$ be the boundary lengths of the clockwise and counterclockwise arc on $\bdy H_\bk$ from $x_\bk$ to $\wh x_\bk$, respectively. Equivalently, $(\ell^{\mathrm l}_\bk,\ell^{\mathrm{r}}_\bk )= Z_{\bk}(0)$. In particular, with probability 1,   $(\ell^{\mathrm l}_\bk,\ell^{\mathrm{r}}_\bk )=(\ell_\bk/2,\ell_\bk/2)$ if and only if $H_\bk$ is monochromatic. 

\begin{lem}\label{lem:endpoint2}
	\((\rng Z', \ell^{\mathrm l}_\bk,\ell^{\mathrm{r}}_\bk)\overset{d}{=}(Z', \ell^{\mathrm l}_\bk,\ell^{\mathrm{r}}_\bk)\) and \(\rng Z'(t_\bk) -\rng Z'(s_\bk)=Z'(t_\bk)-Z'(s_\bk)\)
	for each  $\bk \in \cC_\emptyset$. 
\end{lem}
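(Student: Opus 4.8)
The plan is to express both the pair $(\ell_\bk^{\mathrm l},\ell_\bk^{\mathrm r})$ and the increment $Z'(t_\bk)-Z'(s_\bk)$ as continuous functionals of $Z'$ evaluated together with the three times $s_\bk,\wh t_\bk,t_\bk$ appearing in Lemma~\ref{lem:endpoint1}, to identify the exact discrete counterparts of these functionals in terms of $\acute{\cZ}^n$, and then to pass to the limit in the coupling fixed above. Throughout I would use that in this coupling $\acute Z^n\to\rng Z'$ a.s.\ in the uniform topology, with $\rng Z'$ a.s.\ continuous since $\rng Z'\overset{d}{=}Z'$; that $(\frk M_\bk^n,Z_\bk^n)\to(\frk H_\bk,Z_\bk)$ a.s.\ by Proposition~\ref{prop-component-conv}, so in particular $Z_\bk^n(0)\to Z_\bk(0)=(\ell_\bk^{\mathrm l},\ell_\bk^{\mathrm r})$ a.s.; and that $(s_\bk^n,\wh t_\bk^n,t_\bk^n)\to(s_\bk,\wh t_\bk,t_\bk)$ in probability by Lemma~\ref{lem:endpoint1}.

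On the discrete side, Lemma~\ref{lem:future} applied to the bubble $\Map_\bk^n$ shows that $\acute{\cZ}^n$ restricted to $[S_\bk^n,T_\bk^n+1]_\Z$ is, up to an additive shift, the encoding walk $\Phi(\Map_\bk^n,\Be_\bk^n,\omega_\bk^n)$, whose first value is the unscaled version of $Z_\bk^n(0)$; since this walk ends at $(-1,-1)$ with final step $(-1,-1)$, comparing values at the endpoints of the interval gives the exact identity
\[
\acute Z^n(t_\bk^n)-\acute Z^n(s_\bk^n)=-\,Z_\bk^n(0).
\]
Likewise, Facts~\ref{item:hat}--\ref{item:future} at the end of Section~\ref{sec-sle6-def-discrete} express $\wh T_\bk^n$ (and, for monochromatic $\Map_\bk^n$, the pair $Z_\bk^n(0)=\bcon^{-1}n^{-1/2}(\lfloor\ell_\bk^n/2\rfloor-1,\lceil\ell_\bk^n/2\rceil-1)$; for dichromatic $\Map_\bk^n$, the identity $\wh T_\bk^n=T_\bk^n$) through increments of $\acute{\cZ}^n$ between $S_\bk^n,\wh T_\bk^n,T_\bk^n$. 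These discrete relations have exact continuum analogues coming from Proposition~\ref{prop:mating}\ref{item-mating1}--\ref{item-mating3} and the construction of $\eta'$ and of the processes $\{Z_\bk\}$ in Section~\ref{sec-mating}: since $Z_\bk$ runs from $(\ell_\bk^{\mathrm l},\ell_\bk^{\mathrm r})$ to $(0,0)$ (Lemma~\ref{lem:Levy}) and only the portion of $Z'$ attributable to $\bdy H_\bk$ is affected while $\eta'$ fills $H_\bk$, one gets $Z'(t_\bk)-Z'(s_\bk)=-(\ell_\bk^{\mathrm l},\ell_\bk^{\mathrm r})$; and $(\ell_\bk^{\mathrm l},\ell_\bk^{\mathrm r})$ is a continuous functional $\mcl F_\bk$ of $(Z',s_\bk,\wh t_\bk,t_\bk)$ whose form depends only on $\type_\bk$ and the colour of $H_\bk$ (for dichromatic $H_\bk$, $\wh t_\bk=t_\bk$ and $\mcl F_\bk=Z'(s_\bk)-Z'(\wh t_\bk)$; for monochromatic $H_\bk$ both coordinates equal $\ell_\bk/2$, read off from the drop of $L'$ or $R'$ by time $\wh t_\bk$ as in Proposition~\ref{prop:mating}\ref{item-mating1}).

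Finally I would take $n\to\infty$. Using the uniform convergence $\acute Z^n\to\rng Z'$, the a.s.\ continuity of $\rng Z'$, the convergences $(s_\bk^n,\wh t_\bk^n,t_\bk^n)\to(s_\bk,\wh t_\bk,t_\bk)$ and $Z_\bk^n(0)\to(\ell_\bk^{\mathrm l},\ell_\bk^{\mathrm r})$ (passing to a.s.-convergent subsequences to upgrade convergence in probability), the displayed identity gives $\rng Z'(t_\bk)-\rng Z'(s_\bk)=-(\ell_\bk^{\mathrm l},\ell_\bk^{\mathrm r})$ a.s., and the rescaled relations from Facts~\ref{item:hat}--\ref{item:future} give $(\ell_\bk^{\mathrm l},\ell_\bk^{\mathrm r})=\mcl F_\bk(\rng Z',s_\bk,\wh t_\bk,t_\bk)$ a.s.\ with the \emph{same} functional $\mcl F_\bk$ as in the continuum. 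The second assertion then follows because both $\rng Z'(t_\bk)-\rng Z'(s_\bk)$ and $Z'(t_\bk)-Z'(s_\bk)$ equal $-(\ell_\bk^{\mathrm l},\ell_\bk^{\mathrm r})$ a.s., and the first assertion follows by applying the measurable map $(W,a,b,c)\mapsto(W,\mcl F_\bk(W,a,b,c))$ to the distributional identity $(\rng Z',s_\bk,\wh t_\bk,t_\bk)\overset{d}{=}(Z',s_\bk,\wh t_\bk,t_\bk)$ of Lemma~\ref{lem:endpoint1}. The hard part will be the bookkeeping in the second step: one must carefully match the orientation conventions relating $\acute{\cZ}^n$ to $Z'$ and track the bounded additive offsets (such as $\lfloor\ell_\bk^n/2\rfloor-1$ versus $\ell_\bk/2$), so that the rescaled discrete relations converge to exactly the continuum relations of Proposition~\ref{prop:mating} and Section~\ref{sec-mating}; conceptually, once Lemmas~\ref{lem:endpoint1} and~\ref{lem:Levy} and Propositions~\ref{prop-component-conv} and~\ref{prop:mating} are granted, the lemma is routine.
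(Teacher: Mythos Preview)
Your overall strategy coincides with the paper's: show that $(\ell_\bk^{\mathrm l},\ell_\bk^{\mathrm r})$ arises as the limit of discrete boundary data which converge both in law (jointly with $\acute Z^n\to Z'$) and in probability (jointly with $\acute Z^n\to\rng Z'$), then deduce the distributional identity; and for the second assertion, express the increment $Z'(t_\bk)-Z'(s_\bk)$ as a deterministic function of $(\ell_\bk^{\mathrm l},\ell_\bk^{\mathrm r})$ and the type/colour so that the distributional identity forces a.s.\ equality. The paper does exactly this, only more tersely.

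There is, however, a concrete error in your bookkeeping that breaks both displayed identities. Lemma~\ref{lem:future} does \emph{not} give that $\acute{\cZ}^n|_{[S_\bk^n,T_\bk^n+1]_\Z}$ is (a shift of) $\Phi(\Map_\bk^n,\Be_\bk^n,\omega_\bk^n)$ in general: when $\type_\bk=\op{mono}$, the boundary colouring $\omega_\bk^n$ has been altered to $(\lfloor\ell_\bk^n/2\rfloor-1,\lceil\ell_\bk^n/2\rceil-1)$, and the space-filling exploration of $(\Map_\bk^n,\Be_\bk^n,\omega_\bk^n)$ is \emph{not} the restriction of $\acute\lambda^n$. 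Rather, by Lemma~\ref{lem:interface} (or Lemma~\ref{lem:component}), the restricted walk encodes $(\Map_\bk^n,\Be_\bk'',\omega_\bk'')$ with monochromatic boundary, starting at $(\ell_\bk^n-2,0)$ or $(0,\ell_\bk^n-2)$. Consequently your identity $\acute Z^n(t_\bk^n)-\acute Z^n(s_\bk^n)=-Z_\bk^n(0)$ is false in the monochromatic case, and the continuum limit is $Z'(t_\bk)-Z'(s_\bk)=(-\ell_\bk,0)$ or $(0,-\ell_\bk)$, not $-(\ell_\bk^{\mathrm l},\ell_\bk^{\mathrm r})=-(\ell_\bk/2,\ell_\bk/2)$. (In the dichromatic case there is also a coordinate swap: $Z'(t_\bk)-Z'(s_\bk)=-(\ell_\bk^{\mathrm r},\ell_\bk^{\mathrm l})$.) Your final sentence ``both \ldots\ equal $-(\ell_\bk^{\mathrm l},\ell_\bk^{\mathrm r})$'' therefore fails. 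The fix is immediate: replace the single formula by the case-dependent formulas above, each of which is still a deterministic function of $(\ell_\bk^{\mathrm l},\ell_\bk^{\mathrm r})$ together with the type/colour (the latter being determined by $(s_\bk,\wh t_\bk,t_\bk)$ via Lemma~\ref{lem:endpoint1}), and the rest of your argument goes through unchanged. This is precisely what the paper does.
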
 
\begin{proof}
	Let $\ell^{\op l,n}_\bk$ and $\ell^{\op r, n}_\bk$  be such that 
	$(\Map^n_\bk,\Be^n_\bk ,\omega^n_\bk) \in \DP(\bcon n^{1/2}  \ell^{\op l,n}_\bk, \bcon n^{1/2}  \ell^{\op r, n}_\bk)$. 
	By  Proposition~\ref{prop:mating} and Lemma~\ref{lem:endpoint1}, {we have that} $(\acute Z^n , \ell^{\mathrm l,n}_\bk,\ell^{\mathrm{r},n}_\bk) $ converges to $(Z', \ell^{\mathrm l}_\bk,\ell^{\mathrm{r}}_\bk)  $ in law. On the other hand, in our coupling $(\acute Z^n , \ell^{\mathrm l,n}_\bk,\ell^{\mathrm{r},n}_\bk)  $ converge to $(\rng Z', \ell^{\mathrm l}_\bk,\ell^{\mathrm{r}}_\bk)$ in probability.
	This gives \((\rng Z', \ell^{\mathrm l}_\bk,\ell^{\mathrm{r}}_\bk)\overset{d}{=}(Z', \ell^{\mathrm l}_\bk,\ell^{\mathrm{r}}_\bk)\).
	
	If $H_\bk$ is monochromatic, then $Z'(t_\bk)-Z'(s_\bk)=(-\ell_\bk,0)$ or $(0,-\ell_\bk)$ depending on whether the the color is red or blue. If $H_\bk$ is dichromatic, then 
	\(Z'(t_\bk)-Z'(s_\bk)=-(\ell_\bk^{\op r}, \ell_\bk^{\op l})\). In both cases since \((\rng Z', \ell^{\mathrm l}_\bk,\ell^{\mathrm{r}}_\bk)\overset{d}{=}(Z', \ell^{\mathrm l}_\bk,\ell^{\mathrm{r}}_\bk)\),
	we have \(\rng Z'(t_\bk) -\rng Z'(s_\bk)=Z'(t_\bk)-Z'(s_\bk)\)  almost surely.
\end{proof}

The next  two lemmas are the main steps towards the proof of Proposition~\ref{prop:joint-metric-mating}.
\begin{lem}\label{lem:Levy-joint}
	$(Z',Z_\emptyset) \overset{d}{=}(\rng Z',Z_\emptyset)$.
\end{lem}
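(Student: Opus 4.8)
The plan is to show that, in the coupling fixed above, the pair $(\rng Z',Z_\emptyset)$ has the same joint law as the intrinsic pair $(Z',Z_\emptyset)$ by identifying both as limits in law of the same discrete pair $(\acute Z^n,Z^n_\emptyset)$. The first point is that $Z^n_\emptyset$ is a \emph{deterministic} measurable functional of $\acute Z^n$. Indeed, $\lambda^n_\emptyset$ is the percolation interface of $(\Map^n_\emptyset,\Be^n_\emptyset,\omega^n_\emptyset)$ with target $\wh\Be^n_\emptyset$, so by Fact~\ref{item:hat} above Lemma~\ref{lem-perc-decomp-markov} the position $\wh T^n_\emptyset=\acute\lambda^{n,-1}(\wh\Be^n_\emptyset)$ is an explicit function of $\acute{\cZ}^n$ and of the deterministic length $\ell^n$; and by Lemmas~\ref{lem:walk} and~\ref{lem:interface} the ancestor-free times of $\acute{\cZ}^n$ relative to $\wh T^n_\emptyset$, listed increasingly as $\tau^n(0),\dots,\tau^n(\wh m^n)=\wh T^n_\emptyset$, satisfy $\cZ^n_\emptyset(i)=\acute{\cZ}^n(\tau^n(i))$. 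After the rescalings in \eqref{eq:renormalized-Z} and \eqref{eqn-rescaled-walk} this exhibits $Z^n_\emptyset=F_n(\acute Z^n)$. On the continuum side, $Z'$ a.s.\ determines $\frk H'=(H,d,\mu,\xi,\eta')$ by Theorem~\ref{thm:mating}, hence the chordal curve $\eta_\emptyset$ extracted from $\eta'$ as in Section~\ref{sec-mating}, hence its boundary length process $Z_\emptyset$; write $Z_\emptyset=F(Z')$ for this measurable functional. The functional $F$ is the continuum analogue of the $F_n$: it reads off the values of its argument along the ancestor-free set $\ans(\wh t_\emptyset)$ with $\wh t_\emptyset=\inf\{t\ge 0:L'_0-L'_t=\ell_\emptyset/2\}$, reparametrized by quantum natural time; this is the content of \cite[Section~6.9]{bhs-site-perc} and is the continuum counterpart of Facts~\ref{item:hat}--\ref{item:future}.

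The key step is the joint convergence in law $(\acute Z^n,Z^n_\emptyset)=(\acute Z^n,F_n(\acute Z^n))\to(Z',F(Z'))=(Z',Z_\emptyset)$. Tightness is immediate from $\acute Z^n\to Z'$ (standard convergence of first-quadrant walks) and $Z^n_\emptyset\to Z_\emptyset$ in law (Theorem~\ref{thm-chordal-conv}), so it suffices to identify a subsequential limit $(\acute Z^\infty,W)$, which automatically has $\acute Z^\infty\eqD Z'$ and $W\eqD Z_\emptyset$. The equality $W=F(\acute Z^\infty)$ is precisely the stability statement that the scaling limit of the discrete functional $F_n$ is $F$; it follows from the quantitative estimates of \cite[Section~6.9 and Lemma~9.25]{bhs-site-perc} --- the very inputs used to prove Proposition~\ref{prop:mating}, where the $F_n$-images of the special times $s^n_\bk,t^n_\bk,\wh t^n_\bk$ were shown to converge to their $F$-counterparts --- applied now to the whole interface boundary length process rather than to individual times. (Alternatively, one may combine Proposition~\ref{prop-component-conv}, Proposition~\ref{prop:mating}, and Lemmas~\ref{lem:endpoint1}--\ref{lem:endpoint2}, which give the joint convergence of $\acute Z^n$ with the bubbles $\Map^n_\bk$ and their endpoints for $\bk\in\cC_\emptyset$, with the facts that the jumps of $Z_\emptyset$ are in bijection with the monochromatic bubbles $H_\bk$, $\bk\in\cC_\emptyset$, and that $Z_\emptyset$ --- locally absolutely continuous with respect to a driftless $3/2$-stable process by Lemma~\ref{lem:Levy} --- is reconstructed from its jumps together with $\ans(\wh t_\emptyset)$; one then iterates over $\initial$ as in Proposition~\ref{prop:mating}.)

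Finally, in our coupling $\acute Z^n\to\rng Z'$ a.s.\ and $Z^n_\emptyset\to Z_\emptyset$ a.s.\ by Proposition~\ref{prop-component-conv}, so $(\acute Z^n,Z^n_\emptyset)\to(\rng Z',Z_\emptyset)$ a.s., hence in law. Comparing with the previous paragraph and using uniqueness of weak limits, $(\rng Z',Z_\emptyset)\eqD(Z',Z_\emptyset)$, which is the assertion. The main obstacle is the identification $W=F(\acute Z^\infty)$ above, i.e.\ the continuity of the operation ``extract the interface boundary length process from the space-filling boundary length process'' along the convergent sequence: this is delicate because $\ans(\wh t_\emptyset)$ is a fractal of Hausdorff dimension $3/4$ and because the quantum-natural-time reparametrization must be shown to pass to the limit, but the required estimates are exactly those of \cite{bhs-site-perc} already invoked in Proposition~\ref{prop:mating}.
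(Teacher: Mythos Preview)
Your overall strategy---reduce to the joint convergence $(\acute Z^n,Z^n_\emptyset)\to(Z',Z_\emptyset)$ in law and compare with the a.s.\ convergence $(\acute Z^n,Z^n_\emptyset)\to(\rng Z',Z_\emptyset)$---is sound in principle, and the observation that $Z^n_\emptyset$ is a deterministic functional of $\acute Z^n$ is correct. However, the execution has a genuine gap and some inaccuracies.

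First, the formula $\cZ^n_\emptyset(i)=\acute{\cZ}^n(\tau^n(i))$ is false. Lemma~\ref{lem:walk} gives this identity only for the interface targeted at the original $\wh\Be$, and Lemma~\ref{lem:interface} only identifies the \emph{edges} traced by the re-targeted interface $\wt\lambda$, not its boundary length process. Since $\lambda^n_\emptyset$ is run with boundary condition roughly $(\ell^n/2,\ell^n/2)$ while $\acute\cZ^n$ starts from roughly $(\ell^n,0)$, already $\cZ^n_\emptyset(0)\neq\acute\cZ^n(0)$. The functional $F_n$ exists but is not the one you describe, and the continuum $F$ is likewise not simply ``evaluate $Z'$ along $\ans(\wh t_\emptyset)$ and reparametrize''.

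Second, and more seriously, the identification $W=F(\acute Z^\infty)$---which you correctly flag as the main obstacle---is not established by the references you cite. Lemma~9.25 of \cite{bhs-site-perc}, as used in Proposition~\ref{prop:mating} and Lemma~\ref{lem-time-change-conv}, yields joint convergence of $\acute Z^n$ with specific scalar or monotone auxiliary functionals (the times $s^n_\bk,t^n_\bk,\wh t^n_\bk$ and the time-change $u^n_\emptyset$), not with the full c\`adl\`ag process $Z^n_\emptyset$. Passing from these to Skorokhod convergence of $Z^n_\emptyset$ jointly with $\acute Z^n$ is additional work you do not supply. Your parenthetical ``alternative'' is closer to what is needed but contains an error (the jumps of $Z_\emptyset$ are in bijection with \emph{all} bubbles $H_\bk$, $\bk\in\cC_\emptyset$, not only the monochromatic ones) and is too vague to stand as a proof.

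The paper's argument takes precisely this ``alternative'' route and fills in the missing piece. Since $Z_\emptyset$ is locally absolutely continuous with respect to a pair of independent spectrally negative $3/2$-stable processes, it is a.s.\ determined by its ordered downward jumps; so it suffices to show that $Z_\emptyset$ and $\rng Z_\emptyset:=F(\rng Z')$ have the same jumps in the same order. Equal jump sizes follow from Lemma~\ref{lem:endpoint2}. For the ordering the paper argues case by case on the types of the two bubbles: when both are monochromatic or both dichromatic, the order is read off from the $s_\bk$'s via Lemma~\ref{lem:endpoint1}. The delicate case is one monochromatic and one dichromatic; here one introduces the time $\sigma_\bk=\sup\{t\le s_\bk:R'_t\le R'_{s_\bk}\}$ at which the dichromatic bubble $H_\bk$ is first enclosed, identifies its discrete analogue via the matching-step description of Lemma~\ref{lem:future2}, and proves $\sigma^n_\bk\to\sigma_\bk$. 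This mixed-type analysis is entirely absent from your proposal and is the substantive content of the lemma.
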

\begin{proof}	
	Recall the pair of independent $\frac32$-stable L\'evy processes $Z^\infty=(L^\infty,R^\infty)$ in Lemma~\ref{lem:Levy}.   It is well known that for each $t>0$  the process $L^\infty|_{[0,t]}$ is almost surely  determined by the ordered downward jumps, i.e., the lengths of these jumps as well as the order in which the jumps appear.  By Lemma~\ref{lem:Levy}, the same almost sure statement holds for $Z_\emptyset$. Let  $\rng Z_\emptyset$ be such that $(Z',Z_\emptyset)\overset{d}{=}(\rng Z',\rng Z_\emptyset)$.
	It is clear from Lemma~\ref{lem:endpoint2} that $Z_\emptyset$ and $\rng Z_\emptyset$ have the same set of jumps.  (Note that a.s.\ there are no two jumps for $Z_\emptyset$ of the same size, and similarly for $\rng Z_\emptyset$). Therefore we are left to show that the order in which the jumps appear also agree. 
	
	Suppose $\bk,\bk' \in\cC_\emptyset$.
	We need to show that the jumps of $Z_\emptyset$ and $\rng Z_\emptyset$ of size $\ell_{\BB k}$ and $\ell_{\BB k'}$ corresponding to the bubbles $H_{\BB k}$ and $H_{\BB k'}$ occur in the same order.  
	If $H_\bk$ and $H_{\bk'}$ are both monochromatic (resp.\ both  dichromatic), then since the order in which $\eta'$ fills in the monochromatic (resp.\ dichromatic) bubbles cut out by $\eta_\emptyset$ is the same (resp.\ the opposite) as the order in which $\eta_\emptyset$ cuts these bubbles off, the jump of length $\ell_{\bk}$ comes before the jump of length $\ell_{\bk'}$ if and only if $s_\bk<s_{\bk'} $ (resp.\ $s_\bk > s_{\bk'}$). By Lemma~\ref{lem:endpoint1}, the same holds for $\rng Z_\emptyset$.
	
	Now suppose $H_{\bk}$ is dichromatic and $H_{\bk'}$ is monochromatic. 
	Let 
	\[
	\sigma_{\bk} =\sup\{t\le s_{\bk}:  R'_t\le R'_{s_\bk} \}.
	\]
	Then $\sigma_{\bk}$ is a.s.\ the first time such that $\eta' $ visits $x_{\bk}$; equivalently, $\sigma_{\bk}$ is the time that the bubble $H_{\bk}$ is enclosed, i.e., separated from $\hat x_\emptyset$ by $\eta'$ (recall that $H_\emptyset$ is monochromatic red, so $H_\bk$ is a bubble cut out by $\eta_\emptyset$ which intersects the right boundary of $H_\bk$). See the left part of Figure \ref{fig-Z-conv}. 
	Recall the definition of the ordering $\prec_\emptyset$ on bubbles cut out by $\eta_\emptyset$ from Section~\ref{sec-mating}. 
	The jump of length $\ell_{\bk}$ for $Z_\emptyset$ comes before the jump of length $\ell_{\bk'}$ if and only if  $H_{\bk'} \subset \eta' ([\sigma_{\bk},s_{\bk}])$, which is  further equivalent to $\sigma_{\bk} < s_{\bk'}$. 
	Let $\rng\sigma_{\bk}$ be such that $(\rng Z',\rng \sigma_{\bk})\overset{d}{=} (Z',\sigma_{\bk})$.   
	
	In the discrete setting, suppose $\Map^n_\bk$ (resp.\ $\Map^n_{\bk'}$) is visited by $\acute{\lambda}^n$ after (resp.\ before) $\wh e^n_\emptyset$. 
	Equivalently, $\type_{\bk}=\op{di}$ and $\type_{\bk'}=\op{mono}$.
	By Lemma~\ref{lem:future2}, {we have that} $\Delta\acute{\cZ}^n_{S^n_\bk}$ is a $c$-step, corresponding to the last edge on $\Map^n_\bk$ visited by $\acute{\lambda}^n$.
	Let $\sigma^n_{\bk'}$ be  such that $\Delta\acute{\cZ}^n_{\mcon n\sigma^n_{\bk'}}$ is  the matching $b$-step of $\Delta\acute{\cZ}^n_{S^n_\bk}$.
	Then by Lemma~\ref{lem:future2}, {we have that} $\Map^n_{\bk}$ comes before $\Map^n_{\bk'}$ in the peeling process of  ${\lambda}^n_\emptyset$
	if and only if $\sigma^n_{\bk'}< s_n^k$.
	Then as in the proof of Lemma~\ref{lem:endpoint1},  {we have that} $\lim_{n\to \infty} \sigma^n_{\bk'}=\rng \sigma_{\bk'}$ in probability.  Therefore $H_\bk$ is disconnected by $\eta_\emptyset$ from $\wh x_\emptyset$ before  $H_{\bk'}$
	(equivalently, the jump $\ell_{\bk}$ comes before the jump $\ell_{\bk'}$)
	if and only if $\rng\sigma_{\bk}< s_{\bk'}$. Combined with the previous paragraph, we see that  $\rng \sigma_{\bk}<s_{\bk'}$ if and only if $\sigma_{\bk}<s_{\bk'}$.
	Since there exists a sequence $\{\bk_m\} $ in $\cC_\emptyset$ such that $s_{\bk_m}\downarrow \sigma_{\bk'}$, we have $\rng\sigma_{\bk'}=\sigma_{\bk'}$ almost surely. 
	Therefore if $H_\bk$ is dichromatic and $H_{\bk'}$ is monochromatic, the jump of length $\ell_{\bk}$ comes before the jump of length $\ell_{\bk'}$ for $Z_\emptyset$ if and only if the same holds for $\rng Z_\emptyset$, since this event occurs exactly when $\sigma_{\bk} < s_{\bk'}$ (equivalently, when $\rng\sigma_{\bk} < s_{\bk'}$). 
	This gives $(Z',Z_\emptyset)\overset{d}{=}(\rng Z',Z_\emptyset)$.
\end{proof}

\begin{figure}
	\begin{center}
		\includegraphics[scale=1]{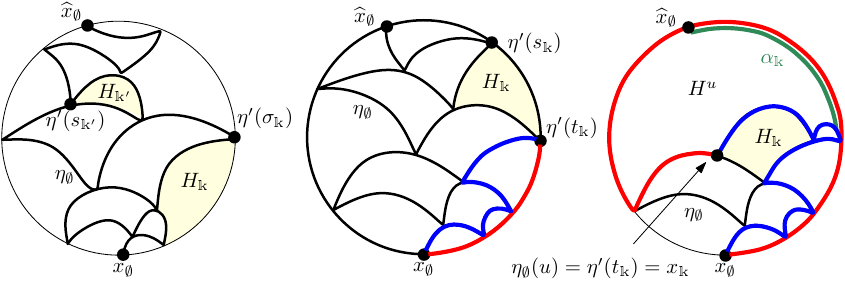}
		\caption{\textbf{Left:} Illustration of the proof of Lemma \ref{lem:Levy-joint} in the case where $H_{\bk}$ is dichromatic and $H_{\bk'}$ is monochromatic. 	
			\textbf{Middle:} 	 Illustration of the proof of Lemma~\ref{lem:Z'} in the case where $H_{\bk}$ is dichromatic.  \textbf{Right:} Illustration of the proof of Lemma~\ref{lem:Z'} in the case where $H_{\bk}$ is monochromatic. In both of the middle and right figures, the red (resp.\ blue) curve indicates the left (resp.\ right) boundary at time $t_{\bk}$. The green curve on the right figure has length $\alpha_\bk$. 
			}
		\label{fig-Z-conv}
	\end{center}
\end{figure}
\begin{lem}\label{lem:Z'}
	\(\rng Z'(s_\bk)=Z'(s_\bk) \) and \(\rng Z'(t_\bk)=Z'(t_\bk) \)  almost surely for all $\bk\in \cC_\emptyset$.
\end{lem}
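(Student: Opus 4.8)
The plan is to prove $\rng Z'(s_\bk) = Z'(s_\bk)$ and $\rng Z'(t_\bk) = Z'(t_\bk)$ for $\bk\in\cC_\emptyset$ by combining the information already extracted in the preceding lemmas: from Lemma~\ref{lem:Levy-joint} we know the whole process $Z_\emptyset$ (the boundary length process of the root interface $\lambda_\emptyset$) is coupled identically under $Z'$ and $\rng Z'$, and from Lemma~\ref{lem:endpoint2} we know the \emph{increments} $\rng Z'(t_\bk) - \rng Z'(s_\bk) = Z'(t_\bk) - Z'(s_\bk)$ agree. So it suffices to pin down, say, $\rng Z'(s_\bk) = Z'(s_\bk)$, after which the $t_\bk$ statement follows from Lemma~\ref{lem:endpoint2}. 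The idea is to express $Z'(s_\bk)$ in terms of the root interface process $Z_\emptyset$ and the areas/boundary lengths of the bubbles visited before time $s_\bk$, all of which are controlled.

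The key step is to write down the value $Z'(s_\bk) = (L'_{s_\bk}, R'_{s_\bk})$ explicitly. Here one must distinguish whether $H_\bk$ is monochromatic or dichromatic, as in the proof of Lemma~\ref{lem:Levy-joint}. In the monochromatic case, $s_\bk$ is the time $\eta'$ starts filling $H_\bk$ immediately after $\eta_\emptyset$ cuts it off; at this moment the unexplored region is exactly $H_{t}\setminus(\text{bubbles already filled})$ where $t$ is the corresponding time of $\eta_\emptyset$. Since $\mu(\eta_\emptyset)=0$ and $\eta'$ fills each monochromatic bubble right after tracing the arc that cuts it off, the left/right boundary lengths at time $s_\bk$ equal the left/right boundary lengths $\cZ_\emptyset$ of the root interface at the corresponding peeling time, which is a record time (cut point) of $Z_\emptyset$. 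Concretely, $Z'(s_\bk) = (\el^{\mathrm l}_\bk, \el^{\mathrm r}_\bk) + \cZ_\emptyset(\cdot)$ evaluated at the appropriate cut time $-$ this is the continuum analog of Lemma~\ref{lem:future}, Fact~\ref{item:past}, and is exactly the content of~\cite[Section~6.9]{bhs-site-perc}. In the dichromatic case, $s_\bk = \sigma_\bk = \sup\{t\le s_\bk : R'_t \le R'_{s_\bk}\}$ is the enclosure time from the proof of Lemma~\ref{lem:Levy-joint}; at this point $Z'(s_\bk)$ is determined by $Z_\emptyset$ (the coordinate value at enclosure) together with $\el^{\mathrm l}_\bk$. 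In both cases $Z'(s_\bk)$ is a measurable function of $(Z_\emptyset, \{(\el^{\mathrm l}_{\bk'},\el^{\mathrm r}_{\bk'})\}_{\bk'\in\cC_\emptyset}, \text{orderings})$, and the same function applied to $(\rng Z_\emptyset, \dots)$ gives $\rng Z'(s_\bk)$. Since Lemmas~\ref{lem:Levy-joint},~\ref{lem:endpoint2}, and the ordering statement established inside the proof of Lemma~\ref{lem:Levy-joint} show all of these inputs agree almost surely under the coupling, we conclude $\rng Z'(s_\bk) = Z'(s_\bk)$ a.s. The $t_\bk$ statement then follows by adding the increment, using Lemma~\ref{lem:endpoint2}.

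Alternatively, and perhaps more cleanly, one can argue by a direct limiting/sandwiching argument mirroring the proof of Lemma~\ref{lem:endpoint1}: by Proposition~\ref{prop:mating}\ref{item-mating4} and Lemma~\ref{lem:endpoint1}, $\acute Z^n(s^n_\bk) \rta Z'(s_\bk)$ in law along with $(\acute Z^n, s^n_\bk)\rta(Z',s_\bk)$, while under our coupling $\acute Z^n \rta \rng Z'$ uniformly and $s^n_\bk \rta s_\bk$ in probability (Lemma~\ref{lem:endpoint1}); hence $\acute Z^n(s^n_\bk) \rta \rng Z'(s_\bk)$ in probability, provided $\rng Z'$ is a.s.\ continuous at $s_\bk$. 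The only subtlety is this continuity: $s_\bk$ is the \emph{start} time of bubble $H_\bk$, and $\rng Z'$ (equal in law to $Z'$, which is continuous since it is a conditioned Brownian motion by Theorem~\ref{thm:mating}) is a.s.\ continuous everywhere, so this is automatic. Thus $\rng Z'(s_\bk) = Z'(s_\bk)$ a.s., and similarly $\rng Z'(t_\bk) = Z'(t_\bk)$ a.s.

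The main obstacle I expect is not the limiting argument per se but the bookkeeping needed to justify that $Z'$ (hence $\rng Z'$) is continuous at $s_\bk$ and $t_\bk$, and — if one takes the first route — carefully writing $Z'(s_\bk)$ as an explicit measurable functional of the ingredients known to be coupled, handling the monochromatic and dichromatic cases and the subcase where $\bk^-=\emptyset$ versus deeper indices. Since we only need the statement for $\bk\in\cC_\emptyset$ here, the continuum Markov structure (Lemma~\ref{lem-sle6-decomp-markov}) plus the descriptions in Proposition~\ref{prop:mating}\ref{item-mating1}--\ref{item-mating3} already give everything; the deeper multi-indices are handled by the iteration set up in Remark~\ref{rmk:initial}. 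The second (sandwiching) route is the shortest and I would present that one, invoking continuity of $Z'$ from Theorem~\ref{thm:mating} and the convergence statements from Proposition~\ref{prop:mating}\ref{item-mating4} and Lemma~\ref{lem:endpoint1}.
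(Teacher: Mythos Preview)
Your first approach is essentially the paper's: the paper computes $Z'(t_\bk)$ (rather than $Z'(s_\bk)$) explicitly as a functional of $Z_\emptyset$ and the $\{\ell^{\mathrm l}_{\bk'},\ell^{\mathrm r}_{\bk'}\}_{\bk'\in\cC_\emptyset}$---for dichromatic $H_\bk$ one has $R'(t_\bk)=\sum_{\bk\preceq_\emptyset\bk'}\ell^{\mathrm r}_{\bk'}$ and similarly for $L'$, while the monochromatic case uses $Z_\emptyset$ at the last visit time $u$ to $x_\bk$ together with a boundary-arc length $\alpha_\bk$---and then observes that the same formulas, fed the same inputs (which agree a.s.\ by Lemmas~\ref{lem:endpoint1},~\ref{lem:endpoint2},~\ref{lem:Levy-joint}), compute $\rng Z'(t_\bk)$. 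The $s_\bk$ statement then follows from the increment equality in Lemma~\ref{lem:endpoint2}. Your sketch of this route is correct in spirit, though vague on the monochromatic bookkeeping.

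Your second ``sandwiching'' approach, however, has a genuine gap, and since you say you would present that one, this matters. You establish two facts: (i) $\acute Z^n(s^n_\bk)\to \rng Z'(s_\bk)$ in probability (from uniform convergence $\acute Z^n\to\rng Z'$, continuity of $\rng Z'$, and $s^n_\bk\to s_\bk$), and (ii) $\acute Z^n(s^n_\bk)\to Z'(s_\bk)$ in law (from Proposition~\ref{prop:mating}\ref{item-mating4}). But a sequence converging in probability to $A$ and in law to $B$ only forces $A\overset{d}{=}B$, not $A=B$ almost surely. At this point in the argument $Z'$ and $\rng Z'$ are two processes with the same law that are \emph{not yet known to coincide}---establishing $Z'=\rng Z'$ is precisely the content of Proposition~\ref{prop:joint-metric-mating}, which this lemma is a step toward---so the conclusion $\rng Z'(s_\bk)\overset{d}{=}Z'(s_\bk)$ is already contained in Lemma~\ref{lem:endpoint1} and adds nothing. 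To upgrade to almost sure equality you must exhibit $Z'(s_\bk)$ (or $Z'(t_\bk)$) as a measurable function of quantities that have already been matched in the coupling, which is exactly what your first approach and the paper do; the sandwiching route cannot bypass this.
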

\begin{proof}
	Suppose $H_\bk$ is dichromatic. 
	Since  the right boundary of the unexplored region at time $t_\bk$ is the union of the right boundaries of the dichromatic bubbles which come after $H_\bk$,
	we have $R'(t_\bk)=\sum_{\bk  \preceq_\emptyset \bk'}\ell^{\mathrm r}_{\bk'}$ where here the sum ranges over all $\bk'\in \cC_\emptyset$ 
	such that $\bk\prec_\emptyset \bk'$. Note that the ordering $\preceq_\emptyset$ on $\cC_\emptyset$ is determined by $\{s_{\bk'},t_{\bk'},\wh t_{\bk'}\}_{\bk'\in \cC_\emptyset}$. Moreover, both $\{s_{\bk'},t_{\bk'},\wh t_{\bk'}\}_{\bk'\in \cC_\emptyset}$  and $\{\ell^{\mathrm r}_{\bk'}, \ell^{\mathrm l}_{\bk'} \}_{\bk'\in\cC_\emptyset}$ are determined by $Z'$.
	By Lemmas~\ref{lem:endpoint1} and~\ref{lem:endpoint2},   $\{s_{\bk'},t_{\bk'},\wh t_{\bk'}\}_{\bk'\in \cC_\emptyset}$  and $\{\ell^{\mathrm r}_{\bk'}, \ell^{\mathrm l}_{\bk'} \}_{\bk'\in\cC_\emptyset}$ are also determined by $\rng Z'$ via the same measurable functions. Therefore
	\begin{equation}\label{eq:R'}
	\rng R'(t_\bk)=\sum_{\bk  \preceq_\emptyset \bk'}\ell^{\mathrm r}_{\bk'}=R'(t_\bk).
	\end{equation}
	See the middle part of Figure \ref{fig-Z-conv}.
	The same statement holds for $\rng L'(t_\bk)$ and $L'(t_\bk)$ with  $\ell^{\mathrm r}_{\bk'}$ replaced by $\ell^{\mathrm l}_{\bk'}$. Therefore $\rng Z'(t_\bk)=Z'(t_\bk)$.
	By the second equality in  Lemma~\ref{lem:endpoint2}, we conclude the proof  in the dichromatic case.

	Suppose $H_\bk$ is monochromatic.  
	Let $u$ be the last time such that $\eta_\emptyset(u)=x_\bk$ 
	and let $H^u$ be the connected component of $H\setminus \eta_\emptyset([0,u])$ containing $\wh x_\emptyset$ on its boundary. See the right part of Figure \ref{fig-Z-conv}.
	Then both $\rng L'(t_\bk)$ and $L'(t_\bk)$ are equal to $L_\emptyset(u)+\ell^{\mathrm r}_\emptyset$. Therefore $\rng L'(t_\bk)=L'(t_\bk)$.
	Let $\alpha_\bk$ be the boundary length of the intersection of  $\bdy H^u$ and the right  arc on $(H_\emptyset, x_\emptyset,\wh x_\emptyset)$. 
	Then $\ell^{\mathrm r}_\emptyset-\alpha_\bk=\sum_{\bk'} \ell^{\op l}_{\bk'}$ where $\bk'$ ranges over $\bk'\in \cC_\emptyset$ such that $H_{\bk'}$ is dichromatic and the jump of length $\ell_{\bk'}$ comes before $\ell_\bk$ in $Z_\emptyset$. By Lemmas~\ref{lem:endpoint2} and~\ref{lem:Levy-joint}, we have    $(Z',\alpha_\bk)\overset{d}{=}(\rng Z',\alpha_\bk)$.
	Note that $R'(t_\bk)=R_\emptyset(u)-\alpha_\bk+ \sum_{\bk'} \ell^{\op r}_{\bk'}$. 
	By a similar argument as for \eqref{eq:R'}, Lemmas~\ref{lem:endpoint2} and~\ref{lem:Levy-joint} yield
	$\rng R'(t_\bk)=R_\emptyset(u)-\alpha_\bk+ \sum_{\bk'} \ell^{\op r}_{\bk'}$ where $\bk'$ ranges over $\bk'\in \cC_\emptyset$ such that $H_{\bk'}$ is dichromatic and the jump of length $\ell_{\bk'}$ comes before the jump of length $\ell_\bk$ for $Z_\emptyset$.  Therefore $\rng R'(t_\bk)=R'(t_\bk)$ hence $\rng Z'(t_\bk)=Z'(t_\bk)$.
	Now  the second equality in  Lemma~\ref{lem:endpoint2} concludes the proof.
\end{proof}

\begin{proof}[Proof of Proposition~\ref{prop:joint-metric-mating}]
	When $\bk\in \cC_\emptyset$ and $H_\bk$ is monochromatic, having established Lemma~\ref{lem:Z'}, the argument in Lemma~\ref{lem:Levy-joint} gives that  $(Z',Z_\bk)\overset{d}{=}(\rng Z',Z_\bk)$.
	When $\bk\in \cC_\emptyset$ and $H_\bk$ is dichromatic, we have the same statement with an even simpler argument. In this case the order of jumps for $Z_\bk$ is given by the 
	ordering $\prec_\bk$, which is determined by $\{s_{\bk'}\}_{\bk'\in\cC_\bk}$. Therefore by Lemma~\ref{lem:endpoint1}, {we have that} $(Z',Z_\bk)\overset{d}{=}(\rng Z',Z_\bk)$.
	Now the argument in Lemma~\ref{lem:Z'} gives that Lemma~\ref{lem:Z'}  still holds with $\cC_\emptyset$ replaced by $\cC_\bk$.  
	
	By iteration, both Lemmas~\ref{lem:Levy-joint} and~\ref{lem:Z'} hold  for all  multi-indices.
	Since $\{s_\bk,t_\bk: \bk \in \bigcup_{m=0}^\infty \N^m \}$ is dense in $[0,\mu(H_\emptyset)]$, we must have $Z'=\rng Z'$ a.s.
\end{proof}

\section{Scaling limit of the space-filling exploration}
\label{sec-sle6-conv}

Throughout this section we work in the setting of Theorem~\ref{thm:main-precise} and Proposition~\ref{prop-component-conv}.  By Proposition~\ref{prop:joint} and the Skorokhod representation theorem,  we can work under a  coupling of $\{\acute{\frk M}^n \}_{n\in \N}$ and  $\left\{ \left(\frk H_{\bk}  , Z_{\bk} \right) : \bk \in \bigcup_{m=0}^\infty \BB N^m  \right\}$  where  almost surely the convergence in Proposition~\ref{prop-component-conv} holds and $\acute{Z}^n\rta Z'$. Henceforth fix such a coupling.

By Proposition~\ref{prop-ghpu-embed}, for each $\bk\in \bigcup_{m=0}^\infty \BB N^m$  there a.s.\ exists a compact metric space $(W_\bk ,D_\bk )$ and isometric embeddings
\begin{equation}
\label{eq:embedding}
\iota_\bk : H_\bk \rta W_\bk \quad \op{and}\quad
\iota_\bk^n : \Map_\bk^n \rta W_\bk ,\: \forall n \in\N
\end{equation} 
such that $\iota_\bk^n (\frk M_\bk^n) \rta \iota_\bk(H_\bk)$ a.s.\ in the $D_\bk$-HPU sense (Definition~\ref{def-hpu}). 
Write $\iota=\iota_\emptyset$, $\iota^n = \iota_\emptyset^n$, and $(W,D) := (W_\emptyset , D_\emptyset)$. 
We henceforth identify $\frk H'$ and $\acute{\frk M}^n$ with their images under $\iota $ and $\iota^n$, respectively. Note that this has the effect of identifying $\frk H_\bk$ and $\frk M_\bk^n$ for $n\in\BB N$ with their images under $\iota$ and $\iota^n$, respectively, so that every $H_\bk$ and $\Map_\bk^n$ is a subset of $W$. 
The goal of this section is to prove the following proposition, which implies Theorem~\ref{thm:main-precise}. 

\begin{prop} \label{prop-peano-conv}
	In the above setting,  a.s.\ $\lim_{n\to\infty}\acute\eta^n =\eta'$ in the $D$-uniform topology. 
\end{prop}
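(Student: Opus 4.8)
The strategy is to show that $\acute\eta^n$ and $\eta'$ are uniformly close by controlling them on the level of individual bubbles and then glueing the bubble-by-bubble estimates together. Recall from Proposition~\ref{prop:joint-metric-mating} that in our coupling $\acute Z^n \to Z'$ uniformly, and by Lemma~\ref{lem:endpoint1} the rescaled bubble endpoint times $(s_\bk^n,\wh t_\bk^n,t_\bk^n)$ converge in probability (hence, after passing to a further subsequence if necessary and using the Skorokhod coupling, a.s.) to $(s_\bk,\wh t_\bk,t_\bk)$ for every multi-index $\bk$. The first step is to record the geometric fact that for each multi-index $\bk$ with $\Map_\bk^n\ne\emptyset$, the restriction of $\acute\eta^n$ to the time interval $[\mcon^{-1}n^{-1}S_\bk^n,\mcon^{-1}n^{-1}T_\bk^n]$ (reparametrized to $[s_\bk^n,t_\bk^n]$) is, up to a time change, exactly the space-filling exploration of $(\Map_\bk^n,\Be_\bk^n,\omega_\bk^n)$; this is the discrete statement following from Lemma~\ref{lem:future}, Lemma~\ref{lem:interface}, and the consistency of orderings in Remark~\ref{rmk:order}. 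Likewise $\eta'|_{[s_\bk,t_\bk]}$ is the space-filling $\SLE_6$ on $\frk H_\bk$ by Theorem~\ref{thm:space-fillng}(2) and the definition of $\prec_\bk$. Thus it suffices to obtain uniform control inside bubbles and then show the ``gaps'' not covered by large bubbles are negligible.

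The second and main step is a finite-bubble approximation argument. Fix $\ep>0$. By Lemma~\ref{lem:area} and a union bound (as in the proof of Theorem~\ref{thm:space-fillng}), a.s.\ there is a finite collection $\cF$ of multi-indices — say all $\bk\in\bigcup_{m=0}^{M}\N^m$ with $\mu(H_\bk)>\delta$ for suitable $M,\delta$ — such that $H\setminus\bigcup_{\bk\in\cF}H_\bk$ has the property that every point lies within $D$-distance $\ep$ of some $H_\bk$, $\bk\in\cF$, and moreover the total $\mu$-mass outside $\bigcup_{\bk\in\cF}\mathrm{int}(H_\bk)$ is less than $\ep$. (This uses that the bubbles of a given generation are disjoint with total mass $\mu(H)$, together with confluence/diameter shrinking of nested bubbles.) For each $\bk\in\cF$, Proposition~\ref{prop-component-conv} gives that $\frk M_\bk^n\to\frk H_\bk$ in GHPU, so after embedding into $W_\bk$ and comparing via $W$, the curves $\eta_\bk^n\to\eta_\bk$ uniformly; iterating this through the finitely many generations in $\cF$ and using that the space-filling curve inside a bubble is built from the chordal curve plus the recursively-defined sub-bubble explorations, one gets that $\acute\eta^n|_{[s_\bk^n,t_\bk^n]}$ converges uniformly to $\eta'|_{[s_\bk,t_\bk]}$ for each $\bk\in\cF$ — here one also invokes that $\eta'$ parametrized by $\mu$-mass and $\acute\eta^n$ parametrized by $\mu^n$-mass have matching time scales because $\mu^n(\Map_\bk^n)\to\mu(H_\bk)$, and that the endpoint times converge. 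For the complementary times, the key point is that whenever $t\notin\bigcup_{\bk\in\cF}[s_\bk,t_\bk]$, the point $\eta'(t)$ is within $\ep$ of some $H_\bk$, $\bk\in\cF$, which is visited by $\eta'$ at a nearby time (by the ordering structure), and similarly for the discrete curve; combined with the uniform continuity of the finitely many limiting curve segments this forces $\sup_t D(\acute\eta^n(t),\eta'(t))$ to be $O(\ep)$ for large $n$.

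The third step is simply to assemble: since $\ep>0$ was arbitrary, $\sup_t D(\acute\eta^n(t),\eta'(t))\to 0$ a.s., which is Proposition~\ref{prop-peano-conv}, and hence, combined with Proposition~\ref{prop:joint}, proves Theorem~\ref{thm:main-precise}. Two technical points deserve care. First, the reparametrizations: $\acute\eta^n$ is defined by $\mu^n$-mass and the bubble intervals $[S_\bk^n,T_\bk^n]_\Z$ are in the native (edge-count) parametrization $\acute\lambda^n$; one must translate via $S_\bk^n/(\mcon n)$ etc.\ and use that $\acute Z^n\to Z'$ together with the descriptions of $s_\bk,\wh t_\bk,t_\bk$ in Proposition~\ref{prop:mating} to identify the limiting intervals — this is where Lemma~\ref{lem:endpoint1} does the work. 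Second, one must handle the interplay between the embeddings $\iota_\bk^n,\iota_\bk$ into the auxiliary spaces $W_\bk$ and the global embedding into $W$: since the internal metrics $d_\bk^n$ on $\Map_\bk^n$ converge to $d_\bk$ on $H_\bk$, and these in turn relate to the ambient $D$ up to errors that vanish (confluence of geodesics in the Brownian disk, as used in~\cite{gwynne-miller-perc}), the within-bubble uniform convergence transfers to uniform convergence in $D$.

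\textbf{Main obstacle.} The hardest part is controlling the curve at times lying outside the finitely many large bubbles — i.e., showing that the ``leftover'' portion of the exploration, which consists of infinitely many tiny nested bubbles, cannot cause $\acute\eta^n$ and $\eta'$ to drift apart. This requires a quantitative equicontinuity estimate: uniformly in $n$, the diameter of $\acute\eta^n$ over any time interval of length $\le\ep$ must be small, which ultimately rests on the tightness/regularity of the Brownian disk and the fact that small $\mu$-mass regions have small diameter (a modulus-of-continuity statement for $\eta'$ that follows from $\mu$ having no atoms and the Hölder-type regularity of the space-filling $\SLE_6$ parametrization). Handling this cleanly, while keeping the bubble endpoint times and the mass-parametrizations aligned between the discrete and continuum objects, is the crux of the argument.
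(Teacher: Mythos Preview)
Your overall strategy --- approximate the space-filling curves by a finite family of bubbles and control the remainder --- is the same as the paper's, but there is a genuine gap in your second step that makes the argument circular as written.

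You write that from the convergence $\eta_\bk^n\to\eta_\bk$ of the \emph{chordal} curves for $\bk\in\cF$, ``iterating this through the finitely many generations in $\cF$ \dots\ one gets that $\acute\eta^n|_{[s_\bk^n,t_\bk^n]}$ converges uniformly to $\eta'|_{[s_\bk,t_\bk]}$ for each $\bk\in\cF$.'' This does not follow. The space-filling curve inside a bubble $H_\bk$ is built from $\eta_\bk$ together with the space-filling explorations of \emph{all} of its sub-bubbles, of which there are infinitely many, most not in $\cF$. Knowing that the chordal curves converge for the finitely many $\bk\in\cF$ tells you nothing, by itself, about what $\acute\eta^n$ does during the (long) time intervals it spends inside sub-bubbles outside $\cF$. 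Showing that $\acute\eta^n|_{[s_\bk^n,t_\bk^n]}\to\eta'|_{[s_\bk,t_\bk]}$ for a leaf bubble of $\cF$ is the same problem you started with, just on a smaller disk.

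The paper avoids this circularity by never asserting convergence of the space-filling curve on individual bubbles. Instead, it constructs an explicit approximating curve $\eta'_{M,K}$ (and discrete analogue $\acute\eta^n_{M,K}$) by \emph{concatenating the chordal curves themselves}, reparametrized by disconnected area. The bulk of the work is then to show that every complementary component of $H\setminus\eta'_{M,K}$ and of $\Map^n\setminus\acute\eta^n_{M,K}$ has small $d$- (resp.\ $d^n$-) diameter, \emph{uniformly in $n$} (Lemma~\ref{lem-component-diam}), and that strings of small dichromatic bubbles along the boundary also have small diameter (Lemma~\ref{lem-bead-diam}). This is exactly what you flag as the ``main obstacle,'' but appealing to ``tightness/regularity of the Brownian disk'' or ``H\"older-type regularity of $\eta'$'' does not suffice: the issue is the \emph{uniform-in-$n$} modulus of continuity of $\acute\eta^n$, which is essentially equivalent to the tightness you would need to prove the proposition in the first place. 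The paper obtains it instead by a geometric argument: a ``pinch'' lemma (Lemma~\ref{lem-pinch}) reducing diameter to boundary diameter, and a ``no approximate triple points'' lemma (Lemma~\ref{lem-bubble-2interval}) bounding boundary diameters of small-boundary-length bubbles via equicontinuity of the (finitely many) chordal curves. Your proposal is missing any analogue of these steps.

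A secondary point: your treatment of the relationship between the embeddings into $W_\bk$ and the global embedding into $W$ is too informal. ``Confluence of geodesics in the Brownian disk'' is not the mechanism used; the paper proves a limit statement for the transition maps $f_\bk^n,g_\bk^n$ (Lemma~\ref{lem-map-limit}) via the uniqueness of isometries of Brownian surfaces, and this is what allows the uniform convergence of $\eta_\bk^n\to\eta_\bk$ to be read in the ambient metric $D$ (Lemma~\ref{lem-path-conv}).
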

To prove Proposition~\ref{prop-peano-conv}, we will define for $M,K \in \BB N$ a curve $\eta_{M,K}'$ by concatenating, in a certain manner, the (finite) collection of SLE$_6$ curves $\eta_\bk$ corresponding to $\bk \in \bigcup_{m=0}^M [1,K]_{\BB Z}^m$, parametrized by disconnected area (see Section~\ref{sec-sle6-area} for this parametrization). We also define analogous discrete curves $\acute\eta_{M,K}^n$.
We will show that if $M$ and $K$ are chosen sufficiently large, independently of $n$, then all of the connected components of $H\setminus \eta_{M,K}'$ and $\Map^n\setminus \acute\eta_{M,K}^n$ for $n\in\BB N$ are uniformly close. This will imply that $\eta_{M,K}'$ (resp.\ $\acute\eta_{M,K}^n$) is uniformly close to $\eta'$ (resp.\ $\acute\eta^n$). We can also deduce from the results of Section~\ref{sec-sle6-area} that $\acute\eta_{M,K}^n $ is close to $\eta_{M,K}'$ when $n$ is large, which will give the desired GHPU convergence. 
Before proceeding to this argument we first record some basic facts about the above-defined embeddings in Section~\ref{sec-embedding}.

\subsection{Basic properties  of the embeddings}\label{sec-embedding}
Recall \eqref{eq:embedding}. For each multi-index $\bk\not=\emptyset$, let 
\eqb \label{eqn-embedded-space-cont}
\wt{\frk H}_\bk = \left( \wt H_\bk , \wt d_\bk , \wt\mu_\bk  , \wt \xi_{\bk } ,  \wt\eta_\bk \right) := \iota_\bk(\frk H_\bk) 
\quad\textrm{and}\quad
\wt{\frk M}_\bk^n = \left( \wt \Map_\bk^n , \wt d_\bk^n , \wt\mu_\bk^n  , \wt \xi_{\bk }^n ,  \wt\eta_\bk^n \right) := \iota_\bk^n(\frk M_\bk^n)  
\eqe 
so that a.s.\ $\lim_{n\to\infty}\wt{\frk M}_\bk^n= \wt{\frk H}_\bk$ in the $D_\bk$-HPU sense. Then the curves $\wt\xi_\bk^n$ and $\wt\eta_\bk^n$ converge to $\wt\xi_\bk$ and $\wt\eta_\bk$, respectively, in the space $(W_\bk , D_\bk )$.  In this section we show that the analogous convergence holds for the curves which are all embedded into the same space $(W,D)$. 
\begin{lem} \label{lem-path-conv} 
	$\xi_\bk^n \rta \xi_\bk$ and $\eta_\bk^n \rta \eta_\bk$ a.s.\ as $n\rta\infty$ in $D$-uniform distance for each  $\bk \in \bigcup_{m=0}^\infty \BB N^m$.
\end{lem}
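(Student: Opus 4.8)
The plan is to induct on $m$, where $\bk\in\BB N^m$, feeding in the single-interface scaling limit at each level. The base case $\bk=\emptyset$ is immediate: since $(W,D)=(W_\emptyset,D_\emptyset)$, $\iota=\iota_\emptyset$ and $\iota^n=\iota_\emptyset^n$, the $D$-HPU convergence $\iota_\emptyset^n(\frk M_\emptyset^n)\to\iota_\emptyset(\frk H_\emptyset)$ supplied by Propositions~\ref{prop-component-conv} and~\ref{prop-ghpu-embed} is exactly the statement that $\xi_\emptyset^n\to\xi_\emptyset$ and $\eta_\emptyset^n\to\eta_\emptyset$ in $D$-uniform distance.

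For the inductive step, fix $\bk=(\bk^-,k)$ with $m\ge1$ and assume the conclusion for $\bk^-$, together with $\iota^n(\Map_{\bk^-}^n)\to\iota(H_{\bk^-})$ in $D$-Hausdorff distance and $\mu^n|_{\Map_{\bk^-}^n}\to\mu|_{H_{\bk^-}}$ in $D$-Prokhorov distance (these extra statements are carried along in the induction, their base case again following from $\iota_\emptyset^n(\frk M_\emptyset^n)\to\iota_\emptyset(\frk H_\emptyset)$). The first task is to \emph{locate the bubble in $W$}: prove $\iota^n(\Map_\bk^n)\to\iota(H_\bk)$ in $D$-Hausdorff distance, $\mu^n|_{\Map_\bk^n}\to\mu|_{H_\bk}$ in $D$-Prokhorov distance, and $\xi_\bk^n\to\xi_\bk$ in $D$-uniform distance. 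The point is that $\Map_\bk^n$, its root $x_\bk^n=\xi_\bk^n(0)$, and most of $\bdy\Map_\bk^n$ are determined by the parent interface $\eta_{\bk^-}^n$ and the parent boundary length process $Z_{\bk^-}^n$: by Lemma~\ref{lem:boundary} and Facts~\ref{item:hat}--\ref{item:future}, $\bdy\Map_\bk^n$ is a sub-arc of $\eta_{\bk^-}^n$ traced over a contiguous range of times whose endpoints are read off from the $a/b/c$-step matchings of $Z_{\bk^-}^n$, together with possibly an arc of $\bdy\Map_{\bk^-}^n$. Since $Z_{\bk^-}^n\to Z_{\bk^-}$ by Proposition~\ref{prop-component-conv}, these times converge to their continuum analogues by the arguments of Proposition~\ref{prop:mating} and Lemma~\ref{lem:endpoint1} applied one level down; combined with the inductive hypothesis this gives $\bdy\Map_\bk^n\to\bdy H_\bk$ in $D$-Hausdorff distance and $\xi_\bk^n\to\xi_\bk$ in $D$-uniform distance. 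To upgrade boundary convergence to convergence of the full bubbles one uses a squeezing argument: the sets $\{\iota^n(\Map_{\bk'}^n)\}_{\bk'\in\cC_{\bk^-}}$ are pairwise disjoint, ordered by boundary length in bijection with the (converging) downward jumps of $Z_{\bk^-}^n$, and their closures cover $\Map^n_{\bk^-}$, so given $\iota^n(\Map^n_{\bk^-})\to\iota(H_{\bk^-})$ they must converge componentwise in $W$; the restriction of $\mu^n$ converges likewise.

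It remains to establish the interface convergence $\eta_\bk^n\to\eta_\bk$ in $(W,D)$, and this is the heart of the matter. The difficulty is that the GHPU convergence $\frk M_\bk^n\to\frk H_\bk$ of Proposition~\ref{prop-component-conv} is with respect to the internal (Brownian-disk) metric $d_\bk^n$ of the bubble, whereas in $W$ the bubble carries the strictly smaller ambient metric. I would argue in two sub-steps. For \emph{tightness}: since $\iota_\bk^n$ is a $d_\bk^n$-isometry and $\iota^n$ is a $d^n$-isometry with $d^n\le d_\bk^n$, one has $D(\eta_\bk^n(s),\eta_\bk^n(t))\le D_\bk(\wt\eta_\bk^n(s),\wt\eta_\bk^n(t))$ for $s<t$, and the right side is uniformly small for $|s-t|$ small and $n$ large because $\wt\eta_\bk^n\to\wt\eta_\bk$ uniformly in $W_\bk$ with $\wt\eta_\bk$ uniformly continuous; together with the fact that $\eta_\bk^n$ takes values in $\iota^n(\Map_\bk^n)$, which converges to the compact set $\iota(H_\bk)$, this makes $\{\eta_\bk^n\}$ relatively compact in $C_0(\R,W)$. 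For \emph{identification}: first note that the internal metric of $D|_{\iota(H_\bk)}$ on $\iota(H_\bk)$ coincides with $d_\bk$ (computing curve lengths inside $H_\bk$ with $d$ or with $d|_{H_\bk}$ gives the same value), so $\iota(H_\bk)$ carries an intrinsic Brownian-disk metric; one then wants to show that the abstract convergence $\frk M_\bk^n\to\frk H_\bk$ is \emph{realized} by $\iota^n$, i.e.\ that the $D$-uniform limit $\eta_\bk^\infty$ of a subsequence of $\eta_\bk^n$ equals $\eta_\bk$. By the previous paragraph applied one level down (with $\eta_\bk^\infty$ in the role of the limiting interface), the complementary components of $\eta_\bk^\infty$ are the $D$-Hausdorff limits of the $\iota^n(\Map_{\bk'}^n)$, $\bk'\in\cC_\bk$, hence (with their intrinsic Brownian-disk metrics) are Brownian disks with boundary lengths given by the downward jumps of $Z_\bk=\lim_n Z_\bk^n$; thus $\eta_\bk^\infty$ satisfies the Markov property of Lemma~\ref{lem-domain-SLE} with the correct curve topology, so the characterization of SLE$_6$ on the Brownian disk recorded after Lemma~\ref{lem-domain-SLE} (from~\cite{gwynne-miller-char}), together with the measurability statement in Lemma~\ref{lem-domain-SLE}, forces $\eta_\bk^\infty=\eta_\bk$. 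Since the subsequence was arbitrary, $\eta_\bk^n\to\eta_\bk$. The main obstacle is precisely this identification step: one must carefully reconcile the ambient metric on $W$ with the intrinsic Brownian-disk metric structure in which the characterization theorem is phrased --- essentially, that the internal-metric convergence of the bubbles is compatible with the chosen embeddings $\iota^n$ --- and keep track of the joint law so that the limit is $\eta_\bk$ rather than merely an independent SLE$_6$ on the same disk $H_\bk$.
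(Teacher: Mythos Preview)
Your approach is viable in outline but takes a considerably heavier route than the paper, and the identification step you flag as ``the main obstacle'' is exactly where the two diverge.

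The paper does \emph{not} re-invoke the SLE$_6$ characterization theorem of~\cite{gwynne-miller-char}. Instead it introduces the transfer maps
\[
f_\bk^n := (\iota_\bk^n)^{-1} : \wt\Map_\bk^n \to W
\quad\text{and}\quad
g_\bk^n := \iota_{\bk^-}^n \circ (\iota_\bk^n)^{-1} : \wt\Map_\bk^n \to W_{\bk^-},
\]
and proves (Lemma~\ref{lem-map-limit}) that $f_\bk^n\to f_\bk$, $g_\bk^n\to g_\bk$ along subsequences. The argument is: any subsequential limit $\mathring g_\bk$ is, by verbatim repetition of~\cite[Lemmas~7.5--7.6]{gwynne-miller-perc}, a measure-preserving internal-metric isometry of the interior of $\wt H_\bk$ onto a connected component of $\wt H_{\bk^-}\setminus\wt\eta_{\bk^-}$; identifying which component is just a matter of matching boundary lengths; and then the \emph{uniqueness of isometries of Brownian surfaces}~\cite[Proposition~7.3]{gwynne-miller-char} forces $\mathring g_\bk=g_\bk$. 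Once the maps converge, $\eta_\bk^n = f_\bk^n\circ\wt\eta_\bk^n \to f_\bk\circ\wt\eta_\bk = \eta_\bk$ follows immediately from the $D_\bk$-uniform convergence $\wt\eta_\bk^n\to\wt\eta_\bk$ and the $1$-Lipschitz bound (your tightness step is the same).

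What the paper's route buys: it works entirely at the level of the embedding maps and metric-space structure, never touching the law of the limiting curve; the only ``identification'' input is the rigidity of Brownian-surface isometries, which is far lighter than the full SLE$_6$ characterization. Your route, by contrast, asks you to verify the topological and Markovian hypotheses of~\cite[Theorem~7.12]{gwynne-miller-char} for the subsequential limit $\eta_\bk^\infty$ inside the ambient space~$W$ --- essentially redoing the bulk of~\cite[Section~7]{gwynne-miller-perc} one level down --- and then separately argue via the measurability in Lemma~\ref{lem-domain-SLE} that $\eta_\bk^\infty$ equals $\eta_\bk$ rather than an independent copy. This can be made to work, but you are re-deriving machinery that the paper simply imports as a black box, and the ``squeezing'' argument you sketch for matching discrete bubbles to continuum bubbles in~$W$ is itself the content of~\cite[Lemma~7.6]{gwynne-miller-perc}. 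The paper's lesson is that once you phrase things in terms of the maps $g_\bk^n$, the SLE is along for the ride.
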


To prove Lemma~\ref{lem-path-conv}, for $n\in \N$ let \(f_\bk := \iota_\bk^{-1} : \wt H_\bk \rta W\) and \(f_\bk^n := (\iota_\bk^n)^{-1} : \wt \Map_\bk^n \rta W\), 
which take us from the embedding of $H_\bk$ (resp.\ $\Map_\bk^n$) into $W_\bk$ to its embedding into $W$. 
Let $g_\emptyset$ (resp.\ $g_\emptyset^n$) be the identity map on $H$ (resp.\ $\Map^n$) and for $\bk\not=\emptyset$, let
\eqbn
g_\bk := \iota_{\bk^-} \circ \iota_\bk^{-1} : \wt H_\bk \rta W_{\bk^-}
\quad \op{and} \quad
g_\bk^n := \iota_{\bk^-}^n \circ (\iota_\bk^n)^{-1} : \wt \Map_\bk^n \rta W_{\bk^-} 
\eqen
where  $\iota_{\bk^-}$ (resp.\ $\iota^n_{\bk^-}$) is considered as a map from $H_\bk$ (resp.\ $\Map^n_\bk$) to $W_{\bk^-}$ under the natural inclusion  $H_\bk \rta H_{\bk^-}$ (resp.\ $\Map_\bk^n \rta \Map_{\bk^-}^n$).
Then $f_\bk$ (resp.\ $g_\bk $) is 1-Lipschitz from $(\wt H_\bk , \wt d_\bk)$ to $(W,D)$ (resp.\ $(W_{\bk^-} , D_{\bk^-})$ and its image is $H_\bk$ (resp.\ $\iota_{\bk^-}(H_\bk)$). Furthermore, $f_\bk$ pushes forward $\wt\mu_\bk$ to $\mu_\bk$, $\wt\xi_\bk$ to $\xi_\bk$, and $\wt\eta_\bk$ to $\eta_\bk$; and
\eqb \label{eqn-map-decomp}
f_\bk = g_{\emptyset} \circ g_{\bk^{-(m-1)}} \circ \dots \circ g_{\bk^-}  \circ g_\bk .
\eqe 
Analogous statements hold for $f_\bk^n$ and $g_\bk^n$.  We start by proving a  limit result for  $f_\bk^n$ and $g_\bk^n$.

\begin{lem} \label{lem-map-limit}
	Almost surely, for each subsequence of $\mcl N$ of $\N$ there is a further subsequence $\mcl N'$ such that for each $\bk\in \bigcup_{m=1}^\infty\BB N^m$ the maps $f_\bk^n$ converge to $f_\bk$ and the maps $g_\bk^n$ converge to $g_\bk$ in the following sense as $\mcl N'\ni n\rta\infty$. 
	For each subsequence $\mcl N''$ of $\mcl N'$ and each sequence of points $x^n \in \wt \Map_\bk^n$ for $n\in\mcl N''$ such that $x^n\rta x\in \wt H_\bk$, one has $D(f_\bk^n(x^n) , f_\bk(x)) \rta 0$ and $D_{\bk^-}(g_\bk^n(x^n) , g_\bk(x)) \rta 0$ as $\mcl N'' \ni n \rta\infty$.
\end{lem}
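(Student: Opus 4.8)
\textbf{Proof plan for Lemma~\ref{lem-map-limit}.}

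The plan is to first reduce the statement about the ``global'' maps $f_\bk^n$ to a statement about the ``one-step'' maps $g_\bk^n$, via the decomposition~\eqref{eqn-map-decomp}, and then to prove convergence of each $g_\bk^n$ by a compactness argument. Let me describe the two pieces.

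For the one-step maps: fix a multi-index $\bk \neq \emptyset$. The map $g_\bk^n$ goes from $\wt\Map_\bk^n \subset W_\bk$ to $W_{\bk^-}$, and its image is the embedded copy of $\Map_\bk^n$ inside $W_{\bk^-}$; the map $g_\bk$ does the analogous thing in the continuum. Both $g_\bk^n$ and $g_\bk$ are $1$-Lipschitz (they are compositions of an isometry with an inverse isometric embedding, hence distance-nonincreasing from the internal metric). The key inputs are: (i) by the coupling fixed at the start of the section, $\iota_{\bk^-}^n(\frk M_{\bk^-}^n) \to \iota_{\bk^-}(H_{\bk^-})$ in the $D_{\bk^-}$-HPU sense, and in particular $\Map_\bk^n$ (embedded into $W_{\bk^-}$) converges in the $D_{\bk^-}$-Hausdorff metric to $H_\bk$ (embedded into $W_{\bk^-}$), since $\Map_\bk^n \to H_\bk$ as a subset by Proposition~\ref{prop-component-conv} and the constructions are consistent; (ii) similarly $\iota_\bk^n(\frk M_\bk^n) \to \iota_\bk(H_\bk)$ in $W_\bk$. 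Now take a sequence $x^n \in \wt\Map_\bk^n$ with $x^n \to x \in \wt H_\bk$ in $W_\bk$. Since $g_\bk^n$ is $1$-Lipschitz with respect to $\wt d_\bk^n$, and $\wt d_\bk^n$ is bounded by (a constant times) $D_\bk$ on a neighborhood — more carefully, one should not expect uniform equicontinuity in the ambient metric, so instead I would pass to a subsequence. By Hausdorff convergence of the graphs $\{(z, g_\bk^n(z)) : z \in \wt\Map_\bk^n\}$ in $W_\bk \times W_{\bk^-}$ (these graphs are closed subsets of a fixed compact space, so Hausdorff-precompact), extract a subsequence $\mcl N'$ along which the graph of $g_\bk^n$ converges to a closed set $G_\bk \subset \wt H_\bk \times W_{\bk^-}$. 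The HPU convergences in (i) and (ii), together with the fact that $g_\bk^n$ intertwines $\wt\xi_\bk^n, \wt\eta_\bk^n, \wt\mu_\bk^n$ with their images under $\iota_{\bk^-}^n$, force $G_\bk$ to be the graph of a $1$-Lipschitz map that intertwines $\wt\xi_\bk, \wt\eta_\bk, \wt\mu_\bk$ with the images of $H_\bk$'s boundary curve, SLE$_6$ curve, and area measure inside $W_{\bk^-}$; since those data determine the isometric embedding $\iota_{\bk^-}|_{H_\bk}$ up to the identifications we have made, $G_\bk$ is the graph of $g_\bk$. (Here one uses that $\wt\eta_\bk$ is space-filling in $\wt H_\bk$, so its trace plus the area measure pin down the embedding, and the decorated metric measure space is rigid as in Lemma~\ref{lem-domain-SLE}.) This gives $D_{\bk^-}(g_\bk^n(x^n), g_\bk(x)) \to 0$ along $\mcl N'$; since there are only countably many $\bk$, a diagonal argument produces a single subsequence $\mcl N'$ that works for all $\bk$ simultaneously, which is exactly the form of the statement.

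For the global maps: once we have the convergence of each $g_\bk^n \to g_\bk$ in the stated sense along $\mcl N'$, the convergence $f_\bk^n \to f_\bk$ follows by composing, using~\eqref{eqn-map-decomp} and its discrete analogue $f_\bk^n = g_\emptyset^n \circ g_{\bk^{-(m-1)}}^n \circ \cdots \circ g_{\bk^-}^n \circ g_\bk^n$. Indeed, given $x^n \in \wt\Map_\bk^n$ with $x^n \to x \in \wt H_\bk$, apply $g_\bk^n$ to get a convergent sequence $g_\bk^n(x^n) \to g_\bk(x)$ in $W_{\bk^-}$; but $g_\bk^n(x^n) \in \wt\Map_{\bk^-}^n$ (the embedded copy), so we may feed it into $g_{\bk^-}^n$ and iterate. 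Since $\bk$ has finite length $m$, this is a finite composition, and each step preserves convergence of sequences of points by the one-step statement; the composition $g_\emptyset \circ \cdots \circ g_\bk = f_\bk$ then yields $D(f_\bk^n(x^n), f_\bk(x)) \to 0$. The subsequence $\mcl N''$ in the statement is just inherited — the argument shows convergence along all of $\mcl N'$, so a fortiori along $\mcl N''$.

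The main obstacle I anticipate is step (ii)–(iii) of the one-step argument: ensuring that the subsequential limit $G_\bk$ of the graphs is genuinely the graph of a \emph{function} (not a multivalued relation) and that this function is exactly $g_\bk$. Functionality and the Lipschitz bound should come from the $1$-Lipschitz property being stable under Hausdorff limits of graphs, plus the fact that every point of $\wt H_\bk$ is a limit of points of $\wt\Map_\bk^n$ (surjectivity onto $\wt H_\bk$ of the limit relation, from Hausdorff convergence). Identifying the limit with $g_\bk$ requires knowing that the isometric embedding of the decorated metric measure space $\frk H_\bk$ into $W_{\bk^-}$ is unique given the image of its boundary curve, SLE$_6$ curve, and area measure — this is where I would invoke the rigidity built into the GHPU framework (two embeddings agreeing on a dense set of decoration agree everywhere, using that $\wt\eta_\bk$ fills $\wt H_\bk$) rather than anything about SLE or LQG per se. A secondary technical point is bookkeeping the countably many simultaneous subsequence extractions into one diagonal subsequence, but this is routine.
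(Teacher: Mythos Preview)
Your overall strategy---extract subsequential limits of the $g_\bk^n$ by compactness, identify them with $g_\bk$, then compose via~\eqref{eqn-map-decomp}---matches the paper's. The gap is in the identification step, and it is genuine.

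You write that ``$\wt\eta_\bk$ is space-filling in $\wt H_\bk$, so its trace plus the area measure pin down the embedding.'' But $\eta_\bk$ is the \emph{chordal} SLE$_6$ in $H_\bk$, not the space-filling curve; its trace has zero $\mu_\bk$-mass and leaves the countably many bubbles $H_{\bk'}$, $\bk'\in\mcl C_\bk$, unfilled. So the decorations $\wt\xi_\bk,\wt\eta_\bk,\wt\mu_\bk$ do \emph{not} determine the embedding on a dense set by any soft GHPU argument. The subsequential limit $\mathring g_\bk$ could a priori act as a nontrivial isometry on the interior of each bubble of $H_\bk\setminus\eta_\bk$ while still intertwining all three decorations.

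What the paper actually does at this point is: (i) show that $\mathring g_\bk$ is an isometry from the interior $(\wt H_\bk\setminus\bdy\wt H_\bk,\wt d_\bk)$ onto a connected component of $\wt H_{\bk^-}\setminus\wt\eta_{\bk^-}$ with its internal metric, pushing forward the area measure (this already takes some work, repeating arguments from~\cite{gwynne-miller-perc}); and then (ii) invoke the rigidity statement~\cite[Proposition~7.3]{gwynne-miller-char} that a Brownian surface has no nontrivial self-isometries preserving the boundary and area measure. This last input is a genuinely nontrivial fact about Brownian/$\sqrt{8/3}$-LQG surfaces---it is not a consequence of the GHPU framework, and your plan to avoid ``anything about SLE or LQG per se'' cannot succeed here. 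Without it, you cannot rule out that $\mathring g_\bk$ differs from $g_\bk$ by such an isometry.
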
 
\begin{proof}
	By~\cite[Lemma 2.1]{gwynne-miller-uihpq} and a diagonalization argument, it is a.s.\ the case that for each subsequence of $\N$ there exists a further subsequence $\mcl N'$ and maps $\rng f_\bk : \wt H_\bk \rta W$ and $\rng g_\bk : \wt H_\bk \rta W_{\bk^-}$ for each $\bk\in \bigcup_{m=1}^\infty\BB N^m$ such that $f_\bk^n \rta \rng f_\bk$ and $g_\bk^n \rta \rng g_\bk$ in the sense described in the statement of the lemma as $\mcl N'\ni n \rta\infty$. 
	
	Repeating verbatim the proof of~\cite[Lemma 7.5, assertions 2 and 3]{gwynne-miller-perc} shows that each $\rng g_\bk$ is an isometry from $(\wt H_\bk \setminus \bdy \wt H_\bk , \wt d_\bk) $ to $\rng g_\bk(\wt H_\bk \setminus \bdy \wt H_\bk)$, equipped with the internal metric of $\wt d_{\bk^-}$; and pushes forward $\wt\mu_\bk$ to $\wt\mu_{\bk^-}|_{ \wt H_\bk   }$. Repeating verbatim the proof of~\cite[Lemma 7.6, assertion 1]{gwynne-miller-perc} shows that in fact $\rng f_\bk(\wt H_\bk \setminus \bdy \wt H_\bk) $ is a connected component of $\wt H_{\bk^-} \setminus \wt\eta_{\bk^-}$, so must be equal to $\iota_{\bk^-}(H_\bk)$. 
	From this and the uniqueness statement~\cite[Proposition 7.3]{gwynne-miller-char} for isometries of Brownian surfaces, we infer that $\rng g_\bk = g_\bk$. 
	By~\eqref{eqn-map-decomp}, the analogous relation for $f_\bk^n$ and $g_\bk^n$, and the convergence statement for the $g_\bk^n$'s, we find that also $f_\bk^n = f_\bk$.
\end{proof}

\begin{proof}[Proof of Lemma~\ref{lem-path-conv}]
	Fix $\bk \in \bigcup_{m=0}^\infty \BB N^m$. 
	By our choice of coupling and isometric embeddings, one has $\wt\xi_\bk^n \rta \wt\xi_\bk$ and $\wt\eta_\bk^n \rta\wt\eta_\bk$ uniformly with respect to the metric $D_\bk$. 
	The maps $f_\bk^n : (\wt \Map_\bk^n , \wt d_\bk^n ) \rta (W,D)$ are 1-Lipschitz, so the curves $\xi_\bk^n = f_\bk^n \circ \wt\xi_\bk^n$ and $\eta_\bk^n = f_\bk^n \circ \wt\eta_\bk^n$ are equicontinuous in $(W,D)$. By the Arz\'ela-Ascoli theorem, it is a.s.\ the case that for every subsequence of $\N$, there exists a further subsequence $\mcl N_*$, a curve $\xi_{\bk,*}  : [0, \nu(\bdy H_\bk)] \rta H$, and a curve $\eta_{\bk,*} : [0,\infty) \rta H$ which is constant at $\infty$ such that $\xi_\bk^n \rta \xi_{\bk,*} $ and $\eta_\bk^n \rta \eta_{\bk,*}^n$ in the $D$-uniform distance as $\mcl N_*\ni n\rta\infty$. By Lemma~\ref{lem-map-limit}, after possibly passing to a further subsequence we can arrange that the maps $f_\bk^n$ converge to $f_\bk$ in the sense of Lemma~\ref{lem-map-limit} along the subsequence $\mcl N_*$. This implies in particular that for each $s\in \BB R$,  
	\eqbn
	\xi_\bk^n(s) = f_\bk^n(\wt\xi_\bk^n(s)) \rta f_\bk^n(\wt\xi_\bk(s)) = \xi_\bk(s)  
	\eqen 
	along $\mcl N_*$, and similarly for each $u\geq 0$ one has $\eta_\bk^n(u) \rta \eta_\bk(u)$. 
	Hence $\xi_{\bk,*} = \xi_\bk$ and $\eta_{\bk,*} = \eta_\bk$. 
\end{proof}

\subsection{Parameterizing by disconnected area}
\label{sec-sle6-area}
In this brief subsection we prove variants of the scaling limit results of the previous subsections where we parametrize the paths $\eta_{\BB k}^n$ and $\eta_{\BB k}$ by the accumulated areas of the monochromatic bubbles which they disconnect from the target point (so that the curves are constant on some intervals of time) instead of by quantum natural time. The reason for considering this choice of parametrization is that it is more closely connected to the peano curves $\acute\eta^n$ and $\eta'$.

For $\bk \in \bigcup_{m=0}^\infty \BB N^m$, recall from Section~\ref{sec-mating} that $[s_\bk , t_\bk]$ is the interval of times during which $\eta'$ is filling in $H_\bk$ and $\wh t_\bk$ is the time such that $\eta'(\wh t_\bk)=\wh x_\bk$. Note that $\wh t_\bk=t_\bk$ if and only if $H_\bk$ is dichromatic. 
For $t\in [s_\bk  , \wh t_\bk]$, if $\eta'(t)\in \eta_\bk(\R)$, let $u_\bk(t)=\sup\{u:\eta_\bk(u)=\eta'(t)\}$. If $\eta'(t)\notin \eta_\bk(\R)$, then $\eta'(t)$ must belong to a monochromatic bubble $H_{\bk'}$ for some $\bk'\in\cC_\bk$. Let $u_\bk(t)=u_\bk(s_{\bk'})$.
Let $ \eta_\bk^\mas: [s_\bk,\wh t_\bk]\to H_\bk$ be defined by 
\begin{equation}\label{eq:repara}
\eta_\bk^\mas(t) := \eta_\bk(u_\bk(t)) .
\end{equation}
Recall that the set of ancestor-free times $\ans(\wh t_\bk)$ is the same as the set of times $t$ for which $\eta'(t) \in \eta_\bk$, equivalently the times when $\eta'$ is not filling in a connected component of $H_\bk\setminus \eta_\bk$ (see \ref{item-mating3} in Proposition~\ref{prop:mating}). 
Therefore, $u_\bk$ is a nondecreasing continuous function on $[s_\bk,\wh t_\bk]$, which stays constant on each connected component of  $(s_\bk,\wh t_\bk) \setminus \ans(\wh t_\bk)$.

In the discrete setting, for $n\in\BB N$ and $\bk \in \bigcup_{m=0}^\infty \BB N^m$,  recall the (unscaled) time $S^n_\bk,\wh T^n_\bk$ and $T^n_\bk$ in Section~\ref{sec-disk-walk}.  
For $i\in [S_\bk^n , \wh T_\bk^n]$, let $j_\bk^n(i)=\inf\{j: \lambda_\bk^n(j) \in \acute{\lambda}^n([i,\infty)_\Z)\}$ and let   $\lambda_\bk^{\mas,n}(i) := \lambda_\bk^n(j_\bk^n(i))$.
Note that
\eqbn
\lambda_\bk^{\mas,n}(i)=\acute\lambda^n(i) ,\quad \forall i \in [S_\bk^n ,\wh T_\bk^n]_{\BB Z} \: \op{with} \: \acute\lambda^n(i) \in \lambda_\bk^n.
\eqen 
Extend the curve $\lambda_\bk^{\mas,n}$ from $[S_\bk^n , \wh T_\bk^n]_{\BB Z}$ to $[S_\bk^n ,\wh T_\bk^n]$ as in Section~\ref{sec-ghpu}. For $t \in [s_\bk^n ,\wh t_\bk^n]$, let 
\eqbn
\eta_\bk^{\mas,n}(t) :=  \lambda_\bk^{\mas,n}(\mcon n t)\qquad \textrm{and}\qquad u_\bk^n(t) := \tcon^{-1} n^{-3/4} j_\bk^n( \mcon n t) ,
\eqen
where here we extend $j_\bk^n$ to $[S_\bk^n , T_\bk^n]$ in such a way that $\eta_\bk^{\mas,n}=\eta_\bk^{n}\circ u_\bk^n$.
\begin{lem} \label{lem-time-change-conv}
	For each $\bk\in \bigcup_{m=0}^\infty\N^,$, {we have that} $\eta_\bk^{\mas,n} \rta \eta_{ \bk}^\mas $ uniformly in $(W,D)$ in probability.
\end{lem}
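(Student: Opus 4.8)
\textbf{Proof plan for Lemma~\ref{lem-time-change-conv}.}

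The plan is to prove the convergence by first establishing convergence of the time-change functions $u_\bk^n \rta u_\bk$ and then composing with the already-established convergence $\eta_\bk^n \rta \eta_\bk$ from Lemma~\ref{lem-path-conv}. The key observation is that $u_\bk^n$ is a monotone function encoded entirely by the walk $\acute\cZ^n$: by Fact~\ref{item:past} above Lemma~\ref{lem-perc-decomp-markov}, the flat intervals of $u_\bk^n$ are exactly the intervals $[S^n_{\bk'},T^n_{\bk'}]_\Z$ for the monochromatic children $\bk' \in \cC_\bk$, which correspond to the connected components of $[S^n_\bk,\wh T^n_\bk]_\Z \setminus \ansd(\wh T^n_\bk)$; and on the complement $u_\bk^n$ increases by one unit of (rescaled) time per step. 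In the continuum, by Assertion~\ref{item-mating3} of Proposition~\ref{prop:mating}, $u_\bk$ is constant exactly on the connected components of $(s_\bk,\wh t_\bk)\setminus \ans(\wh t_\bk)$. So the strategy is to transfer the convergence of the ancestor-free time sets (which is governed by the walk/Brownian motion convergence $\acute Z^n \rta Z'$ of Proposition~\ref{prop:joint}) to convergence of the monotone maps $u_\bk^n$.

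Concretely, I would proceed as follows. First, note that by Lemma~\ref{lem:endpoint1} we have $(s^n_\bk,\wh t^n_\bk,t^n_\bk) \rta (s_\bk,\wh t_\bk,t_\bk)$ in probability, and under our coupling $\acute Z^n \rta Z'$ uniformly. Second, I would show that $u_\bk^n \rta u_\bk$ in probability, uniformly on $[s_\bk,\wh t_\bk]$, as maps into $[0,\infty)$. Since the $u_\bk^n$ are nondecreasing and uniformly bounded (by $\wt t_\bk^n$ or similar), and $u_\bk$ is continuous and nondecreasing, it suffices to show pointwise convergence at a dense set of times, which then upgrades to uniform convergence by a standard Dini-type/monotonicity argument. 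For a fixed $t$ at which $\eta'(t)\in\eta_\bk(\R)$ (a full-measure set of times since $\ans(\wh t_\bk)$ has no interior outside the bubbles and $u_\bk$ is strictly increasing exactly off the flat intervals), the value $u_\bk(t)$ is the accumulated ``interface length'' of $\lambda_\bk$ up to the point $\eta'(t)$, which by Lemma~\ref{lem:walk} and Fact~\ref{item:past} is read off from $\acute Z^n$ by counting ancestor-free times in $[S^n_\bk, \mcon n t]_\Z$ relative to $\wh T^n_\bk$; the convergence $\acute Z^n\rta Z'$ together with continuity of the relevant functionals of the path (the Hausdorff-dimension-$3/4$ set $\ans$ is determined continuously by $Z'$ at non-exceptional times, cf.~\cite[Section 10.2]{wedges}) gives $u_\bk^n(t)\rta u_\bk(t)$. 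Third, having $u_\bk^n \rta u_\bk$ and $\eta_\bk^n \rta \eta_\bk$ uniformly in $(W,D)$, and using that $\eta_\bk$ is uniformly continuous, I compose: $\eta_\bk^{\mas,n} = \eta_\bk^n \circ u_\bk^n \rta \eta_\bk \circ u_\bk = \eta_\bk^\mas$ uniformly in $(W,D)$. The composition step is routine: $D(\eta_\bk^n(u_\bk^n(t)), \eta_\bk(u_\bk(t))) \le D(\eta_\bk^n(u_\bk^n(t)),\eta_\bk(u_\bk^n(t))) + D(\eta_\bk(u_\bk^n(t)),\eta_\bk(u_\bk(t)))$, the first term bounded by $\sup$-distance of $\eta_\bk^n,\eta_\bk$ and the second controlled by uniform continuity of $\eta_\bk$ and $\sup_t|u_\bk^n(t)-u_\bk(t)|$. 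Finally, one reduces the general multi-index case to $\bk \in \initial$ and iterates, exactly as in Proposition~\ref{prop:mating} and Remark~\ref{rmk:initial}.

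The main obstacle is the second step: proving $u_\bk^n \rta u_\bk$ uniformly in probability. The subtlety is that $u_\bk$ is constant on a Cantor-like set of intervals (complement of $\ans(\wh t_\bk)$, which itself has Hausdorff dimension $3/4$ and hence its complement is a countable union of intervals with full Lebesgue measure), so one must be careful that the discrete flat intervals converge to the continuum flat intervals in a strong enough sense — essentially this is the statement that the time sets $\ansd(\wh T^n_\bk)$ converge to $\ans(\wh t_\bk)$, which requires the path convergence $\acute Z^n\rta Z'$ plus the fact that a.s.\ the endpoints of the continuum flat intervals are not ``exceptional'' for $Z'$ (no two downward jumps of the same size, no cone-time coincidences, etc.), so that the combinatorial descriptions in Facts~\ref{item:hat}--\ref{item:future} pass cleanly to the limit. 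Much of the needed machinery is already available: \cite[Lemma~9.25]{bhs-site-perc} establishes precisely the kind of walk-functional-to-Brownian-functional convergence required, and the argument is parallel to the proof of Proposition~\ref{prop:mating}. I would therefore organize the proof of the second step around a direct appeal to \cite[Lemma~9.25]{bhs-site-perc} and Facts~\ref{item:hat}--\ref{item:future}, upgrading the resulting convergence of finitely many ``special times'' to uniform convergence of the monotone function $u_\bk^n$ via monotonicity and density, in the same spirit as the passage from Proposition~\ref{prop:mating}\ref{item-mating4} to Lemma~\ref{lem:endpoint1}.
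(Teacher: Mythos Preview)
Your approach is essentially the same as the paper's: establish $u_\bk^n \rta u_\bk$ in probability by appealing to \cite[Lemma~9.25]{bhs-site-perc} together with the coupling $\acute Z^n \rta Z'$, then compose with the convergence $\eta_\bk^n \rta \eta_\bk$ from Lemma~\ref{lem-path-conv} and the relation $\eta_\bk^\mas = \eta_\bk \circ u_\bk$. The paper's proof is simply the three-line version of this; your extended discussion of how one might prove the time-change convergence ``by hand'' via monotonicity and pointwise convergence at dense times is unnecessary once you invoke \cite[Lemma~9.25]{bhs-site-perc} (which you correctly identify as the key input), and the iteration over $\initial$ is likewise not needed since that lemma already covers the general $\bk$.
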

\begin{proof} 
	By \cite[Lemma~9.25]{bhs-site-perc}, {we have that} $(\acute{Z}^n, u_\bk^n)$ converges in law to $(Z', u_\bk)$.
	Therefore $u_\bk^n \rta u_\bk$ in probability  in our coupling.
	Now Lemma~\ref{lem-time-change-conv} follows from Lemma~\ref{lem-path-conv} and \eqref{eq:repara}.
\end{proof}

\subsection{Proof of Theorem~\ref{thm:main-precise}}
\label{sec-main-proof}

Now we carry out the strategy outlined at the beginning of Section~\ref{sec-sle6-conv}. Fix $M,K \in \BB N$. We will define curves $\eta_{M,K}'$ and $\acute\eta_{M,K}^n$ for $M,K \in \BB N$ in terms of the chordal curves $\eta^\mas_\bk$ and $\eta_\bk^{\mas, n}$, respectively. Let us start by defining $\eta_{M,K}'$. 
Let $I_{M,K}=\bigcup_{m=0}^M \bigcup_{\bk \in [1,K]^m_{\BB Z}} [s_\bk , \wh t_\bk]$.
For each $t\in I_{M,K}$,  let  $\bk'$ be such that $[s_{\bk'},\wh t_{\bk'}]$
is the shortest interval containing $t$ in  $\{[s_{\bk},\wh t_{\bk}]\}_{\bk \in [1,K]^m_{\BB Z}}$  and 
set $\eta_{M,K}'(t) := \eta_{\bk'}^\mas(t)$.
For each $t\in [0,\mu(H)] \setminus I_{M,K}$, 
set $\eta'_{M,K}(t)  := \eta_{M,K}'(\ol t)$, where $\ol t$ is the next time after $t$ which belongs to $I_{M,K}$.
Roughly speaking, $\eta_{M,K}'$ is obtained by concatenating finitely many of the chordal curves $\eta_\bk$. 

The curve $\acute{\eta}_{M,K}^n$ is defined similarly: let $I^n_{M,K}=\bigcup_{m=0}^M \bigcup_{\bk \in [1,K]^m_{\BB Z}} [s^n_\bk , \wh t^n_\bk]$.
For each $t\in I^n_{M,K}$,  let  $\bk'$ be such that $[s^n_{\bk'},\wh t^n_{\bk'}]$
is the shortest interval containing $t$ in  $\{[s^n_{\bk'},\wh t^n_{\bk'}]\}_{\bk \in [1,K]^m_{\BB Z}}$  and 
set $\acute{\eta}_{M,K}^n(t) := \eta_\bk^{\mas,n}(t)$.
For each $t\in [0,\mu^n(H)] \setminus I^n_{M,K}$, 
set $\acute{\eta}_{M,K}^n(t)  := \acute{\eta}_{M,K}^n(\ol t)$, where $\ol t$ is the next time after $t$ which belongs to $I^n_{M,K}$.

We note that $\eta_{M,K}'$ is  left continuous with right limits, but not  right continuous. 
Indeed, for $\bk \in  [1,K]_{\BB Z}^m$  such that $m\in [0,M]_\Z$ and $H_\bk$ is monochromatic, the curve $\eta_{M,K}|_{[\wh t_\bk ,t_\bk]}$ 
will ``jump" across strings of dichromatic bubbles $H_{\bk'}$ for $\bk' \in \mcl C_\bk \setminus [1,K]_{\BB Z}^{m+1}$ which are traced in order by $\eta'$. Note that this discontinuity always occurs at one of the times $\wh t_\bk$ for $\bk\in \bigcup_{m=0}^M [1,K]_{\BB Z}^m$. 
However, as we will eventually show  in  Lemma~\ref{lem-bead-diam}, these discontinuities are typically small when $M$ and $K$ are large.

We introduce the following notations for the next lemma. Given a metric space $(X,d)$ and a subset $A\subset X$, the $d$-diameter of $A$ is defined by $\op{diam}(A;d):=\inf\{d(x,y): x,y\in A \}$.
Suppose $(X,d)=(\Map^n,d^n)$ for some $n\in\N$. If $A$ is a subgraph of $\Map^n$, we write $\op{diam}(\cV(A);d)$ as $\op{diam}(A;d)$.
If $A\subset  \cE(\Map^n)$, then we write $\op{diam}(B;d)$ as $\op{diam}(A;d)$ where $B$ is the set of vertices which are endpoints of edges in $B$.
We write $\Map^n-E$ as $\Map^n-\acute\eta_{M,K}^n$  where $E$ is the collection of edges that are contained in the range of $\acute{\eta}^n_{M,K}$.

\begin{lem} \label{lem-component-diam}
	Almost surely, for each $\ep > 0$ there exists $M,K \in \BB N$ only depending on $\ep$ such that 
	\begin{align}
	\label{eqn-component-diam}
	&\op{diam}\left( U ; d \right) \leq \ep ,&&\text{$\forall$ connected component $U$ of $H\setminus \eta_{M,K}'$};\\
	\label{eqn-component-diam-n}
	&\op{diam}\left(  U^n  ; d^n \right) \leq \ep , &&\text{$\forall$ 2-connected component $U^n$ of $\Map^n-\acute\eta_{M,K}^n$ for all $n\in\N$}.
	\end{align}
\end{lem}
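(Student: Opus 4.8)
\textbf{Proof proposal for Lemma~\ref{lem-component-diam}.}

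The plan is to leverage the joint convergence of the nested exploration established in Proposition~\ref{prop-component-conv} together with the self-similar structure of the bubbles $\{H_\bk\}$ and $\{\Map_\bk^n\}$. The key observation is that the connected components of $H\setminus \eta_{M,K}'$ (resp.\ the $2$-connected components of $\Map^n - \acute\eta_{M,K}^n$) are precisely the bubbles $H_\bk$ (resp.\ $\Map_\bk^n$) which are ``at the frontier'' of the truncation — that is, those $H_\bk$ with $\bk \notin \bigcup_{m=0}^M [1,K]_\Z^m$ but whose parent (in the relevant sense) is in this index set, plus a careful accounting of the dichromatic bubbles that $\eta_{M,K}'$ jumps across. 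Concretely, after tracing $\eta^\mas_\bk$ for all $\bk\in\bigcup_{m=0}^M[1,K]_\Z^m$, what remains unfilled are (i) the bubbles $H_{\bk'}$ with $\bk'\in\cC_\bk$, $\bk\in[1,K]_\Z^m$ for $m\le M$, and $k_{m+1}>K$ (the ``$K$-tail'' bubbles), and (ii) all bubbles at generation $M+1$ below an index in $[1,K]_\Z^M$ — but these are nested inside the generation-$\le M$ picture, so the genuinely maximal unfilled regions are bounded in terms of finitely many ``residual'' pieces.

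First I would prove the continuum statement~\eqref{eqn-component-diam}. For this, fix $\ep>0$. By Lemma~\ref{lem:area} (applied to $\mu$-typical points) and a compactness/covering argument, for each point $z\in H$ the diameter of the $m$-th largest bubble containing $z$ tends to $0$, and in fact one expects a uniform statement: a.s.\ $\sup_{\bk : \op{gen}(\bk)=m} \op{diam}(H_\bk ; d) \to 0$ as $m\to\infty$. This should follow from the fact that $(H,d)$ is compact, the bubbles of a given generation cover $H$ up to a measure-zero set (the union of lower-generation interfaces, which has empty interior), and a diagonal/Dini-type argument: if not, there is a sequence of bubbles of growing generation with diameter bounded below, hence (passing to a subsequence and using compactness) a limiting point contained in infinitely many bubbles of unboundedly large generation all of macroscopic diameter, contradicting Lemma~\ref{lem:area}. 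Once we know $\max_{\op{gen}(\bk)=m}\op{diam}(H_\bk;d)\to 0$, choose $M$ with this max below $\ep/2$ at generation $M+1$; then choose $K$ large enough that the finitely many ``$K$-tail'' unions $\bigcup_{k>K} H_{(\bk,k)}$ for $\bk\in\bigcup_{m\le M}[1,K]_\Z^m$ each have diameter below $\ep$ — this uses that $\sum_k \mu(H_{(\bk,k)}) < \infty$ so the tails have small measure, combined with the fact that for a Brownian disk small boundary length (equivalently, the jump sizes of $Z_\bk$ going to $0$) forces small diameter, which follows from the scaling relation~\eqref{eqn-disk-scaling} and a union bound over the countably many bubbles. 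The discontinuity jumps of $\eta_{M,K}'$ at times $\wh t_\bk$ are handled the same way (they jump across a string of dichromatic grandchildren whose total boundary length is a single jump of $Z_\bk$).

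Next, the discrete statement~\eqref{eqn-component-diam-n}. Here the main point is \emph{uniformity in $n$}. I would argue by contradiction: if~\eqref{eqn-component-diam-n} fails for the $M,K$ chosen above (possibly enlarged), there is a subsequence along which some $2$-connected component $U^n$ of $\Map^n-\acute\eta_{M,K}^n$ has $\op{diam}(U^n;d^n)\ge\ep$. Each such $U^n$ is one of the bubbles $\Map_{\bk}^n$ with $\bk$ in the frontier index set (or a dichromatic-string residual), so there are only countably many candidate indices $\bk$, and by pigeonholing we may assume it is always the same index $\bk$ along the subsequence. But then by Proposition~\ref{prop-component-conv} (convergence of $\frk M_\bk^n \to \frk H_\bk$ in GHPU, which controls diameters) and the convergence $\bcon^{-1}n^{-1/2}\#\bdy\Map_\bk^n \to \ell_\bk$ from the jump convergence of $Z_\emptyset^n\to Z_\emptyset$ used iteratively, we get $\op{diam}(\Map_\bk^n;d^n)\to\op{diam}(H_\bk;d)$, which is $\le\ep/2$ by our choice of $M,K$ — a contradiction, provided we first pass to a subsequence so that all the relevant convergences hold a.s.\ (Skorokhod). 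The only subtlety is the $K$-tail: there the index $\bk$ ranges over an infinite set $\{(\bk_0,k):k>K\}$, so pigeonholing does not immediately pin down a single index. This is handled by noting that $\sum_{k} \bcon^{-1}n^{-1/2}\#\bdy\Map_{(\bk_0,k)}^n$ is (up to a single jump of $\acute Z^n$ or $Z_{\bk_0}^n$) bounded, converges to the corresponding continuum sum, and hence for $K$ large the tail is uniformly small in $n$; combined with a discrete analogue of ``small boundary length $\Rightarrow$ small diameter'' for site-percolated Boltzmann triangulations (which follows from the tightness/convergence in~\cite{aasw-type2} plus a union bound, or is already implicit in the GHPU convergence of all the $\frk M_\bk^n$), this closes the gap.

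\textbf{Main obstacle.} I expect the hard part to be the uniform-in-$n$ control of the ``$K$-tail'' residual components in the discrete setting — i.e., showing that $\max_{k > K}\op{diam}(\Map_{(\bk_0,k)}^n ; d^n)$ is small uniformly in $n$ once $K$ is large. Unlike the finitely many ``deep but low-index'' bubbles, which are handled by a direct pigeonhole-plus-GHPU-convergence argument, the tail requires a genuinely uniform estimate relating boundary length to diameter for Boltzmann triangulations with simple boundary, valid simultaneously for all $n$ and all sufficiently large $k$. One clean route is to invoke the scaling limit~\cite{aasw-type2} together with the Markov property (Lemma~\ref{lem-perc-decomp-markov}) to reduce to a tail estimate for a single Brownian disk's diameter as a function of its (small) boundary length, then transfer back via the GHPU convergence; the bookkeeping needed to make this rigorous — in particular interchanging ``max over tail'' with ``limit in $n$'' — is where the real work lies.
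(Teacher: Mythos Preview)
Your overall plan is reasonable, and the continuum part~\eqref{eqn-component-diam} can indeed be handled along the lines you suggest, though note that Lemma~\ref{lem:area} concerns $\mu$-\emph{mass} of bubbles containing a $\mu$-\emph{random} point, not diameter at an arbitrary limit point, so your compactness argument does not directly yield a contradiction. The paper's route is to first control mass (Lemma~\ref{lem-sle6-decomp}: for suitable $M,K$ every component has $\mu$-mass $\le\ep\mu(H)$, proved exactly via Lemma~\ref{lem:area} and the fact that a $\mu$-sample lands in a mass-$\ge\ep\mu(H)$ set with probability $\ge\ep$), and then convert small mass to small diameter using continuity of $\eta'$, since each component equals $\eta'(I)$ for an interval $I$ of length equal to its mass.

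For~\eqref{eqn-component-diam-n} your approach diverges substantially from the paper's, and the obstacle you flag is real: the ``small boundary length $\Rightarrow$ small diameter uniformly in $n$'' estimate for the infinitely many tail bubbles is not available as a black box, and a union bound over jumps of a $3/2$-stable process requires quantitative diameter tails for Boltzmann triangulations that are uniform in $n$ --- this is delicate at best. The paper bypasses this entirely via two lemmas you do not invoke. First, Lemma~\ref{lem-pinch} (a topological fact: the Brownian disk has no cut points) reduces bounding $\op{diam}(U^n;d^n)$ to bounding $\op{diam}(\bdy U^n;d^n)$. Second, Lemma~\ref{lem-bubble-2interval} (a consequence of the fact that chordal $\SLE_6$ has no boundary triple points, phrased via the boundary length process) shows that if $U^n$ has small rescaled boundary length, then $\bdy U^n$ is traced by the parent interface $\eta_{\bk}^n$ during at most two time intervals of length $\le\delta$, plus possibly a short arc of $\bdy\Map_{\bk}^n$. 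Since there are only finitely many parent indices $\bk\in\bigcup_{m\le M}[1,K]_\Z^m$, equicontinuity of the corresponding $\eta_\bk^n$ and $\xi_\bk^n$ (from Lemma~\ref{lem-path-conv}) then gives the boundary-diameter bound uniformly over all small-boundary-length components simultaneously. The finitely many components with large boundary length are handled exactly as you propose, by direct GHPU convergence of their boundary paths to the continuum $\xi_{\bk'}$, whose diameter is already controlled by~\eqref{eqn-component-diam}. So the key missing idea is: rather than bounding the bubble's intrinsic size, bound how long the parent curve spends tracing its boundary.
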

Lemma~\ref{lem-component-diam} together with Lemma~\ref{lem-bead-diam} below will allow us to show that when $M$ and $K$ are large, {we have that} 
$\eta_{M,K}'$ is close to $\eta'$ and $\acute\eta_{M,K}^n$ is close to $\acute\eta^n$ uniformly in $n$. The bound \eqref{eqn-component-diam} is an immediate consequence of the following lemma.
\begin{lem} \label{lem-sle6-decomp}
	Almost surely, for each $\ep > 0$ there exists random $M , K\in\BB N$ such that 
	each connected component of $H\setminus \eta_{M,K}'$ has $\mu$-mass at most $\ep \mu(H)$. 
\end{lem}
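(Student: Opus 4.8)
Here is the plan for proving Lemma~\ref{lem-sle6-decomp}.

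\textbf{Overall strategy.} I would show that the connected components of $H \setminus \eta_{M,K}'$ are among the bubbles $H_{\bk}$ for $\bk$ ``just beyond'' the truncation level, plus some bubbles which are reached late during the filling-in of the ancestor curves, and then argue that the $\mu$-mass of the largest such bubble tends to $0$ a.s.\ as $M,K \to \infty$. The key structural input is the description of $\eta_{M,K}'$ as a concatenation of the chordal curves $\eta_{\bk}^{\mas}$ for $\bk \in \bigcup_{m=0}^M [1,K]_{\BB Z}^m$: by construction $\eta_{M,K}'$ traces $\eta_{\bk}^{\mas}$ on $[s_\bk,\wh t_\bk]$ for the deepest such $\bk$, and it is constant (``skips'') on the complementary time intervals. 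Consequently a point $z \in H$ with $z \notin \eta_{M,K}'(\R)$ lies in a bubble $H_{\bk'}$ with $\bk' \notin \bigcup_{m=0}^{M} [1,K]_{\BB Z}^m$; more precisely, following the nested structure, either (a) $\bk'$ is a child-index $(k_1,\dots,k_m)$ where the first coordinate exceeding $K$ or the first depth exceeding $M$ singles out a maximal ancestor $H_{\bk}$ with $\bk \in \bigcup_{m=0}^{M}[1,K]_{\BB Z}^m$ such that $z$ lies in a connected component of $H_\bk \setminus \eta_\bk$ not among the first $K$ largest, or (b) $z$ lies in a dichromatic bubble cut off by $\eta_\bk$ that $\eta_{M,K}'$ skips over (since $\eta_{M,K}'$ only traces $\eta_\bk^\mas$ on $[s_\bk,\wh t_\bk]$, not on $[\wh t_\bk,t_\bk]$). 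In case (a) one deep level of nesting, in case (b) one level, separates $z$ from the truncated curve. So every connected component $U$ of $H \setminus \eta_{M,K}'$ is contained in some bubble $H_\bk$ with $\bk \in \bigcup_{m=0}^{M+1}\BB N^m$ but $\bk \notin \bigcup_{m=0}^{M}[1,K]_{\BB Z}^m$, and moreover $U$ is a union of bubbles at the next level.

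\textbf{Controlling the masses.} For case (a): fix $\bk \in \bigcup_{m=0}^M [1,K]_{\BB Z}^m$. By Lemma~\ref{lem-domain-SLE} the connected components of $H_\bk \setminus \eta_\bk$ are Brownian disks whose boundary lengths are the jumps of $Z_\bk$ up to time $\sigma$; since $\sum_j (\text{jump})_j < \infty$ and a Brownian disk of small boundary length has small $\mu$-mass with high probability (using the scaling relation~\eqref{eqn-disk-scaling} together with the explicit density of $\mu(H_1)$, or just tightness of $\mu$-mass as a function of boundary length), the total $\mu$-mass of the components of $H_\bk\setminus\eta_\bk$ other than the $K$ largest tends to $0$ a.s.\ as $K\to\infty$. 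Taking a union over the finitely many $\bk$ at depth $\le M$ handles case (a) for each fixed $M$, uniformly, as $K\to\infty$. For case (b): the dichromatic bubbles skipped over by $\eta_{M,K}'$ at index $\bk$ are exactly the children $\bk' \in \cC_\bk$ with $H_{\bk'}$ dichromatic and $\bk' \notin [1,K]_{\BB Z}^{m+1}$; again these have boundary lengths given by jumps of $Z_\bk$ (on the dichromatic side), so the same summability argument applies. The remaining issue is depth: bubbles $H_\bk$ with $\bk$ at depth exactly $M+1$ obtained by taking the $k_i$-th largest component at each of $M+1$ stages. Here I would invoke Lemma~\ref{lem:area}: for $\mu$-a.e.\ point $z$, the $\mu$-mass of the $m$-th largest bubble containing $z$ tends to $0$ as $m\to\infty$; by Fubini this gives that $\mu\{z : \mu(H_{\bk_{M+1}(z)}) > \delta\} \to 0$ as $M \to \infty$ for any $\delta>0$, where $\bk_{M+1}(z)$ is the $(M{+}1)$-st largest bubble containing $z$. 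Combining, choose $M$ large so the depth contribution is $\le \ep\mu(H)/2$, then $K$ large so the within-level-$\le M$ contributions are $\le \ep\mu(H)/2$.

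\textbf{Main obstacle.} The combinatorially delicate part is the bookkeeping in the first paragraph: precisely identifying which bubbles constitute the connected components of $H\setminus\eta_{M,K}'$, given that $\eta_{M,K}'$ is built from $\eta_\bk^\mas$ on $[s_\bk,\wh t_\bk]$ only and is constant on the skip-intervals, and given that the ``shortest interval containing $t$'' rule means different sub-intervals of $[s_\bk,t_\bk]$ get filled by curves at different depths. One must check that a connected component of the complement is never ``split'' by the concatenated curve in a way that produces components not contained in a single bubble $H_\bk$ at level $\le M+1$ — this follows from the fact that $\eta_\bk$ is a continuous curve whose complementary components are exactly the $H_{\bk'}$, $\bk'\in\cC_\bk$, together with Proposition~\ref{prop:mating}~\ref{item-mating2}--\ref{item-mating3} relating the skip structure to $\ans(\wh t_\bk)$ and $\Cut(\wh t_\bk)$. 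Once the identification is in hand, the mass estimates are routine given Lemmas~\ref{lem-domain-SLE} and~\ref{lem:area} and the Brownian disk scaling. I expect writing the identification cleanly, rather than any analytic estimate, to be where the real work lies.
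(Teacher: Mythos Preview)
Your approach is the same as the paper's in spirit --- identify the complementary components as bubbles $H_{\bk'}$, use summability of child masses to handle the ``width'' direction ($K\to\infty$), and use Lemma~\ref{lem:area} for the ``depth'' direction ($M\to\infty$) --- and your depth argument via Fubini is exactly right. Two points need correcting, though.

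First, your case (b) is spurious. Dichromatic children $\bk' \in \cC_\bk$ with last coordinate in $[1,K]$ satisfy $[s_{\bk'},\wh t_{\bk'}] = [s_{\bk'},t_{\bk'}] \subset I_{M,K}$, so $\eta_{M,K}'$ \emph{does} enter them; the dichromatic bubbles that are skipped are precisely those with last coordinate $>K$, which is already your case (a). So the two cases are (a) some coordinate exceeds $K$, and (c) depth reaches $M+1$.

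Second, and more seriously, ``taking a union over the finitely many $\bk$ at depth $\le M$'' is where the argument leaks. The parents you need to union over are $\bk \in \bigcup_{r=0}^{M-1}[1,K]_{\BB Z}^r$, which is $O(K^M)$ indices --- this grows with $K$, so pointwise convergence $\max_{j>K}\mu(H_{(\bk,j)})\to 0$ for each fixed $\bk$ does not give uniformity. The paper fixes this by observing that only parents $\bk$ with $\mu(H_\bk) > \ep\mu(H)$ can have a child of mass $> \ep\mu(H)$, and since the bubbles at each depth partition $H$ there are at most $(M+1)/\ep$ such parents in $\bigcup_{r=0}^M\BB N^r$ --- a bound independent of $K$. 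One then takes $K$ large enough for this genuinely finite collection. The paper packages all of this as a short contradiction argument (assume a bad component persists for all $M,K$, choose $K=K_M'$ this way to force it to depth $>M$, then contradict Lemma~\ref{lem:area}), but your direct two-step version works once this finiteness is inserted.

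Finally, the bookkeeping you flag as the ``main obstacle'' --- that complementary components are single bubbles --- is in fact routine: $\eta_{M,K}'$ traces exactly the curves $\eta_\bk$ for $\bk \in \bigcup_{r=0}^M[1,K]_{\BB Z}^r$, so its complement is the nested intersection of complements, and the components are precisely those $H_{\bk'}$ with $\bk'^-$ in the index set but $\bk'$ not. The paper spends one sentence on this.
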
 
\begin{proof}
	For $\ep\in (0,1)$, let $E(\ep)$ be the event that for each $M,K\in \BB N$, there exists a connected component of $H\setminus \eta_{M,K}'$ with $\mu$-mass larger than $\ep \mu(H)$.  
	By way of contradiction, we assume that there exists an $\ep$ with $\P[E(\ep)]>0$. 
	Fix  $M\in \N$. For each $m\in [0,M]_\Z$ and $\bk\in \N^m$ such that $\mu(H_\bk)>\ep \mu(H)$, let  $K_\bk\in \N$ be such that
	$\sum_{j=K_\bk}^{\infty}\mu(H_{(\bk,j)})<M^{-1}\ep\mu(H_\bk)$ (such a $K_\bk$ can be found since the union of the bubbles $H_{\BB k'}$ for $\BB k' \in \mcl C_{\BB k}$ is all of $H_{\BB k}$ up to a set of $\mu$-mass zero).
	Let $K_M'$ be the maximum over all such $K_\bk$'s, which is finite since there are only finitely many $\BB k \in \bigcup_{m=1}^M \BB N^m$ with $\mu(H_\bk)  > \ep \mu(H)$. 
	On the  event $E(\ep)$, there exists a connected component $U$ of $H\setminus \eta'_{M,K'}$ such that $\mu(U)>\ep \mu(H)$. The component $U$ must be a bubble $H_{\bk_M}$ for certain multi-index $\bk_M$. By our choice of $K_M'$, we must have  $\bk_M \in \bigcup_{m>M}\N^{m}$.  
	Let $z\in H$ be a point sampled according to $\mu$. Then $\liminf_{M\to\infty}\P[z\in H_{\bk_M}]\ge \ep\P[E(\ep)]>0$. 
	This contradicts Lemma~\ref{lem:area}.
\end{proof}
\begin{proof}[Proof of \eqref{eqn-component-diam} in Lemma~\ref{lem-component-diam}]
	Since $\eta'$ is continuous, almost surely there exists $\delta =\delta(\ep) >0$ such that $d(\eta'(t_1) ,\eta'(t_2)) \leq \ep$
	for each $t_1,t_2 \in [0,\mu(H)]$ with $|t_1-t_2| \leq \delta$.
	By Lemma~\ref{lem-sle6-decomp}, there a.s.\ exists $M,K \in \BB N$ such that each connected component of $H\setminus \eta_{M,K}'$ has $\mu$-mass at most $\delta$. 
	Thus each such connected component can be  filled in by $\eta'$ in a single interval of length at most $\delta$
	Therefore~\eqref{eqn-component-diam} holds for our choices of $M,K$.
\end{proof}
The bound \eqref{eqn-component-diam-n} does not directly follows from  \eqref{eqn-component-diam} and the scaling limit results that we have already established,
since we only have convergence of each $\frk M_\bk^n$ to $H_\bk$ individually, not uniformly over all $\bk \in \bigcup_{m=0}^\infty \BB N^m$, and there are infinitely many connected components of $H\setminus \eta_{M,K}'$.  
To prove \eqref{eqn-component-diam-n},
we first record a simple lemma reducing estimates for the diameter of the set itself to estimates for the diameter of its boundary, which is more amenable to analysis via the boundary length process (see Lemma~\ref{lem-bubble-2interval}).  
\begin{lem} \label{lem-pinch}
	Given  a subgraph $S$ of $\Map^n$, let $\bdy S$ be the graph such that $\cV(\bdy S)$ is the union of $\cV(S)\cap \cV(\bdy \Map^n)$ and $\{v\in\cV(S): \;v \textrm{ is adjacent to a vertex in }\cV(\Map^n)\setminus\cV(S)\}$; and $\cE(\bdy S)$ is the set of 
	edges in $\cE(S)$ with both endpoints in $\cV(\bdy S)$.	
	Almost surely, for each $\ep   \in (0,1)$ there exists a random $\delta >0$ such that  for each $n\in\N$ and each subgraph $S$ of $\Map^n$ with  $\op{diam}\left( \bdy S ; d^n \right) \leq \delta$, we have 
	$\op{diam}\left( S ; d^n \right) \leq \ep$. 
\end{lem}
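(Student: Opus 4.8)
The plan is to deduce Lemma~\ref{lem-pinch} from the convergence $(\Map^n , d^n , \mu^n , \xi^n) \to (H,d,\mu,\xi) = \BD_\LL$ in the GHPU topology, together with known topological properties of the Brownian disk --- namely that $(H,d)$ is homeomorphic to a closed disk and, more importantly, that it has no ``pinch points'' of the kind that would make the statement fail: precisely, for the Brownian disk one has that for every $\ep>0$ there is a $\delta>0$ such that any set $A\subset H$ which can only be separated from the rest of $H$ by a set of $d$-diameter $\le \delta$ has $d$-diameter $\le \ep$. This is a standard ``equicontinuity of the disk'' type statement; it can be extracted from the construction of $\BD_\LL$ via the Brownian snake (as in~\cite{bet-mier-disk,bet-disk-tight}), or alternatively via the equivalence with $\sqrt{8/3}$-LQG and Hölder continuity estimates for the metric.

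The concrete steps I would carry out: First, argue by contradiction. Suppose the lemma fails. Then there is $\ep\in(0,1)$, a (random) subsequence of $n$'s, and for each such $n$ a subgraph $S^n$ of $\Map^n$ with $\op{diam}(\bdy S^n ; d^n) \to 0$ but $\op{diam}(S^n ; d^n) \ge \ep$. Pick vertices $x^n, y^n \in \cV(S^n)$ with $d^n(x^n,y^n) \ge \ep/2$. Second, embed everything into a common compact metric space $(W,D)$ via Proposition~\ref{prop-ghpu-embed}, so that $\Map^n \to H$ in the $D$-Hausdorff sense and $\mu^n \to \mu$, $\xi^n \to \xi$. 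Passing to a further subsequence, let $x^n \to x$, $y^n \to y$ in $W$; these lie in $H$ since $\Map^n \to H$, and $d(x,y) \ge \ep/2$ so $x\ne y$. Third, use that $\bdy \Map^n$, viewed through $\xi^n$, converges to $\bdy H$ (through $\xi$), and the fact that $\op{diam}(\bdy S^n;d^n)\to 0$ forces the portion of $\bdy \Map^n$ contained in $\bdy S^n$ to shrink to a single point $w\in\bdy H$. Fourth, and this is the crux, argue that in the limit the continuum set $\ol{S}$ obtained as a Hausdorff-subsequential-limit of $\cV(S^n)$ is separated from $H\setminus \ol S$ only through the single point $w$ together with a set of zero $d$-diameter; more carefully, any $d$-geodesic in $H$ from $x$ to a point of $H\setminus\ol S$ must pass through (a limit of) $\bdy S^n$, hence through a point at $D$-distance $\to 0$ from all of the shrinking boundary, hence through $w$. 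This realizes $w$ as a cut point of $H$ separating $x$ from a macroscopic portion of $H$, which contradicts the no-pinch property of the Brownian disk quoted above (in the disk, cut points do not separate the space into two pieces of positive diameter --- equivalently the disk is a topological disk and its cut points, if any, are ``outward'' cut points near the boundary that bound a shrinking region).

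I expect the main obstacle to be step four: carefully transferring the discrete separation statement ``$\bdy S^n$ separates $S^n$ from its complement in $\Map^n$'' to a genuine continuum separation statement about $H$. The subtlety is that a Hausdorff limit of the vertex sets $\cV(S^n)$ need not itself be a ``nice'' set, and one must argue that a path in $H$ connecting $x$ to the complement can be approximated by a path in $\Map^n$ (using GHPU convergence and the fact that $(H,d)$ is a length space / geodesic space), which would then have to cross $\bdy S^n$. The cleanest way around this is probably to work with geodesics: take a $d$-geodesic $\gamma$ in $H$ from $x$ to a fixed base point $\xi(0)$ on the boundary that is at positive distance from $w$ (possible since $\bdy S^n$ shrinks to the single point $w$, so for large $n$ the arc of $\bdy\Map^n$ away from $\bdy S^n$ is macroscopic and stays away from $w$); approximate $\gamma$ by near-geodesics in $\Map^n$ using GHPU convergence, note that such a near-geodesic in $\Map^n$ must pass from $\cV(S^n)$ to $\cV(\Map^n)\setminus\cV(S^n)$ hence through $\cV(\bdy S^n)$, and take limits to conclude $\gamma$ passes within distance $o(1)$ of $w$ --- so actually through $w$. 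Then $w$ separates $x$ from $\xi(0)$ in $H$, contradicting the topological disk property. Alternatively, one can avoid re-deriving the no-pinch property from scratch by citing the ``regularity of the Brownian disk'' estimates from~\cite{bet-disk-tight} (or the $\sqrt{8/3}$-LQG metric Hölder estimates), which directly give that small-boundary-diameter subsets have small diameter; in the write-up I would state this as the key input and give the contradiction argument above as the reduction.
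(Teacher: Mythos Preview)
Your approach is essentially the same as the paper's: argue by contradiction, embed everything into a common space $(W,D)$ via Proposition~\ref{prop-ghpu-embed}, pass to subsequential limits, and derive a cut point in $H$ that contradicts the fact that the Brownian disk is a topological disk~\cite{bet-disk-tight}. Two remarks on execution.

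First, a small inaccuracy: the limit point $w$ of $\bdy S^{n}$ need not lie on $\bdy H$; the subgraph $S^n$ may not touch $\bdy \Map^n$ at all. Your step three conflates $\bdy S^n$ with its intersection with $\bdy\Map^n$. This is harmless --- just take $w$ to be any subsequential limit of a point $y^n\in\bdy S^n$, wherever in $H$ it lands.

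Second, and more substantively, the obstacle you flag in step four is real, and the paper resolves it differently and more cleanly than your geodesic-approximation plan. Rather than approximating continuum paths by discrete ones (which, as you note, only directly shows a \emph{particular} geodesic passes through $w$, not that $w$ separates), the paper works with sets: for small $\delta>0$ set $V_\delta^{n}:=S^{n}\setminus B_{4\delta}(y^{n};d^n)$ and $U_\delta^{n}:=\Map^{n}\setminus(S^{n}\cup B_{4\delta}(y^{n};d^n))$. For large $n$ one has $\bdy S^n\subset B_{2\delta}(y^n;d^n)$, so $U_\delta^n$ and $V_\delta^n$ lie at $d^n$-distance $\ge\delta$ from each other, while each has macroscopic diameter (here one uses $\liminf_n\op{diam}(\bdy\Map^n;d^n)>0$ to get a lower bound on $\op{diam}(\Map^n\setminus S^n)$). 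Passing to $D$-Hausdorff limits $U_\delta,V_\delta\subset H$ along a subsequence, one obtains $H=U_\delta\cup V_\delta\cup B_{4\delta}(y;d)$ with $U_\delta,V_\delta$ disjoint and of diameter bounded below independently of $\delta$. Sending $\delta\to 0$ shows $H\setminus\{y\}$ is disconnected, contradicting the disk topology. This avoids any path-approximation argument entirely.
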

\begin{proof}
	The proof is essentially the same as that of~\cite[Lemma 5.7]{gwynne-miller-perc}, but we give the details since the statement in our setting is slightly different.
	
	Suppose by way of contradiction that the statement of the lemma is false. It is clear that for each $\ep  >0$, we can find a $\delta>0$ which works for any fixed finite collection of $n\in\N$, so there must exist $\ep > 0$ and a subsequence $\{n_q\}_{q\in\BB N} \subset \N$ such that for each $q\in\BB N$, there exists a subgraph $S^{n_q} \subset \Map^{n_q}$ with $\op{diam}(S^{n_q} , d^{n_q}) \geq \ep$ and $\op{diam} (\bdy S^{n_q} , d^{n_q}) \leq 1/q$. 
	
	For $q\in\BB N$, choose $y^{n_q} \in \bdy S^{n_q}$. It is clear that a.s.\ $\liminf_{q \rta\infty} \op{diam}( \bdy \Map^{n_q} ; d^{n_q}) > 0$, so there exists $\zeta >0$ such that for large enough $q \in \BB N$, the $d^{n_q}$-diameter of $\Map^{n_q} \setminus S^{n_q}$ is at least $\zeta$. 
	
	For $\delta \in (0, (\ep \wedge \zeta)/100 )$, define 
	\eqbn
	V_\delta^{n_q} := S^{n_q} \setminus B_{4\delta}(y^{n_q}; d^{n_q}) \quad \op{and} \quad
	U_\delta^{n_q} := \Map^{n_q} \setminus \left( S^{n_q} \cup B_{4\delta}(y^{n_q}; d^{n_q})  \right) .
	\eqen
	For large enough $q\in \BB N$, the set $V_\delta^{n_q}$ (resp.\ $U_\delta^{n_q}$) has $d^{n_q}$-diameter at least $\ep/2$ (resp.\ $\zeta/2$).
	Furthermore, since $\op{diam}(\bdy S^{n_q} , d^{n_q} ) \leq 1/k$, it follows that for large enough $q \in \BB N$ the sets $\bdy S^{n_q} \subset B_{2\delta}(y^{n_q} ; d^{n_q})$ so the sets $V_\delta^{n_q}$ and $U_\delta^{n_q}$ lie at $d^{n_q}$-distance at least $\delta$ from each other. 
	
	By possibly passing to a further subsequence, we can find $y\in H$ closed sets $U_\delta , V_\delta \subset H$ for each rational $\delta \in (0, (\ep\wedge \zeta) /100)$ such that as $q \rta\infty$, a.s.\ $y^{n_q} \rta y$ and $U_\delta^{n_q} \rta U_\delta$ and $V_\delta^{ n_q} \rta V_\delta$ in the $D$-Hausdorff metric. 
	Then $U_\delta$ and $V_\delta$ lie at $d$-distance at least $\delta$ from each other and have $d$-diameters at least $\ep/2$ and $\zeta/2$, respectively. 
	Furthermore, we have $H = U_\delta\cup V_\delta \cup B_{4\delta}(y ; d)$. Sending $\delta\rta 0$, we see that removing $y$ from $H$ disconnects $H$ into two components. 
	But, $H$ a.s.\ has the topology of a disk~\cite{bet-disk-tight}, so we obtain a contradiction. 
\end{proof}

The following lemma together with the equicontinuity of the curves $\eta_\bk^n$ for $\bk \in \bigcup_{m=0}^M [1,K]_{\BB Z}^m$ will allow us to bound the diameters of the boundaries (with respect to the   $d^n$-metric on $\Map^n$) of all but finitely many 2-connected components of $\Map^n- \acute\eta_{M,K}^n$ simultaneously.

\begin{lem} \label{lem-bubble-2interval}
	Almost surely, for each $M,K \in \BB N$ and each $\delta >0$ there exists a random $r = r(M,K,\delta)\in (0,\delta)$ such that the following is true. 
	For  $n\in\N$ and $\bk\in \bigcup_{m=0}^M [ 1,K]_{\BB Z}^m $, if $U$ is a 2-connected component of $\Map_\bk^n- \lambda_\bk^n$ with  boundary length at most $\bcon  r\sqrt n$,
	then there exist no times $u_1^n,u_2^n,u_3^n$ such that $\lambda_\bk^n(\mcon nu_i^n)$ has an endpoint on $\cV(U)$ for $i=1,2,3$ and $u_1^n<u_2^n-\delta<u_3^n-2\delta$.
\end{lem}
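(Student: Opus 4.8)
The plan is to argue by contradiction, in the same spirit as the proof of Lemma~\ref{lem-pinch} and the compactness arguments in~\cite{gwynne-miller-perc}. Suppose the statement fails: then there exist $M,K\in\BB N$, $\delta>0$, a sequence $r_q\downarrow 0$, a subsequence $\{n_q\}$, multi-indices $\bk_q\in \bigcup_{m=0}^M[1,K]_{\BB Z}^m$, and for each $q$ a $2$-connected component $U^{n_q}$ of $\Map_{\bk_q}^{n_q}-\lambda_{\bk_q}^{n_q}$ with boundary length at most $\bcon r_q\sqrt{n_q}$, together with times $u_1^{n_q}<u_2^{n_q}-\delta<u_3^{n_q}-2\delta$ such that $\lambda_{\bk_q}^{n_q}(\mcon n_q u_i^{n_q})$ has an endpoint on $\cV(U^{n_q})$ for $i=1,2,3$. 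Since there are only finitely many multi-indices in $\bigcup_{m=0}^M[1,K]_{\BB Z}^m$, after passing to a further subsequence we may assume $\bk_q\equiv\bk$ is constant; write $\lambda^n$ for $\lambda_\bk^n$, $\eta^n$ for $\eta_\bk^n$, $\frk M^n$ for $\frk M_\bk^n$, etc.

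\textbf{Key steps.} First I would translate the combinatorial ``a component of small boundary length being revisited at three well-separated times'' into a statement about the boundary length process $Z_\bk^n=(L_\bk^n,R_\bk^n)$ and the interface $\eta_\bk^n$. By the discussion in Section~\ref{sec-perc-interface}, specifically~\eqref{eqn-I_U-property} and the description of $I_U$, a $2$-connected component $U$ of $\Map_\bk^n-\lambda_\bk^n$ with boundary length $\ell_U$ corresponds to a downward jump of size $\ell_U-1$ (or $\ell_U$, depending on whether $U$ touches $\bdy\Map_\bk$) in one of the coordinates of $Z_\bk^n$; moreover, the edges of $\lambda_\bk^n$ incident to $\cV(U)$ are visited during a time interval $I_U$ of length at most $\ell_U$, over which the relevant coordinate of $Z_\bk^n$ drops by exactly $\ell_U-1$. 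Hence if the (unscaled) boundary length of $U^{n_q}$ is at most $\bcon r_q\sqrt{n_q}$, the set of times $i$ with $\cV(\lambda^{n_q}(i))\cap\cV(U^{n_q})\neq\emptyset$ lies in a single discrete interval $I_{U^{n_q}}$ of length at most $\bcon r_q\sqrt{n_q}$. Rescaling by~\eqref{eq:renormalized-Z}, this becomes an interval of length at most $C r_q n_q^{1/4}/n_q^{3/4}=C r_q n_q^{-1/2}\to 0$ in the $\eta^n$-parametrization. But $u_1^{n_q},u_2^{n_q},u_3^{n_q}$ all lie in this rescaled interval (they are images of times in $I_{U^{n_q}}$ under the affine time change $t\mapsto \tcon n^{3/4}t$, or rather its inverse), contradicting $u_3^{n_q}-u_1^{n_q}>2\delta$ for $q$ large.

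Wait — I need to be careful about which parametrization is used in the statement: the statement refers to $\lambda_\bk^n(\mcon n u_i^n)$, i.e.\ the times $\mcon n u_i^n$ are in the edge-counting parametrization of the interface $\lambda_\bk^n$ (not the space-filling curve), so $\mcon n u_i^n\in I_{U^{n}}$. Then $u_i^n=(\mcon n)^{-1}(\text{element of }I_{U^n})$, and the length of the rescaled interval $(\mcon n)^{-1}I_{U^n}$ is at most $(\mcon n)^{-1}\bcon r\sqrt n = (\bcon/\mcon) r n^{-1/2}\to 0$. So for $q$ large enough, $(\bcon/\mcon) r_q n_q^{-1/2}<\delta$, which contradicts $u_2^{n_q}-u_1^{n_q}>\delta$. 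This is essentially immediate from the combinatorial description of $I_U$ in Section~\ref{sec-perc-interface} plus the scaling constants, and no scaling-limit input is actually needed — the key point is purely the deterministic bound $\#I_U\le \ell_U$ combined with $\ell_U\le\bcon r\sqrt n +1$.

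\textbf{Main obstacle.} The only subtlety I anticipate is correctly matching the two time parametrizations and the scaling constants: the interface $\lambda_\bk^n$ is parametrized by edges (so $\mcon n u_i^n$ ranges over $[0,m_\bk^n]_{\BB Z}$, where $m_\bk^n$ is the length of the interface), whereas elsewhere in the paper $\eta_\bk^n(t)=\lambda_\bk^n(\tcon n^{3/4}t)$ uses a different scaling; I must make sure I read off $u_i^n$ from the statement consistently and that the factor $\bcon/\mcon$ (or whatever combination arises) really does give a quantity tending to $0$ with $n$. A secondary point is confirming that the edges of $\lambda_\bk^n$ incident to a $2$-connected component $U$ form a \emph{contiguous} block of indices $I_U$ and that $\#I_U\le \ell_U$ where $\ell_U$ is the boundary length of $U$; both are consequences of the facts recorded after Definition~\ref{def:peel}, in particular that $\lambda$ traces $\bdy U$ monotonically (counterclockwise or clockwise) and that $\cL$ (resp.\ $\cR$) is monotone on $I_U$ with unit increments, so $\#(I_U)-1$ equals the total variation of that coordinate on $I_U$, which is $\ell_U-1$. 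Once these deterministic facts are in place the contradiction is immediate and no probabilistic estimate or compactness argument is required; in fact this lemma is considerably softer than Lemma~\ref{lem-pinch}.
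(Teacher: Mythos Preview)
Your argument has a genuine gap: the claim that $\#I_U$ is bounded by the boundary length of $U$ is false. You write that ``$\cL$ (resp.\ $\cR$) is monotone on $I_U$ with unit increments,'' but the passage after Definition~\ref{def:peel} actually says the increments of $\cL$ on $I_U\cap[0,i]$ are ``$0$ or $1$.'' The $0$-increments occur at steps where the interface's left endpoint stays fixed at a vertex of $\bdy U$ while the blue side is being explored, and there is no a priori bound on how many such steps there are. Geometrically, the interface can linger with its red endpoint at a single vertex $v\in\bdy U$ for arbitrarily many consecutive steps, each contributing to $I_U$ but none advancing $\cL$. Consequently $\max I_U-\min I_U$ can be comparable to the full length of the interface even when $U$ has boundary length $O(1)$, and your deterministic contradiction collapses. (Your claim that $I_U$ is a single interval is also false in general --- it is interrupted whenever the interface detours into another $2$-connected component hanging off a cut vertex of $\bdy U$ --- but this is not the essential obstruction.)

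What \emph{is} true is that the \emph{values} of $\cL$ at times in $I_U$ lie in a range of size at most the boundary length of $U$, and moreover $\cL(\max I_U)=\min_{[\min I_U,\max I_U]}\cL$ by~\eqref{eqn-I_U-property}. The paper's proof exploits precisely this: taking (without loss of generality) $u_1^n$ and $u_3^n$ to correspond to $\min I_{U^n}$ and $\max I_{U^n}$, one gets $L^n(u_3^n)=\min_{[u_1^n,u_3^n]}L^n$ together with $|L^n(u_i^n)-L^n(u_j^n)|$ bounded by the rescaled boundary length of $U^n$. Passing to a subsequential limit along a sequence of counterexamples with rescaled boundary lengths tending to zero produces times $u_1\le u_2-\delta\le u_3-2\delta$ with $L(u_1)=L(u_2)=L(u_3)=\inf_{[u_1,u_3]}L$, which contradicts the absence of such running-infimum triple points for the boundary length process of chordal $\SLE_6$ (\cite[Lemma~7.8]{gwynne-miller-perc}, ultimately resting on~\cite{miller-wu-dim}). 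So a genuine scaling-limit input is required; the lemma is not softer than Lemma~\ref{lem-pinch}.
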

\begin{proof}
	To simply the notion, we only prove the case when $\bk=\emptyset$ so that $(\Map^n_\bk,\lambda_\bk, \eta^n_\bk)=(\Map^n,\lambda^n, \eta^n)$. 
	For other $\bk$, since $M,K$ are finite, we can iterate the argument for $k=\emptyset$ to conclude.	
	The idea of the proof is that if the statements of the lemma were false, then $\eta^n$ would have to have an ``approximate triple point", but we know the limiting SLE$_6$ curve $\eta$ has no triple points by~\cite[Remark 5.4]{miller-wu-dim}. However, we work with boundary length processes instead of curves since we do not know a priori that $\eta^n$ does not have any complementary 2-connected components with small boundary length but large diameter. To be more precise,
	let $\sigma$ be the time when $\eta$ hits its target point $\wh x$. 
	We know from~\cite[Lemma 7.8]{gwynne-miller-perc} that there a.s.\ do not exist times $u_1 < u_2<u_3 \leq \sigma$ for which the left boundary length process satisfies
	\eqb \label{eqn-sle-bdy-triple}
	L(u_1) = L(u_2) = L(u_3) = \inf_{u\in [u_1,u_3]} L(u)\quad\textrm{or}\quad R(u_1) = R(u_2) = R(u_3) = \inf_{u\in [u_1,u_3]} R(u)
	\eqe  
	
	Assume by way of contradiction that the statement of the lemma is false. 
	Then we can find $\delta>0$ such that for each $q\in\N$ there exists $n_q \in \N$ and a 2-connected component $U^{n_q}$ of $\Map^{n_q}- \eta^{n_q}$ with boundary length at most $\bcon\sqrt n/q$  and $u_1^{n_q},u_2^{n_q},u_3^{n_q}$ such that $\lambda^n(\mcon nu_i^{n_q})$ has an endpoint on $\cV(U)$ for $i=1,2,3$ and $u_1^{n_q}<u_2^{n_q}-\delta<u_3^{n_q}-2\delta$. Define the time set $I_U^{n_q}$ as in~\eqref{eq:IU} but with $\lambda^n$ in place of $\lambda$. Without loss of generality, we may assume that $\mcon n u_1^{n_q}=\min I_U^{n_q}$ and  $\mcon n u_3^{n_q}=\max I_U^{n_q}$.
	By left/right symmetry, after possibly passing to a further subsequence we can assume without loss of generality that each $U^{n_q}$ lies to the left of $\eta^{n_q}$. 
	By~\eqref{eqn-I_U-property} and since $|L^{n_q}(s) - L^{n_q}(t)| \leq \bcon^{-1} n^{-1/2} \#\mcl E(\bdy U^{n_q}) \leq 1/q$ for each $s,t \in U^{n_q}$,
	\begin{equation}\label{eqn-sle-bdy-triple-approx}
	L^{n_q}(u^{n_q}_3) =\min\{L^{n_q}(t): t\in [u^{n^q}_1, u^{n^q}_3]\}\ge \max \{L^{n_q}(u^{n_q}_1),L^n(u^{n_q}_2)\} -1/q.	
	\end{equation}
	
	By compactness and \eqref{eqn-sle-bdy-triple-approx},  we can possibly passing along a further  subsequence  and assume that 
	there exits times $u_1 ,u_2,u_3 \in [0,\sigma]$ with $u_2 \in [u_1+\delta,u_3-\delta]$  and a number $\ell\in \BB R$ such that as $q \rta\infty$, 
	\eqbn
	u_i^{n_q} \rta u_i \quad \op{and} \quad
	L^{n_q}(u_i^{n_q}) \rta \ell ,\quad
	\forall i \in \{1,2,3\}.
	\eqen
	By the Skorokhod convergence $L^n\rta L$, a.s.\ for any $t\in (u_1,u_3)$  there exist $t^{n_q}\in (u^{n_q}_1, u^{n_q}_3)$ such that $L^n(t^{n_q})\rta L(t)$.
	Now \eqref{eqn-sle-bdy-triple-approx} implies $\ell\le L(t)$, for all such $t$. Hence \(\ell= \inf\{L(t): t\in [u_1,u_3] \}\), which contradicts~\eqref{eqn-sle-bdy-triple}.
\end{proof}

\begin{proof}[Proof of \eqref{eqn-component-diam-n} of Lemma~\ref{lem-component-diam}]
	Fix $\ep > 0$. By Lemma~\ref{lem-pinch}, there a.s.\ exists $\delta_0 = \delta_0(\ep) \in (0,\ep)$ such that for each $n\in\N$ and each subgraph $S$ of $\Map^n$ with $\op{diam}\left( \bdy S ; d^n \right) \leq \delta_0$, it holds that $\op{diam}\left( S ; d^n \right) \leq \ep$. As we proved earlier, there a.s.\ exist $M,K$ such that \eqref{eqn-component-diam} holds with $\delta_0/2$ in place of $\ep$.
	
	By Lemma~\ref{lem-path-conv}, the paths $\xi_\bk^n$ and $\eta_\bk^n$ for fixed $\bk\in \bigcup_{m=0}^\infty \BB N^m$ are equicontinuous as an $n$-indexed sequence. Hence we can a.s.\ find $\delta_2 = \delta_2(\delta_0,M,K) >0$ such that for each $n\in\N$ and each $\bk \in \bigcup_{m=0}^M [1,K]_{\BB Z}^m$, it holds for each $s,s' \in \BB R$ with $|s-s'| \leq \delta_2$ that $d_\bk^n(\xi_\bk^n(s) , \xi_\bk^n(s')) \leq \delta_0/3$ 
	and $d_\bk^n(\eta_\bk^n(s ) , \eta_\bk^n(s')) \leq \delta_0/3$. Set $\delta$ in Lemma~\ref{lem-bubble-2interval} to be $\delta_2$ and let $r=r(M,K,\delta_2)\in (0,\delta_2)$ be as in that lemma.  
	
	Each 2-connected component of $\Map^n-\acute\eta_{M,K}^n$ is equal to $\Map_{\bk'}^n$ for some $\bk' \in  \bigcup_{m=0}^M \bigcup_{\bk\in [1,K]_{\BB Z}^m} \mcl C_\bk$.
	Suppose $U$ is such a component with boundary length at most $\bcon r\sqrt{n}$  and choose $\bk \in \bigcup_{m=0}^M   [1,K]_{\BB Z}^m $ such that $U = \Map_{\bk'}^n$ for $\bk' \in \mcl C_\bk$.
	By our choice of $r$ from Lemma~\ref{lem-bubble-2interval},
	$\cV(\bdy U)$ is contained in the union of the endpoints of the edges in $\lambda^n_\bk([a_\bk^n  , a_\bk^n + \delta_2]) \cup \lambda^n([b_\bk^n - \delta_2 , b_\bk^n])$, where $a_\bk^n$ (resp.\ $b_\bk^n$) is the first (resp.\ last) time that $\lambda_\bk^n$ hits $\bdy U$, and an arc of $\bdy \Map^n$ with at most $\bcon r\sqrt n \leq \bcon \delta_2 \sqrt n$ edges. 
	By our choice of $\delta_2$ and the triangle inequality, {we have that} $\op{diam}(\bdy U;d_\bk^n)\leq \delta_0$. By our choice of $\delta_0$, this implies that $\op{diam}(U ; d^n) \leq \ep$. 
	
	By the Skorokhod convergence $Z_\bk^n \rta Z_\bk$ and since the components $\Map_{\bk'}$ are ordered in decreasing order by their boundary lengths, we can a.s.\ find $k_* \in \BB N$ such that for each $\bk \in \bigcup_{m=0}^M [1,K]_{\BB Z}^m$ and each $\bk' \in \mcl C_\bk$ whose last coordinate is at least $k_*$, the boundary length of $\Map_{\bk'}^n$ is at most $\bcon r\sqrt n$. By the preceding paragraph, it is a.s.\ the case that for large enough $n\in\BB N$, each such $\Map_{\bk'}^n$ has $d^n$-diameter at most $\ep$. 
	
	It remains to consider the finitely many 2-connected components of the form $\Map_{\bk'}^n$ for $\bk' \in  \bigcup_{m=0}^M \bigcup_{\bk\in [1,K]_{\BB Z}^m} \mcl C_\bk$ with the last coordinate of $\bk'$ less than or equal to $k_*$. Let $\mcl K$ be the set of such multi-indices $\bk'$. For each $\bk' \in \mcl K$, the rescaled boundary path $\xi_{\bk'}^n$ converges $D$-uniformly to $\xi_{\bk'}$, which is the boundary path of a connected component of $H\setminus \eta_{M,K}'$, so has $d$-diameter at most $\delta_0/2$.
	Therefore, for large enough $n\in\N$ it holds that
	\eqb \label{eqn-bad-component-bdy-diam}
	\op{diam}\left( \bdy \Map_{\bk'}^n ; d^n \right) \leq \delta_0 ,\quad \forall \bk' \in \mcl K .
	\eqe   
	By our choice of $\delta_0$, we see that~\eqref{eqn-bad-component-bdy-diam} implies~\eqref{eqn-component-diam-n}.
\end{proof}
Lemma~\ref{lem-component-diam} is not quite good enough for our purposes since the curves $\eta'$ (resp.\ $\acute\eta^n$) can trace arbitrarily long strings of small dichromatic bubbles without interacting with $\eta'_{M,K}$ (resp.\ $\acute\eta^n_{M,K}$). See Figure~\ref{fig-sle6-decomp}, right. So, we need to bound the maximal diameter of a string of dichromatic bubbles which do not intersect our approximating curves.

\begin{lem} \label{lem-bead-diam}
	Almost surely, for each $\ep > 0$ there exists a random $M_0\in \BB N$ such that for each $M\ge M_0$, there exist random $K,N\in\N$ (allowed to depend on $M$) satisfying the following conditions.
	For $m\in[0,M]_\Z$ and $\bk \in  [1,K]_{\BB Z}^m$, if  $H_\bk$ is monochromatic, and $I$ is a connected component of $[\wh t_\bk,t_\bk]\setminus(\bigcup_{\bk'\in [1,K]_{\BB Z}^{m+1}} [s_{\bk'},t_{\bk'}] )$, then $\op{diam}(\eta'(I);d)\le \ep$.
	Furthermore,
	for $m\in[0,M]_\Z$ and $\bk \in  [1,K]_{\BB Z}^m$, if  $\Map^n_\bk$ is visited by $\acute\lambda^n$ before $\wh \Be_{\bk^-}$, and $I^n$ is a connected component of 
	$[\wh T^n_\bk,T^n_\bk]_\Z\setminus(\bigcup_{\bk'\in [1,K]_{\BB Z}^{m+1}} [S^n_{\bk'},T^n_{\bk'}]_\Z)$, then $\op{diam}(\acute{\lambda}^n(I^n);d^n)\le \ep$ for $n\ge N$.
	(Here for a subset $A$ of $\Z$, the connected component of $A$ is with respect the adjacency relation $i\sim j$ if and only if $|i-j|=1$.)
\end{lem}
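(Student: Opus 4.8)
\textbf{Proof proposal for Lemma~\ref{lem-bead-diam}.} The plan is to reduce both the continuum and discrete assertions to a single ``strings of dichromatic bubbles are small'' statement and then prove that statement via an area/boundary-length argument together with the topological input that $H$ has the topology of a disk. Let me describe the steps.

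\emph{Step 1: reduce to a statement about maximal strings.} For a monochromatic bubble $H_\bk$, recall from Proposition~\ref{prop:mating}\ref{item-mating2} and the definition of the ordering $\prec_\bk$ that the curve $\eta'$ restricted to $[\wh t_\bk , t_\bk]$ traces, in order, the dichromatic children $H_{\bk'}$ for $\bk' \in \mcl C_\bk$, and that the complementary intervals of $\ans(\wh t_\bk)$ inside $[s_\bk , \wh t_\bk]$ give the monochromatic children. Thus a connected component $I$ of $[\wh t_\bk , t_\bk] \setminus \bigcup_{\bk' \in [1,K]_{\BB Z}^{m+1}} [s_{\bk'}, t_{\bk'}]$ is exactly the time interval over which $\eta'$ fills a maximal run of consecutive dichromatic children of $H_\bk$ whose index exceeds $K$ (i.e.\ ``slipping through'' between two of the first $K$ dichromatic children, or before the first one, or after the $K$th one). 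Its image $\eta'(I)$ is the closure of the union of those dichromatic bubbles. So \eqref{lem-bead-diam} for the continuum amounts to: \emph{for every $\ep$ there are $M,K$ so that every maximal string of consecutive dichromatic bubbles of index $>K$ (at every level $\le M$, among the first $K$ indices at each level) has $d$-diameter $\le \ep$}. The discrete statement is the exact analog with $\acute\lambda^n$, Lemma~\ref{lem:future}, and Lemma~\ref{lem:future2} playing the roles of Proposition~\ref{prop:mating}.

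\emph{Step 2: the continuum bound.} I would first control the total $\mu$-mass of such a string: by Lemma~\ref{lem:area} (applied to a $\mu$-sample $z$, as in the proof of Lemma~\ref{lem-sle6-decomp}), for any $\ep' > 0$ one can find $M_0$ and then, for each $M \ge M_0$, a $K$ so that the $\mu$-mass of every maximal string as above is $\le \ep' \mu(H)$; the argument is essentially the compactness/contradiction argument already used in Lemma~\ref{lem-sle6-decomp}, now bookkeeping strings of children rather than single children. Since $\eta'$ is parametrized by $\mu$-mass and is (uniformly) continuous, small $\mu$-mass forces small time-length hence small $d$-diameter of $\eta'(I)$. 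This gives the first assertion.

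\emph{Step 3: the discrete bound, uniformly in $n$.} This is where the work is, and it parallels the proof of \eqref{eqn-component-diam-n} in Lemma~\ref{lem-component-diam}. By Lemma~\ref{lem-pinch} it suffices to bound $\op{diam}(\bdy(\acute\lambda^n(I^n)); d^n)$ uniformly; here $\acute\lambda^n(I^n)$ is a submap of $\Map^n$ whose boundary, after chopping off an arc of $\bdy\Map^n$, is traced by $\lambda_\bk^n$ over two short time windows near the endpoints of $I^n$ (as in Lemma~\ref{lem-bubble-2interval}), \emph{provided} the total boundary length $\bcon^{-1} n^{-1/2} \#\cE(\bdy(\acute\lambda^n(I^n)))$ is small. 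The total boundary length of a string of dichromatic bubbles is, by Lemma~\ref{lem:future2} and Lemma~\ref{lem:boundary}, controlled by the sizes of the matching $a$/$b$–$c$ steps of $\acute\cZ^n$, i.e.\ by jumps of $Z^n_\bk$; by the Skorokhod convergence $Z^n_\bk \rta Z_\bk$ from Proposition~\ref{prop-component-conv} and Step~2's mass bound, one can choose $K$ (and discard finitely many ``large'' strings by hand, exactly as in the $\mcl K$-argument of Lemma~\ref{lem-component-diam}) so that every remaining string has small renormalized boundary length for all large $n$. For those strings the two-window description plus equicontinuity of $\eta_\bk^n = \lambda_\bk^n(\tcon n^{3/4}\cdot)$ (Lemma~\ref{lem-path-conv}) and of $\xi^n$ bounds $\op{diam}(\bdy(\acute\lambda^n(I^n)); d^n) \le \delta_0$, and Lemma~\ref{lem-pinch} upgrades this to $\op{diam}(\acute\lambda^n(I^n); d^n) \le \ep$. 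Since only finitely many levels $m \le M$ and finitely many indices $\le K$ at each level are involved, I can iterate this over all relevant $\bk$ and take $N$ to be the maximum of the finitely many thresholds; for the finitely many exceptional strings, their boundary paths converge $D$-uniformly to boundary paths of components of $H\setminus\eta'_{M,K}$, whose diameters are $\le \delta_0/2$ by Step~2, so \eqref{eqn-bad-component-bdy-diam}-type bounds hold for large $n$.

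\emph{Main obstacle.} The delicate point is the uniformity over the infinitely many dichromatic strings (and over $n$) in Step~3: one cannot simply invoke $\frk M_\bk^n \rta \frk H_\bk$ for each $\bk$ separately. As in Lemma~\ref{lem-component-diam}, the resolution is to separate ``boundary length'' from ``diameter'' — use Lemma~\ref{lem-bubble-2interval} to say a string with small boundary length is traced by $\lambda_\bk^n$ over two short time windows, use Lemma~\ref{lem-pinch} to pass from small boundary-diameter to small diameter, and use the Skorokhod convergence of the $Z_\bk^n$ (which records all the bubble sizes at once) plus a finite exceptional set to get the uniform control. Getting the bookkeeping of ``maximal strings of dichromatic bubbles'' consistent between the $\acute\cZ^n$-side (via Lemmas~\ref{lem:future},~\ref{lem:future2},~\ref{lem:boundary}) and the $Z'$-side (via Proposition~\ref{prop:mating}) is the part that requires care but no new ideas.
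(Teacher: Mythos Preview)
Your Step~2 gives a correct alternative to the paper's continuum argument: for each monochromatic $H_\bk$ the tail $\sum_{j>K}\mu(H_{(\bk,j)})$ tends to $0$, and only finitely many $\bk$ have $\mu(H_\bk)$ above the uniform-continuity threshold of $\eta'$, so each string has small time-length and hence small $d$-diameter.

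Step~3, however, has a real gap. A string $\acute\lambda^n(I^n)$ is not a single $2$-connected component of $\Map_\bk^n - \lambda_\bk^n$ but a union of many dichromatic bubbles, so Lemma~\ref{lem-bubble-2interval} does not apply to it and the ``two short time windows'' description of its boundary is wrong: between disconnecting consecutive dichromatic bubbles in the string, $\lambda_\bk^n$ wanders off to cut out monochromatic bubbles, so the $\lambda_\bk^n$-side of $\bdy(\acute\lambda^n(I^n))$ consists of the $\lambda_\bk^n$-arcs of \emph{all} the bubbles in the string, spread over many disjoint $\lambda_\bk^n$-time intervals. Worse, the ``small total boundary length'' claim is false: the perimeters of the dichromatic children of $H_\bk$ are the downward jumps of one coordinate of $Z_\bk$, whose sum diverges (L\'evy measure $\asymp x^{-5/2}\,dx$), so the tail beyond index $K$ still carries infinite total perimeter in the limit. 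Skorokhod convergence of $Z_\bk^n$ controls the largest remaining jump, not the sum, and your Step~2 mass bound gives small \emph{area}, not small perimeter.

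The paper's route is much shorter and rests on a geometric observation you are not using: every dichromatic child of $H_\bk$ meets $\bdy H_\bk$ along an arc, and these arcs appear along $\bdy H_\bk$ in the same order as $\eta'$ fills the bubbles. Hence $\eta'(I)\cap\bdy H_\bk$ is a \emph{single} arc of $\bdy H_\bk$ that avoids all of the first $K$ children. Choosing $K$ so large that the first $K$ children leave no gap of boundary length $\ge\delta$ on $\bdy H_\bk$ forces this arc to have length $<\delta$, hence $d$-diameter $\le\ep/4$ by equicontinuity of $\xi_\bk$ (Lemma~\ref{lem-path-conv}). Now simply invoke Lemma~\ref{lem-component-diam}, already proved: every bubble in the string is a component of $H\setminus\eta'_{M,K}$ of diameter $\le\ep/4$, and each touches the short arc, so $\op{diam}(\eta'(I);d)\le\ep$ by the triangle inequality. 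The discrete case is identical, using equicontinuity of $\xi_\bk^n$ and Skorokhod convergence of $Z_\bk^n$ to transfer the ``no $\delta$-gap'' property to $\bdy\Map_\bk^n$. Neither Lemma~\ref{lem-pinch} nor Lemma~\ref{lem-bubble-2interval} is needed at this stage.
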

\begin{proof}
	The idea of the proof is that each connected component $\eta'(I)$ (resp.\ $\acute\eta^n(I^n)$) as in the statement of the lemma is close to the arc of $\bdy H_\bk$ (resp.\ $\bdy M_\bk^n$) which it intersects. The length of this boundary arc is small if $K$ is large, and its diameter can be bounded using equicontinuity of the boundary paths.
	
	Fix $\ep > 0$. 
	It is clear that
	increasing $M$ or $K$ cannot increase the maximal diameter of connected components of $H\setminus \eta_{M,K}'$; and the analogous statement holds for $\acute\eta_{M,K}^n$. 
	By  Lemma~\ref{lem-component-diam},  we can a.s.\ find $M_0 , K_0,N_0 \in \BB N$ such that as long as  $M\geq M_0,K\geq K_0$ and $n\ge N_0$,  we have 
	\eqb
	\label{eqn-component-diam2}
	\op{diam}\left( U ; d \right) \leq \frac{\ep}{4} ,\quad \text{$\forall$ connected component $U$ of $H\setminus \eta_{M,K}'$}; \quad \text{and}
	\eqe
	\eqb
	\label{eqn-component-diam-n2}
	\op{diam}\left(  U^n  ; d^n \right) \leq \frac{\ep}{4} ,\quad  \text{$\forall$ 2-connected component $U^n$ of $\Map^n- \acute\eta_{M,K}^n$}. 
	\eqe
	We will now fix $M\ge M_0$ and find $K$  satisfying  the conditions in Lemma~\ref{lem-bead-diam}.
	
	By Lemma~\ref{lem-path-conv}, there a.s.\ exists $\delta>0$ such that for each $\bk \in \bigcup_{m=0}^{M} [1,K_0]_{\BB Z}^m$ 
	and each $s_1,s_2 \in \BB R$ with $|s_1-s_2| \leq \delta$, the boundary paths satisfy $d (\xi_\bk (s_1) , \xi_\bk (s_2)) \leq \ep/4$ and $d^n(\xi_\bk^n(s_1) , \xi_\bk^n(s_2)) \leq \ep/2$ for each $n\in\N$.

	Every non-trivial arc of each $\bdy H_{\BB k}$ shares a non-trivial boundary arc with a connected component of $H_{\BB k}\setminus \eta_{\BB k}$. Hence we can a.s.\ find $K \geq K_0$ such that for each $\BB k\in \bigcup_{m=0}^{M_0} [1,K_0]_{\BB Z}^m$ and each arc $J$ of $\bdy H_{\BB k}$ with boundary length at least $\delta/2$, there exists $\BB k' \in \mcl C_{\BB k} \cap [1,K]_{\BB Z}$ such that $H_{\BB k'}$ shares a non-trivial arc with $J$. 
	
	From the Skorokhod convergence $Z_{\BB k}^n \rta Z_{\BB k}$ and the encoding of boundary intersections of $\eta_{\BB k}$ (resp.\ $\eta_{\BB k}^n$) in terms of the running infima of the coordinates of $Z_{\BB k}$ (resp.\ $Z_{\BB k}^n$) --- see Lemma~\ref{lem:boundary} --- we find that for large enough $n\in\N$, it holds for each $\BB k\in \bigcup_{m=0}^{M_0} [1,K_0]_{\BB Z}^m$ and each arc $J^n$ of $\bdy \Map_{\BB k}^n$ with rescaled boundary length at least $\delta $ that there exists $\BB k' \in \mcl C_{\BB k} \cap [1,K]_{\BB Z}$ such that $\Map_{\BB k'}^n$ shares a non-trivial arc with $J^n$. 
	
	If $I$ is a connected component of $[\wh t_\bk,t_\bk]\setminus(\bigcup_{\bk'\in [1,K]_{\BB Z}^{m+1}} [s_{\bk'},t_{\bk'}] )$ for $\BB k\in \bigcup_{m=0}^M [1,K]_{\BB Z}^m$, then $\eta'(I) \cap \bdy H_{\BB k}$ is a connected arc of $\bdy H_{\BB k}$ which does not share a non-trivial arc with $\bdy H_{\BB k'}$ for any $\BB k' \in \mcl C_{\BB k} \cap [1,K]_{\BB Z}$, so has boundary length at most $\delta$ by our choice of $K$. By our choice of $\delta$, the arc $\eta'(I) \cap \bdy H_{\BB k}$ has $d$-diameter at most $\ep/4$. The set $\eta'(I)$ is the union of closures of connected components of $H_{\BB k} \setminus \eta_{M,K}'$ whose boundaries intersect $\eta'(I) \cap \bdy H_{\BB k}$. By~\eqref{eqn-component-diam2}, each such component has $d$-diameter at most $\ep/4$, so $\op{diam}(\eta'(I)  ) \leq \ep$. This gives the continuum part of the lemma. The discrete part statement is obtained similarly, using~\eqref{eqn-component-diam-n2} and the last sentence of the preceding paragraph.
\end{proof}

\begin{proof}[Proof of Proposition~\ref{prop-peano-conv}]
	Fix $\ep >0$ and choose $M,K \in \BB N$ for which the conclusions of Lemmas~\ref{lem-component-diam} and~\ref{lem-bead-diam}  are both satisfied.
	We will now show that $\eta_{M,K}'$ (resp.\ $\acute\eta_{M,K}^n$) is a good approximation for $\eta'$ (resp.\ $\acute\eta^n$). 
	By the definition of $\eta_{M,K}'$, we have $\eta_{M,K}'(t) = \eta'(t)$ for each $t$ such that $\eta'(t) \in \eta'_{M,K}$. For each time $t$ such that $\eta_{M,K}'(t)$ does not lie in $\eta_{M,K}'$, either
	$\eta'(t)$ is contained in 
	a connected component of $H\setminus \eta_{M,K}'$ with $\eta_{M,K}'(t)$ on its boundary; 
	or $t$ is contained in  a connected component of $[\wh t_\bk,t_\bk]\setminus(\bigcup_{\bk'\in [1,K]_{\BB Z}^{m+1}} [s_{\bk'},t_{\bk'}] )$, 
	where $m\in[0,M]_\Z$, $\bk \in  [1,K]_{\BB Z}^m$, and   $H_\bk$ is monochromatic. In both cases, \(d\left( \eta_{M,K}'(t) , \eta'(t) \right) \leq  \ep\). To sum up, 
	$\BB d_{D}^{\op U}(\eta'_{M,K}, \eta')\le \ep$.
	Similarly, there exists $N \in \BB N$ such that $\BB d_{D}^{\op U}(\acute{\eta}^n_{M,K}, \acute{\eta}^n)\le \ep$
	 for  $n\ge N$.
	
	We claim that \(\limsup_{n \rta\infty} \BB d^{\op{Sk}}_D   \left(  \eta'_{M,K} , \acute\eta^n_{M,K} \right) \leq 2 \ep\), where $\BB d^{\op{Sk}}_D$ denotes the $D$-Skorokhod distance. Combined with the previous paragraph, we have  \(\limsup_{n \rta\infty} \BB d^{\op{Sk}}_D\left(\eta' , \acute\eta^n \right) \leq 4 \ep .\)
	Since $\ep  >0$ can be made arbitrarily small, we see that $\BB d_{D}^{\op {Sk}}(\acute\eta^n,\eta')\rta 0$.
	Since $\acute\eta^n$ and $\eta'$ are continuous, we will have $\BB d_{D}^{\op {U}}(\acute\eta^n,\eta')\rta 0$, as desired.
	
	Since the interval endpoints satisfy $s_\bk^n \rta s_\bk$ and $\wh t_\bk^n \rta \wh t_\bk$ (Lemma~\ref{lem:endpoint1}) and the $s_\bk$'s and $\wh t_\bk$'s for $\bk \in \bigcup_{m=0}^M [1,K]_{\BB Z}^m$ are distinct, we can find for $n\in\N$ an increasing homeomorphism $\phi^n : [0,\mu(H)] \rta [0,\mu^n(\Map^n)]  $ which satisfies $\phi^n(s_\bk  ) = s_\bk^n$ and $\phi^n(\wh t_\bk ) = \wh t_\bk^n$ for $\bk \in \bigcup_{m=0}^M [1,K]_{\BB Z}^m$ and which converges uniformly to the identity map $[0,\mu(H)] \rta [0,\mu(H)]$ as $ n \rta\infty$.  
	By Lemma~\ref{lem-time-change-conv}, we have $\eta_\bk^{\mas,n} \circ\phi^n \rta \eta_\bk^\mas$ uniformly on $[s_\bk , \wh t_\bk]$ for each $\bk  \in \bigcup_{m=0}^M [1,K]_{\BB Z}^m$. From this and the definitions of $\eta_{M,K}'$ and $\acute\eta_{M,K}^n$, we see that $\acute\eta_{M,K}^n\circ\phi^n \rta \acute\eta_{M,K}^n$ uniformly on the set 
	\eqbn
	A := \bigcup_{m=0}^M \bigcup_{\bk \in [1,K]_{\BB Z}^m} [s_\bk , \wh t_\bk] . 
	\eqen
	For each $t\in [0,\mu(H)]\setminus A$, we have $\eta_{M,K}'(t) = \eta_{M,K}'(\ol t)$, where $\ol t$ is the rightmost point of $A\cap [0,t]$.  By Lemma~\ref{lem-bead-diam}, 
	$D(\eta'_{M,K}(t), \eta'_{M,K}(\ol t )) \le \ep$.
	Similar considerations hold for $\acute\eta_{M,K}^n\circ\phi^n$ off of $A$. We thus obtain
	\(\limsup_{n \rta\infty} \BB d^{\op{Sk}}_D   \left( \eta'_{M,K} , \acute\eta^n_{M,K} \right) \leq 2 \ep\).
\end{proof}
\begin{remark}
	At a first glance, it might appear possible to prove Theorem~\ref{thm-metric-peano} more directly in the following manner. 
	If one can establish tightness of $\acute{\frk M}^n$ with respect to the GHPU topology, then by the Prokhorov theorem $(\acute{\frk M}^n, \acute Z^n)$ admits subsequential limits. 
	Each such subsequential limit $(\wt{\frk H}' , \wt Z')$ is a Brownian disk decorated by a space-filling curve together with a correlated two-dimensional Brownian motion which describes (in some sense) the evolution of the left and right boundary lengths of the curve. 
	One can then try to argue that this space-filling curve has to be space-filling SLE$_6$. 
	
	This does \emph{not} follow from~\cite[Theorem 1.11]{wedges}, which implies that the space-filling SLE$_6$-decorated Brownian disk $\frk H'$ is a.s.\ determined by its left/right boundary length process $Z'$. 
	The reason is that~\cite[Theorem 1.11]{wedges} only shows that $\frk H'$ is given by a \emph{non-explicit} measurable function of $Z'$, not that any curve-decorated Brownian disk whose left/right boundary length process agrees with $Z'$ has to be equal to $\frk H'$. 
	
	The paper~\cite{gwynne-miller-char} gives conditions under which a curve on a Brownian surface with known left/right boundary length process is in fact a form of SLE$_6$.
	One of the results of~\cite{gwynne-miller-char} is used to identify a subsequential limit of random planar maps decorated by a single percolation interface in~\cite{gwynne-miller-perc}. It is likely possible to prove Theorem~\ref{thm-metric-peano} by first establishing tightness of $\acute{\frk M}^n$, then checking the hypotheses of the space-filling SLE$_6$ version of the result of~\cite{gwynne-miller-char} (see~\cite[Theorem 7.1]{gwynne-miller-char}). 
	However, it appears that deducing Theorem~\ref{thm-metric-peano} from the case of a single interface, as we do here, is easier.
\end{remark}

\section{Consequences of the main result}
\label{sec-CLE}

In this section we prove some consequences of Theorem~\ref{thm:main-precise} mentioned in Section~\ref{sec:app}.
In Sections~\ref{subsec:ghpul} and~\ref{subsec:CLE-conv} we  provide the precise statement and the proof of  Theorem~\ref{thm:loop-vague}.  
In Section~\ref{subsec:pivotal} we prove results on  the scaling limit of pivotal points.

\subsection{The GHPUL topology}
\label{subsec:ghpul}
Given a metric space $(X, d)$,  an \emph{unrooted oriented loop} on $X$  is a continuous map from the circle to $X$ identified up to reparametrization by orientation-preserving homeomorphisms of the circle.
Define the pseudo-distance  between two continuous maps from the circle $\BB R/\Z$ to $X$  by
\begin{equation}\label{emb-eq:dist-unrooted}
\BB d^{\op u}_{d} (\ell,\ell')= \inf_{\psi} \sup_{t\in \R /\Z} d(\ell(t), \ell'(\psi(t)),
\end{equation}
where the infimum is taken over all orientation-preserving homeomorphisms $\psi : \BB R/\BB Z\rta\BB R/\BB Z$.

A closed set of unrooted oriented loops on $X$ with respect to the $\BB d^{\op u}_d$-metric is called a \emph{loop ensemble} on $X$.  
We let  $\cL(X)$ be the space of loop ensembles on $X$ and  consider the function
\[
\wh {\BB d}^{\op L}_{d}  (c,c')=\inf\{\ep>0: \forall \ell \in c, \exists \ell' \in c'\,\textrm{such that} \;\BB d^{\op u}_{d}(\ell,\ell')\le \ep\}\qquad \forall\;  c,c'\in \cL(X).
\] 
Then $\BB d^{\op L}_{d} $ defined by 
\begin{equation}\label{emb-eq:dist-ensemble}
\BB d^{\op L}_{d} (c,c')=\max\{ \wh {\BB d}^{\op L}_{d}  (c,c'),  \wh {\BB d}^{\op L}_{d}  (c',c) \}, \qquad \forall\; c,c'\in \cL(X)
\end{equation} 
is a metric on $\cL(X)$. Let $\BM^\GHPUL$ be the set of $5$-tuples $\frk X  = (X , d,  \mu , \eta, c)$, where $(X,d)$ is a compact metric space, $\mu$ is a finite Borel measure on $X$, $\eta \in C_0(\BB R,X)$, and $c\in \cL(X)$.   If we are given elements $\frk X^1  = (X^1 , d^1,  \mu^1 , \eta^1,c^1)$ and $\frk X^2  = (X^2 , d^2,  \mu^2 , \eta^2,c^2)$  of $ \BM^\GHPUL$ and isometric embeddings 
$\iota^1 : (X^1 , d^1) \rta (W,D)$ and $\iota^2 : (X^2 , d^2) \rta  (W,D)$ for some metric space $(W,D)$, we define the \emph{GHPU-Loop (GHPUL) distortion} of $(\iota^1,\iota^2)$ by
\[
\op{Dis}_{\frk X^1,\frk X^2}^\GHPUL\left(W,D , \iota^1, \iota^2 \right)   
:= \op{Dis}_{\frk X^1,\frk X^2}^\GHPU\left(W,D , \iota^1, \iota^2 \right)    +  \BB d^{\op{L}}_D \left(\iota^1(c^1) , \iota^2(c^2) \right) ,
\]
where $\op{Dis}_{\frk X^1,\frk X^2}^\GHPU(\cdot)$ is the GHPU distortion as defined in~\eqref{eqn-ghpu-var}.
The \emph{GHPU-Loop (GHPUL) distance} between $\frk X^1$ and $\frk X^2$ is given by
\[
\BB d^\GHPUL\left( \frk X^1 , \frk X^2 \right) 
= \inf_{(W, D) , \iota^1,\iota^2}  \op{Dis}_{\frk X^1,\frk X^2}^\GHPUL\left(W,D , \iota^1, \iota^2 \right),
\]
where the infimum is over all compact metric spaces $(W,D)$ and isometric embeddings $\iota^1 : X^1 \rta W$ and $\iota^2 : X^2\rta W$.	
{It can be proved following the argument of \cite[Proposition 1.3 and Section 2.2]{gwynne-miller-uihpq} that} $(\BM^\GHPUL,d^\GHPUL)$ is a complete separable metric space. 

Let $(H,d,\mu,\xi)$ be a Brownian disk as in Theorem~\ref{thm:main-precise} and let $\Gamma$ be a CLE$_6$ on $(H,d,\mu,\xi)$ as in Definition~\ref{def:CLE}, where each loop is viewed as an unrooted oriented loop by forgetting the parametrization. The closure of $\Gamma$ under the $\BB d^{\op u}_d$-metric is given by $\Gamma$ together with all points in $\D$ identified as trivial loops (see the the last paragraph in \cite[Section 3]{camia-newman-sle6}). In this sense, we view $\Gamma$ as an random variable in $\cL(H)$ throughout this section.

\subsection{The scaling limit of the loop ensemble}
\label{subsec:CLE-conv} 
Suppose $(\Map,\Be)$ is a triangulation with simple boundary and  $\omega$ is a site percolation on $(\Map,\Be)$ with monochromatic boundary condition. 
Let $\cC$ be  a non-boundary cluster of $\omega$, as defined in Section~\ref{sec:app}. Let $\neg\cC$ be  the connected component of $\cV(\Map)\setminus \cV(\cC)$ containing $\bdy\Map$.
The \emph{filled cluster}  $\ol\cC$ of $\cC$ is the largest subgraph of $\Map$ such that $v\in \cV(\cC)$ if and only if  $v\notin \neg \cC$. See Figure \ref{fig-tri}.
The \emph{outer boundary}  $\bdy\cC$ of $\cC$ is the largest subgraph of $\ol\cC$ such that  $v\in \cV(\bdy\cC)$ if and only if $v$ is adjacent to a vertex on $\neg \cC$.  
The \emph{loop} $\gamma=\gamma(\cC)$ \emph{surrounding} $\cC$  is defined\footnote{In \cite{bhs-site-perc} $\gamma$ was called the \emph{outside-cycle} of $\C$.} to be the collection of edges with one endpoint in $\cC$ and the other in $\neg \cC$. 
Order the edges in $\gamma$ in the order of visits by the space-filling exploration $\acute{\lambda}$ of $(\Map,\Be,\omega)$. As shown in \cite[Lemma~5.11]{bhs-site-perc}, 
$\gamma$ is an edge path in the sense of Section~\ref{sec-ghpu} under this order.
Let $\Gamma(\Map,\Be,\omega)$ be the collection of all the loops on $(\Map,\Be)$ defined as above. 

Recall the setting in Theorem~\ref{thm:loop-vague}. 
Let $\Gamma^n=\Gamma(\Map^n,\Be^n,\omega^n)$.
If we identify each  $\gamma^n \in \Gamma^n$ with an edge path on $\Map^n$ and extend it to a  continuous curve as in Section~\ref{sec-ghpu}, then $\gamma^n$ can be viewed as  an unrooted oriented loop on $(\Map^n,d^n)$.
Hence $\Gamma^n$ is a loop ensemble on $\Map^n$ in the sense of Section~\ref{subsec:ghpul}. 
Now $(\Map^n,d^n,\mu^n,\xi^n,\Gamma^n)$ can be identified as an element in $\BM^\GHPUL$. 
Now we are ready  to state the precise version of Theorem~\ref{thm:loop-vague}.

\begin{thm}\label{thm:loop}
	$(\Map^n,d^n,\mu^n,\xi^n,\Gamma^n)$ converges in law to  $(H,d,\mu,\xi,\Gamma)$ with respect to the GHPUL topology. Moreover, this convergence occurs jointly with the convergence of Theorem~\ref{thm:main-precise}.
\end{thm}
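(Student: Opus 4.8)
The plan is to deduce Theorem~\ref{thm:loop} from Theorem~\ref{thm:main-precise} by showing that the loop ensemble $\Gamma^n$ is, in a suitable approximate sense, a continuous functional of the space-filling exploration $\acute{\frk M}^n$, and likewise $\Gamma$ is the corresponding functional of $\frk H'$. Concretely, I would work under a coupling in which the convergence of Theorem~\ref{thm:main-precise} holds almost surely (via Skorokhod), so that $\acute{\eta}^n \to \eta'$ uniformly and $\acute Z^n \to Z'$ uniformly in a common space $(W,D)$ after isometric embedding. The key observation, recorded in~\cite[Lemma~5.11]{bhs-site-perc}, is that each loop $\gamma^n_\bk$ of $\Gamma^n$ is exactly the concatenation of a segment of the percolation interface $\lambda^n_{\bk^-}$ with all of $\lambda^n_\bk$, where $\bk$ ranges over dichromatic bubbles in the nested exploration of Section~\ref{sec-sle6-def-discrete} — this is the discrete analog of Definition~\ref{def:CLE} of $\Gamma$ via dichromatic bubbles in Section~\ref{sec-cle-def}. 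So the first step is to make this identification precise: index the loops of $\Gamma^n$ and of $\Gamma$ by the dichromatic multi-indices $\bk$, and show $\gamma^n_\bk$ (as an unrooted oriented loop) converges to $\gamma_\bk$ in the $\BB d^{\op u}_{D}$-metric for each fixed $\bk$. This convergence follows from Lemma~\ref{lem-path-conv} (the curves $\eta^n_\bk = \lambda^n_\bk$ and $\eta^n_{\bk^-}$ converge to $\eta_\bk$ and $\eta_{\bk^-}$ uniformly in $(W,D)$), together with the convergence of the relevant time endpoints, which can be read off from $\acute Z^n \to Z'$ exactly as in Proposition~\ref{prop:mating} and Lemmas~\ref{lem:endpoint1}–\ref{lem:endpoint2} — the times $\bar\sigma_\bk,\bar\tau_\bk$ defining $\gamma_\bk$ are determined by the ancestor/cone structure of $Z'$, and similarly in the discrete.

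The second step is to upgrade the convergence of individual loops to convergence of the whole ensembles in the GHPUL topology, which requires a uniform ``tail'' estimate: small bubbles produce small loops. Here I would invoke Lemma~\ref{lem-component-diam} and Lemma~\ref{lem-bead-diam} essentially verbatim. Lemma~\ref{lem-component-diam} already gives that all 2-connected components of $\Map^n - \acute\eta^n_{M,K}$ and all complementary components of $H\setminus\eta'_{M,K}$ have $d^n$- resp.\ $d$-diameter at most $\ep$ once $M,K$ are large, uniformly in $n$. Since any loop $\gamma^n_\bk$ with $\bk\notin\bigcup_{m\le M+1}[1,K]_\Z^m$ is contained in the closure of such a small component (its surrounded cluster $\ol{\cC}$ lies inside a bubble not separated by the approximating curves, modulo the string-of-dichromatic-bubbles phenomenon handled by Lemma~\ref{lem-bead-diam}), all such loops have diameter $\le \ep$ and are within $\ep$ (in $\BB d^{\op u}_D$) of the trivial constant loop, which is already an element of the closure $\Gamma^n$ (and of $\Gamma$, since trivial loops are included). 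Thus for the finitely many ``large'' loops indexed by $\bk\in\bigcup_{m\le M+1}[1,K]_\Z^m$ we have honest convergence from Step~1, and the remaining loops on both sides are uniformly $\ep$-close to trivial loops; this gives $\BB d^{\op{L}}_D(\Gamma^n,\Gamma)\to 0$ by the definition~\eqref{emb-eq:dist-ensemble} of the loop-ensemble Hausdorff metric. Combining with the GHPU convergence $\acute{\frk M}^n\to\frk H'$ already embedded in the same $(W,D)$ yields the joint GHPUL convergence, proving the ``moreover'' clause at the same time.

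The main obstacle I anticipate is Step~2, specifically verifying that a loop $\gamma^n_\bk$ surrounding a cluster $\cC$ genuinely sits inside a small-diameter complementary component of the approximating curve $\acute\eta^n_{M,K}$, rather than winding through a long chain of dichromatic sub-bubbles that the approximating curve does not see. This is precisely the issue already confronted in the proof of Proposition~\ref{prop-peano-conv} and resolved by Lemma~\ref{lem-bead-diam}, so I would adapt that argument: a loop surrounding a deeply-nested cluster either lies in one component of $\Map^n-\acute\eta^n_{M,K}$, or else its trace along the relevant interface $\lambda^n_{\bk^-}$ is confined to a short boundary arc of $\Map^n_{\bk^-}$ (short because $K$ is large) together with small-diameter bubbles, and in either case the diameter bound holds. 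A secondary technical point is to confirm that the discrete ``filled cluster / outer boundary / surrounding loop'' construction of Section~\ref{subsec:CLE-conv} matches the nested-bubble description so that the indexing of loops by dichromatic $\bk$ is exactly right; this is the content of~\cite[Lemma~5.11]{bhs-site-perc} and its surrounding discussion, which I would cite and spell out in the notation of Section~\ref{sec-sle6-def-discrete}. Finally one checks that the limiting object so obtained is indeed $\Gamma$ of Definition~\ref{def:CLE}, which is immediate once the loop-by-loop convergence of Step~1 is in place since the continuum loops $\gamma_\bk$ are defined by exactly the same concatenation recipe.
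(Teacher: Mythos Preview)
Your overall architecture matches the paper's proof: work under the Skorokhod coupling with everything embedded in $(W,D)$, index the macroscopic loops by dichromatic bubbles, prove $\gamma^n_\bk\to\gamma_\bk$ for each fixed $\bk$ via the concatenation description and Lemma~\ref{lem-path-conv}, and then dispose of the tail. The paper does exactly this, and your Step~1 is correct.

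The gap is in Step~2. A loop $\gamma^n_\bk$ for a dichromatic $\bk$ is the concatenation of the interface $\lambda^{n,2}$ inside $\Map^n_\bk$ and a segment $\lambda^{n,1}$ of the parent interface $\lambda^n_{\bk^-}$. The first piece does lie in a small complementary component when $\bk$ is not in your finite index set, so Lemma~\ref{lem-component-diam} handles it. But $\lambda^{n,1}$ is part of the approximating curve $\acute\eta^n_{M,K}$ itself, not of any complementary component; and it is a segment of the \emph{interface} $\eta^n_{\bk^-}$, not a boundary arc of $\Map^n_{\bk^-}$, so neither Lemma~\ref{lem-component-diam} nor Lemma~\ref{lem-bead-diam} bounds its diameter. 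The paper handles this with a separate argument: it introduces the time set $\cI^K$ of times when $\eta_\emptyset$ is not tracing one of the $K$ largest loops in $\Gamma(\emptyset)$, shows that its connected components have small $\eta_\emptyset$-image (because the Lebesgue measure of $\bigcap_K\cI^K$ is zero, as $\eta_\emptyset$ a.s.\ spends zero time on $\bdy H$), transfers this to the discrete $\cI^{n,K}$ via convergence of the endpoints $(\bar\sigma^n_k,\bar\tau^n_k)\to(\bar\sigma_k,\bar\tau_k)$ and equicontinuity of $\eta^n_\emptyset$, and then argues topologically that any cluster not among the top-$K$ is trapped between a short interface segment and a short boundary arc, hence small by Lemma~\ref{lem-pinch}. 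There is also a discrete-specific wrinkle you do not mention: for dichromatic $\bk$ one has $\Gamma(\bk)=\emptyset$ in the continuum, but $\Gamma^n(\bk)$ can be nonempty because $\omega^n$ and $\omega^n_\bk$ differ at a single vertex $v$; the paper shows separately that these spurious clusters are all adjacent to $v$ and have diameter $o(1)$. Your proposal would need both of these ingredients to close.
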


In the rest of this section, we will work in the setting of Proposition~\ref{prop-peano-conv}, so that $\{\acute{\frk M}^n \}_{n\in \N}$ is a sequence of percolated triangulations each equipped with its space-filling exploration path and $\frk H'$ is a Brownian disk decorated by a space-filling SLE$_6$. By the Skorokhod representation theorem, we can couple $\{\acute{\frk M}^n \}_{n\in \N}$ and $\frk H'$ such that the convergence in Theorem~\ref{thm:main-precise}  and Proposition~\ref{prop-component-conv} holds almost surely. Moreover,  we use Proposition~\ref{prop-ghpu-embed} to isometrically embed $\{\acute{\frk M}^n\}_{n\in \N}$ and $\frk H'$ into a metric space $(W,D)$ so that $\acute{\frk M}^n \rta \frk H'$ in the $D$-HPU sense.

\begin{proof}[Proof of Theorem~\ref{thm:loop}]  
	For $\ep>0$, let $\Gamma_\ep$ be the set of loops in $\Gamma$ with $D$-diameter larger than $\ep$.  
	Define $\Gamma^n_\ep$ for $n\in\BB N$ similarly. 
	Note that $ \BB D^{\op L}_{d^n}(\Gamma^n\setminus \Gamma^n_\ep,\Gamma\setminus \Gamma_\ep)\le 2\ep+o_n(1)$ for each $n\in \N$.
	It suffices to show that  in our coupling $\lim_{n\to\infty} \BB d^{\op L}_D(\Gamma^n_\ep,\Gamma_\ep)=0$   almost surely for each fixed $\ep > 0$.  
	Recall the bubbles $H_{\bk} \subset H$ for $\bk \in \bigcup_{m=0}^\infty \BB N^m$ from Section~\ref{sec-mating}. For $\bk \in \bigcup_{m=0}^\infty \BB N^m$, define
	\begin{equation}\label{def:Gammak}
	\Gamma(\bk)\defeq\{\gamma\in\Gamma:   \textrm{ $H_\bk$ is the smallest bubble containing $\gamma$}\} \qquad\textrm{and}\qquad \Gamma_\ep(\bk)\defeq\Gamma(\bk)\cap \Gamma_\ep.
	\end{equation}
	Here we emphasize that $H_\bk$ is closed, and that a loop $\gamma \in \Gamma(\bk)$ may intersect $\bdy H_\bk$.
	Define $\Gamma^n_\ep(\bk)\subset \Gamma^n(\bk)\subset \Gamma^n$ similarly with $\Gamma$ replaced by $\Gamma^n$ and $H_\bk$ replaced by the triangulation $\Map^n_\bk$ of Section~\ref{sec-sle6-def-discrete}. We claim that $\Gamma(\bk)\neq \emptyset$ if and only if $H_\bk$ is monochromatic and, furthermore, that $\gamma\in \Gamma(\bk)$ if and only if $\gamma=\gamma_{\bk'}$ for some $\bk'\in \cC_\bk$ where $H_{\bk'}$ is dichromatic (recall that $\gamma_{\bk'}$ is defined in Section~\ref{sec-cle-def}). Indeed, if all the $\gamma_{\bk'}$'s are removed from $H_\bk$, we are left with monochromatic bubbles smaller than $H_\bk$. Other loops in $H_\bk$ must be inside these smaller bubbles.

	Let $\BB K_\ep=\{\bk\in \bigcup_{m = 0}^\infty \N^m: \diam(H_\bk;D)\ge  \ep\}$. 
	Then by Lemma~\ref{lem-component-diam}, {we have that} $\#\BB K_\ep<\infty$ almost surely. Moreover, for $n$ large enough, {we have that} $\diam(M^n_\bk;D)\ge \ep$ if and only if $\bk\in \BB K_\ep$. 
	Since $\Gamma_\ep(\bk)\subset \bigcup_{\bk'\in\BB K_\ep} \Gamma_\ep(\bk')$ and $\Gamma^n_\ep(\bk)\subset \bigcup_{\bk'\in\BB K_\ep} \Gamma_\ep^n(\bk')$ almost surely for large enough $n$,  
	Theorem~\ref{thm:loop} will follow from the following assertion:   for each $\bk\in\BB K_\ep$, almost surely 
	\begin{align}
	&\lim_{n\to\infty} \#\Gamma^n_\ep(\bk)=\#\Gamma_\ep(\bk); \label{eq:loop-conv} \\
	&\lim_{n\to\infty} \BB d^{\op L}_D(\Gamma_\ep^n(\bk),\Gamma_\ep(\bk))=0 \textrm{ if }\Gamma(\bk)\neq \emptyset,\textrm{ namely, $H_\bk$ is monochromatic}. \label{eq:loop-conv1}
	\end{align}
	We first prove \eqref{eq:loop-conv}-\eqref{eq:loop-conv1} for $\bk=\emptyset$, which constitutes the main body of our proof.
	
	In the continuum, let $m\defeq\inf\{t\in \R: \eta_\emptyset(t)=\wh x_\emptyset\}$.
	For each $K\in \N$, let 
	\begin{align*}
	\Gamma^K(\emptyset)&\defeq \{\gamma_k:  k\in [1,K]_\Z,\; H_k\textrm{ is dichromatic}\}.\\
	\cI^K&\defeq\{t\in [0,m]: \eta_\emptyset(t)\notin \bigcup_{\gamma \in \Gamma^K(\emptyset)} \gamma\}
	\end{align*}
	By the definition of $\Gamma(\emptyset)$ in \eqref{def:Gammak}, {we see that} $\Gamma(\emptyset)$ is the set of loops that have a nontrivial segment traced by $\eta_{\emptyset}$. By the construction in Section~\ref{sec-cle-def}, {we have that} $\Gamma(\emptyset)=\cup_{K\in\N}\Gamma^K(\emptyset)$ and 
	$\bigcap_{K\in\N}\cI^K$ is the set of times where $\gamma_\emptyset$ visits the right boundary of $(H,x_\emptyset,\wh x_\emptyset)$. 
	If $u$ is uniform in $(0,1)$   independent of everything else, then almost surely $\gamma_\emptyset(um)$ is not on the boundary of $H$. 
	Therefore the Lebesgue measure of $\bigcap_{K\in\N}\cI^K$ is zero a.s., hence
	the Lebesgue measure of
	$\cI^K$ tends to $0$ as $K\to\infty$. By the continuity of  $\eta_\emptyset$ and since $H\setminus \eta_\emptyset$ has only finitely many connected components with diameter $>\ep$ for each $\ep> 0$ (this follows from the continuity of $\eta'$ --- see the proof of Lemma~\ref{lem-component-diam}), we may choose $K$ large enough such that  
	\begin{enumerate}[label=(\arabic{enumi})]
		\item\label{item:interface}
		for each connected component of $\cI^K$, the $D$-diameter of $\eta(I)$ is smaller than $\ep/2$; 
		\item\label{item:bubble}
		if $k>K$ and $H_k$ dichromatic, then $\diam(H_k;D)<\ep/2$.
	\end{enumerate}
	For this choice of $K$, each loop in $\Gamma(\emptyset)\setminus \Gamma^K(\emptyset)$ has $D$-diameter less than $\ep$, hence $\Gamma_\ep(\emptyset)\subset \Gamma^K(\emptyset)$.

	Now we turn our attention to the discrete. In the rest of the proof, the colors on $\cV(\Map^n)$ are with respect to $\omega^n$ unless otherwise stated. We  identify a loop in $\Gamma^n$ with the cluster it surrounds. We assume $n$ to be so large that for each  $k$ with $\gamma_k\in \Gamma^K(\emptyset)$,  
	we have $\type_k=\op{di}$ and $\omega^n$ is dichromatic on $(\Map^n_k,\Be^n_k)$.  For each such $k$,
	let $\cC^n_k$ be the cluster containing the blue vertices on $\bdy\Map^n_k$. Let $\Gamma^{n,K}(\emptyset)$ be the collection of such clusters, which is viewed as a subset of $\Gamma^n(\emptyset)$.

	We will now argue that loops in $\Gamma^{n,K}(\emptyset)$  admit an analogous description to loops in $\Gamma^K(\emptyset)$, and use this to show that they converge to the loops in $\Gamma^K(\emptyset)$. We recall the setting of Section~\ref{sec-sle6-def-discrete}. For each $k$ such that $\cC^n_k \in \Gamma^{n,K}(\emptyset)$,  let $\lambda^{n,2}$ be the percolation interface of $(\Map^n_k, \Be^n,\omega^n|_{\cV(\Map^n_k)})$ and $\wh \Be'_k$ be the target of $\lambda^{n,2}$.
	For $e=\Be_k$ or $\wh\Be'_k$, let $\tri(e)$ be the unique triangle containing $e$ that is visited by
	$\lambda^n_\emptyset$.  There are exactly two edges on $\tri(e)$ visited by $\lambda^n_\emptyset$.
	Let $\wh e_k$  and $e_k$ be the second edge on $\tri(\wh \Be'_k)$  and the first edge on $\tri(\Be_k)$  visited by  $\lambda^n_\emptyset$, respectively. Let $\lambda^{n,1}_k$ be the segment of $\lambda^n_\emptyset$ from $\wh e_k$ to $e_k$. Let $\gamma^n_k$ be the loop surrounding $\cC^n_k$.
	Then $\gamma^n_k$ is the concatenation of  $\lambda^{n,1}_\emptyset$ and $\lambda^{n,2}_\emptyset$. See Figure \ref{fig-tri}.
	\begin{figure}[ht!]
	\begin{center}
	\includegraphics[scale=1]{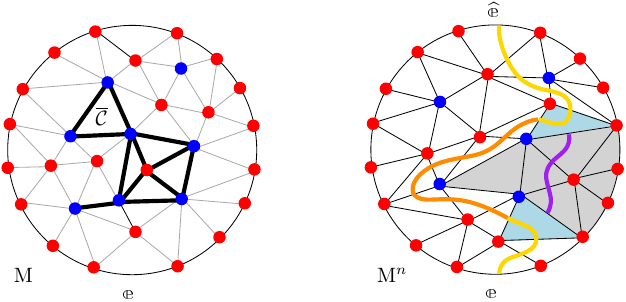} 
	\caption{\label{fig-tri} 
		{\bf Left}: Let $\cC$ be the cluster consisting of seven blue vertices. Then the filled cluster $\ol\cC$ is the map with bold edges.
		{\bf Right}: Illustration of objects in the proof of Theorem \ref{thm:loop}. The map $\Map^n_k$ is shown in gray, while tri$(\Be_k)$ (upper) and tri$(\wh\Be'_k)$ (lower) are shown in blue. The percolation cycle $\gamma_k^n$ is the concatenation of $\lambda_\emptyset^{n,1}$ (orange) and $\lambda_\emptyset^{n,2}$ (purple), while the percolation interface $\lambda^n_\emptyset$ is the concatenation of the yellow paths and the orange path.
	}
	\end{center}
	\end{figure}
	
	By Lemma~\ref{lem-interface-markov} and Proposition \ref{prop-component-conv}, {we have that} $\lambda^{n,2}$ converges to $\eta_k$ in the $\BB d^{\op u}_D$-metric, as defined in~\eqref{emb-eq:dist-unrooted}. Note that the target edge of $\lambda^{n,2}$ and $\lambda^n_k$ as defined in Section~\ref{sec-sle6-def-discrete} are different, but that the two target edges share a vertex. Therefore, the two interfaces have the same scaling limit. Recall $\bar \sigma_k=\inf\{s: \eta_\emptyset(s) \in \bdy H_k \}$ and $\bar\tau_k=\sup\{s:\eta_\emptyset (s) \in \bdy H_k \}$ defined in Section~\ref{sec-cle-def}.
	Let $\bar\sigma^n_k$ and $\bar\tau^n_k$ be such that $\lambda^n_\emptyset(\BB sn^{3/4} \bar\sigma^n_k)=\wh e_k$ and $\lambda^n_\emptyset(\BB s n^{3/4}\bar\tau^n_k)=e_k$.
	Since $\eta^n_\emptyset(\bar\sigma^n_k)\rta \eta_\emptyset(\bar\sigma_k)$,
	$\eta^n(\bar\tau^n_k)\rta \eta_\emptyset(\bar\tau_k)$, and both $\eta_\emptyset(\bar\sigma_k)$ and $\eta_\emptyset(\bar\tau_k)$ are visited by $\eta_\emptyset$ once, 
	we have   $(\bar\sigma^n_k,\bar\tau^n_k) \rta (\bar\sigma_k,\bar\tau_k)$ almost surely. 
	Therefore $\lambda^{n,1}$ converges to $\eta_\emptyset|_{[\bar\sigma_k,\bar\tau_k]}$ in the $\BB d^{\op u}_D$-metric. By Definition~\ref{def:CLE}, we have 
	\begin{equation}\label{eq:loop-limit}
	\textrm{$\BB d^{\op u}_D(\gamma^n_k,\gamma_k)\rta 0$ almost surely.}
	\end{equation}

	Let $m^n=\inf\{i\in\N_0: \lambda^n(i) =\wh \Be^n\}$ and 
	\[
	\cI^{n,K}\defeq\{i\in [0,m^n]_\Z: \lambda^n_\emptyset(i)\notin \bigcup_{\lambda \in \Gamma^{n,K}(\emptyset)} \lambda\}.
	\]
	Then by the previous paragraph the endpoints of the connected components of $\cI^{n,K}$ rescaled by $\BB sn^{3/4}$ converge to the endpoints of the connected components of $\cI^{K}$. 
	Choose $\delta$ such that  Lemma~\ref{lem-pinch} holds with  $\ep$ replaced by $\ep/2$.  
	Using the equicontinuity of $\eta^n_\emptyset$ for large enough $n$, by possibly increasing $K$, we can require that
	for each connected component $I$ of $\cI^{n,K}$,  the following hold: 
	\begin{enumerate}[label=(\arabic{enumi})]
		\item\label{item:interface1}
		The $D$-diameter of $\lambda_\emptyset^n(I)$ is less than $\delta/2$.
		\item\label{item:bubble1}
		Let $a,b$ be the endpoints of $I$. Let $v_a$  and $v_b$ 
		be the endpoints of $\lambda^n_\emptyset(a)$ and $\lambda^n_\emptyset(b)$  on $\bdy\Map^n$  respectively. 
		Then the $D$-diameter of the arc on $\bdy\Map^n$ between $v_a$ and $v_b$  is less than $\delta/2$.
	\end{enumerate}
	We claim that $\Gamma^n_\ep(\emptyset)\subset\Gamma^{n,K}(\emptyset)$ for such choice of $K$. For each $\cC\subset \Gamma^n(\emptyset)$,
	by definition the set $J(\cC):=\{j\in [0,m^n]_\Z: \lambda^n_\emptyset(j)\textrm{ has an endpoint on }\cC\}$ is nonempty.   Moreover, for  each $j\in J(\cC)$ the endpoint of $\lambda^n_\emptyset(j)$ on $\cC$  must be blue since the red endpoint is on the boundary cluster. If $\cC$ is surrounded by $\gamma_k$ for some $\gamma_k\in \Gamma^{n,K}(\emptyset)$, 
	then $J(\cC)=\{j\in [0,m^n]_\Z: \lambda^n_\emptyset(j) \in \gamma^n_k \}$.  Therefore, if $\cC\notin \Gamma^{n,K}(\emptyset)$, then $J(\cC)\subset \cI^{n,K}$. Consider a $j\in J(\cC)$ and 
	the connected component $I$ of $\cI^{n,K}$ containing $j$.  Let $v_a,v_b$ be defined as in \ref{item:bubble1} above. Since edges in $\{\lambda^n_\emptyset(i) \}_{i\in I}$ and boundary edges between $v_a$ and $v_b$ are not in $\cC$, it must be the case that 
	$\cC$ is in the region bounded by these edges. By our choice of $K$, we have $\diam(\cC;D)\le \ep/2$. Hence
	$\diam(\gamma^n;D)\le \ep$ if  $\dcon^{-1} n^{-1/4}<\ep/2$.  This gives  $\Gamma^n_\ep(\emptyset)\subset\Gamma^{n,K}(\emptyset)$. By \eqref{eq:loop-limit}, we obtain \eqref{eq:loop-conv}-\eqref{eq:loop-conv1} for $\bk=\emptyset$.

	As a byproduct of the argument above, for $n$ large enough, each blue cluster on $\Gamma^n(\emptyset)$ with a 
	vertex adjacent to the right endpoint of $\Be$ has diameter smaller than $\ep$. 
	Now given $k\in \N$ such that $H_k$ is dichromatic, choose $n$ so large that $\type_k=\op{di}$. Recall that $\omega^n$ and $\omega^n_k$ only differ at a single vertex $v$, which is an endpoint of $\wh \Be^n_k$. Therefore each cluster in $\Gamma^n(\bk)$ must have a vertex adjacent to $v$. 
	Therefore the same argument as for the aforementioned byproduct implies that the diameters of these clusters are smaller than $\ep$ for large enough $n$. Since $\Gamma(\bk)=\emptyset$, this proves \eqref{eq:loop-conv} for $\bk=k$. 
	
	By iterating  the argument above for each multi-index in $\BB K_\ep\cap (\N\cup \N^2)$ that corresponds to a monochromatic bubble,  we obtain \eqref{eq:loop-conv}-\eqref{eq:loop-conv1} for all $\bk\in \BB K_\ep$.  
\end{proof}

\subsection{The embedding from the mating-of-trees bijection}\label{subsec:relation}
Suppose $\{\acute{\frk M}^n \}_{n\in \N}$ and  $\frk H'$  are coupled such that $\acute Z^n\rta Z'$ almost surely, which is satisfied in our coupling.
In \cite[Section~7.2]{bhs-site-perc}, a sequence of mappings $\phi^n$ from $\cV(\Map^n)\cup \cE(\Map^n)$ to the unit disk $\D$ are considered. Identifying $\D$ with the Brownian disk $H$ (via~\eqref{eq:disk}), we obtain an embedding of
$\cV(\Map^n)\cup \cE(\Map^n)$ into $H$, which we still denote by $\phi^n$. 
Under $\{\phi^n \}$, several important percolation observables are proved to converge to their continuous counterpart, including the loop ensemble, the exploration tree, and the counting measure on  pivotal points. 
The precise definition of $\phi^n$ relies on more detailed information of the bijection in Section~\ref{sec-discrete}, which we will not review here. Here we list two properties of $\{\phi^n\}$ which follow from the definition and \cite[Lemma~9.20]{bhs-site-perc}, which specify $\phi^n$ up to an $o_n(1)$ error:
\begin{enumerate}[label=(\arabic{enumi})]
	\item \label{item:edge} For each $e\in \cE(\Map^n)$, let $t_e$ be such that $\acute\lambda^n(\mcon n t_e)=e$. Then almost surely
	\[
	\lim_{n\to \infty}\sup_{e\in \cE(\Map^n)}d(\phi^n(e), \eta'(t_e))=0.
	\]
	\item \label{item:vertex} \(\lim_{n\to \infty}\sup_{e,v}d(\phi^n(v), \phi^n(e))=0\) where $e,v$ in the $\sup$ range over edges and vertices on $\Map^n$ such that $v$ is an endpoint of $e$.  
\end{enumerate}

The following lemma is an immediate consequence of these two properties of $\phi^n$ and 
the almost sure convergence of the space-filling exploration $\acute{\eta}^n$ to $\eta'$ in $D$-uniform metric.

\begin{lem}\label{lem:compare}
	In the coupling above, almost surely
	$$
	\lim_{n\to \infty}\sup_{e\in \cE(\Map^n)} D(\phi^n(e), e)+\sup_{v\in \cV(\Map^n)}D(\phi^n(v),v)=0.
	$$
\end{lem}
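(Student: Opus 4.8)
\textbf{Proof proposal for Lemma~\ref{lem:compare}.}

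The plan is to deduce the statement directly from the two listed properties \ref{item:edge} and \ref{item:vertex} of the maps $\phi^n$, together with the almost sure $D$-uniform convergence $\acute\eta^n \rta \eta'$ established in Proposition~\ref{prop-peano-conv} (which holds in the coupling under consideration). First I would recall that, by the definition of $\acute\eta^n$ as the reparametrization of $\acute\lambda^n$ by $\mu^n$-mass, for each edge $e\in\cE(\Map^n)$ the point $\acute\eta^n(t_e)$ lies on the edge $e$, where $t_e$ is the time with $\acute\lambda^n(\mcon n t_e)=e$; more precisely $\acute\eta^n([t_e - \mcon^{-1}n^{-1}, t_e])$ parametrizes (a connected portion of) the edge $e$, so $D(\acute\eta^n(t_e), x)\le \diam(e;d^n)$ for every point $x$ of $e$, and $\diam(e;d^n)=\dcon^{-1}n^{-1/4}\to 0$ uniformly in $e$. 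Hence $\sup_{e}\, \sup_{x\in e} D(\acute\eta^n(t_e),x)\to 0$ almost surely; in particular $D(\acute\eta^n(t_e), e)\le \dcon^{-1}n^{-1/4}$ and the same bound holds with $e$ replaced by any endpoint $v$ of $e$.

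Next I would combine this with Proposition~\ref{prop-peano-conv}. Since $\acute\eta^n$ and $\eta'$ are continuous curves on $[0,\mu^n(\Map^n)]$ and $[0,\mu(H)]$ respectively and $\acute\eta^n\rta\eta'$ in the $D$-uniform sense (after the standard extension of $\acute\eta^n$ to a curve on all of $\R$ as in Section~\ref{sec-ghpu}, and using $\mu^n(\Map^n)\to\mu(H)$), we have $\sup_{t} D(\acute\eta^n(t),\eta'(t))\to 0$. Applying this at $t=t_e$ and using the triangle inequality with the previous paragraph gives
\[
\sup_{e\in\cE(\Map^n)} D(\eta'(t_e), e) \;\le\; \sup_{t} D(\acute\eta^n(t),\eta'(t)) + \dcon^{-1}n^{-1/4} \;\longrightarrow\; 0 \quad\text{a.s.}
\]
Now property \ref{item:edge} says $\sup_{e} d(\phi^n(e),\eta'(t_e))\to 0$ (here $d$ is the metric on $H$, which equals $D$ restricted to $H\subset W$), so by the triangle inequality $\sup_{e} D(\phi^n(e), e) \le \sup_e D(\phi^n(e),\eta'(t_e)) + \sup_e D(\eta'(t_e), e)\to 0$ almost surely. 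Finally, for the vertex bound: given $v\in\cV(\Map^n)$, pick any edge $e$ incident to $v$; then $D(\phi^n(v),v)\le D(\phi^n(v),\phi^n(e)) + D(\phi^n(e), e) + D(e,v)$, where $D(e,v)=0$ since $v$ is an endpoint of $e$ (so $v\in e$ as a subset of $W$). Property \ref{item:vertex} controls the first term uniformly over incident pairs $(v,e)$, and the second term was just shown to vanish uniformly; hence $\sup_{v} D(\phi^n(v),v)\to 0$ almost surely. Adding the two suprema yields the claim.

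The argument is essentially bookkeeping with the triangle inequality, so I do not anticipate a genuine obstacle; the only point requiring a little care is making sure the implicit reparametrizations and domain-mismatch issues (the differing time intervals $[0,\mu^n(\Map^n)]$ vs.\ $[0,\mu(H)]$, and the fact that $t_e$ need not coincide with a time at which $\acute\eta^n$ sits exactly at a canonical representative point of $e$) are absorbed into the uniform $o_n(1)$ errors. Both are handled by the uniform bound $\diam(e;d^n)=\dcon^{-1}n^{-1/4}$ on edge diameters together with the extension convention for $\acute\eta^n$ from Section~\ref{sec-ghpu}, under which the curves are viewed as elements of $C_0(\R,W)$ and the $D$-uniform convergence of Proposition~\ref{prop-peano-conv} already accounts for the endpoint matching. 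No further input is needed.
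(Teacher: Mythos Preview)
Your proposal is correct and follows essentially the same approach as the paper: the paper's own proof consists of the single sentence that Lemma~\ref{lem:compare} is ``an immediate consequence of these two properties of $\phi^n$ and the almost sure convergence of the space-filling exploration $\acute{\eta}^n$ to $\eta'$ in $D$-uniform metric,'' and what you wrote is precisely the triangle-inequality bookkeeping that fills this in. Your cautionary remark about the edge-diameter bound $\dcon^{-1}n^{-1/4}$ absorbing any parametrization mismatch is the right observation; no further input is needed.
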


Lemma~\ref{lem:compare} allows us to transfer all the convergence results in \cite[Theorem~7.2]{bhs-site-perc} to convergence in the $D$-metric in our coupling. Let us illustrate this in the setting of loop ensembles. For each $\gamma\in\Gamma^n$, let  $\reg(\gamma)=\ol\cC$ where $\cC$ is the cluster of $\omega^n$ surround by $\gamma$ and $\ol\cC$ is the filled cluster defined in Section~\ref{subsec:CLE-conv}.
Let $\area(\gamma)=\mu^n(\reg(\gamma))$.
For each $\gamma\in \Gamma$, let $\neg\gamma$ be the connected component of $H\setminus \gamma$ containing $\bdy H$. Let 
$\reg(\gamma)$ be the closure of the union of all connected components of $H\setminus\gamma$ other than $\neg\gamma$ where $\gamma$ visits the boundary in the same orientation as visiting $\bdy(\neg\gamma)$. Let $\area(\gamma)=\mu(\reg(\gamma))$. 
For $j\in\N$, let $\gamma^n_j$  (resp.\ $\gamma_j$)  be the loop in $\Gamma^n$ (reps. $\Gamma$) with the $j$-th largest area.  Then by  \cite[Theorem~7.2]{bhs-site-perc},  
$\lim_{n\to\infty}\BB d^{\op u}_D(\phi^n\circ \gamma^n_j, \gamma_j)=0$, with $\BB d_D^{\op u}$ as in~\eqref{emb-eq:dist-unrooted}.  
By Lemma~\ref{lem:compare}, {we have that} $\lim_{n\to\infty}\BB d^{\op u}_D(\phi^n\circ \gamma^n_j, \gamma^n_j)=0$ hence $\lim_{n\to\infty}\BB d^{\op u}_D(\gamma^n_j, \gamma_j)=0$. In fact, \cite[Theorem~7.2]{bhs-site-perc} gives the convergence $\gamma_j^n \rta \gamma_j$ in the $D$-uniform metric under the following parametrizations:  $\gamma^n_j$ is parametrized starting from the first edge visited by $\acute{\lambda}^n$ and rescaled as $\eta^n$, while $\gamma_j$ is as in Definition~\ref{def:CLE}.
Although this convergence is stronger than the $\BB d^{\op u}_D$-convergence for  individual loops, \cite[Theorem~7.2]{bhs-site-perc} does not imply Theorem~\ref{thm:loop} because it does not rule out the existence of sequence $\gamma^n\in\Gamma^n$ such that $\gamma^n$ encloses $o_n(1)$ units of $\mu^n$-area but $\diam(\gamma^n;D)$ is uniformly larger than a positive constant.  Our proof of Theorem~\ref{thm:loop} rules this out by the equicontinuity of $\acute{\eta}^n$.

\subsection{Pivotal measure and the color flipping at a pivotal point}
\label{subsec:pivotal}
In this subsection, we will explain why our results imply that the so-called \emph{pivotal measures} associated with $(\Map^n , \BB e^n ,\omega^n)$ converge to their continuum analogues under the coupling described just after Theorem~\ref{thm:loop}. This result will play an important role in the proof of the convergence of the Cardy embedding in~\cite{hs-cardy-embedding}; see Remark~\ref{rmk:dynamical}.

For $v\in \cV(\Map^n)\setminus\cV(\bdy \Map^n)$,  let $\Gamma^n_v$ be the loop ensemble associated with the percolation obtained from $\omega^n$  by flipping the color of $v$, and let $\cL^n_v$ be the symmetric difference of $\Gamma^n$ and $\Gamma^n_v$.  
For $\ep>0$, we say that $v$ is an \emph{$\ep$-pivotal point} of $(\Map^n,\Be^n, \omega^n)$ 
if  there are at least three loops in $\cL^n_v$ with  area (see the definition at the end of Section~\ref{subsec:CLE-conv}) at least $\ep$.
Morally speaking, $v$ is an $\ep$-pivotal point if flipping the color of $v$ results in some splitting or merging of clusters  of ``size'' at least $\ep$.

A point $v\in \D$ is called a pivotal point of $\Gamma$ if it is a  point of  intersection of at least two loops of $\Gamma$ or if it is visited at least twice by a loop in $\Gamma$.
As shown in \cite[Theorem 2]{camia-newman-sle6}, with probability 1, {we have that} $v\in\D$ is a pivotal point of $\Gamma$ if and only if one of the following two occurs:
\begin{enumerate}[label=(\arabic{enumi})]
	\item there exist exactly two loops $\gamma,\gamma'\in\Gamma$ such that $v\in\gamma\cap\gamma'$ and both $\gamma$ and $\gamma'$ visit $v$ once;
	\label{item:int}
	\item there exists a unique loop $\gamma\in\Gamma$ that visits $v$ and $\gamma$ visits $v$ exactly twice.  
	\label{item:double}
\end{enumerate}
By \emph{flipping the color} at $v$, we mean merging $\gamma,\gamma'$ into a single loop in Case~\ref{item:int} and splitting ${\gamma}$ into two loops in Case~\ref{item:double}. See Figure \ref{fig-flip}. If a loop does not visit $v$, flipping the color at $v$ keeps the loop unchanged.  Let $\Gamma_v$ be the set of loops obtained after flipping the color at $v$. Then it is easy to see that  the orientation of $\Gamma$ induces an orientation on each loop in $\Gamma_v$, making it an ensemble of unrooted oriented loops. Moreover,  the symmetric difference  $\cL_v$ of $\Gamma$ and $\Gamma_v$ always contains exactly  three loops.  We say that $v$ is an $\ep$-pivotal point of $\Gamma$ if each loop in $\cL_v$ has area at least $\ep$.

\begin{figure}[ht!]
	\begin{center}
		\includegraphics[scale=1]{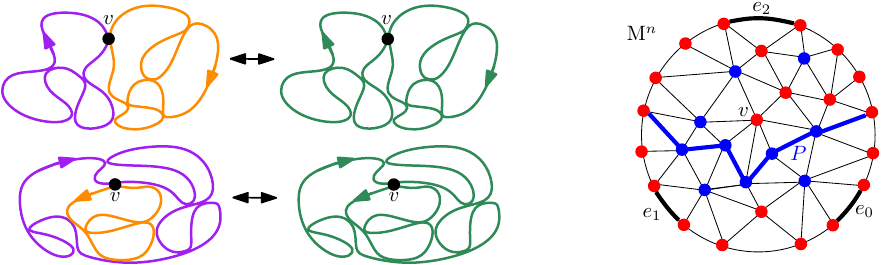} 
		\caption{ 
			{\bf Left}: Illustration of the operation of flipping the color at $v\in\D$. Each loop is drawn with a different color. In Case \ref{item:int} we go from left to right when we merge the two loops, while in Case \ref{item:double} we go from right to left when we split the single loop. The top (resp.\ bottom) row illustrates the case of non-nested (resp.\ nested) loops.
			{\bf Right}: Illustration of the event $E^n(e_0,e_1;e_2,v)$ considered in Section \ref{subsec:crossing}.
		}
		\label{fig-flip}
	\end{center}
\end{figure}

Let $\nu^n$ be the measure on $\cV(\Map^n)$ where each vertex is assigned mass $n^{-1/4}$.
For each $\ep>0$,  let $\nu^n_\ep$ be the restriction of $\nu^n$ to the $\ep$-pivotal points of $\Gamma^n$.  By \cite[Theorem~7.2]{bhs-site-perc}, there exists a random finite Borel measure $\nu_\ep$ supported on the set of $\ep$-pivotal points of $\Gamma$ such that  $(\phi^n)_* \nu^n$, which is the pushforward of $\nu^n_\ep$ under $\phi^n$,  converges to $\nu_\ep$ in probability. 

\begin{prop}\label{prop:pivot1}
	In our coupling $\nu^n_\ep$ converges to $\nu_\ep$ in probability with respect to the $D$-Prokhorov metric. Moreover,  $\mathbf{1}_{\nu^n_\ep=0}$ converges to  $\mathbf 1_{\nu_\ep=0}$ in probability.
\end{prop}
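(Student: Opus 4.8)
\textbf{Proof proposal for Proposition~\ref{prop:pivot1}.}

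The plan is to leverage the already-established convergence $(\phi^n)_*\nu^n_\ep \rta \nu_\ep$ in probability (which is \cite[Theorem~7.2]{bhs-site-perc}) together with Lemma~\ref{lem:compare}, exactly in the spirit of the discussion in Section~\ref{subsec:relation}. The only gap between $(\phi^n)_*\nu^n_\ep$ and $\nu^n_\ep$ (viewed as a measure on $W$ via the isometric embedding $\iota^n$) is that the former places mass at the points $\phi^n(v)$ while the latter places mass at the images of the vertices $v$ themselves. Lemma~\ref{lem:compare} says precisely that $\sup_{v\in\cV(\Map^n)} D(\phi^n(v),v)\rta 0$ a.s.\ in our coupling. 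Since $\nu^n_\ep$ and $(\phi^n)_*\nu^n_\ep$ are obtained from one another by moving each atom a distance at most $\sup_{v}D(\phi^n(v),v)$, the $D$-Prokhorov distance between them is bounded above by this supremum, hence tends to $0$ a.s. Combined with $(\phi^n)_*\nu^n_\ep\rta\nu_\ep$ in probability (with respect to $D$-Prokhorov), the triangle inequality for the Prokhorov metric gives $\nu^n_\ep\rta\nu_\ep$ in probability. I would state the elementary fact that pushing a finite measure forward by a map that displaces every point by at most $\delta$ changes the Prokhorov distance by at most $\delta$ as a one-line lemma and prove it directly from the definition of $\BB d^{\op P}_D$.

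For the second statement, I would argue as follows. On the event $\nu_\ep = 0$: since $\nu^n_\ep \rta \nu_\ep = 0$ in probability with respect to $D$-Prokhorov and $\nu^n_\ep$ is a (nonnegative integer multiple of $n^{-1/4}$)-valued total-mass measure, convergence of $\nu^n_\ep$ to the zero measure forces the total mass $\nu^n_\ep(\Map^n)$ to tend to $0$; but $\nu^n_\ep(\Map^n)$ is either $0$ or at least $n^{-1/4}\cdot(\text{number of }\ep\text{-pivotal points})$, and in fact if it is nonzero it is at least $n^{-1/4}$. Wait --- that bound alone does not immediately give the claim because $n^{-1/4}\to 0$. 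The cleaner route is: $\mathbf 1_{\nu^n_\ep\ne 0}$ equals $1$ iff there is at least one $\ep$-pivotal point, equivalently iff there are at least three loops in some $\cL^n_v$ of area $\ge\ep$. Here I would instead use the joint convergence of the loop ensembles (Theorem~\ref{thm:loop}, or rather the refined statement that the loops of $\Gamma^n$ of $\mu^n$-area $\ge\ep$ converge to those of $\Gamma$ of area $\ge\ep$, together with convergence of their areas, from \cite[Theorem~7.2]{bhs-site-perc}) plus the characterization of $\ep$-pivotal points of $\Gamma$ via Cases~\ref{item:int}--\ref{item:double}. The key is to show that $\{\nu_\ep\ne 0\}$ and $\{\nu_\ep=0\}$ are, up to null sets, open and closed in the appropriate sense, i.e.\ that $\mathbf 1_{\nu_\ep=0}$ is a.s.\ a continuity point of the limiting configuration.

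Concretely, I would proceed: (i) On $\{\nu_\ep\ne 0\}$, fix an $\ep$-pivotal point $v_0$ of $\Gamma$; by \cite[Theorem~7.2]{bhs-site-perc} there are $\ep$-pivotal points $v_0^n$ of $\Gamma^n$ with $\phi^n(v_0^n)\rta v_0$, so $\nu^n_\ep\ne 0$ for large $n$, giving $\liminf_n\mathbf 1_{\nu^n_\ep\ne 0}\ge\mathbf 1_{\nu_\ep\ne 0}$. Actually the convergence of measures $(\phi^n)_*\nu^n_\ep\rta\nu_\ep$ with $\nu_\ep\ne 0$ already forces $\nu^n_\ep\ne 0$ for large $n$ (a sequence of zero measures cannot converge to a nonzero one), so this direction is free. (ii) For the reverse direction, I need: on $\{\nu_\ep=0\}$, eventually $\nu^n_\ep=0$. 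Suppose not; then along a subsequence there exist $\ep$-pivotal points $v^n$ of $\Gamma^n$, i.e.\ three loops $\gamma_1^n,\gamma_2^n,\gamma_3^n\in\cL^n_{v^n}$ each of $\mu^n$-area $\ge\ep$; after passing to a further subsequence and using compactness of $W$ together with \cite[Theorem~7.2]{bhs-site-perc}, the points $\phi^n(v^n)$ converge to some $v\in H$ and the relevant loops converge to loops of $\Gamma$ (or $\Gamma_v$) of area $\ge\ep$ that witness $v$ being an $\ep$-pivotal point of $\Gamma$, contradicting $\nu_\ep=0$. The one subtlety is passing the ``three distinct loops in the symmetric difference, each of area $\ge\ep$'' condition through the limit; this uses that area is continuous along the convergence (part of \cite[Theorem~7.2]{bhs-site-perc}) and that the color-flipping operation commutes with the limit at a point where the limiting configuration has the structure of Case~\ref{item:int} or \ref{item:double} --- here one invokes \cite[Theorem~2]{camia-newman-sle6} to know that $v$ is automatically of one of these two types once it is pivotal. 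Combining (i) and (ii) gives $\mathbf 1_{\nu^n_\ep\ne 0}\rta\mathbf 1_{\nu_\ep\ne 0}$ a.s.\ along the coupling, hence in probability.

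The main obstacle I anticipate is step (ii): ruling out ``spurious'' discrete $\ep$-pivotal points $v^n$ whose images $\phi^n(v^n)$ converge to a point that is \emph{not} $\ep$-pivotal (or not pivotal at all) in the limit. This requires a quantitative handle showing that if flipping $\omega^n$ at $v^n$ creates/destroys three macroscopic loops then, in the limit, the loops of $\Gamma$ near the limit point $v$ must genuinely touch or self-touch at $v$ with all three symmetric-difference loops macroscopic --- essentially an upper-semicontinuity of the pivotal structure. I expect this to follow from the joint loop convergence of Theorem~\ref{thm:loop} (controlling small-area loops via equicontinuity of $\acute\eta^n$, so no macroscopic loop hides with small area) combined with the fact that the symmetric difference $\cL^n_{v^n}$ is governed locally by the peeling/space-filling structure near $v^n$, which converges; but writing this out carefully, and in particular handling the boundary case $v\in\bdy H$ (where $\nu$ puts no mass, cf.\ the "byproduct" argument at the end of the proof of Theorem~\ref{thm:loop} bounding diameters of clusters meeting the boundary), is where the real work lies. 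I would organize the details by reusing the diameter/equicontinuity estimates from Section~\ref{sec-sle6-conv} (Lemmas~\ref{lem-component-diam} and~\ref{lem-bead-diam}) and the loop-localization from the proof of Theorem~\ref{thm:loop}.
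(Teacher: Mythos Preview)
Your argument for the first assertion is correct and is essentially what the paper does: invoke the convergence $(\phi^n)_*\nu^n_\ep \to \nu_\ep$ from \cite[Theorem~7.2]{bhs-site-perc} and transfer to the $D$-metric via Lemma~\ref{lem:compare}. The paper phrases the transfer through a randomly sampled point $\bpv^n$ (plus convergence of total mass) rather than through your Prokhorov-displacement lemma, but these are equivalent packagings of the same input.

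For the second assertion you are working much too hard. The convergence $\mathbf 1_{\nu^n_\ep=0}\to\mathbf 1_{\nu_\ep=0}$ is itself part of what is cited from \cite[Theorem~7.2]{bhs-site-perc}; the paper simply invokes it. Your elaborate step~(ii) --- extracting a subsequential limit of discrete $\ep$-pivotal points and showing the pivotal structure persists --- is precisely the kind of work already carried out in \cite{bhs-site-perc} via the explicit $\acute Z^n$/$Z'$ description of pivotal points, and reproducing it here from Theorem~\ref{thm:loop} and the equicontinuity lemmas of Section~\ref{sec-sle6-conv} is not straightforward. The difficulty is that being $\ep$-pivotal for $\Gamma^n$ is a statement about the symmetric difference $\cL^n_v$, which involves the \emph{flipped} configuration $\Gamma^n_v$, not just $\Gamma^n$; the GHPUL convergence of $\Gamma^n$ alone does not immediately control $\Gamma^n_v$ for a varying $v=v^n$, so your sketch does not obviously close the gap you correctly identify as the ``main obstacle''. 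Just cite the result.
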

\begin{proof}
	Conditional on everything else, on the event that $\nu^n_\ep\neq 0$, let $\bpv^n$ be a uniformly sampled $\ep$-pivotal point of $\Gamma^n$.
	Moreover, on the event that $\nu_\ep\neq 0$, let $\bpv$ be a point sampled according to $\nu_\ep(\cdot)/ \nu_\ep(H)$.
	By \cite[Theorem~7.2]{bhs-site-perc}, we  may extend our coupling such that  almost surely $\mathbf{1}_{\nu^n_\ep=0}$ converges to  $\mathbf 1_{\nu_\ep=0}$, and, moreover,
	\[ 
	\lim_{n\to\infty} d(\phi^n( \bpv^n), \bpv)=0\qquad \textrm{and}\qquad \lim_{n\to\infty} \nu^n_\ep(\cV(\Map^n)) = \nu_\ep(H).
	\]  
	By Lemma~\ref{lem:compare}, we have \(\lim_{n\to\infty} D( \bpv^n, \bpv)=0\) almost surely.  This concludes the proof.
\end{proof}

We will now prove the convergence of the loop ensemble obtained by flipping the color of a single pivotal point. 
Let $\bpv^n$ and $\bpv$ be defined as in the proof of Proposition~\ref{prop:pivot1}. Suppose we are under the extended coupling considered there, where $D(\bpv^n,\bpv)\rta 0$.
Let $ \Gamma^n_{\bpv^n}$  (resp., $ \Gamma_{\bpv}$) be the loop ensemble obtained by flipping the color of $\bpv^n$ (resp., $\bpv$).  Let $\wh \nu^n_\ep$  be the restriction of $\nu$ to the $\ep$-pivotal points of $ \Gamma^n_{\bpv^n}$.
By \cite[Proposition~7.9]{bhs-site-perc}, there exists a random finite Borel measure $\wh \nu_\ep$ supported on the set of $\ep$-pivotal points of $\wh \Gamma$ such that   $(\phi^n)_*\wh \nu^n_\ep$  converges to $\wh \nu_\ep$ in probability.  By Lemma~\ref{lem:compare}, we have the following.

\begin{prop}\label{prop:pivot2}
	In the coupling above, {we have that} $\wh \nu^n_\ep \rta \wh\nu_\ep$ and $\Gamma^n_{\bpv^n}\rta \Gamma_{\bpv}$ in probability with respect to the $D$-Prokhorov topology and the topology on loop ensembles on $(W,D)$, respectively.
\end{prop}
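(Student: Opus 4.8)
The plan is to deduce both statements from the corresponding results of~\cite{bhs-site-perc}, phrased in terms of the embeddings $\phi^n$, together with Lemma~\ref{lem:compare}, in the same way Proposition~\ref{prop:pivot1} was deduced from~\cite[Theorem~7.2]{bhs-site-perc}; the only genuinely new ingredient, needed for the loop ensemble convergence, is an equicontinuity argument in the spirit of the proof of Theorem~\ref{thm:loop}. Throughout we would work under the coupling described above, so that almost surely $D(\bpv^n,\bpv)\to 0$, $(\phi^n)_*\wh\nu^n_\ep\to\wh\nu_\ep$ in probability, $\mathbf 1_{\wh\nu^n_\ep=0}\to\mathbf 1_{\wh\nu_\ep=0}$, and the conclusions of Theorem~\ref{thm:loop} and Lemma~\ref{lem:compare} all hold. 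For the measure convergence, note that $\wh\nu^n_\ep$ is supported on $\cV(\Map^n)$ and, by Lemma~\ref{lem:compare}, $\sup_{v\in\cV(\Map^n)}D(\phi^n(v),v)\to 0$, so $\wh\nu^n_\ep$ lies at $D$-Prokhorov distance $o_n(1)$ from $(\phi^n)_*\wh\nu^n_\ep$; combined with $(\phi^n)_*\wh\nu^n_\ep\to\wh\nu_\ep$ in probability from~\cite[Proposition~7.9]{bhs-site-perc} this gives $\wh\nu^n_\ep\to\wh\nu_\ep$ in the $D$-Prokhorov topology in probability, and the assertion $\mathbf 1_{\wh\nu^n_\ep=0}\to\mathbf 1_{\wh\nu_\ep=0}$ is already part of~\cite[Proposition~7.9]{bhs-site-perc}. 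This is the argument of Proposition~\ref{prop:pivot1} verbatim.

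For the loop ensemble convergence I would follow the scheme of the proof of Theorem~\ref{thm:loop}: fix $\ep'>0$ and reduce to matching the loops of $D$-diameter at least $\ep'$ in $\Gamma^n_{\bpv^n}$ and $\Gamma_{\bpv}$. The ensemble $\Gamma^n_{\bpv^n}$ differs from $\Gamma^n$ only in the loops of the symmetric difference $\cL^n_{\bpv^n}$, which surround the cluster of $\bpv^n$ and the clusters adjacent to $\bpv^n$; every loop common to $\Gamma^n$ and $\Gamma^n_{\bpv^n}$ is handled by the convergence $\Gamma^n\to\Gamma$ of Theorem~\ref{thm:loop}, which in particular produces no macroscopic loop of vanishing $\mu^n$-area. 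Among the loops of $\cL^n_{\bpv^n}$, those enclosing $\mu^n$-area at least $\ep$ converge in $\BB d^{\op u}_D$ to the finitely many loops of $\cL_{\bpv}$: here one applies~\cite[Theorem~7.2]{bhs-site-perc} to the color-flipped configuration $\omega^n_{\bpv^n}$ (which is what underlies~\cite[Proposition~7.9]{bhs-site-perc}) together with Lemma~\ref{lem:compare}. It therefore remains to show that the loops of $\cL^n_{\bpv^n}$ with small $\mu^n$-area have small $D$-diameter; once this is known, letting $\ep'\downarrow 0$ yields $\BB d^{\op L}_D(\Gamma^n_{\bpv^n},\Gamma_{\bpv})\to 0$ in probability.

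The main obstacle is precisely this last point, and it is where the analogy with Theorem~\ref{thm:loop} must be made to work. Since $\bpv$ is $\ep$-pivotal, the three loops of $\cL_{\bpv}$ all have $\mu$-mass bounded below, and by~\cite[Theorem~2]{camia-newman-sle6} at most two loops of $\Gamma$ pass through $\bpv$. Using $\Gamma^n\to\Gamma$ and the fact that every loop of $\cL^n_{\bpv^n}$ has a vertex adjacent to $\bpv^n$, one expects that for $n$ large only a bounded number of loops of $\cL^n_{\bpv^n}$ have $D$-diameter bounded away from $0$ --- namely those taking part in the ``main'' merge or split at $\bpv^n$, which converge to $\cL_{\bpv}$ --- while for each remaining loop $\gamma^n\in\cL^n_{\bpv^n}$ the boundary $\bdy\reg(\gamma^n)$ is contained in the union of a short arc of one of the interfaces $\lambda^n_\bk$ from Section~\ref{sec-sle6-def-discrete} and a segment near $\bpv^n$; equicontinuity of the rescaled interfaces $\eta^n_\bk$ (as used in the proof of Lemma~\ref{lem-component-diam}) together with the pinching estimate Lemma~\ref{lem-pinch} then bounds $\op{diam}(\gamma^n;D)$. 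Making this precise --- controlling which interface $\lambda^n_\bk$ each extra loop lies near, and ruling out a growing number of macroscopic loops in $\cL^n_{\bpv^n}$ --- is where the real work lies; the remainder is bookkeeping analogous to the proof of Theorem~\ref{thm:loop}.
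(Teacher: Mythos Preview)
Your treatment of the measure convergence $\wh\nu^n_\ep\to\wh\nu_\ep$ is exactly the paper's: both say the argument is identical to that of Proposition~\ref{prop:pivot1}, pushing the $D$-Prokhorov convergence through Lemma~\ref{lem:compare}.

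For the loop ensemble convergence the paper takes a much shorter route than you do. Rather than invoking \cite[Theorem~7.2]{bhs-site-perc} for the \emph{unflipped} configuration and then redoing the equicontinuity machinery of Theorem~\ref{thm:loop} for the flipped one, the paper cites \cite[Proposition~7.11]{bhs-site-perc} directly: that result already establishes, under the $\phi^n$-embedding, that for each $\delta>0$ the loops of $\Gamma^n_{\bpv^n}$ with area larger than $\delta$ converge to those of $\Gamma_{\bpv}$ in a topology stronger than $\BB d^{\op L}_D$. Combining this with Lemma~\ref{lem:compare} (to pass from the $\phi^n$-embedding to the $D$-metric) and Theorem~\ref{thm:loop} (which, applied to $\Gamma^n$, controls all loops common to $\Gamma^n$ and $\Gamma^n_{\bpv^n}$ and in particular rules out macroscopic loops of vanishing area there) is what the paper gives as the full argument.

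Your proposal is not wrong in spirit, but it is more laborious: you essentially set out to reprove the content of \cite[Proposition~7.11]{bhs-site-perc} by applying \cite[Theorem~7.2]{bhs-site-perc} to the flipped configuration and then hand-rolling the small-area/small-diameter control for the loops of $\cL^n_{\bpv^n}$ via Lemma~\ref{lem-pinch} and interface equicontinuity. You candidly flag this last step as ``where the real work lies''; the paper avoids that work by citing the stronger black-box result. One could fairly say the paper's proof is terse on exactly the point you worry about --- it does not spell out how small-area loops in $\cL^n_{\bpv^n}\cap\Gamma^n_{\bpv^n}$ are handled --- but its intended mechanism is the combination of Theorem~\ref{thm:loop} (for loops shared with $\Gamma^n$) and \cite[Proposition~7.11]{bhs-site-perc} (for the finitely many macroscopic loops that differ), rather than a fresh equicontinuity argument.
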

\begin{proof}
	The proof of the first result is identical to Proposition~\ref{prop:pivot1}. The second convergence follows from Theorem~\ref{thm:loop}, Lemma~\ref{lem:compare}, and \cite[Proposition 7.11 ]{bhs-site-perc}, where it is shown that for each $\delta>0$, loops in $\Gamma^n_{\bpv^n}$ with area larger than $\delta$  under the pushforward of $\phi^n$ converge to loops in 
	$\Gamma_{\bpv}$ with area larger than $\delta$  in a topology stronger than $\BB d^{\op L}_D$.
\end{proof}

\begin{remark}\label{rmk:dynamical}
	The dynamical percolation on $\Map^n$ is the following Markov process: starting with  a sample of $\omega^n$, flip the colors on $\cV(\Map^n)$ according to the Poisson point process with intensity $\nu^n\otimes dt$, where $dt$ is the Lebesgue measure on $(0,\infty)$.
	As a consequence of Propositions~\ref{prop:pivot1} and~\ref{prop:pivot2}, it will be shown in \cite{hs-cardy-embedding} that the variant of  dynamical percolation on $\Map^n$ where only $\ep$-pivotal points are allowed to flip converges to its continuum analog.  
	As $\ep\rta 0$, this $\ep$-dependent limiting dynamic further converges to an ergodic Markov process called the \emph{Liouville dynamical percolation} introduced in \cite{ghss-ldp}. 
	It will also be shown that $\nu_\ep$ and $\wh{\nu}_\ep$ can be defined equivalently as the ``$\sqrt{8/3}$-LQG Minkowski content" of the set of pivotal points of $\Gamma$ and $\Gamma_{\bpv}$, respectively,.
\end{remark}

\subsection{Re-rooting invariance and crossing events}
\label{subsec:crossing}
If $\wt{\BB e}^n$ is another choice of root edge for $\Map^n$, then the order in which points are traced by the space-filling explorations of $(\Map^n , \BB e^n , \omega^n)$ and $(\Map^n , \wt{\BB e}^n , \omega^n)$ are different, and hence these processes give rise to different left/right boundary length processes. Here we will describe the joint scaling limit of these processes for any finite number of different root vertices. This allows us to show that certain ``crossing events" for site percolation on $\Map^n$ converge to their continuum analogs,
which in turn will be a key input in~\cite{hs-cardy-embedding} (see Remark~\ref{rmk:Cardy}).

Suppose $\omega$ is a site percolation on $(\Map,\Be)$ with monochromatic boundary condition and let $\Gamma(\Map,\Be,\omega)$ be  its  loop ensemble as defined in the first paragraph of Section~\ref{subsec:CLE-conv}.
Our definition of $\Gamma(\Map,\Be,\omega)$ there depends on $\Be$ because it uses the space-filling exploration $\acute{\lambda}$ of $(\Map,\Be,\omega)$. However, it is well known that for each $\gamma\in\Gamma$  the collection of dual edges on $\gamma$  forms a simple cycle on the dual map of $\Map$. Moreover, if each dual edge is oriented such that the red (resp.\ blue) vertex is on the left (resp.\ right), then the simple cycle is itself oriented.  
This gives an ordering of edges on $\gamma$ up to cyclic permutations, thus making $\gamma$ an unrooted oriented loop. Since
this definition of $\gamma$ agrees with the one in Section~\ref{subsec:CLE-conv} based on $\acute{\lambda}$,  we see that $\Gamma(\Map,\Be,\omega)$ only depends on $(\Map,\omega)$ but not on $\Be$.

\begin{lem}\label{re-rooting}
	In the setting of Theorem~\ref{thm:loop}, we further assume that  we have chosen our coupling so that  $(\Map^n,d^n, \mu^n, \xi^n, \Gamma^n)$ converges to $(H,d,\mu,\xi,\Gamma)$ almost surely  with respect to the GHPUL topology.
	Let $k\in \N$ and let $0<u_1<\cdots<u_k<1$.
	For $i\in [1,k]_\Z$, let $\Be^n_i=\beta^n(\lceil u_i\ell^n\rceil)$ where $\beta^n$ and $\ell^n$ are the boundary curve and boundary length of $(\Map^n,\Be^n)$, respectively. 
	For $i\in[1,k]_\Z$, let $\acute \lambda^n_i$ be the space-filling exploration of $(\Map^n, \Be^n_i, \omega^n)$ and let $\acute{\eta}^n$ and $\acute Z^n_i$ be the corresponding rescaled space-filling exploration and random walk, respectively.  Let $\eta'_i$ be such that $(H,\eta')\overset{d}{=}(H,\eta'_i)$, $\eta'_i(0)= \xi(u_i)$ and  the $\CLE_6$ corresponding to $\eta'_i$ agrees with $\Gamma$ as an element in $\cL(H)$. Let $Z'_i$ be the boundary length process of $\eta'_i$. Then  $(\Map^n,d^n, \mu^n, \xi^n, \acute \eta^n, \acute\eta^n_1,\cdots, \acute \eta^n_k)$ and $(\acute Z^n_i)_{i\in[1,k]_\Z}$ 
	jointly converge in probability  to $(H,d,\mu,\xi,\eta',\eta'_1,\cdots, \eta'_k)$ and $(Z'_i)_{i\in[1,k]_\Z}$ in the product topology of $\BM^{\GHPU}_{k+2}\times C_0(\R; \R^k)$.
\end{lem}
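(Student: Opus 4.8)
The plan is to reduce the statement to a re-rooting statement for the space-filling exploration, which in turn follows from the already-established convergence in Theorem~\ref{thm:main-precise} together with the loop-ensemble convergence of Theorem~\ref{thm:loop}. The key observation is that each re-rooted space-filling exploration $\acute\lambda^n_i$ can be reconstructed from the loop ensemble $\Gamma^n$ together with the boundary marked point $\Be^n_i$: once one knows the (finite, for each diameter threshold) collection of oriented percolation loops and the starting edge on $\partial\Map^n$, the nested-exploration description from Section~\ref{sec-sle6-def-discrete} (applied with root $\Be^n_i$) determines $\acute\lambda^n_i$, since the bubbles $\Map^n_\bk$ of the exploration rooted at $\Be^n_i$ are the same subgraphs as for the exploration rooted at $\Be^n$ --- only the order of traversal and the orientation conventions change, and these are dictated by the cyclically-ordered loops in $\Gamma^n$ and the position of $\Be^n_i$. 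Similarly, in the continuum, $\eta'_i$ is determined by $\Gamma$ (equivalently by the collection of bubbles and nested chordal SLE$_6$'s) and the marked point $\xi(u_i)$; the distributional identity $(H,\eta')\eqD(H,\eta'_i)$ with the stated CLE$_6$ compatibility is precisely the re-rooting invariance of space-filling SLE$_6$, which follows from the imaginary-geometry construction (reversibility and re-rooting of space-filling SLE$_6$, \cite{ig4}) together with Theorem~\ref{thm:space-fillng}.

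Concretely, first I would establish that for each fixed $i$, one has $(\acute{\frk M}^n_i, \acute Z^n_i)\rta(\frk H'_i, Z'_i)$ in law, where $\acute{\frk M}^n_i$ is the triangulation decorated by $\acute\lambda^n_i$ and $\frk H'_i$ is the space-filling SLE$_6$-decorated Brownian disk rooted at $\xi(u_i)$. This is immediate from Theorem~\ref{thm:main-precise} applied with root edge $\Be^n_i$: since $\bcon^{-1}n^{-1/2}\el^n\rta\LL$ and the marginal law of $(\Map^n,\Be^n_i,\omega^n)$ is again that of a site-percolated Boltzmann triangulation with monochromatic boundary (the root edge being a deterministic function of the boundary), Theorem~\ref{thm:main-precise} applies verbatim. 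The content of the lemma is the \emph{joint} convergence of all $k+2$ explorations on the same map. Here I would pass to a subsequence along which the whole tuple $(\Map^n,d^n,\mu^n,\xi^n,\Gamma^n,\acute\eta^n,\acute\eta^n_1,\dots,\acute\eta^n_k)$ together with the walks converges in law; by Theorem~\ref{thm:loop} the first six coordinates converge jointly to $(H,d,\mu,\xi,\Gamma,\eta')$, and by the previous remark each limiting curve $\tilde\eta_i$ is a space-filling curve on the same Brownian disk $H$ which traces the same loop ensemble $\Gamma$ and starts at $\xi(u_i)$. It then remains to identify $\tilde\eta_i$ with $\eta'_i$ and to check the joint law; for this I would argue that a space-filling curve on a Brownian disk which traces a given CLE$_6$ $\Gamma$ and starts at a given boundary point is \emph{uniquely determined} by $\Gamma$ and the starting point, using the nested-bubble description of Section~\ref{sec-mating}: the ordering $\prec_\bk$ on child bubbles is read off from $\Gamma$ (the dichromatic bubbles are exactly the ones enclosed by loops of $\Gamma$, and the monochromatic ones are ordered by when their bounding interface segment is traced), and then Theorem~\ref{thm:space-fillng} gives uniqueness of the curve parametrized by $\mu$-mass. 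Since this holds for every subsequential limit and the limit does not depend on the subsequence, the full convergence follows, and since the limit is deterministic given $(H,d,\mu,\xi,\Gamma)$ and the $u_i$'s, convergence in probability follows from convergence in law together with the a.s.\ convergence we have fixed by Skorokhod.

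For the walks $\acute Z^n_i$: each $\acute Z^n_i$ is, by Theorem~\ref{thm:bijection}, a deterministic function of $(\Map^n,\Be^n_i,\omega^n)$, hence of $\Gamma^n$ and $\Be^n_i$, hence asymptotically a function of the limiting data; alternatively, and more simply, $Z'_i$ is measurable with respect to $\frk H'_i$ by Theorem~\ref{thm:mating}, so once we know $\acute\eta^n_i\rta\eta'_i$ jointly with the metric-measure convergence, the convergence $\acute Z^n_i\rta Z'_i$ follows by the same argument used in Section~\ref{sec-disk-walk} to prove Proposition~\ref{prop:joint} (namely, the walk is determined by the ancestor-free-time structure, which is continuous in the appropriate sense; one invokes that $\acute Z^n_i\rta$ some limit with the law of $Z'$ by \cite{dw-limit} and that this limit must be $Z'_i$ by the measurability of $Z'_i$ w.r.t.\ $\frk H'_i$). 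I expect the main obstacle to be the clean verification that $\eta'_i$ as defined in the statement --- via the distributional coupling with $\Gamma$ --- genuinely coincides with the subsequential limit $\tilde\eta_i$, i.e., the uniqueness statement ``a $\mu$-parametrized space-filling curve on $H$ tracing $\Gamma$ and based at a prescribed point is unique''. The subtlety is that one must show the nested-bubble ordering is fully recoverable from $\Gamma$ alone: the dichromatic bubbles and their nesting are clearly encoded by the loops, but one must check that the monochromatic bubbles cut out along a single interface $\eta_\bk$ are also correctly ordered --- this uses that $\eta_\bk$ itself is (modulo parametrization) recoverable from $\Gamma$ as the concatenation of appropriate loop arcs, which is exactly the content of the construction in Section~\ref{sec-cle-def} run in reverse, together with the fact that radial/chordal SLE$_6$ targeted at a boundary point is determined by the loops it traces. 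Once this recoverability is in hand, everything else is a routine transfer of the arguments of Sections~\ref{sec-scaling}--\ref{sec-sle6-conv} with $\Be^n$ replaced by $\Be^n_i$ and an appeal to the joint convergence already proven.
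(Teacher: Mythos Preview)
Your proposal is correct and follows essentially the same route as the paper. The paper's proof is a two-line compression of your argument: it asserts that $\Gamma$ and $\eta'$ (modulo parametrization) determine each other a.s.\ (since $\Gamma$ determines the nested exploration, exactly the recoverability claim you isolate as the main obstacle), so that $(H,d,\mu,\xi,\Gamma)$ determines $(\eta'_i,Z'_i)_{i}$; it then invokes Theorem~\ref{thm:main-precise} (applied at each root, jointly with Theorem~\ref{thm:loop}) together with the general fact in Footnote~\ref{fn:in-prob} --- namely, if $X_n\to X$ a.s.\ and $(X_n,f(X_n))\to(X,f(X))$ in law then $f(X_n)\to f(X)$ in probability --- which is exactly the subsequential-limit-plus-uniqueness argument you spell out by hand.
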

\begin{proof} 
	In Definition~\ref{def:CLE}, $\Gamma$ as an element in $\cL(H)$ and  $\eta'$ modulo monotone reparametrization determine each  other.  From our construction of $\Gamma$, it is clear to $\eta'$ determines $\Gamma$ a.s. On the other hand,  $\Gamma$ determines $\eta'$ a.s.\ since it determines the nested exploration.  Therefore $(H,d,\mu,\xi, \Gamma)$ determines $(\eta',Z')$  and $(\eta'_i,Z'_i)_{i\in[1,k]_\Z}$ a.s. By Theorem~\ref{thm:main-precise} and Footnote~\ref{fn:in-prob} we conclude the proof.
\end{proof}

Although the definition of the $k$ walks $(\acute Z^n_i)_{i\in[1,k]_\Z}$  is quite elementary, their coupling is not straightforward to understand without using the loop ensemble. 
This was the obstruction in \cite{bhs-site-perc} to proving the joint convergence of certain \emph{crossing event}s which we can prove here.  In the setting of Theorem~\ref{thm:loop}, suppose $e_0,e_1,e_2$ are three distinct edges on $\bdy\Map^n$ ordered clockwise.  For $i,j\in[0,2]_{\Z}$, we denote by $(e_i,e_{j})$ the set of boundary  vertices of $\Map^n$ situated between $e_i$ and $e_j$ in clockwise order  (including one endpoint of $e_i$ and one endpoint of $e_j$). For a vertex $v\in \cV(\Map^n)$, we denote by $E^n(e_0,e_1;e_2,v)$ the event that there exists a simple path (i.e., a sequence of distinct vertices on $\Map^n$ where any two  consecutive vertices are adjacent) $P$ on $\Map^n$ such that 
\begin{itemize}
	\item[(a)]  $P$ contains one endpoint in $(e_1,e_2)$ and one endpoint in $(e_2,e_0)$, while all other vertices of $P$ are inner blue vertices;  
	\item[(b)] either $v\in P$ or $v$ is on the same side of $P$ as the edge $e_2$.
\end{itemize}
See Figure \ref{fig-flip}.

In the setting of Lemma~\ref{re-rooting}, let $k=3$. Write $\Be^n$ as $e^n_0$ and set  $u_0=0$. Let $x_i=\xi(u_i)$ for $i\in [0,3]_\Z$. 
Let $u^n$ be a uniform integer from $[1,\#\cE(\Map^n)]_\Z$.  We further assume that $\{u^n \}_n\in \N$ is independent of everything else and $u^n/\#\cE(\Map^n)$ converge almost surely to a uniform variable  $u$ on $(0,1)$. Let $v=\eta(u)$.
Conditioning on $u^n$, let $v^n$ be a uniformly chosen endpoints of $\acute{\lambda}^n(u^n)$.  
Then  $v^n\in\cV(\Map^n)$ is sampled from $\mu^n(\cdot)/\mu^n(\Map^n)$ and $v\in\D$ is sampled from $\mu(\cdot)/\mu(H)$.
It is proved in \cite[Theorem~7.6]{bhs-site-perc}\footnote{In \cite{bhs-site-perc} the measure $\mu^n$ is defined such that each vertex on $\Map^n$ has mass $n^{-1}$ instead. However, it is easy to see that the 
	$d^n$-Prokhorov distance between these two measures tends to $0$ in probability, say, by Lemma~\ref{lem:compare} and the proof of~\cite[Lemma~9.22]{bhs-site-perc}.} that in such a coupling, where $\acute Z^n\rta Z'$ and $v^n\rta v$ almost surely, the events $E^n(e^n_0,e^n_1; e^n_2,e^n_3)$ and $E^n(e^n_0,e^n_1; e^n_2,v)$ converge in probability to some events $E(x_1,x_2;x_3,v)$ and $E(x_1,x_2;x_3,x_4)$.  These two events are defined in terms of $(Z',u)$ in \cite[Section~6.9]{bhs-site-perc}. By rotating the role of these boundary edges,  we have the following.
\begin{prop}\label{prop:cro}
	Under  the convention that $4=1$ and $5=2$, {the convergence in Lemma \ref{re-rooting} is joint with convergence of }
	the  indicators of  events $E^n(e^n_0,e^n_1; e^n_2,e^n_3)$, $E^n(e^n_1,e^n_2; e^n_3,e^n_0)$, and
	$\{E^n(e^n_i,e^n_{i+1}; e^n_{i+2},v)\}_{i\in[1,3]_\Z}$. 
\end{prop}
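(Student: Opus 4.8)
\textbf{Proof proposal for Proposition~\ref{prop:cro}.}

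The plan is to reduce the joint convergence to the already-established joint convergence of the finitely many rescaled walks $\acute Z^n, \acute Z^n_1,\acute Z^n_2,\acute Z^n_3$ together with the boundary curve and the sampled point. First I would set up the right coupling: by Lemma~\ref{re-rooting} applied with $k=3$ and the re-rooting points $u_1,u_2,u_3$ (and $u_0=0$, so that $\Be^n=e^n_0$), together with the convergence $u^n/\#\cE(\Map^n)\to u$ and hence $v^n\to v$ from the construction just above the proposition, we may use the Skorokhod representation theorem to work in a coupling where, almost surely, $\acute Z^n\to Z'$, $\acute Z^n_i\to Z'_i$ for $i\in[1,3]_\Z$, $(\Map^n,d^n,\mu^n,\xi^n)\to(H,d,\mu,\xi)$ in GHPU, and $v^n\to v$. (Footnote~\ref{fn:in-prob} lets us pass from convergence in law of the tuple to almost sure convergence in this coupling, and then back to convergence in probability of the indicators at the end.) The key point enabled by Lemma~\ref{re-rooting} is that all of these objects live on the \emph{same} Brownian disk $H$ with the same CLE$_6$; only the roots differ.

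The substance of the argument is the statement, taken from~\cite[Theorem~7.6]{bhs-site-perc} and~\cite[Section~6.9]{bhs-site-perc}, that a single crossing event such as $E^n(e^n_0,e^n_1;e^n_2,e^n_3)$ or $E^n(e^n_0,e^n_1;e^n_2,v)$ is a measurable function (up to a null set, and up to an $o_n(1)$ error that vanishes in the coupling) of the single walk $\acute Z^n$ — more precisely of $(\acute Z^n,u^n)$ — and converges in probability to the corresponding continuum event $E(x_1,x_2;x_3,x_4)$ or $E(x_1,x_2;x_3,v)$, which is the same measurable functional of $(Z',u)$. So I would invoke this for each of the five events listed in the proposition, \emph{but with the appropriate choice of root edge}: the event $E^n(e^n_i,e^n_{i+1};e^n_{i+2},\cdot)$ is naturally expressed in terms of the space-filling exploration started from the boundary edge $e^n_i$, i.e.\ in terms of $\acute Z^n_i$ (with $\acute Z^n_0:=\acute Z^n$), by the rotational symmetry of the construction of the crossing event in~\cite{bhs-site-perc}. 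Thus each indicator in the list is, in the coupling, a fixed measurable functional of one of the convergent pairs $(\acute Z^n_i,u^n)$ (or $(\acute Z^n_i,v^n)$), converging a.s.\ to the same functional of $(Z'_i,u)$ (or $(Z'_i,v)$). Since the joint convergence of $(\acute Z^n,\acute Z^n_1,\acute Z^n_2,\acute Z^n_3,v^n)$ to $(Z',Z'_1,Z'_2,Z'_3,v)$ holds in this coupling, the joint convergence of the five indicators follows, and then translating back out of the coupling gives joint convergence in law (equivalently, in probability under the coupling of Lemma~\ref{re-rooting}).

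The main obstacle — and the only place where real care is needed — is verifying that the continuum limiting events are a.s.\ continuity points of the relevant functionals, so that the a.s.\ convergence $\acute Z^n_i\to Z'_i$ actually forces the indicators to converge rather than merely being bounded. This is exactly the content of~\cite[Theorem~7.6]{bhs-site-perc}: the boundary of each crossing event in $(Z',u)$-space is a null set for the law of $(Z',u)$, because the crossing events are described via strict inequalities among boundary-length quantities and the CLE$_6$ loops almost surely do not exhibit the degenerate (three-arm at the boundary, or loops tangent to the boundary at a marked point) configurations that would put $(Z',u)$ on such a boundary. A secondary, purely bookkeeping point is to make sure that the map ``root edge $\mapsto$ crossing-event functional'' is genuinely the cyclic rotation one expects, i.e.\ that $E^n(e^n_i,e^n_{i+1};e^n_{i+2},\cdot)$ relates to $\acute Z^n_i$ in precisely the way $E^n(e^n_0,e^n_1;e^n_2,\cdot)$ relates to $\acute Z^n$; this is immediate from the fact that the definition of $E^n$ in terms of a blue simple path between two boundary arcs is invariant under relabeling the three marked edges cyclically, combined with the corresponding statement in~\cite[Section~6.9]{bhs-site-perc}. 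Modulo these two points, which are either established in~\cite{bhs-site-perc} or follow from inspection of the definitions, the proposition is a formal consequence of Lemma~\ref{re-rooting} and~\cite[Theorem~7.6]{bhs-site-perc}.
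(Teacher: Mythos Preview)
Your proposal is correct and follows the paper's own approach: the paper does not give a separate proof but states that the proposition follows ``by rotating the role of these boundary edges,'' i.e.\ by combining the re-rooting lemma (Lemma~\ref{re-rooting}), which supplies the \emph{joint} convergence of the walks $\acute Z^n,\acute Z^n_1,\acute Z^n_2,\acute Z^n_3$ on a common Brownian disk, with the single-event convergence of~\cite[Theorem~7.6]{bhs-site-perc} applied once for each root. Your write-up simply makes this two-line argument explicit, including the continuity-set verification that~\cite{bhs-site-perc} already handles.
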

The \emph{joint} convergence {of the crossing events} in Proposition~\ref{prop:cro} is far from obvious using the techniques in \cite{bhs-site-perc}.
\begin{remark}[Cardy embedding]\label{rmk:Cardy}
	Under  the convention that $4=1$ and $5=2$, let
	\begin{equation}\label{eq:cdy}
	p^n_i(v)=\P\left[E^n(e^n_i,e^n_{i+1}; e^n_{i+2},v) \big|(\Map^n,\Be^n) \right]\qquad   \textrm{for }i=1,2,3 \textrm{ and } v\in \cV(\Map^n).
	\end{equation}
	Then  the function $\Cardy^n:\cV(\Map^n)\to \Delta:=\{ (x_1,x_2,x_3)\in  (0,\infty)^3\,:\,x_1+x_2+x_3=1 \}$ defined by $\Cardy^n=(p^n_1+p^n_2+p^n_3)^{-1}(p^n_1,p^n_2,p^n_3)$ is the so called \emph{Cardy embedding}. 
	When $\Map^n$ is replaced by the the intersection of the regular triangular lattice with a Jordan domain $\Omega$ with three boundary marked points, the result of Smirnov \cite{smirnov-cardy} is equivalent to the statement that the Cardy embedding converges to the  Riemann mapping from $\Omega$ to  $\Delta$, where the three marked points are mapped to the three vertices of $\Delta$. 
	In \cite{hs-cardy-embedding}, based on results mentioned in Remark~\ref{rmk:dynamical}, the second and third author will show that $\Cardy^n$ converges to the conformal embedding from $(H,x_0,x_1,x_3)$ to $\Delta$.
\end{remark}

\newcommand{\etalchar}[1]{$^{#1}$}
\def\cprime{$'$}

\end{document}